\def\ps@pprintTitle{%
 \let\@oddhead\@empty
 \let\@evenhead\@empty
 \def\@oddfoot{\centerline{\thepage}}%
 \let\@evenfoot\@oddfoot}
\newcommand{\overbar}[1]{\mkern 1.5mu\overline{\mkern-1.5mu#1\mkern-1.5mu}\mkern 1.5mu}
\def\Forb{\mathop{\mathrm{Forb}}\nolimits}
\def\Age{\mathop{\mathrm{Age}}\nolimits}
\def\Rel{\mathop{\mathrm{Rel}}\nolimits}
\def\dom{\mathop{\mathrm{Dom}}\nolimits}
\def\range{\mathop{\mathrm{Range}}\nolimits}
\def\Aut{\mathop{\mathrm{Aut}}\nolimits}
\def\tp{\mathop{\mathrm{tp}}\nolimits}
\def\Flim{\mathop{\mathrm{Flim}}\nolimits}
\def\Nexp{\mathop{\mathrm{Nexp}}\nolimits}
\def\Nemb{\mathop{\mathrm{Nemb}}\nolimits}
\def\sse{\subseteq}
\newcommand{\expand}[2][+]{({#2}*{#2}^{#1})}
\def\str#1{\mathbf {#1}}
\def\arity#1{a(\rel{}{#1})}
\def\rel#1#2{R_{\mathbf{#1}}^{#2}}
\def\F{{\mathcal F}}
\def\K{{\mathcal K}}
\def\Fraisse{Fra\"{\i}ss\' e}
\theoremstyle{definition}
\newtheorem{defn}{Definition}[section]
\newtheorem*{example}{Example}
\newtheorem{remark*}{Remark}
\newtheorem{remark}{Remark}[section]
\theoremstyle{remark}
\theoremstyle{plain}
\newtheorem{thm}{Theorem}[section]
\newtheorem{corollary}[thm]{Corollary}
\newtheorem{prop}[thm]{Proposition}
\newtheorem{observation}[thm]{Observation}
\newtheorem{lem}[thm]{Lemma} 
\newtheorem{conjecture}[thm]{Conjecture}
\newtheorem{question}[thm]{Question}
\def\Ind#1#2{#1\setbox0=\hbox{$#1x$}\kern\wd0\hbox to 0pt{\hss$#1\mid$\hss}
\lower.9\ht0\hbox to 0pt{\hss$#1\smile$\hss}\kern\wd0}
\def\Notind#1#2{#1\setbox0=\hbox{$#1x$}\kern\wd0\hbox to
0pt{\mathchardef\nn="0236\hss$#1\nn$\kern1.4\wd0\hss}\hbox 
to 0pt{\hss$#1\mid$\hss}\lower.9\ht0
\hbox to 0pt{\hss$#1\smile$\hss}\kern\wd0}
\def\ind{\mathop{\mathpalette\Ind{}}}
\newcommand{\struc}[1]{\langle #1\rangle}
\begin{document}
\bibliographystyle{alpha}

\begin{frontmatter}
\title{Ramsey expansions of metrically homogeneous graphs}
\author[KAM]{Andr\'es Aranda}
\ead{andres.aranda@gmail.com}
\author[Ge3,IM]{David Bradley-Williams\fnref{g0}}
\ead{williams@math.cas.cz}
\fntext[g0]{In the final stages of this project was partially supported by the research training group GRK 2240: Algebro-Geometric Methods in Algebra, Arithmetic and Topology, funded by the German Science Foundation (DFG), was further supported by project EXPRO 20-31529X of the Czech Science Foundation (GA\v CR) and by the Czech Academy of Sciences CAS (RVO 67985840).}
\author[KAM]{Jan Hubi\v{c}ka\fnref{g1}}
\ead{hubicka@kam.mff.cuni.cz}
\fntext[g1]{In the final stages of this project supported by project 21-10775S of  the  Czech  Science Foundation (GA\v CR) and by a project that has received funding from the European Research Council (ERC) under the European Union's Horizon 2020 research and innovation programme (grant agreement No 810115, DYNASNET).}
\author[Gr]{Miltiadis Karamanlis}
\ead{kararemilt@gmail.com}
\author[KA]{Michael Kompatscher\fnref{g2}}
\ead{kompatscher@karlin.mff.cuni.cz}
\fntext[g2]{In the final stages of this project supported by a project that has received funding from the European Research Council (ERC) under the European Unions Horizon 2020 research and innovation programme (grant agreement No. 771005, CoCoSym).}
\author[KAM,DD]{Mat\v ej Kone\v cn\'y\fnref{g3}}
\ead{matej.konecny@tu-dresden.de}
\fntext[g3]{In the final stages of this project supported by project 21-10775S of  the  Czech  Science Foundation (GA\v CR) and by the European Research Council (Project POCOCOP, ERC Synergy Grant 101071674). Views and opinions expressed are however those of the authors only and do not necessarily reflect those of the European Union or the European Research Council Executive Agency. Neither the European Union nor the granting authority can be held responsible for them.}

\author[Ca]{Micheal Pawliuk}
\ead{m.pawliuk@utoronto.ca}

\address[KAM]{Department of Applied Mathematics (KAM), Faculty of Mathematics and Physics, Charles University, Prague, Czech Republic}
\address[Ge3]{Mathematisches Institut der Heinrich-Heine-Universit\"at, D\"usseldorf, Germany}
\address[IM]{Institute of Mathematics, Czech Academy of Sciences, Prague, Czech Republic}
\address[Gr]{Department of Mathematics, National and Kapodistrian University of Athens, Greece}
\address[KA]{Department of Algebra, Faculty of Mathematics and Physics, Charles University, Prague, Czech Republic}
\address[DD]{Institute of Algebra, TU Dresden, Dresden, Germany}
\address[Ca]{University of Toronto Mississauga, Toronto, Canada}

\begin{abstract}
We investigate Ramsey expansions, the coherent extension property for partial isometries (EPPA), and the existence of a stationary independence relation for all classes of
metrically homogeneous graphs from Cherlin's catalogue. We show that, with the exception of tree-like
graphs, all metric spaces in the catalogue have precompact Ramsey expansions
(or lifts) with the expansion property. With two exceptions we can also
characterise the existence of a stationary independence relation and coherent
EPPA.

Our results are a contribution to Ne\v set\v ril's
classification programme of Ramsey classes and can be seen as empirical evidence of the
recent convergence in techniques employed to establish the Ramsey property, the expansion property, EPPA and the existence of a stationary independence relation. At the heart of our proof is a canonical way of completing edge-labelled graphs to metric spaces in Cherlin's classes. The existence of such a ``completion algorithm'' then allows us to apply several strong results in the areas that imply EPPA or the Ramsey property.

The main results have numerous consequences for the automorphism groups of the
\Fraisse{} limits of the classes. As corollaries, we prove amenability, unique ergodicity,
existence of universal minimal flows, ample generics, small index property,
21-Bergman property and Serre's property (FA).
\end{abstract}
\end{frontmatter}

\clearpage
\tableofcontents
\clearpage

\section{Introduction}
Given a graph, its {\em associated
metric space} has its vertices as points, and the distance between two points is the length of the shortest path connecting them.
A (countable) structure is {\em homogeneous}  if
every isomorphism between finite substructures is induced by an automorphism.
A {\em metrically homogeneous graph} is a connected graph with the property
that the associated metric space is a homogeneous metric space.

As a new contribution to the well known classification programme of homogeneous structures (of Lachlan and Cherlin~\cite{Lachlan1984,Lachlan1980,Cherlin1998}),
Cherlin~\cite{Cherlin2013} has recently provided a catalogue of metrically homogeneous
graphs. While originally this catalogue was only conjectured to be complete, there is now a purported (yet unpublished) proof.\footnote{This is claimed on Cherlin's website: https://sites.math.rutgers.edu/\~{}cherlin/Paper/inprep.html}

This paper investigates properties of classes of finite substructures of structures from Cherlin's
catalogue.  To state our main results we introduce some terminology first.
A graph is {\em bipartite} if it contains no odd cycles and it is \emph{regular} if there exists $0\leq k\leq \infty$ such that the degree of every vertex is $k$. The {\em diameter} of a graph is the maximal distance
in its associated metric space. A graph of diameter $\delta$ is {\em antipodal} if in its associated metric space for every vertex there exists at most one vertex in distance $\delta$ (and it is {\em antipodally closed} if every vertex has a unique such vertex).
A graph is {\em tree-like} if it is isomorphic to a graph $T_{m,n}$, $2 \leq m,n\leq \infty$, defined as follows:
\begin{defn}
\label{defn:tree-like}
Given $2 \leq m,n\leq \infty$, the graph $T_{m,n}$ is defined to be the (regular) graph in
which the blocks (two-connected components) are cliques of order $n$ and every
vertex is a cut vertex, lying in precisely $m$ blocks.
\end{defn}

Our main results can be summarised as follows (all the remaining notions will be introduced below):
\begin{thm}
\label{thm:ramseyall}
Let $\Gamma$ be the associated metric space of a countably infinite metrically homogeneous graph $\str{G}$ of diameter $\delta$ from Cherlin's catalogue. 
Then one of the following applies:
\begin{enumerate}
	\item If $\str{G}$ is a tree-like graph, then $\Age(\Gamma)$ has no precompact Ramsey expansion. 
	\item If $\str{G}$ is not tree-like, then one of the following holds:
		\begin{enumerate}
			\item If $\str{G}$ is primitive (i.e. neither antipodal nor bipartite), then the class of free orderings of $\Age(\Gamma)$ is Ramsey.
			\item If $\str{G}$ is bipartite and not antipodal then $\Age(\Gamma)$ is Ramsey when extended by a unary predicate denoting the bipartition and by convex linear orderings.
			\item If $\str{G}$ is antipodal, denote by $\mathcal A$ the subclass of $\Age(\Gamma)$ of antipodally closed metric spaces. Then:
					\begin{enumerate}
						\item If $\str{G}$ is not bipartite or has odd diameter, then $\mathcal A$ is Ramsey when extended by a linear ordering convex with respect to the podes.
						\item If $\str{G}$ is bipartite of even diameter, then $\mathcal A$ is Ramsey when extended by a unary predicate denoting the bipartition and a linear ordering convex with respect to the bipartition and the podes.
					\end{enumerate}
					In both these cases, the linear order on one pode in $\mathcal A$ uniquely extends to the other pode so that it is (linearly) isomorphic. For precise definitions and discussion about non-antipodally closed metric spaces, see Section~\ref{sec:antipodal}.
		\end{enumerate}
\end{enumerate}
By ``a predicate denoting the bipartition'' we mean a unary predicate defining two
equivalence classes of vertices, such that all distances in the same equivalence
class are even.

All the Ramsey expansions above have the expansion property.
\end{thm}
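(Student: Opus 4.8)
The plan is to treat the two top-level cases of Theorem~\ref{thm:ramseyall} by entirely different methods: for the non-tree-like graphs I would establish the Ramsey property and the expansion property constructively, while for the tree-like graphs $T_{m,n}$ I would argue for the non-existence of any precompact Ramsey expansion by a rigidity obstruction. The unifying device for all the positive cases is a \emph{completion algorithm}: a uniform procedure that, given a finite edge-labelled graph in which every small (say at most three-vertex) configuration already embeds into $\Age(\Gamma)$, fills in the remaining distances to produce a genuine metric space lying in $\Age(\Gamma)$. Once such a completion is available, the Ramsey property for the ordered (and, where needed, predicate-expanded) class follows by invoking the general machinery for Ramsey classes defined by forbidden homomorphic images together with a locally finite completion; membership in the catalogue is exactly a forbidden-configuration description, so the key hypothesis to verify is that local consistency of distances forces global completability inside the prescribed Cherlin class.

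First I would isolate the combinatorial core, namely correctness of the completion. Here one takes the parameters of the class (the diameter $\delta$ together with the bounds controlling which triangles and short cycles are admissible) and shows that any partially labelled graph free of the forbidden configurations of bounded size admits a consistent assignment of every missing distance, typically by a shortest-path/greedy assignment respecting the triangle inequality and the catalogue's extra constraints. This is where the bipartite and antipodal features enter as genuine obstructions to a naive completion: in the bipartite case the parity of distances partitions the vertices into two classes, so completion must be carried out separately on and across the classes; in the antipodal case the vertex at distance $\delta$ is forced to be unique, so before completing I would pass to the subclass $\mathcal A$ of antipodally closed spaces, where the antipodal involution is total, and then propagate each assigned distance to the distance forced between antipodes. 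Establishing that these propagations never collide---that the algorithm terminates with a valid member of $\Age(\Gamma)$ for every choice of catalogue parameters---is the step I expect to be the main obstacle, since it requires a uniform case analysis across Cherlin's list and is most delicate exactly at the bipartite/antipodal boundary cases and the degenerate small-diameter values.

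With completion in hand, I would read off the correct expansion in each case from the data that automorphisms of $\Gamma$ fail to fix. In the primitive $3$-constrained case nothing beyond a generic linear order is needed, so the class of free orderings is Ramsey. In the bipartite case the bipartition is a $0$-definable equivalence relation (even distance), so an automorphism may mix the two classes; to obtain a precompact Ramsey expansion with finitely many orbits on tuples I must name the bipartition by a unary predicate and restrict to orderings convex with respect to it, and then apply the completion-based Ramsey theorem to the expanded class. In the antipodal case the antipodal pairing is likewise $0$-definable, and the same reasoning forces an order convex with respect to the podes; the coherence statement---that the order on one pode determines an isomorphic order on the other---reflects that the antipodal involution must be realised as an order isomorphism between podes, which I would build into the definition of the expanded class so that it remains an amalgamation class. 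In every positive case the expansion property would be verified by the standard embedding argument: for each finite $\str{A}$ in the base class and each admissible expansion $\hat{\str{A}}$, exhibit (again using completion) a finite $\str{B} \supseteq \str{A}$ in the base class all of whose admissible expansions contain a copy of $\hat{\str{A}}$, showing that the named predicates and convexity constraints are exactly the minimal data required.

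Finally, for the tree-like graphs I would prove the negative statement. The associated metric space of $T_{m,n}$ carries a canonical tree/betweenness structure that no precompact expansion can make generic: any finite expansion leaves a configuration whose copies in sufficiently large members can be $2$-coloured with no monochromatic copy, because the tree amalgamation is too rigid for the Ramsey property to survive with only finitely many orbits of tuples. Equivalently, one shows that the universal minimal flow of $\Aut(\Gamma)$ is non-metrizable, so by the Kechris--Pestov--Todorcevic correspondence and its precompact refinement no precompact Ramsey expansion can exist. Ruling out \emph{all} precompact expansions, rather than exhibiting a failure for a single candidate, is the subtle point of this case, and would require identifying the minimal unavoidable tree data and showing it has infinitely many orbit-types under the automorphism group.
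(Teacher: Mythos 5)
Your overall architecture matches the paper's: a completion algorithm feeding the locally-finite-completion Ramsey machinery (Theorem~\ref{thm:localfini}), expansions dictated by the $0$-definable equivalence relations (bipartition, antipodal pairing), the standard edge-Ramsey argument for the expansion property, and a rigidity obstruction for $T_{m,n}$. However, two steps as you describe them would fail. First, the combinatorial core is not a ``shortest-path/greedy assignment,'' and local consistency of three-vertex configurations does not force completability. The catalogue classes carry \emph{upper} bounds on triangle perimeters ($C_0$, $C_1$) and the $K_2$-bound, so the shortest path completion (which pushes every missing distance as high as possible) produces forbidden triangles; moreover the genuine obstacles are edge-labelled cycles of length up to $2^\delta\cdot 3$ that contain no forbidden triangle at all (Lemma~\ref{lem:obstacles} and the explicit forbidden $4$- and $5$-cycles for $\mathcal A^5_{3,3,16,13}$). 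The paper's replacement is the completion with a \emph{magic parameter} $M$ chosen in the window $\max(K_1,\lceil\delta/2\rceil)\le M\le\min(K_2,\lfloor(C-\delta-1)/2\rfloor)$ (Definition~\ref{defn:magicdistance}), which completes forks toward $M$ rather than toward $\delta$ in a carefully scheduled order; proving that a window element exists and that the resulting completion is optimal and parity-preserving is the content of Sections~4.3--4.5 and is not recoverable from a greedy triangle-inequality argument.

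Second, your plan to handle the antipodal case by running a completion on the antipodally closed class and then applying the same Ramsey machinery breaks down because Theorem~\ref{thm:localfini} requires \emph{strong} amalgamation, which antipodal classes do not have (the $\delta$-matching is a nontrivial algebraic closure). The paper instead obtains the antipodal Ramsey property by a transfer argument: every antipodally closed space is the antipodal extension $\str{M}^\ominus$ of a space of diameter $\delta-1$, and $\str{C}\longrightarrow(\str{B})^{\str{A}}_2$ in the reduct class immediately gives $\str{C}^\ominus\longrightarrow(\str{B}^\ominus)^{\str{A}^\ominus}_2$ (Theorem~\ref{thm:antiporamsey}); the antipodal completion algorithms are needed only for EPPA and stationary independence. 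Finally, for the tree-like case your sketch identifies the right obstruction but omits the device that makes it work: the paper pigeonholes the finitely many one-vertex expansion types along a long path of distance-$2$ edges to find two equivalent vertices at some distance $\ell$, then colours the vertices of any putative Ramsey witness by distance mod $\ell+1$ from a fixed basepoint and uses the uniqueness of geodesics in $T_{m,n}$ to show no copy of the pair can be monochromatic. Without that explicit colouring (or an actual non-metrizability proof for the universal minimal flow), the negative claim is not established.
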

We thus completely characterise Ramsey expansions of all currently known infinite metrically homogeneous graphs. In addition to that
we show two related properties, arriving at an almost complete characterisation.

\begin{thm}
\label{thm:EPPAmain}
Let $\Gamma$ be the associated metric space of a countably infinite metrically homogeneous graph $\str{G}$ from Cherlin's catalogue.
\begin{enumerate}
 \item If $\str{G}$ is a tree-like graph, then $\Age(\Gamma)$ does not have EPPA.
 \item If $\str{G}$ is not tree-like then:
 \begin{enumerate}
   \item If $\str{G}$ is antipodal, and:
   \begin{enumerate}
     \item If $\str{G}$ is non-bipartite of even diameter or bipartite of odd diameter, then $\Age(\Gamma)$ has coherent EPPA.
     \item If $\str{G}$ is bipartite of even diameter or non-bipartite of odd diameter, then $\Age(\Gamma)$ extended by a unary predicate denoting podality has coherent EPPA.
   \end{enumerate}
   \item If $\str{G}$ is not antipodal then it has coherent EPPA.
  \end{enumerate}
\end{enumerate}
By ``a predicate denoting podality'' we mean a unary predicate defining two
equivalence classes of vertices, such that no pair of vertices in distance $\delta$ are
in the same equivalence class.
\end{thm}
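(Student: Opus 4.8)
The plan is to follow the two cases of the statement separately: a \emph{negative} argument for the tree-like graphs (part~1) and a single uniform \emph{positive} construction for all the remaining classes (part~2), the latter reducing the whole problem to one general coherent-EPPA theorem via the completion algorithm that is the technical core of the paper.

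For the tree-like graphs I would exhibit an explicit partial automorphism admitting no finite witness. Every tree-like graph has infinite diameter and its associated metric space is a genuine tree metric, so it contains arbitrarily long geodesics; pick five points $a_0,\dots,a_4$ on a geodesic, so that $d(a_i,a_j)=|i-j|$, and let $\varphi$ be the ``shift'' with $\varphi(a_i)=a_{i+1}$ for $0\le i\le 3$, whose domain and range are the two length-$3$ subpaths (this is a partial automorphism, since shifting preserves every distance). Suppose some automorphism $g$ of a finite $\str B\in\Age(\Gamma)$ extended $\varphi$. A short induction using the additivity of the tree metric along a geodesic shows that the orbit $g^j(a_0)$ consists of pairwise distinct points with $d(a_0,g^j(a_0))=j$ for all $j\ge 0$ (each application of $g$ prolongs the geodesic by one step, which the metric detects), contradicting $|\str B|<\infty$. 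Hence no finite EPPA-witness exists and $\Age(\Gamma)$ fails EPPA; this is the exact metric counterpart of the failure of a precompact Ramsey expansion in Theorem~\ref{thm:ramseyall}(1).

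For the non-tree-like classes the strategy is uniform. I would work in the finite relational language with one binary symbol $R_i$ coding distance $i$ for $1\le i\le\delta$, augmented by the unary predicate(s) prescribed in the statement (the bipartition and/or podality predicate) in the bipartite and antipodal cases, and describe the relevant class (possibly so expanded) as the class of all finite structures admitting a completion to a metric space in Cherlin's class. I would then verify the hypotheses of a general coherent-EPPA theorem of Herwig--Lascar / Hodkinson--Otto type, in its completion-based strengthening: namely that the class has a \emph{locally finite, automorphism-preserving completion}. Local finiteness asks for a bound $k$ depending only on the numerical parameters of the class ($\delta,K_1,K_2,\dots$) such that a structure completes as soon as all its $\le k$-element substructures do; this is precisely what the completion algorithm established earlier in the paper delivers, since that algorithm fills missing and over-long distances using only the triangle/parity bounds and the finite list of Henson-type constraints, all of bounded size. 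Automorphism-preservation is built into the canonical (shortest-path / maximal-admissible-distance) nature of the completion. Feeding this into the general theorem yields coherent EPPA, the coherence coming for free from the group-theoretic witness construction underlying that theorem.

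The hard part, and the source of the case distinction in part~2(a), is the antipodal structure. The antipodal law equips $\Gamma$ with a \emph{global} involution sending each vertex to its unique antipode at distance $\delta$, which is not a bounded-size local constraint, so the completion cannot be run on the distance relations alone. I would restrict to antipodally closed spaces so that the antipodal map becomes a total involution treated as part of the structure, complete one ``half'' of the space, and propagate across the involution; the delicate point is whether this propagation is automorphism-preserving, and here parity enters, since the antipode sits at distance $\delta$ and therefore interacts with the bipartition (or with the even/odd distance classes) according to the parity of $\delta$. When the parities are aligned (part~2(a)i) the two-colouring of vertices by ``side of the antipodal pairing'' is already canonically determined by the metric data, the completion respects it, and coherent EPPA holds with no extra structure; when the parities are crossed (part~2(a)ii) that colouring is \emph{not} determined by the distances, the naive completion fails to be automorphism-preserving, and the remedy is exactly to name the two classes by the podality predicate. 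Checking that, after this addition, the completion is again locally finite and automorphism-preserving, and that the predicate is genuinely indispensable in case~2(a)ii so that the stated expansion is optimal, is where I expect the real work to lie.
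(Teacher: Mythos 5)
Your argument for the tree-like case is correct but takes a different route from the paper's: you derive the contradiction from the unbounded orbit of a shift along a geodesic (the key point being that in $T_{m,n}$ consecutive geodesic points are separated by cut vertices, so $d(a_0,g^j(a_0))=j$ for every $j$, forcing the witness to be infinite), whereas the paper observes that an EPPA-witness may be assumed vertex-transitive and therefore cannot contain both a leaf and a non-leaf vertex. Both arguments are sound.

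The genuine gap is in part 2, in the \emph{non-antipodal bipartite} case. Theorem~\ref{thm:EPPAmain}(2b) asserts coherent EPPA for $\Age(\Gamma)$ itself, with no added predicate, but the class $\mathcal A^\delta_{\infty,0,C_0,2\delta+1}$ is not locally finite in your sense: its obstacles include all edge-labelled cycles of odd perimeter, which have unbounded size, so there is no bound $k$ such that a graph completes as soon as all of its $k$-vertex subgraphs do (Lemma~\ref{lem:biobstacles} deliberately bounds only the obstacles \emph{other than} odd cycles). Your uniform reduction to a ``locally finite, automorphism-preserving completion'' therefore does not apply here. Adding a bipartition predicate, as your parenthetical suggests, restores local finiteness but only yields EPPA for the expanded class, and this does not descend to the reduct: a partial isometry whose domain lies inside a single part can be extended either preserving or swapping the two parts, and the predicate forces a choice the original map does not make. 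The paper closes exactly this gap with an explicit $\mathbb{Z}_2$-cover in the proof of Theorem~\ref{thm:biregulareppa}: the Herwig--Lascar witness $\str{B}$ is replaced by $\str{C}$ on $B\times\{0,1\}$ with distances reassigned according to parity, which destroys all odd cycles while still extending every partial isometry of $\str{A}$ coherently (composing with the deck transformation $(v,i)\mapsto(v,1-i)$ when needed); only then is the bipartite completion algorithm run. The same device is needed again inside the antipodal bipartite cases.

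A smaller imprecision: in case 2(a)i the reason no podality predicate is needed is not that the two-colouring by podes ``is canonically determined by the metric data'' --- the choice of pode remains entirely arbitrary --- but that the antipodal completion is \emph{independent} of that choice (because $M=\delta-M$ when $\delta$ is even, resp.\ $M+(M+1)=\delta$ in the bipartite odd case; Corollaries~\ref{cor:antievencompletion} and~\ref{cor:antibioddcompletion}), and hence preserves automorphisms that respect no particular pode. Your case split and conclusions are nonetheless the right ones, and for the primitive 3-constrained classes your outline does match the paper's proof (Theorem~\ref{thm:herwiglascar} combined with Lemmas~\ref{lem:obstacles} and~\ref{lem:aut}).
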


In the submitted version of this paper, we asked whether the statement of Theorem~\ref{thm:EPPAmain} is best possible, that is, whether one needs to extend the even-diameter bipartite and odd-diameter non-bipartite antipodal classes by a unary predicates denoting podality (it was stated as Problem~1.3). Since then this was resolved by Evans, Hubička, Konečný and Nešetřil~\cite{eppatwographs} who proved that the predicated are not necessary for the diameter 3 non-bipartite case and subsequently by Konečný~\cite{Konecny2019a} who extended their techniques to all classes in question.

\begin{thm}
\label{thm:SIRall}
Let $\Gamma$ be the associated metric space of a countably infinite metrically homogeneous graph $\str{G}$ from Cherlin's catalogue. 
\begin{enumerate}
 \item If $\str{G}$ is isomorphic to $T_{m,n}$ then $\Gamma$ has no stationary independence relation and it has a local stationary independence relation if and only if $m = \infty$ and $n \in \{2,3,\infty\}$. 
 \item If $\str{G}$ is not tree-like then:
 \begin{enumerate}
   \item If $\str{G}$ is antipodal, and:
   \begin{enumerate}
     \item If $\str{G}$ is not bipartite and has even diameter, then there exists a stationary independence relation on $\Gamma$.
     \item If $\str{G}$ is bipartite and has odd diameter, then there exists a local stationary independence relation, but there is no stationary independence relation on $\Gamma$.
     \item Otherwise there is no local stationary independence relation on $\Gamma$.
   \end{enumerate}
   \item If $\str{G}$ is not antipodal:
   \begin{enumerate}
     \item If $\str{G}$ is not bipartite, then there exists a stationary independence relation on $\Gamma$.
     \item If $\str{G}$ is bipartite, then there exists a local stationary independence relation, but there is no stationary independence relation on $\Gamma$.
   \end{enumerate}
  \end{enumerate}
\end{enumerate}
\end{thm}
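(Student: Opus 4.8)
The plan is to read off the independence relation from the canonical completion of edge-labelled graphs to metric spaces in Cherlin's classes, which is the technical heart of the paper. For finite subsets $A,B,C$ of $\Gamma$ I would declare $A \ind_C B$ to hold exactly when the metric structure on $A\cup B\cup C$ is the canonical completion of the free amalgam of $A\cup C$ and $B\cup C$ over $C$: the completion obtained by assigning to each pair $(x,y)$ with $x\in A\setminus C$ and $y\in B\setminus C$ the largest admissible distance (capped by $\delta$ and by the triangle inequality through $C$) and then running the completion algorithm to repair the remaining Cherlin constraints. With this definition, \emph{existence} and the fact that the amalgam stays inside $\Age(\Gamma)$ are immediate from the completion algorithm, while \emph{invariance}, \emph{monotonicity} and \emph{symmetry} of the Tent--Ziegler axioms are routine. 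The content of the positive statements is therefore to verify \emph{transitivity} and \emph{stationarity}, and both reduce to a single structural claim: that the canonical completion is functorial and uniquely determined, so that iterated amalgamation is associative (transitivity) and the independent extension of a given type is unique (stationarity).

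The two dichotomies of the theorem surface precisely in stationarity. In the non-bipartite, non-antipodal case (2)(b)(i) the completion is metric closure together with the repair of forbidden triangles, the maximal-distance assignment is genuinely unrestricted, and stationarity holds over every base including $\emptyset$, yielding a full SIR. If $\str{G}$ is bipartite, the parity of $d(x,y)$ is forced by the bipartition classes of $x$ and $y$; over a nonempty base $C$ those classes are pinned down by the distances to any $c\in C$, so the maximal admissible distance of the correct parity is unique and one obtains a \emph{local} SIR, whereas over $C=\emptyset$ a single pair of points admits both an even and an odd independent configuration, neither preferred and the two not conjugate over $\emptyset$, which is the failure of stationarity ruling out a global SIR in (2)(a)(ii) and (2)(b)(ii). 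For antipodal $\str{G}$ the extra constraint is the antipodal involution $x\mapsto x^{*}$, which forces $d(x^{*},y)=\delta-d(x,y)$, so placing $x$ against $y$ simultaneously pins the pode distances. One checks that a canonical free placement compatible with this pairing exists exactly when there is a reflection-fixed middle distance, i.e.\ when $\delta$ is even and $\str{G}$ is non-bipartite, giving the SIR of (2)(a)(i); in the bipartite odd-diameter case the pairing is consistent over nonempty bases but the empty-base parity obstruction persists, giving the local-but-not-global SIR of (2)(a)(ii); and in the two remaining antipodal parities the middle distance clashes with the parity demanded by the pode relation, so even a local SIR fails, which is (2)(a)(iii).

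The tree-like case (1) I would treat separately and combinatorially. In $T_{m,n}$ any three vertices have a well-defined median, and the vertex at which two branches meet is $\emptyset$-definable; this rigidity defeats stationarity, so there is no SIR for any $m,n$. For the local relation, amalgamation over a nonempty base succeeds exactly when the base anchors the relevant meet and new branches can always be attached freely: infinite branching $m=\infty$ is needed for the existence of independent extensions, and among the block sizes only $n\in\{2,3,\infty\}$ leave the clique blocks without an obstruction to uniqueness of the completion (the $\infty$-regular tree of edges, the tree of triangles, and the tree of infinite cliques), whereas a finite $n\ge 4$ forces non-canonical choices when completing across a block. Constructing the local SIR explicitly for these three families and producing the obstructing configurations for all other $(m,n)$ is the bulk of the work here.

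I expect the main obstacle to be the antipodal analysis. Both directions hinge on understanding how the completion algorithm interacts with the involution $x\mapsto x^{*}$, and pinning down the exact boundary between (2)(a)(i), (2)(a)(ii) and (2)(a)(iii) requires careful parity bookkeeping for the middle distance $\delta/2$ and for the relation $d(x,y)+d(x^{*},y)=\delta$. In particular, the negative statement (2)(a)(iii) — non-existence of even a \emph{local} SIR — cannot be reduced to the completion algorithm and demands explicitly engineered finite counterexamples, handled case by case according to the parity of the diameter and whether $\str{G}$ is bipartite.
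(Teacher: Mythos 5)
Your overall architecture matches the paper's: translate (local) SIRs into (local) canonical amalgamation operators, realise the operator as a canonical completion of the free amalgam, derive the bipartite failures from a parity obstruction over the empty base, tie the antipodal trichotomy to whether the reflection $x\mapsto\delta-x$ fixes the completion's target distance, and kill the remaining cases (antipodal odd non-bipartite, antipodal even bipartite, tree-like) with explicit small configurations. All of that is how the paper proceeds (Theorem~\ref{thm:canonicalamalg}, Corollaries~\ref{cor:SIR}, \ref{cor:biSIR}, Theorem~\ref{thm:antiSIR}, Section~\ref{sec:tree-like}).

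However, there is a concrete error at the heart of your construction: you define the canonical completion by giving each new pair $(x,y)$ with $x\in A\setminus C$, $y\in B\setminus C$ ``the largest admissible distance'' and then repairing. For most classes $\mathcal A^\delta_{K_1,K_2,C_0,C_1}$ in the catalogue this fails. For example, in $\mathcal A^3_{1,2,10,9}$ the maximal assignment gives three mutually new points pairwise distance $3$, producing the triangle $333$ of odd perimeter $9\geq C_1$, which is forbidden and cannot be ``repaired'' since all three edges have already been committed; more generally the $C$- and $K_2$-bounds rule out large distances, and a greedy maximal assignment is neither well defined nor associative. The correct canonical completion pushes every new distance toward a \emph{magic} parameter $M$ with $\max(K_1,\lceil\delta/2\rceil)\leq M\leq\min(K_2,\lfloor(C-\delta-1)/2\rfloor)$, inserting distances in a carefully chosen order; the Optimality and Parity Lemmas for this completion are exactly what make the resulting operator monotone and associative, and hence what make Existence and Stationarity go through. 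Identifying $M$ (and showing it exists for all primitive admissible parameters) is the missing idea, and it is also what explains your antipodal boundary precisely: a SIR exists in case (2)(a)(i) because $M=\delta/2$ is the unique fixed point of $x\mapsto\delta-x$, and a local SIR exists in (2)(a)(ii) because the pair $\{M,M+1\}$ with $M=(\delta-1)/2$ is swapped by that reflection consistently with the bipartition. Two smaller gaps: the equivalence between SIRs and canonical amalgamation operators is used in \emph{both} directions (your negative arguments appeal to monotonicity of a hypothetical operator, which requires deriving the operator from the SIR axioms, not just the converse), and the negative cases you defer --- (2)(a)(iii), and the tree-like classification including the local SIRs for $m=\infty$, $n\in\{2,3,\infty\}$ --- each require an explicit four- or five-point configuration and an explicit operator; they are short but they are not automatic.
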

We remark that for structures with closures it makes sense to consider an
additional axiom as discussed by Evans, Ghadernezhad and Tent~\cite[Definition 2.2]{Evans2016}.
Due to the simple nature of closures considered in this paper this does not make a practical
difference in our results.

\subsection{Edge-labelled graphs and metric completions}
\label{sec:shortestpathsec}
Our arguments are based on an analysis of an algorithm to fill holes in incomplete
metric spaces presented in Section~\ref{sec:basic3}. This algorithm works
for a significant (namely the primitive) part of Cherlin's catalogue and leads to a strong
notion of ``canonical amalgamation''. When constraints on the amalgamation
classes are pushed to the extreme, new phenomena (such as antipodality or
bipartiteness) appear. We show that even in this case our algorithm is useful,
provided that we apply some additional techniques. First, we introduce the
necessary notation to speak about metric spaces with holes.
\begin{defn}
\label{ref:graphs}
An {\em edge-labelled graph} $\str{G}$ is a pair $(G,d)$ where $G$ is the {\em vertex
set} and $d$ is a partial function from $G^2$ to $\mathbb N$ such that $d(u,v)=0$ if and only if $u=v$, and $d(u,v)=d(v,u)$ whenever either number is defined. A pair of vertices $u,v$ on which $d(u,v)$ is defined is called an {\em edge} of $\str{G}$. We also call $d(u,v)$ the {\em length of the edge} $u,v$.
\end{defn}
The standard graph-theoretic notions of homomorphism, embedding, and isomorphism extend naturally to edge-labelled graphs.
Our subgraphs will always be induced (details are given in Section~\ref{sec:perliminaries}).
An edge-labelled graph can also be seen as a relational structure (as discussed
in Section~\ref{sec:reldistance}). We find it convenient
to use notation that resembles the standard notation of metric spaces.

We denote by $\mathcal G^\infty$ the class of all finite edge-labelled graphs and
by $\mathcal G^\delta$ the subclass of $\mathcal G^\infty$ of those graphs containing
no edge of length greater than $\delta$.

An (edge-labelled) graph $\str{G}$ is {\em complete} if every pair of vertices
forms an edge; a complete edge-labelled graph $\str{G}$ is called a {\em metric space} if the {\em triangle inequality} holds,
that is $d(u,w)\leq d(u,v)+d(v,w)$ for every $u,v,w\in G$.
An edge-labelled graph $\str{G}=(G,d)$ is {\em metric} if there exists a metric
space $\str{M}=(G,d')$ such that $d(u,v)=d'(u,v)$ for every edge $u,v$ of $\str{G}$.
Such a metric space $\str{M}$ is also called a {\em (strong) metric completion} of
$\str{G}$.

Given $\str{G}=(G,d)\in \mathcal G^\infty$ the {\em path distance $d^+(u,v)$ of
$u$ and $v$} is the minimum $$\ell=\sum_{1\leq i\leq n-1}d(u_i,u_{i+1})$$ taken over
all possible sequences of vertices for which $u_1=u,u_2,\dots u_n=v$ and
$d(u_i,u_{i+1})$ is defined for every $i\leq n-1$. If there is no such sequence
we put $\ell=\infty$. It is a well-known fact that a connected edge-labelled graph $\str{G}=(G,d)$
is metric if and only if $d(u,v)=d^+(u,v)$ for every edge of $\str{G}$. In this
case $(G,d^+)$ is a metric completion of $\str{G}$ which we refer to as the
{\em shortest path completion}.  This completion algorithm also leads
to an easy characterisation of metric graphs: $\str{G}$ is metric if
and only if it does not contain a {\em non-metric cycle}, by which we mean an edge-labelled graph
corresponding to a graph-theoretic cycle such that one distance in the cycle is greater than sum of
the remaining distances. See e.g.~\cite{Hubicka2016} for details.

Our algorithm is a generalisation of the shortest path
completion algorithm, tailored to preserve some other properties (e.g., forbidding long
cycles whereas the shortest path completion permits forbidding only short cycles).

\subsection{Extension property for partial automorphisms (EPPA)}
\label{subsec:EPPA}
A {\em partial automorphism} of the structure $\str{A}$
is an isomorphism $f\colon \str{B} \to \str{B}'$ where $\str{B}$ and $\str{B}'$ are substructures of $\str{A}$.  
We say that a class of finite structures $\K$  has the {\em extension property for
partial automorphisms}  ({\em EPPA}, sometimes called the {\em Hrushovski extension
property} or in the context of metric spaces the {\em extension property for partial isometries}) if for all $\str{A} \in \K$ there is $\str{B} \in \K$ such that
$\str{A}$ is a substructure of $\str{B}$ and 
every partial automorphism of $\str{A}$ extends to an automorphism of $\str{B}$.
We call $\str{B}$ with such a property an {\em EPPA-witness of $\str{A}$}

In addition to being a non-trivial and beautiful combinatorial property,
classes with EPPA have further interesting properties. For example, Kechris and
Rosendal~\cite{Kechris2007} have shown that the automorphism groups of their
\Fraisse{} limits are amenable.

In 1992 Hrushovski~\cite{hrushovski1992} showed that the class $\mathcal G$ of
all finite graphs has EPPA.  A simple combinatorial argument for Hrushovski's result
was given by Herwig and Lascar~\cite{herwig2000} along with a non-trivial
strengthening for certain, more restricted, classes of structures described by
forbidden homomorphisms. This result was independently used by
Solecki~\cite{solecki2005} and Vershik~\cite{vershik2008} to prove EPPA for the
class of all finite metric spaces with integer, rational or real distances. (Because EPPA is a property of finite structures, there is no difference between integer and rational distances. The techniques also naturally extend to real numbers. For our presentation we will consider integer distances only.)
These results were further strengthened by Rosendal~\cite{rosendal2011,rosendal2011b}.
Recently Conant developed this argument to generalised metric spaces~\cite{Conant2015}, where the distances are elements of a certain classes of distance monoid. Siniora and
Solecki~\cite{Siniora,Siniora2} introduced the stronger notion of \emph{coherent EPPA} (where the
extensions to automorphisms compose whenever the partial automorphisms do, see
Definition~\ref{defn:coherent}), and proved a coherent strengthening of the Herwig--Lascar theorem.\footnote{Since the submission of the paper, there has been a lot of development. For example, Hubička, Konečný, and Nešetřil gave a self-contained combinatorial proof of Solecki and Vershik's result~\cite{Hubicka2018metricEPPA}, and strengthened the Herwig--Lascar and the Siniora--Solecki results~\cite{Hubicka2018EPPA}.}

Hrushovski's result was a key ingredient for a paper by Hodges, Hodkinson, Lascar and Shelah~\cite{hodges1993b} which proved the small index property for the random graph. This line of research has since expanded and the concept of \emph{ample generics} (which follows from the combination of EPPA and the amalgamation property for
automorphisms, where the automorphism group of the amalgam is the same as the automorphism group
of the free amalgam) has been isolated. This is further outlined in Section~\ref{sec:ample}.

\subsection{Ramsey classes}
\label{subsec:ramsey}
The notion of Ramsey classes was isolated in the 1970s and, being a strong
combinatorial property, it has found numerous applications, for example in
topological dynamics~\cite{Kechris2005}.  It was independently proved by Ne\v set\v ril--R\"odl~\cite{Nevsetvril1976} and
Abramson--Harrington~\cite{Abramson1978} that the class of all finite linearly
ordered hypergraphs is a Ramsey class. Several new classes followed. 

For structures $\str{A},\str{B}$ denote the set
of all substructures of $\str{B}$ that are isomorphic to $\str{A}$ by ${\str{B}\choose \str{A}}$. 
\begin{defn}
A class $\mathcal C$ of structures is a \emph{Ramsey class} if for every two objects $\str{A}$ and
$\str{B}$ in $\mathcal C$ and for every positive integer $k$ there exists a
structure $\str{C}$ in $\mathcal C$ such that the following holds: For every
partition of ${\str{C}\choose \str{A}}$ into $k$ classes there exists a
$\widetilde{\str B} \in {\str{C}\choose \str{B}}$ such that
${\widetilde{\str{B}}\choose \str{A}}$ is contained in a single class of the partition. In short we then write
$$
\str{C} \longrightarrow (\str{B})^{\str{A}}_k.
$$
\end{defn}

We now briefly outline the results related to Ramsey classes of metric spaces, see~\cite{Nevsetvril1995,NVT14,Bodirsky2015,Hubicka2016,hubicka2025twenty} for further references.

It was observed by Ne\v set\v ril~\cite{Nevsetvril1989a} that under mild
assumptions every Ramsey class is an amalgamation class. The Ne\v set\v ril
classification programme of Ramsey classes~\cite{Nevsetril2005} asks which 
amalgamation classes provided by the classification programme of homogeneous
structures yield a Ramsey class. Amalgamation classes are often not Ramsey for
simple reasons, such as the lack of a linear order on the vertices of its structures which is known
to be present in every Ramsey class~(\cite{Kechris2005}, see e.g. \cite[Proposition 2.22]{Bodirsky2015}). For this reason we consider enriched classes,
where the language of an amalgamation class is expanded by additional relations. We refer to these classes
as {\em Ramsey expansions} (or {\em lifts}), see Section~\ref{sec:ramseyexpansion}.

In 2005 Ne\v set\v ril \cite{Nevsetvril2007} showed that the class of all
finite metric spaces is a Ramsey class when enriched by free linear ordering
of the vertices (see also~\cite{masulovic2016pre} for an alternative proof). This result was extended to some subclasses $\mathcal A_S$ of
finite metric spaces where all distances belong to a given set $S$ by Nguyen
Van Th{\'e}~\cite{The2010}.  Recently, Hubi\v cka and Ne\v set\v ril further
generalised this result to $\mathcal A_S$ for all feasible choices of
$S$~\cite{Hubicka2016}, earlier identified by Sauer~\cite{Sauer2013},
as well as to the class of metric spaces omitting cycles of odd perimeter.
Ramsey property of one additional case was shown by Soki\'c~\cite{Sokic2017}. Hubi\v cka, Kone\v cn\'y and Ne\v set\v ril study and further generalise Conant's generalised monoid metric spaces and prove both the Ramsey property and EPPA for a much broader family of them~\cite{Hubicka2017sauerconnant, Hubicka2017sauer,Konecny2018b}. 

\subsection{Stationary independence relations}
A stationary independence relation (Definition~\ref{SIR}) is a 
concept recently developed by Tent and
Ziegler~\cite{Tent2013} to show simplicity of the automorphism group of the
Urysohn space. M\"uller \cite{Muller2016} showed how these independence relations give rise to a generalisation of the Kat\v etov construction (of the Urysohn space), employing them to prove universality of automorphism groups of structures admitting stationary independence relations.
Examples of structures with a stationary independence relation include free
amalgamation classes, metric spaces~\cite{Tent2013} and the Hrushovski predimension construction (given by Evans, Ghadernezhad and Tent~\cite{Evans2016}).

Given a structure $\str{M}$ and finite substructures
$\str{A}$ and $\str{B}$, the substructure generated by their union is denoted by
$\struc{\str{A}\str{B}}$.
A stationary independence relation is a ternary relation on finite substructures
of a given homogeneous structure $\str{M}$. The substructures $\str{A}, \str{B}, \str{C}$ are
in the relation, roughly speaking, when $\str{M}$ induces a ``canonical amalgamation'' of
$\struc{\str{A}\str{C}}$ and $\struc{\str{B}\str{C}}$ over $\str{C}$. In this case
we write $\str{A}\ind_{\str{C}}\str{B}$.

Stationary independence relations are axiomatised as follows:
\begin{defn}[(Local) Stationary Independence Relation]
\label{SIR}
Assume $\str{M}$ to be a homogeneous structure.  A ternary relation $\ind$ on
the finite substructures of $\str{M}$ is called a \emph{stationary independence
relation} (\emph{SIR}) if the following conditions are satisfied:
\begin{enumerate}[label=SIR\arabic*]
 \item\label{invariance} \emph{(Invariance)}. The independence of finitely generated substructures in $\str{M}$ only depends on their type. In particular, for 
any
automorphism $f$ of $\str{M}$, we have $\str{A}\ind_{\str{C}}\str{B}$ if and only if
$f(\str{A})\ind_{f(\str{C})}f(\str{B})$.
 \item\label{symmetry}\emph{(Symmetry)}. If $\str{A}\ind_\str{C}\str{B}$, then $\str{B}\ind_\str{C}\str{A}$.
\item\label{monotonicity} \emph{(Monotonicity)}. If $\str{A}\ind_{\str{C}}\struc{\str{B}\str{D}}$, then
$\str{A}\ind_{\str{C}}\str{B}$ and $\str{A}\ind_{\struc{\str{B}\str{C}}}\str{D}$.
\item\emph{(Existence)}. \label{existence} For any $\str{A},\str{B}$ and $\str{C}$ in $\str{M}$, there
is some
$\str{A}'\models \tp(\str{A}/\str{C})$ with $\str{A}'\ind_{\str{C}}\str{B}$. 
\item\emph{(Stationarity)}. \label{stationarity} If $\str{A}$ and $\str{A}'$ have the same
type over $\str{C}$ and are both independent over $\str{C}$ from some set $\str{B}$, then they
also have the same type over $\struc{\str{B}\str{C}}$. 
\end{enumerate}
If the relation $\str{A}\ind_{\str{C}}\str{B}$ is only defined for nonempty $\str{C}$, we
call $\ind$ a \emph{local} stationary independence relation.
\end{defn}
Here $\tp(\str{A}/\str{C})$ denotes the type of $\str{A}$ over $\str{C}$, see Definition~\ref{defn:type}.

\subsection{Obstacles to completion}

The list of subclasses of metric spaces with Ramsey expansions corresponds closely to
the list of classes with EPPA. The similarity of these results is not a coincidence.  All of the
proofs proceed from a given metric space and, by a non-trivial construction, build
an edge-labelled graph with either the desired Ramsey property or EPPA. However, these edge-labelled graphs might not be complete. Using some detailed information about when and how they can be completed to the given class of metric spaces $\mathcal A$ it is then
possible to find an actual witness for the Ramsey property or EPPA in $\mathcal A$.
The actual ``amalgamation engines'' have been isolated (see Theorems~\ref{thm:localfini} and~\ref{thm:herwiglascar}) and
are based on a characterisation of each class by a set of obstacles in the sense of the definition below.
Given a set $\mathcal O$ of edge-labelled graphs, let $\Forb(\mathcal O)$ denote the class
of all finite edge-labelled graphs $\str{G}$ such that there is no $\str{O}\in \mathcal O$ with a homomorphism
$\str{O}\to \str{G}$.
\begin{defn}
Given a class of metric spaces $\mathcal A$, we say that $\mathcal O$ is the {\em
set of obstacles of $\mathcal A$} if $\mathcal A\subseteq \Forb(\mathcal O)$ and
moreover every $\str{G}\in \Forb(\mathcal O)$ has a metric completion into $\mathcal A$.
\end{defn}

\subsection{Outline of the paper}
Cherlin's catalogue, given in Section~\ref{sec:catalogue}, provides a rich spectrum of structures. We follow the
ca\-ta\-logue in an order corresponding to the proof techniques (or main
properties of the amalgamation classes), rather than strictly following the
order of the catalogue as presented by Cherlin~\cite{Cherlin2013}. Several
classes are refined and individual special cases are considered separately.

An essential part of the characterisation is the following description of triangle constraints by means of
five numerical parameters.
A {\em triangle} is a triple of distinct vertices $u,v,w\in M$ and its {\em perimeter} is $d(u,v)+d(v,w)+d(w,u)$.
Classes described by constraints on triangles form an essential part of the catalogue; 
we call them {\em 3-constrained classes}. These classes can be described by means
of five numerical parameters:

\begin{defn}[Triangle constraints]
\label{defn:numerical}
Given integers $\delta$, $K_1$, $K_2$, $C_0$ and $C_1$ we consider the class
$\mathcal A^\delta_{K_1,K_2,C_0,C_1}$ of all finite metric spaces $\str{M}=(M,d)$ with integer
distances such that $d(u,v)\leq \delta$ (we call $\delta$ the {\em diameter} of $\mathcal A^\delta_{K_1,K_2,C_0,C_1}$) for every $u,v\in M$
and for every triangle $u,v,w\in M$ with perimeter $p=d(u,v)+d(u,w)+d(v,w)$, the following are true: ($m=\min\{d(u,v),\allowbreak d(u,w),\allowbreak d(v,w)\}$ is the length of the shortest edge of $u,v,w$)
\begin{itemize}
 \setlength\itemsep{0em}
 \item if $p$ is odd then $2K_1 < p < 2K_2 + 2m$,
 \item if $p$ is odd then $p<C_1$, and
 \item if $p$ is even then $p<C_0$.
\end{itemize}
\end{defn}
Intuitively, the parameter $K_1$ forbids all odd cycles shorter than $2K_1+1$, while $K_2$ ensures that the difference in length between even- and odd-distance paths connecting any
pair of vertices is less than $2K_2+1$. The parameters $C_0$ and $C_1$
forbid induced long even and odd cycles respectively. Not every combination of
numerical parameters makes sense and leads to an amalgamation class. Those that do are
characterised by Cherlin's Admissibility Theorem~\ref{thm:admissible}.

We consider the following classes:

\begin{description}
 \item[Spaces of diameter 2:]
 Classes with diameter 2 are not discussed at all in this paper, because all the relevant results have already been established; these are just the homogeneous graphs from Lachlan and Woodrow's catalogue~\cite{Lachlan1980}.

 The Ramsey properties of homogeneous graphs were first studied by Ne\v set\v ril \cite{Nevsetvril1989a}. See also~\cite{Jasinski2013},
 which states the relevant results in more modern language and also shows the ordering properties in all classes from the Lachlan--Woodrow catalogue.

 The EPPA of the class of all finite graphs was shown by Hrushovski~\cite{hrushovski1992}, EPPA of the class of all finite graphs omitting a given complete graph $\str{K}_n$ by Herwig~\cite{Herwig1995} and Hodkinson and Otto~\cite{hodkinson2003},  and
 the remaining cases are particularly easy. Stationary independence relation for all those classes is an easy exercise.
 \item[Primitive 3-constrained spaces:]
 A metrically homogeneous graph $\str{G}$ is {\em primitive} if there are no non-trivial $\Aut(\str{G})$-invariant equivalence relations on its vertices. 

 In Section~\ref{sec:basic3} we consider the richest regular family of amalgamation classes in the catalogue.
 This case contains the class of all finite metric spaces of finite diameter $\delta$ and, more generally, most classes $\mathcal A^\delta_{K_1,K_2,C_0,C_1}$, $\delta<\infty$, with parameters satisfying Case~\ref{II} or~\ref{III} of Theorem~\ref{thm:admissible}. 

These classes are closed under strong amalgamation. We describe a generalised completion algorithm which allows us to show the coherent EPPA and present a Ramsey expansion.
 \item[Bipartite 3-constrained spaces:]
In Section~\ref{sec:bipartite} we consider the 3-constrained spaces $\mathcal A^\delta_{\infty,0,C_0,2\delta+1}$, $\delta<\infty$ that contain no odd triangles, but triangles with edge length $\delta,\delta,2$ are allowed (and thus $C_0 > 2\delta+2$).
 This corresponds to the non-antipodal Case~\ref{I} and is similar to primitive 3-constrained spaces where the bipartitions form definable equivalence relations and thus introduce imaginary elements. The existence of imaginaries has some consequences on the completion algorithm, Ramsey expansions, and coherent EPPA. 
 \item[Spaces with Henson constraints:] As discussed in Section~\ref{sec:henson}, many 3-con\-strained classes can be further restricted by Henson constraints (spaces with distances $1$ and $\delta$ only). These constraints cannot be represented in the form of forbidden triangles. We show that they have little effect on our constructions because the completion algorithm never introduces new Henson constrains in any class where Henson constrains are admissible (Theorem~\ref{thm:admissible_henson}).
 \item[Antipodal spaces:] Here we consider 3-constrained classes where edges of length $\delta$ form a matching. Every amalgamation class closed under forming antipodal companions (see Definition~\ref{defn:antipodal}) can be extended to an antipodal metric space and these are special cases of our constructions because the matching implies non-trivial algebraic closure  in the~\Fraisse{} limits (in other words the class is not closed for strong amalgamation).  We show how to carry the Ramsey and EPPA results from the underlying class to its antipodal variant in Section~\ref{sec:antipodal} and also discuss antipodal Henson constraints.

 Analysis of antipodal spaces is surprisingly subtle and breaks down to 4 different sub-cases which needs to be considered separately. For two of the cases we can not show optimality of our EPPA argument.
 \item[Classes with infinite diameter:]
So far we have only discussed classes of finite diameter. However, we can derive EPPA and the Ramsey property in the remaining infinite diameter cases from what we already know from the classes with finite diameter, as we will show in Section~\ref{sec:infinite}.
 In Section~\ref{sec:tree-like}, we deal with ages of tree-like graphs $T_{m,n}$. These graphs generalise $\mathbb Z$ seen as a metric space, and we show that their basic properties are similar---there is no precompact Ramsey lift and no EPPA. Both conclusions follow from the fact that the algebraic closure of any vertex is infinite.
\end{description}
\section{Preliminaries}
\label{sec:perliminaries}
We first review the standard model-theoretic notions of relational structures and amalgamation classes (see, for example \cite{Hodges1993}).
Next, we introduce the relevant ``amalgamation engines'' used to build Ramsey objects and EPPA-witnesses throughout this paper.

A language $L$ is a set of relational symbols $\rel{}{}\in L$, each associated with a natural number $\arity{}$ called \emph{arity}.
A \emph{(relational) $L$-structure} $\str{A}$ is a pair $(A,(\rel{A}{};\rel{}{}\in L))$ where $\rel{A}{}\subseteq A^{\arity{}}$ (i.e. $\rel{A}{}$ is a $\arity{}$-ary relation on $A$). The set $A$ is called the \emph{vertex set} or the \emph{domain} of $\str{A}$ and elements of $A$ are \emph{vertices}. The language is usually fixed and understood from the context (and it is in most cases denoted by $L$).  If $A$ is a finite set, we call $\str{A}$ a \emph{finite structure}. We consider only structures with countably many vertices.
The class of all (countable) relational $L$-structures will be denoted by $\Rel(L)$.

A \emph{homomorphism} $f\colon\str{A}\to \str{B}=(B,(\rel{B}{};\rel{}{}\in L))$ is a mapping $f\colon A\to B$ satisfying for every $\rel{}{}\in L$ the implication $(x_1,x_2,\ldots, x_{\arity{}})\in \rel{A}{}\implies (f(x_1),f(x_2),\ldots,f(x_{\arity{}}))\in \rel{B}{}$. (For a subset $A'\subseteq A$ we denote by $f(A')$ the set $\{f(x) : x\in A'\}$ and by $f(\str{A})$ the homomorphic image of a structure.) If $f$ is injective, then $f$ is called a \emph{monomorphism}. A monomorphism is called \emph{embedding} if the above implication is an equivalence, i.e. if for every $\rel{}{}\in L$ we have $(x_1,x_2,\ldots, x_{\arity{}})\in \rel{A}{}\iff (f(x_1),f(x_2),\ldots,f(x_{\arity{}}))\in \rel{B}{}$.  If $f$ is an embedding which is an inclusion then $\str{A}$ is a \emph{substructure} (or \emph{subobject}) of $\str{B}$. By the \emph{age of a structure $\str{A}$}, or $\Age(\str{A})$ we denote the class of all finite substructures of $\str{A}$. For an embedding $f\colon\str{A}\to \str{B}$ we say that $\str{A}$ is \emph{isomorphic} to $f(\str{A})$ and $f(\str{A})$ is also called a \emph{copy} of $\str{A}$ in $\str{B}$. We define $\str{B}\choose \str{A}$ as the set of all copies of $\str{A}$ in $\str{B}$.

\begin{figure}
\centering
\includegraphics{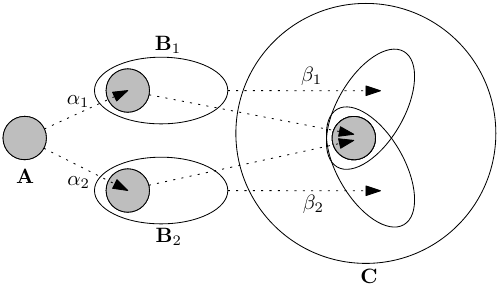}
\caption{An amalgamation of $\str{B}_1$ and $\str{B}_2$ over $\str{A}$.}
\label{amalgamfig}
\end{figure}
Let $\str{A}$, $\str{B}_1$ and $\str{B}_2$ be relational structures and $\alpha_1$ an embedding of $\str{A}$
into $\str{B}_1$, $\alpha_2$ an embedding of $\str{A}$ into $\str{B}_2$, then
every structure $\str{C}$
 with embeddings $\beta_1\colon\str{B}_1 \to \str{C}$ and
$\beta_2\colon\str{B}_2\to\str{C}$ such that $\beta_1\circ\alpha_1 =
\beta_2\circ\alpha_2$ is called an \emph{amalgamation} of $\str{B}_1$ and $\str{B}_2$ over $\str{A}$ with respect to $\alpha_1$ and $\alpha_2$. See Figure~\ref{amalgamfig}.
We will call $\str{C}$ simply an \emph{amalgamation} of $\str{B}_1$ and $\str{B}_2$ over $\str{A}$
(as in most cases $\alpha_1$ and $\alpha_2$ can be chosen to be inclusion embeddings).

We say that an amalgamation is \emph{strong} when $\beta_1(x_1)=\beta_2(x_2)$ if and
only if $x_1\in \alpha_1(A)$ and $x_2\in \alpha_2(A)$.  Less formally, a strong
amalgamation glues together $\str{B}_1$ and $\str{B}_2$ with an overlap no
greater than the copy of $\str{A}$ itself.  A strong amalgamation is \emph{free} if there are no tuples in any relations of $\str{C}$ spanning vertices from both
$\beta_1(B_1\setminus \alpha_1(A))$ and $\beta_2(B_2\setminus \alpha_2(A))$.

An \emph{amalgamation class} is a class $\K$ of finite structures satisfying the following three conditions:
\begin{description}
\item[Hereditary property:] For every $\str{A}\in \K$ and a substructure $\str{B}$ of $\str{A}$ we have $\str{B}\in \K$;
\item[Joint embedding property:] For every $\str{A}, \str{B}\in \K$ there exists $\str{C}\in \K$ such that $\str{C}$ contains both $\str{A}$ and $\str{B}$ as substructures;
\item[Amalgamation property:]
For $\str{A},\str{B}_1,\str{B}_2\in \K$ and $\alpha_1$ an embedding of $\str{A}$ into $\str{B}_1$, $\alpha_2$ an embedding of $\str{A}$ into $\str{B}_2$, there is a $\str{C}\in \K$ which is an amalgamation of $\str{B}_1$ and $\str{B}_2$ over $\str{A}$ with respect to $\alpha_1$ and $\alpha_2$.
\end{description}

\begin{defn}[Type]
\label{defn:type}
 Let $L(\str{X})$ denote the language $L\cup\{c_x:x\in X\}$, where each $c_x$ is a constant symbol interpreted as the vertex $x$. By a {\em type over $\str{X}$}, we mean a maximal satisfiable set of $L(\str{X})$ formulas $p(x)$ with
free variables $x$. The type $\tp(a/\str{X})$ of
a tuple $a$ over $\str{X}$ is the set of all $L(\str{X})$ formulas which are satisfied by
$a$. 
\end{defn}

\subsection{Edge-labelled graphs as relational structures}
\label{sec:reldistance}

For notational convenience we introduced edge-labelled graphs (Definition~\ref{ref:graphs}) and we will
view them, equivalently, also as relational structures in the language $L$ consisting of binary relations
$\rel{}{1},\rel{}{2},\ldots$ which denote the distances.  The notions of homomorphisms, embeddings and substructures
in edge-labelled graphs correspond to the same notions for relational structures.

Given $\str{G}=(G,d)\in \mathcal G^\infty$ and $\str{G}'=(G',d')\in \mathcal G^\infty$ a {\em homomorphism} $G\to G'$ is a function $f\colon G\to G'$
such that $d(x,y)=d'(f(x),f(y))$ whenever $d(x,y)$ is defined.
A homomorphism $f$ is an {\em embedding} (or {\em isometry}) if $f$ is one-to-one and $d(x,y)=d'(f(x),f(y))$ whenever either side of the equality
makes sense. A surjective embedding is an {\em isomorphism} and and {\em automorphism} is an isomorphism $\str{G}\to \str{G}$. A graph $\str{G}$ is an {\em (induced) subgraph} of $\str{H}$ if the identity mapping is an embedding $\str{G}\to \str{H}$.

\subsection{Ramsey expansions}
\label{sec:ramseyexpansion}

Let $L^+$ be a language containing the language $L$. By this we mean $L\subseteq L^+$ and $L^+$ assigns the same arity as $L$ to all $R\in L$.
Under these conditions, every structure $\str{X}=(X,(\rel{X}{}; \rel{}{}\in L^+))\in \Rel(L^+)$ may be viewed as a structure $\str{A}=(X,(\rel{X}{}; \rel{}{}\in L))\in \Rel(L)$ with some additional relations $\rel{X}{}$ for $\rel{}{}\in L^+\setminus L$.
 We call $\str{X}$ a \emph{expansion} (or \emph{lift}) of $\str{A}$ and $\str{A}$ is called the \emph{reduct} (or \emph{shadow}) of $\str{X}$. In this sense the class $\Rel(L^+)$ is the class of all expansions of $\Rel(L)$, or, conversely, $\Rel(L)$ is the class of all shadows of $\Rel(L^+)$.

The question about existence of an Ramsey expansion has been put into more precise setting by means
of the following two definitions:
 
\begin{defn}[Precompact expansion~\cite{NVT14}]
Let $\mathcal K^+$ be a class of lifts to $L^+$ of $L$-structures in $\K$.
We say that $\mathcal K^+$ is a \emph{precompact expansion} (or \emph{lift}) of $\mathcal K$ if for
every structure $\str{A} \in \mathcal K$ there are only finitely many
structures $\str{A}^+ \in \mathcal K^+$ such that $\str{A}^+$ is a lift of
$\str{A}$ (i.e. the shadow of $\str{A}^+$ obtained by forgetting the relations in $L^+\setminus L$ is isomorphic to $\str{A}$). 
\end{defn}

\begin{defn}[Expansion property~\cite{NVT14}]
\label{defn:ordering}
Let a class $\mathcal K^+$ be a lift of $\K$. For $\str{A},\str{B}\in \K$ we say
that $\mathcal K^+$ has the \emph{expansion property} for $\str{A},\str{B}$ if for every lift
$\str{B}^+\in \mathcal K^+$ of $\str{B}$ there is an embedding of every lift $\str{A}^+\in \mathcal K^+$
of $\str{A}$ into $\str{B}^+$.

$\mathcal K^+$ has the \emph{expansion property} with respect to $\K$ if for every $\str{A}\in \K$
there is $\str{B}\in \K$ such that $\mathcal K^+$ has the expansion property for $\str{A},\str{B}$.
\end{defn}
It can be shown (see~\cite{NVT14}) that for every homogeneous class there is up
to bi-definability at most one precompact Ramsey expansion with expansion
property.

\medskip

Our main tool for giving Ramsey property will be Theorem 2.1 of~\cite{Hubicka2016} which we introduce now
after some additional definitions.
 An $L$-structure $\str{A}$ is an \emph{irreducible structure} if every pair of vertices from $A$ is in some relation in $L$ (the relation need not be binary: for example, a complete $k$-hypergraph is irreducible).
\begin{defn}[Homomorphism-embedding~\cite{Hubicka2016}]
 A homomorphism
$f\colon\str{A}\to\str{B}$ is a \emph{homo\-morphism-embedding}  if for every irreducible substructure $\str{C}$ of $\str{A}$, the restriction of $f$ to $C$ is an embedding into $\str{B}$.
\end{defn}
While for (undirected) graphs the homomorphism and homomorphism-em\-bed\-ding coincide, for general relational structures they may differ.
\begin{defn}[Completion~\cite{Hubicka2016}]
\label{defn:completion}
Let $\str{C}$ be a structure. An irreducible structure $\str{C}'$ is a \emph{(strong) completion}
of $\str{C}$ if there exists a one-to-one homomorphism-embedding $\str{C}\to\str{C}'$.

Of particular interest is the question of whether a completion of a given structure exists, such that the completed structure lies in a given class $\mathcal K$. If it does, we speak of its \emph{$\mathcal K$-completion}.
\end{defn}
Note that in \cite{Hubicka2016} the definition of completion is weaker than the definition of strong completion. In this paper all completions will be implicitly strong.
\begin{defn}[Locally finite subclass~\cite{Hubicka2016}]
\label{def:localfinite}
Let $\mathcal R$ be a class of finite irreducible structures and $\mathcal K$ a subclass of $\mathcal R$. We say
that the class $\mathcal K$ is a \emph{locally finite subclass of $\mathcal R$} if for every $\str{C}_0 \in \mathcal R$ there is a finite integer $n = n(\str {C}_0)$ such that 
every structure $\str C$ has a strong $\K$-completion (i.e. there exists $\str{C}' \in \K$ that is a strong completion of $\str{C}$), provided that the following conditions are satisfied:
\begin{enumerate}
\item there is a homomorphism-embedding from $\str{C}$ to $\str{C}_0$ (in other words, $\str{C}_0$ is a, not necessarily strong, $\mathcal R$-completion of $\str{C}$), and,
\item every substructure of $\str{C}$ with at most $n$ vertices has a strong $\K$-comple\-tion.
\end{enumerate}
\end{defn}

\begin{thm}[Hubi\v cka--Ne\v set\v ril~\cite{Hubicka2016}]
\label{thm:localfini}
Let $\mathcal R$ be a Ramsey class of irreducible finite structures and let $\K$ be a hereditary
locally finite subclass of $\mathcal R$ with strong amalgamation.
Then $\K$ is Ramsey.

Explicitly:
For every pair of struc\-tures $\str{A}, \str{B}$ in  $\K$ there exists
a structure $\str{C} \in \K$  such that
$$
\str{C} \longrightarrow (\str{B})^{\str{A}}_2.
$$
\end{thm}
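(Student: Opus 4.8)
My proof deduces the Ramsey property of $\K$ from that of $\mathcal R$ by producing a structure that already carries the partition property and then \emph{repairing} it into $\K$ via a strong completion (Definition~\ref{defn:completion}). The point that makes repair harmless is that $\str{A}$ and $\str{B}$, being complete metric spaces, are irreducible, so a one-to-one homomorphism-embedding $e\colon\str{P}\to\str{C}$ preserves their copies exactly: $e$ restricts to an embedding on every copy of $\str{A}$ or $\str{B}$ in $\str{P}$, and since such a copy is already complete, $e$ introduces no new copies of $\str{A}$ inside it. Hence it suffices to build a (possibly incomplete) structure $\str{P}$ with $\str{P}\longrightarrow(\str{B})^{\str{A}}_2$ that admits a $\K$-completion $\str{C}\in\K$. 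Indeed, given a $2$-colouring of ${\str{C}\choose\str{A}}$, I pull it back along $e$ to a colouring of ${\str{P}\choose\str{A}}$, extract a copy $\widetilde{\str{B}}$ of $\str{B}$ in $\str{P}$ whose $\str{A}$-copies are monochromatic, and transport it by $e$; completeness of $\str{B}$ guarantees that $e(\widetilde{\str{B}})$ has exactly the same copies of $\str{A}$, so it is monochromatic in $\str{C}$ as well.

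The naive realisation of this plan---take an $\mathcal R$-Ramsey witness $\str{C}_0\in\mathcal R$ with $\str{C}_0\longrightarrow(\str{B})^{\str{A}}_2$ and simply complete it---fails, because an arbitrary member of $\mathcal R$ need not have any $\K$-completion; it may contain small obstacles (non-metric triangles, forbidden cycles) that no completion can remove. Local finiteness (Definition~\ref{def:localfinite}) is exactly the device that circumvents this, but only for structures whose bounded substructures are already $\K$-completable. So the real task is to construct $\str{P}$ that (a) admits a homomorphism-embedding into some $\str{C}_0\in\mathcal R$, thereby fixing the locality threshold $n=n(\str{C}_0)$, and (b) is assembled so that every substructure on at most $n$ vertices has a $\K$-completion. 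Conditions (a) and (b) are precisely the hypotheses of Definition~\ref{def:localfinite}, which then supplies the completion $\str{C}\in\K$, while the partition property of $\str{P}$ supplies the arrow.

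To build such a $\str{P}$ I would run the Ne\v set\v ril--R\"odl partite construction with picture $\str{C}_0$. One works with partite structures whose parts are indexed by the vertices of $\str{B}$ and in which the relevant copies of $\str{B}$ are transversal (one vertex per part); the construction processes the copies of $\str{A}$ in $\str{C}_0$ one at a time, at each step taking many copies of the current picture and amalgamating them over the fibre above a copy of $\str{A}$. The base amalgamation is the Partite Lemma, whose one-dimensional case is delivered by the Ramsey property of $\mathcal R$. Two features are arranged throughout. First, the projection of the final picture $\str{P}$ onto $\str{C}_0$ is a homomorphism-embedding: transversal irreducible substructures of $\str{P}$ map injectively to irreducible substructures of $\str{C}_0$ with matching relations, giving (a). Second, each amalgamation is performed inside $\K$ using its strong amalgamation property, gluing copies of $\str{B}\in\K$ over a shared fibre without identifying any further vertices; this keeps the pieces genuine $\K$-amalgams, so that---for a suitable number of rounds---every window of at most $n$ vertices of $\str{P}$ is a substructure of such a $\K$-amalgam and hence $\K$-completable, giving (b).

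The main obstacle is the bookkeeping that couples (a) and (b): the threshold $n=n(\str{C}_0)$ is pinned down by the picture chosen at the very start, so the partite construction must be run with enough rounds, and with fibres glued from $\K$ by strong amalgamation, that all $\le n$-vertex substructures stay $\K$-completable \emph{while} the projection to $\str{C}_0$ remains a homomorphism-embedding after every step and the large-scale obstacles (which prevent $\str{P}$ itself from lying in $\K$) are confined to scales above $n$. Verifying that a single partite amalgamation step simultaneously preserves the projection, preserves local $\K$-completability, and advances the partition property for one family of $\str{A}$-copies is the delicate part. Once this is in place, Definition~\ref{def:localfinite} completes $\str{P}$ to $\str{C}\in\K$, and the reduction of the first paragraph transfers $\str{P}\longrightarrow(\str{B})^{\str{A}}_2$ to $\str{C}\longrightarrow(\str{B})^{\str{A}}_2$, as required.
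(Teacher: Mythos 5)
The paper does not prove this theorem: it is quoted verbatim from Hubi\v cka--Ne\v set\v ril~\cite{Hubicka2016} and used as a black box (an ``amalgamation engine''), so there is no in-paper proof to compare against. Judged against the proof in the cited source, your outline is the right one and matches it in structure: the reduction in your first paragraph (pull the colouring back along the one-to-one homomorphism-embedding $e\colon\str{P}\to\str{C}$, find a monochromatic $\widetilde{\str{B}}$ in $\str{P}$, and use irreducibility of $\str{B}$ to see that $e(\widetilde{\str{B}})$ acquires no new copies of $\str{A}$) is exactly how the completion step is made harmless there, and the two conditions (a), (b) you isolate are precisely the hypotheses of Definition~\ref{def:localfinite} that the construction must deliver. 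Note only that irreducibility of $\str{A},\str{B}$ comes from $\K\subseteq\mathcal R$ being a class of irreducible structures, not from their being ``complete metric spaces''---the theorem is about general relational structures.

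That said, what you have is a correct proof \emph{plan}, not a proof: essentially all of the difficulty of the theorem is concentrated in the third paragraph, which you only gesture at. The Partite Lemma, the iterated partite construction, and above all the bookkeeping that every $\le n$-vertex substructure of the final picture $\str{P}$ is $\K$-completable while the projection to $\str{C}_0$ remains a homomorphism-embedding, occupy the bulk of~\cite{Hubicka2016} and are not routine. One point in your sketch is stated too loosely: the amalgamation steps of the partite construction are free amalgamations of partite structures and their results are generally \emph{not} in $\K$ (if they were, no completion would be needed); the role of strong amalgamation is rather to guarantee that small windows of the glued object still admit $\K$-completions and that the eventual completion need not identify vertices. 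Making that precise is exactly the delicate verification you defer, so as written the argument has a genuine gap at its technical core, even though the architecture is sound and agrees with the cited proof.
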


\subsection{Coherent EPPA}
The following is a strengthening of the Herwig--Lascar Theorem~\cite[Theorem 2]{herwig2000} for coherent EPPA which will be our main tool to prove
EPPA in this paper.

\begin{defn}[Coherent maps~\cite{solecki2009,Siniora}]
Let $X$ be a set and $\mathcal P$ be a family of partial bijections 
 between subsets
of $X$. A triple $(f, g, h)$ from $\mathcal P$ is called a {\em coherent triple} if $$\dom(f) = \dom(h), \range(f ) = \dom(g), \range(g) = \range(h)$$ and $$h = g \circ f.$$

Let $X$ and $Y$ be sets, and $\mathcal P$ and $\mathcal Q$ be families of partial bijections between subsets
of $X$ and between subsets of $Y$, respectively. A function $\varphi\colon \mathcal P \to \mathcal Q$ is said to be a
{\em coherent map} if for each coherent triple $(f, g, h)$ from $\mathcal P$, its image $\varphi(f), \varphi(g), \varphi(h)$ in $\mathcal Q$ is coherent.
\end{defn}
\begin{defn}[Coherent EPPA~\cite{solecki2009,Siniora}]
\label{defn:coherent}
A class $\K$ of finite $L$-structures is said to have {\em coherent EPPA} if $\K$ has EPPA and moreover the extension of partial automorphisms
is coherent. That is, for every $\str{A} \in \K$, there exists $\str{B} \in \K$ such that $A\subseteq B$ and every
partial automorphism $f$ of $\str{A}$ extends to some $\hat{f} \in \Aut(\str{B})$ with  the property that the map $\varphi$ from the partial automorphisms of $\str{A}$ to the automorphism of $\str{B}$ given by $\varphi(f) = \hat{f}$ is coherent.
\end{defn}
\begin{thm}[Solecki--Siniora~\cite{solecki2009,Siniora}]
\label{thm:herwiglascar}
Let $\mathcal O$ be a finite family of structures, $\str{A}\in \Forb(\mathcal O)$, and $P$ be a set of partial isomorphisms of $A$.  If there exists a structure $\str{M}$ containing $\str{A}$ such that each element of $P$ extends to an automorphism of $\str{M}$ and moreover there is no $\str{O}\in \mathcal O$ with a homomorphism $\str{O}\to\str{M}$, then there exists a finite structure $\str{B}\in \Forb(\mathcal O)$ and $\phi\colon P \to \Aut(\str{B})$ such that
\begin{enumerate}
\item $\phi(p)$ is an extension of $p$, and
\item $\phi$ is coherent.
\end{enumerate}
\end{thm}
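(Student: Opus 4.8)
The plan is to route the whole construction through a single free group, so that coherence is automatic, and to use the profinite topology on free groups (via the Ribes--Zalesskii theorem) to collapse an infinite ``generic'' extension down to a finite one while preserving membership in $\Forb(\mathcal O)$. Write $A=\{a_1,\dots,a_m\}$ and $P=\{p_1,\dots,p_n\}$, and let $F$ be the free group on generators $x_1,\dots,x_n$, one per partial isomorphism. The hypothesis supplies a structure $\str{M}\supseteq\str{A}$ together with automorphisms $\widehat{p_i}\in\Aut(\str{M})$ extending each $p_i$; this is exactly the data of a group homomorphism $\rho\colon F\to\Aut(\str{M})$ with $\rho(x_i)=\widehat{p_i}$. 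The target $\str{B}$ will be produced as a quotient of a free ``unfolding'' of $\str{A}$ by a normal subgroup of finite index in $F$, chosen with care.

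First I would build a pre-structure $\str{T}$ carrying a genuine $F$-action. Its vertices are the equivalence classes of pairs $(w,a)\in F\times A$ under the relation generated by $(w,a)\sim(wx_i^{\pm1},p_i^{\mp1}(a))$ whenever $p_i^{\mp1}(a)$ is defined, with $F$ acting by left multiplication on the first coordinate (the relation is equivariant for this action, so it descends). The relations of $\str{T}$ are the $F$-translates of the relations of $\str{A}$, arranged so that $a\mapsto[(1,a)]$ embeds $\str{A}$ and each $x_i$ acts as an automorphism extending $p_i$. The map $[(w,a)]\mapsto\rho(w)(a)\in M$ is a well-defined homomorphism $\str{T}\to\str{M}$ (here one uses that $\widehat{p_i}$ agrees with $p_i$ on $A$); composing any putative $\str{O}\to\str{T}$ with it would yield $\str{O}\to\str{M}$, so since $\str{M}\in\Forb(\mathcal O)$ we also get $\str{T}\in\Forb(\mathcal O)$.

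Next I would fold $\str{T}$ to a finite structure by a normal subgroup $N$ of finite index in $F$, setting $\str{B}=\str{T}/N$ with the residual action of $F/N$. Two things must survive: each $p_i$ must still extend (automatic once $N$ is chosen fine enough that the copy of $\str{A}$ embeds into $\str{B}$), and $\str{B}$ must remain in $\Forb(\mathcal O)$. This last point is the main obstacle, since folding can glue far-apart vertices of $\str{T}$ and thereby create a homomorphic image of some $\str{O}\in\mathcal O$ that was absent in $\str{T}$. Such a spurious image corresponds to finitely many elements of $F$ being forced into $N$; because $\mathcal O$ is finite and each $\str{O}$ finite, there are only finitely many such configurations up to a uniform size bound, and each ``bad'' element lies outside a prescribed product of finitely generated subgroups of $F$ (stabilizers of the vertices and edges involved). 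The Ribes--Zalesskii theorem guarantees that every such product is closed in the profinite topology on $F$, hence that a single finite-index normal $N$ can be chosen avoiding all the finitely many bad elements at once; with this $N$, no $\str{O}$ maps into $\str{B}$, so $\str{B}\in\Forb(\mathcal O)$.

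Finally, define $\phi\colon P\to\Aut(\str{B})$ by letting $\phi(p_i)$ be the action of $x_iN$ on $\str{B}$. Condition (1), that $\phi(p_i)$ extends $p_i$, holds by construction. Coherence (condition (2)) is then immediate and is precisely the payoff of building everything from $F$: $\phi$ is the restriction to generators of the group homomorphism $F/N\to\Aut(\str{B})$, so whenever $(f,g,h)$ is a coherent triple in $P$ with $h=g\circ f$, the corresponding generators satisfy the same relation in $F/N$ and their images compose correctly in $\Aut(\str{B})$. I expect the real work to be concentrated entirely in the Ribes--Zalesskii step: identifying exactly which products of finitely generated subgroups must be separated, and verifying that separating them kills every homomorphism from $\mathcal O$; the remaining bookkeeping is formal.
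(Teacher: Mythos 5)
The paper does not prove this statement at all: Theorem~\ref{thm:herwiglascar} is imported as a black box from Solecki and Siniora, so there is no internal proof to compare yours against. Judged on its own terms, your proposal correctly reconstructs the architecture of the known proof: the free group $F$ on the partial isomorphisms, the $F$-equivariant unfolding $\str{T}$ of $\str{A}$, the homomorphism $\str{T}\to\str{M}$ forcing $\str{T}\in\Forb(\mathcal O)$, the passage to a finite quotient $\str{T}/N$, and --- crucially --- the observation that coherence is automatic because $\phi$ is the restriction to generators of the group homomorphism $F/N\to\Aut(\str{B})$. That last point is genuinely the entire content of the Solecki--Siniora refinement over plain Herwig--Lascar, and you have it right.

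The gap is that the Herwig--Lascar core is asserted rather than proved. Two claims are doing all the work and neither is justified: (a) that the stabilizers in $F$ of the vertices (and of the tuples spanning relations) of $\str{T}$ are finitely generated, and (b) that the set of elements of $F$ whose inclusion in $N$ would create a homomorphic image of some $\str{O}\in\mathcal O$ in $\str{T}/N$ is a finite union of products of cosets of such subgroups, so that Ribes--Zalesskii applies. Claim (b) is where essentially all of the difficulty of the theorem lives: a homomorphic image of $\str{O}$ in the quotient is witnessed by a tuple of vertices of $\str{T}$ lying in several distinct $N$-orbits, and encoding ``these vertices become correctly identified and correctly related after folding'' as membership of a tuple of group elements in a product of cosets requires a careful induction over the finitely many abstract ``shapes'' of such witnesses; it is not a one-line consequence of finiteness of $\mathcal O$. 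Your phrasing also inverts the logic of the separation step: the point is that the bad set is itself a finite union of coset products (hence profinitely closed) and does not contain the identity because $\str{T}$ is already obstacle-free, whence some finite-index normal $N$ misses it --- not that each bad element ``lies outside'' a prescribed product. Finally, you should verify that $a\mapsto[(1,a)]$ is an embedding (no new relations are induced on the copy of $\str{A}$ by $F$-translates of relations of $\str{A}$), which uses that the $p_i$ are partial isomorphisms and not merely partial homomorphisms; this folds into the same case analysis but is not free. As written, the proposal is a faithful roadmap to the literature rather than a proof.
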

We will call a $\str{B}$ with the properties as stated in Theorem~\ref{thm:herwiglascar} a {\em coherent EPPA-witness} of $\str{A}$.

Observe that while formulated differently, in our setting of strong amalgamation classes the conditions of Theorem~\ref{thm:herwiglascar} very
similar to ones of Theorem~\cite{Hubicka2016}. The infinite structure extending all partial isomorphisms is the \Fraisse{} limit
and thus in both cases we only need to show a bound on the size of obstacles for the completion algorithm.
To show EPPA we additionally need to have a completion algorithm which preserve all symmetries.\footnote{This similarity has been formalized by a recent strengthening of the Solecki--Siniora theorem by Hubička, Konečný, and Nešetřil~\cite{Hubicka2018EPPA}.}

\section{Cherlin's catalogue of metrically homogeneous graphs}
\label{sec:catalogue}
Now we present Cherlin's catalogue of metrically homogeneous
graphs~\cite{Cherlin2013} and the relevant definitions. We slightly change the presentation in contrast to~\cite{Cherlin2013},
but the changes are minor and easily seen to be equivalent.

\begin{conjecture}[Cherlin's Metric Homogeneity Classification Conjecture~\cite{Cherlin2013}]
\label{con:cherlin}
The countable metrically homogeneous graphs are the following.
\begin{enumerate}
\item In diameter $\delta\leq 2$: the homogeneous graphs, classified by Lachlan and Woodrow~\cite{Lachlan1980}.
\item In diameter $\delta\geq 3$:
\begin{enumerate}
\item The finite ones, classified by Cameron~\cite{Cameron1980}.
\item Macpherson’s regular tree-like graphs $T_{m,n}$ with $2 \leq m,n \leq \infty$,
\item The \Fraisse{} limits of amalgamation classes of the form $\mathcal A_3\cap \mathcal A_H$ with 
$\mathcal A_3$ 3-constrained and $\mathcal A_H$ of Henson type or antipodal Henson type.
\end{enumerate}
\end{enumerate}
\end{conjecture}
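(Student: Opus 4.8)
The plan is to use the \Fraisse{} correspondence to turn the classification into an enumeration of amalgamation classes: a countable metrically homogeneous graph is determined up to isomorphism by its age, and that age is an amalgamation class of finite integer-distance metric spaces whose \Fraisse{} limit is (the associated metric space of) a connected graph. So proving the classification amounts to listing all such amalgamation classes. First I would dispose of the small-diameter cases. In diameter $\delta \leq 2$ the metric carries no more information than the underlying graph, so the problem collapses to the Lachlan--Woodrow catalogue of homogeneous graphs, and the finite graphs of larger diameter are covered by Cameron's classification.

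For diameter $\delta \geq 3$ the strategy is to stratify by the $\Aut(\str{G})$-invariant equivalence relations on the vertex set, i.e. to separate the primitive from the imprimitive cases. The imprimitive examples should be exactly the bipartite graphs, where ``even distance'' is a congruence, and the antipodal graphs, where ``distance $0$ or $\delta$'' is a congruence. In each case I would argue that passing to the quotient (respectively to a single pode) reduces the analysis to a primitive class of lower complexity, so that the imprimitive graphs are controlled by the primitive ones together with the combinatorial data recording how the classes are glued. Organizing this reduction is the structural heart of assembling the catalogue.

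For the primitive graphs of diameter $\geq 3$ the core is a local analysis of which triangles (3-types) may occur. I would show that, apart from the degenerate tree-like graphs $T_{m,n}$, the admissible triangles are governed by the five numerical parameters $\delta, K_1, K_2, C_0, C_1$ of Definition~\ref{defn:numerical}: $K_1$ fixes the odd girth, $K_2$ bounds the parity defect between paths joining a pair of vertices, and $C_0, C_1$ forbid long even and odd cycles. The key lemma is an admissibility criterion (Cherlin's Admissibility Theorem, invoked later as Theorem~\ref{thm:admissible}) characterizing precisely which parameter tuples yield an amalgamation class; the natural proof exhibits a canonical shortest-path-style completion when the parameters are admissible, and a concrete amalgamation failure otherwise. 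On top of a 3-constrained class one may then impose Henson-type constraints (forbidden families using only distances $1$ and $\delta$), and one verifies that these remain compatible with amalgamation exactly in the parameter ranges the catalogue records.

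The decisive obstacle is \emph{completeness}: showing that every metrically homogeneous graph in fact arises this way, with no exotic amalgamation class escaping the parameter-plus-Henson description. This requires ruling out configurations that are not forced by triangle constraints, and hence a delicate global control of the amalgamation property across arbitrarily large spaces, rather than the comparatively routine verification that the listed classes \emph{are} amalgamation classes. It is precisely this direction that remains open in full generality, which is why the statement is recorded here as a conjecture; the present paper does not attempt it, but instead takes the catalogue as given and studies the Ramsey, EPPA, and stationary-independence properties of its members.
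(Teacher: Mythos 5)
There is nothing to compare here: the statement is recorded in the paper as a \emph{conjecture}, and the paper supplies no proof of it --- Section~\ref{sec:catalogue} explicitly states that it merely restates Cherlin's catalogue and that ``any divergence from~\cite{Cherlin2013} is a mistake.'' Your proposal correctly recognises this. The strategy you sketch (the \Fraisse{} correspondence reducing the problem to enumerating amalgamation classes, the $\delta\leq 2$ reduction to Lachlan--Woodrow, the stratification into primitive, bipartite and antipodal cases, the triangle analysis via the parameters $\delta,K_1,K_2,C_0,C_1$ and the Admissibility Theorem, and the Henson-type refinements) is a fair summary of how the catalogue is organised and of the known partial results, and you are right that the genuinely hard direction is completeness: showing no amalgamation class escapes the parameter-plus-Henson description. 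Since you explicitly concede that this direction is open and that the paper takes the catalogue as a hypothesis rather than proving it, your submission is an accurate account of the status of the statement rather than a proof, and it should be assessed as such --- no one, including the authors of this paper, has a proof to measure it against.
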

Let us remark that there is now a purported (yet unpublished) proof of this conjecture.\footnote{This is claimed on Cherlin's website: https://sites.math.rutgers.edu/\~{}cherlin/Paper/inprep.html}

\subsection{3-constrained spaces $\mathcal A_3$}
Recall the numerical parameters $\delta$, $K_1$, $K_2$, $C_0$ and $C_1$ introduced in Definition~\ref{defn:numerical} to describe the 3-constrained spaces.
Different numerical parameters can be used to describe the same classes of
structures. To avoid this redundancy we will assume the following constraints
which describe meaningful sets of parameters.

\begin{defn}[Acceptable numerical parameters]
\label{defn:acceptable}
A sequence of parameters $(\delta,K_1,K_2,C_0,C_1)$ is {\em acceptable} if it satisfies the following conditions:
\begin{itemize}
  \setlength\itemsep{0em}
  \item $3\leq \delta\leq \infty$;
  \item $1\leq K_1\leq K_2\leq 2\delta$ or $K_1=\infty$ and $K_2=0$;
  \item $2\delta+1\leq C_0,C_1\leq 3\delta+2$, and if $\delta < \infty$ then $C_0$ is even and $C_1$ is odd;
  \item If $K_1=\infty$ (the bipartite case) then $C_1=2\delta+1$.
\end{itemize}
\end{defn}
\begin{thm}[Cherlin's Admissibility Theorem~\cite{Cherlin2013}]
\label{thm:admissible}
Let $(\delta,K_1,K_2,C_0,C_1)$ be an acceptable sequence of parameters (in particular, $\delta\geq 3$). Then
the associated class $\mathcal A^\delta_{K_1,K_2,C_0,C_1}$ is an amalgamation class if and only if
one of the following three groups of conditions is satisfied, where we write $C$ for $\min(C_0,C_1)$
and $C'$ for $\max(C_0,C_1)$:
\begin{enumerate}[label=(\Roman*)]
\setlength\itemsep{0em}
\item\label{I} $K_1=\infty$ (the bipartite case; so $K_2=0$ and $C_1=2\delta+1$).
\item\label{II} $K_1<\infty, C\leq 2\delta+K_1$, and
\begin{itemize}
 \setlength\itemsep{0em}
 \item $C=2K_1+2K_2+1$;
 \item $K_1+K_2\geq \delta$;
 \item $K_1+2K_2\leq 2\delta-1$, and:
\end{itemize}
\begin{enumerate}[label=(II\Alph*)]
\setlength\itemsep{0em}
\item\label{IIa} $C'=C+1$, or
\item\label{IIb} $C'>C+1, K_1=K_2$, and $3K_2=2\delta-1$.
\end{enumerate}
\item\label{III} $K_1<\infty$, $C>2\delta+K_1$, and:
\begin{itemize}
 \setlength\itemsep{0em}
 \item $K_1+2K_2\geq 2\delta-1$ and $3K_2\geq 2\delta$;
 \item If $K_1+2K_2=2\delta-1$ then $C\geq 2\delta+K_1+2$;
 \item If $C'>C+1$ then $C\geq 2\delta+K_2$.
\end{itemize}
\end{enumerate}
\end{thm}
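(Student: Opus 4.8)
The plan is to verify the three defining conditions of an amalgamation class for $\mathcal{A}^\delta_{K_1,K_2,C_0,C_1}$, with essentially all of the work falling on the amalgamation property; the necessity and sufficiency of conditions \ref{I}--\ref{III} will be the two halves of the argument. The hereditary property is immediate, since the class is cut out by constraints on triangles (finite configurations) and these are inherited by substructures. The joint embedding property is a degenerate instance of amalgamation over the empty set, so once amalgamation is established in the sharper pointwise form below it follows for free. Thus the statement reduces to: the admissibility conditions hold if and only if every amalgamation diagram $\str{B}_1 \hookleftarrow \str{A} \hookrightarrow \str{B}_2$ in the class can be completed.

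For the sufficiency direction I would amalgamate one cross-pair at a time. Treat the free amalgam as an edge-labelled graph $\str{G}=(G,d)$ whose only gaps are the pairs $(u,v)$ with $u \in B_1 \setminus A$ and $v \in B_2 \setminus A$, and assign each gap a single value. Triangles through a common $a \in A$ confine this value to the interval $[\,\ell^-(u,v),\,\ell^+(u,v)\,]$, where $\ell^-(u,v)=\max_{a}\lvert d(u,a)-d(a,v)\rvert$ is forced from below by the triangle inequality (and, for odd perimeters, by $p>2K_1$), while $\ell^+(u,v)=\min\bigl(\delta,\ \min_a (d(u,a)+d(a,v))\bigr)$ is forced from above by the diameter bound and, for long cycles, by $C_0$ and $C_1$. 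In the bipartite case \ref{I} no odd perimeters occur, so only the even constraint $C_0$ and the diameter survive and amalgamation is routine; the substance lies in \ref{II} and \ref{III}. There the crux is to check that the admissible set is nonempty and contains a value of the parity dictated by the $K_2$-constraint $p<2K_2+2m$, and --- the genuinely delicate point --- that values can be chosen \emph{consistently} across all gaps so that no triangle carrying two cross-edges (vertices $u_1,u_2 \in B_1 \setminus A$ and $v \in B_2 \setminus A$, or its mirror) becomes inadmissible. I would secure consistency by fixing a uniform rule for $d(u,v)$ depending only on the distance profiles of $u$ and $v$ into $\str{A}$ (e.g. taking the largest admissible value of the correct parity) and verifying closure; equivalently, one exhibits $\mathcal{A}^\delta_{K_1,K_2,C_0,C_1}=\Forb(\mathcal{O})$ for the set $\mathcal{O}$ of non-metric cycles, short odd cycles, $K_2$-violating cycles and long even and odd cycles, and completes via the generalised shortest-path algorithm of Section~\ref{sec:shortestpathsec}.

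For the necessity direction I would, for each way the inequalities in \ref{II} and \ref{III} can fail, produce an explicit two- or three-point amalgamation diagram with no completion in the class. These are built by gluing two extremal triangles over a shared edge or vertex so that the forced lower bound $\ell^-$ on the missing cross-distance exceeds the forced upper bound $\ell^+$, or so that every value in the admissible range carries the wrong parity. The precise relations $C=2K_1+2K_2+1$, $K_1+K_2\geq\delta$ and $K_1+2K_2\leq 2\delta-1$, together with the split between \ref{IIa} and \ref{IIb}, should each correspond to one such extremal obstruction, and similarly for the three clauses of \ref{III}.

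I expect the main obstacle to be exactly this boundary bookkeeping: isolating, among all amalgamation diagrams, the finitely many extremal ones that first become uncompletable as the parameters degrade, and matching them one-to-one with the inequalities in the statement. The interaction of the two parity classes (governed by $K_1,K_2,C_1$ on the odd side and $C_0$ on the even side) with the diameter cap $\delta$ makes the worst-case configurations non-obvious, and the rigid subcase \ref{IIb} with $3K_2=2\delta-1$ signals that certain bounds are attained only on very specific diagrams which must be found by hand.
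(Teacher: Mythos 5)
This theorem is not proved in the paper at all: it is Cherlin's Admissibility Theorem, imported verbatim from~\cite{Cherlin2013} (the surrounding section explicitly states that nothing in it is new), so there is no in-paper proof to compare your attempt against. Judged on its own terms, your proposal is a reasonable strategic outline --- reduce to filling cross-pairs in the free amalgam, bound each missing distance by an interval $[\ell^-,\ell^+]$ cut down further by parity, and match each inequality in \ref{II} and \ref{III} to an extremal uncompletable diagram --- but it is an outline only. The entire quantitative content of the theorem (why exactly $C=2K_1+2K_2+1$, $K_1+K_2\geq\delta$, $K_1+2K_2\leq 2\delta-1$, the \ref{IIa}/\ref{IIb} dichotomy, and the three clauses of \ref{III}) is deferred to ``boundary bookkeeping'' that you explicitly do not carry out, and that bookkeeping \emph{is} the theorem; without producing the extremal diagrams for necessity and verifying that a uniform filling rule never creates a forbidden triangle under precisely those inequalities for sufficiency, nothing has been established.

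Two concrete defects in the plan itself. First, your fallback of completing ``via the generalised shortest-path algorithm of Section~\ref{sec:shortestpathsec}'' is circular: the correctness of that algorithm (Theorem~\ref{thm:magiccompletion} and its bipartite and antipodal variants) is proved in this paper \emph{assuming} the parameters are admissible, and moreover a magic parameter $M$ is only guaranteed to exist under the admissibility inequalities (Definition~\ref{defn:magicdistance} and the lemma following Definition~\ref{defn:magiccompletion}); you cannot invoke it to establish admissibility. Second, your framework assigns every cross-pair a distance in $\{1,\dots,\delta\}$, which silently assumes strong amalgamation. In the antipodal subcase of \ref{II} (where $K_1+K_2=\delta$, so $C=2\delta+1$) the class does not amalgamate strongly: two vertices $u\in B_1\setminus A$, $v\in B_2\setminus A$ both at distance $\delta$ from the same $a\in A$ force $d(u,v)=0$, i.e.\ must be \emph{identified}, which your interval $[\ell^-,\ell^+]\subseteq\{1,\dots,\delta\}$ cannot express. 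Any complete proof must treat this case separately, as the paper itself does at length in Section~\ref{sec:antipodal}.
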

A sequence of parameters $(\delta,K_1,K_2,C_0,C_1)$  is called {\em admissible} if and only if it satisfies one of the three sets of conditions in Theorem~\ref{thm:admissible}.

\begin{table}[t]
\centering 
\begin{tabular}{|rrrr|r|r|l|l|}
\hline
 $K_1$   & $K_2$&$C_0$& $C_1$& $M$ & Case & $\mathcal S$ & Structure \\ \hline
 $\infty$& 0 &  8 &  7 &  --   & \ref{I} & $\emptyset$ & Bipartite antipodal\\ 
 $\infty$& 0 &  10 & 7 & --    & \ref{I} & $\emptyset$ & Bipartite\\ 
 1       & 2 &  8 &  7 & -- & \ref{IIa} & $\emptyset$ & Antipodal\\ 
 1       & 2 & 10 &  9 & 2 & \ref{III} & $K_n$ & No $\delta\delta\delta$, $1\delta\delta$ triangles\\ 
 1       & 2 & 10 & 11 & 2 & \ref{III} & $K_n$ and/or $I_m$ & No $1\delta\delta$ triangles\\ 
 1       & 3 &  8 &  9 & 2 & \ref{III} & $\emptyset$ & No 5-anticycle \\ 
 1       & 3 & 10 & 9 & 2 & \ref{III} & Any w/o $I_3$ & No $\delta\delta\delta$ triangles\\ 
 1       & 3 & 10 & 11 & $2, 3$ & \ref{III} & Any & All metric spaces \\ 
 2       & 2 & 10 & 9 & 2 & \ref{III} & $\emptyset$ & No $\delta\delta\delta$, $1\delta\delta$, $111$\\ 
 2       & 2 & 10 & 11 & 2 & \ref{III} & $I_m$ & No $1\delta\delta$, $111$ triangles\\ 
 2       & 3 & 10 & 9 & 2 & \ref{III} & Any w/o $K_3$,$I_3$ & No $\delta\delta\delta$, $111$ triangles\\ 
 2       & 3 & 10 & 11 & $2, 3$ & \ref{III} & Any w/o $K_3$ & No $111$ triangles \\ 
 3       & 3 & 10 & 11 & 3 & \ref{III} & $\emptyset$ & No 5-cycle \\ \hline
\end{tabular}
\caption{All admissible parameters for $\delta=3$ with the set $\mathcal S$ of Henson constraints limited to meaningful choices~\cite[Table 2]{Amato2016}
and parameter $M$ satisfying Definitions~\ref{defn:magiccompletion} and~\ref{defn:bimagiccompletion}. $K_n$ denotes the $n$-clique (i.e. the metric space with $n$ vertices and all distances 1) and $I_n$ is the $n$-anticlique (all distances $\delta$). The second column lists the possible choices for magic distances (see Definition \ref{defn:bimagiccompletion})}
\label{tab:delta3}
\end{table}
\begin{example}
All admissible parameters with $\delta=3$ are listed in Table~\ref{tab:delta3}.

\end{example}
\subsection{Henson constraints}
\label{sec:Henson}
Suppose $\delta\geq 3$. A {\em $(1,\delta)$-space} is a metric space in which all distances are $1$ or $\delta$;
thus the relation $d(x,y)\leq 1$ is an equivalence relation, and the classes lie at mutual distance $\delta$ (in $(1,\infty)$-spaces, all distances are equal to 1).
A $(1,\delta)$-space will also be called a {\em $\delta$-Henson constraint}, or just a \emph{Henson constraint} if
$\delta$ is clear from the context. 

A {\em clique} is a metric space $\str{K}=(K,d)$ where $d(u,v)=1$ for every $u\neq v$.
$\str{K}^*=(K,d^*)$ is an {\em antipodal companion of the clique $\str{K}$} if there exists $S\subset K$ such 
that for distinct vertices $u,v$, $d^*(u,v)=\delta-1$ if $u\in S,v\notin S$ or vice versa and $d^*(u,v)=1$ otherwise.

\begin{defn}[Acceptable parameters with Henson constraints]
The sequence of parameters $(\delta,K_1,K_2,\allowbreak C_0,C_1,\mathcal S)$ is {\em acceptable} if
\begin{enumerate}
 \item $(\delta,K_1,K_2,C_0,C_1)$ is an acceptable sequence of numerical parameters.
 \item $\mathcal S$ is a set of $\delta$-Henson constraints if $\delta = \infty$ or $C_1>2\delta + 1$ or $C_0 > 2\delta + 2$, and a set of cliques and their antipodal companions if $\delta < \infty$, $C_1 = 2\delta + 1$ and $C_0 = 2\delta + 2$ (cf. Definition \ref{defn:antipodal}).
 \item $\mathcal S$ is {\em irredundant} in the sense that no constraint in $\mathcal S$ contains triangles forbidden in $\mathcal A^\delta_{K_1,K_2,C_0,C_1}$, and every constraint in $\mathcal S$ consists of at least 4 vertices. 
\end{enumerate}
\end{defn}
\begin{thm}[Admissible parameters with Henson constraints]
\label{thm:admissible_henson}
Let  $(\delta,K_1,K_2,C_0,C_1,\mathcal S)$ be an acceptable sequence of parameters, where $\mathcal S$
is a set of $\delta$-Henson constraints. Let $C=\min(C_0,C_1)$ and $C'=\max(C_0,C_1)$. Let $\mathcal A^\delta_\mathcal S$
be the class of all finite metric spaces of diameter $\delta$ omitting embeddings from members of $\mathcal S$.
Then the class $\mathcal A^\delta_{K_1,K_2,C_0,C_1} \cap \mathcal A^\delta_\mathcal S$ is an amalgamation class if and only if
the following holds:
\begin{enumerate}
 \item The sequence $(\delta,K_1,K_2,C_0,C_1)$ is admissible.
 \item If $\delta < \infty$, $C = 2\delta + 1$, $K_1 < \infty$, and $\mathcal S$ is nonempty, then $\delta\geq 4$ and $\mathcal S$
consists of a clique and all its antipodal companions.
 \item If $\delta = \infty$, or $C>2\delta+K_1$ (Case~\ref{III}), then
\begin{itemize}
   \item If $K_1=\delta$ then $\mathcal S$ is empty;
   \item If $\delta < \infty$ and $C=2\delta+2$ then $\mathcal S$ is empty.
\end{itemize}
\end{enumerate}
\end{thm}
A sequence of parameters $(\delta,K_1,K_2,C_0,C_1,\mathcal S)$  is called {\em admissible} if and only if it satisfies the conditions in Theorem~\ref{thm:admissible_henson}.

\section{Primitive 3-constrained spaces}
\label{sec:basic3}
In this section we will work with fixed admissible parameters $\delta<\infty$, $K_1$,
$K_2$, $C_0$ and $C_1$, such that the associated homogeneous metric space $\Gamma^\delta_{K_1,K_2,C_0,C_1}$ is primitive, i.e. it has no non-trivial definable equivalence relation on it. As one can verify, those are exactly the parameters where the class $\mathcal A^\delta_{K_1,K_2,C_0,C_1}$ has strong amalgamation and contains at least one triangle with odd perimeter.  
Recall that we denote $C=\min(C_0,C_1),$ and $C'=\max(C_0,C_1).$ Then we can characterise the parameters as follows:
\begin{defn}
The admissible parameters $\delta<\infty$, $K_1$, $K_2$, $C_0$ and $C_1$ are {\em primitive} when
they satisfy case \ref{II} or \ref{III} of                                        
Theorem~\ref{thm:admissible} and moreover do not form an antipodal space. 

In other words, the triangle $\delta\delta a$ for some $a\geq 1$ is permitted and $$C_0,C_1 \geq 2\delta + 2.$$
\end{defn}
For many of the lemmas in this section, the inequalities $C_0,C_1\geq 2\delta + 2$ will be an essential assumption.

Recall that $\mathcal A^\delta_{K_1,K_2,C_0,C_1}$ was defined in Definition \ref{defn:numerical} as the class of all metric spaces that satisfy some constraints on its triangles, i.e. its 3-element subspaces. In the following it will often be more convenient to think of $\mathcal A^\delta_{K_1,K_2,C_0,C_1}$ as the class of metric spaces that do not embed any triangle violating those constraints --- we are going to refer to such triangles as \emph{forbidden triangles}. We also will slightly abuse notation and use the term triangle for both triples of vertices $u,v,w$ and for triples of edges $a,b,c$ with $a = d(u,v)$, $b = d(v,w)$, $c = d(u,w)$. By Definition \ref{defn:numerical} a triangle $abc$ is forbidden if it satisfies one of the following conditions: 
\begin{description}
\item[Non-metric:] $a,b,c$ is forbidden if $a+b<c$,
\item[$K_1$-bound:] $a+b+c < 2K_1+1$ and $a+b+c$ is odd,
\item[$K_2$-bound:] $b+c\geq 2K_2+a$ and $a+b+c$ is odd and $a\leq b,c$,
\item[$C_1$-bound:] $a+b+c\geq C_1$ and $a+b+c$ is odd,
\item[$C_0$-bound:] $a+b+c\geq C_0$ and $a+b+c$ is even,
\item[$C$-bound:] If $|C_0-C_1| = 1$, the $C_1$-bound and $C_0$-bound can be expressed together as $a+b+c\geq C$, where $C=\min(C_0,C_1)$.
\end{description}
Triangles that are not forbidden will be called \emph{allowed}.

\subsection{Generalised completion algorithm for 3-constrained classes}
\label{sec:algorithm}

Let $\mathcal D=\{1,2,\ldots \delta\}^2$ be a collection of (ordered) pairs. It is more natural to consider unordered pairs, but notationally easier to consider ordered pairs. We will refer to elements of $\mathcal D$ as {\em forks}. 

Consider a $\delta$-bounded variant of the shortest path completion, where we assume that the input graphs contain no distances greater than $\delta$ and in the output all edges longer than $\delta$ are replaced by an edge of that length. There is an alternative formulation of this completion: For a fork $\vec{f}=(a,b)$, define $d^+(\vec{f})=\min(a+b,\delta)$. In the $i$-th step look at all incomplete forks $\vec{f}$ (i.e. triples of vertices $u,v,w$ such that exactly two edges are present) such that $d^+(\vec{f}) = i$ and define the length of the missing edge to be $i$.

This algorithm proceeds by first adding edges of length 2, then edges of length 3 and so on up to edges of length $\delta$ and has the property that out of all metric completions of a given graph, every edge of the completion yielded by this algorithm is as close to $\delta$ as possible.

It makes sense to ask what happens if, instead of trying to make each edge as close to $\delta$ as possible, one would try to make each edge as close to some parameter $M$ as possible. For $M$ in a certain range, such an algorithm exists. For each fork $\vec{f}=(a,b)$ one can define $d^+(\vec{f}) = a+b$ and $d^-(\vec{f}) = |a-b|$, i.e. the largest and the smallest possible distance that can metrically complete the fork $\vec{f}$. The generalised algorithm will complete $\vec{f}$ by $d^+(\vec{f})$ if $d^+(\vec{f})<M$, by $d^-(\vec{f})$ if $d^-(\vec{f})>M$ and by $M$ otherwise. It turns out that there is a good permutation $\pi$ of $\{1,\ldots,\delta\}$, such that if one adds the distances in the order prescribed by the permutation, this generalised algorithm will produce a correct completion whenever one exists. It is easy to check that the choice $M=\delta$ and $\pi=\text{id}_\delta$ corresponds to the shortest path completion algorithm.

\begin{defn}[Completion algorithm]\label{defn:ftmcompletion}
Given $c\geq 1$, $\mathcal F\subseteq \mathcal D$, and
a graph $\str{G}=(G,d)\in \mathcal G^\delta$, we say that $\str{G}'=(G,d')$ is the 
{\em $(\mathcal F,c)$-completion} of $\str{G}$ if $d'(u,v)=d(u,v)$ whenever $u,v$ is an edge of $\str{G}$
and $d'(u,v)=c$ if $u,v$ is not an edge of $\str{G}$ and there exist $(a,b)\in \mathcal F$, $w\in G$ such that $\{d(u,w),d(v,w)\}=\{a,b\}$. There are no other edges in $\str{G}'$.

Given $1\leq M\leq \delta$, a one-to-one function $t\colon\{1,2,\ldots,\delta\}\setminus \{M\}\to \mathbb N$ 
and a function $\mathbb F$
from $\{1,2,\ldots,\delta\}\setminus \{M\}$ to the power set of $\mathcal D$, we define the {\em $(\mathbb F,t,M)$-completion} of  $\str{G}$
as the limit of a sequence of edge-labelled graphs  $\str{G}_1, \str{G}_2,\ldots$ such that $\str{G}_1=\str{G}$ and $\str{G}_{k+1}=\str{G}_k$ if $t^{-1}(k)$ is undefined
and $\str{G}_{k+1}$ is the $(\mathbb F(t^{-1}(k)),t^{-1}(k))$-completion of $\str{G}_{k}$ otherwise, with every pair of vertices not forming an edge in this limit set to distance $M$.
\end{defn}

We will call the vertex $w$ from Definition \ref{defn:ftmcompletion} the {\em witness of the edge $u,v$}. The function $t$ is called the {\em time function} of the completion because edges of length $a$ are inserted to $\str{G}_{t(a)}$ the $t(a)$-th step of the completion. If for a $(\mathbb F, t, M)$-completion and distances $a,c$ there is a distance $b$ such that $(a,b)\in \mathbb F(c)$ (i.e. the algorithm might complete a fork $(a,b)$ with distance $c$), we say that {\em $c$ depends on $a$}.

\begin{defn}[Magic distances]
\label{defn:magicdistance}
Let $M\in\{1,2,\ldots, \delta\}$ be a distance. We say that $M$ is {\em magic} (with respect to $\mathcal A^\delta_{K_1,K_2,C_0,C_1}$) if $$\max\left(K_1, \left\lceil\frac{\delta}{2}\right\rceil\right) \leq M \leq \min\left(K_2,\left\lfloor\frac{C-\delta-1}{2}\right\rfloor\right).$$
\end{defn}

Note that for primitive admissible parameters $(\delta, K_1,K_2,C_0,C_1)$ such an $M$ always exists.

\begin{observation}\label{obs:magicismagic}
The set of magic distances (with respect to $\mathcal A^\delta_{K_1,K_2,C_0,C_1}$) is $$S=\left\{1\leq a \leq \delta : aab\text{ is allowed for all }1\leq b\leq \delta\right\}.$$
\end{observation}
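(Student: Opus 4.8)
The plan is to prove the set equality $S = \left\{1 \leq a \leq \delta : aab \text{ is allowed for all } 1 \leq b \leq \delta\right\}$ by characterising, for a fixed distance $a$, exactly which isosceles triangles $aab$ (with $1 \leq b \leq \delta$) are forbidden, and then showing that all of these are allowed precisely when $a$ lies in the magic range $\max(K_1, \lceil \delta/2 \rceil) \leq a \leq \min(K_2, \lfloor (C-\delta-1)/2 \rfloor)$. I would work directly from the list of forbidden-triangle conditions (Non-metric, $K_1$-bound, $K_2$-bound, $C_1$-bound, $C_0$-bound) stated just before Definition~\ref{defn:magicdistance}, specialised to the triangle with two edges of length $a$ and one of length $b$.

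\emph{First} I would dispose of the trivial constraints. The triangle $aab$ has perimeter $p = 2a + b$ and shortest edge $m = \min(a,b)$. The Non-metric condition $a + a < b$ (equivalently $b > 2a$) is the only way metricity fails; since we also range $b$ up to $\delta$, I would note that the remaining bounds are what really constrain $a$. \emph{Then} I would examine the four remaining bounds as conditions on $b$ for fixed $a$:
\begin{itemize}
 \item The $K_1$-bound ($2a + b < 2K_1 + 1$, $b$ odd) is avoided for all odd $b \geq 1$ exactly when $2a + 1 \geq 2K_1 + 1$, i.e. $a \geq K_1$.
 \item The $K_2$-bound forbids $aab$ when it is the short edge $a$ that is smallest, i.e. $a \leq b$, and $2b \geq 2K_2 + a$ fails to hold $\ldots$ here I must be careful to take the correct orientation of the $K_2$-bound; with $m = \min\{a,a,b\}$, I would split into the cases $b \leq a$ and $b > a$ and verify that avoidance for all $b \leq \delta$ forces $a \leq K_2$.
 \item The $C_1$- and $C_0$-bounds forbid large even and odd perimeters; avoiding $2a + b \geq C$ for all $b \leq \delta$ requires $2a + \delta < C$, i.e. $a \leq \lfloor (C - \delta - 1)/2 \rfloor$.
\end{itemize}

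\emph{Finally}, the lower bound $\lceil \delta/2 \rceil \leq a$ must be extracted, and this is where I expect the main subtlety. This bound is not directly one of the triangle inequalities but arises from combining the metricity requirement with the demand that $aab$ be allowed for the \emph{largest} admissible $b = \delta$: for $aab$ to be metric with $b = \delta$ we need $2a \geq \delta$, giving $a \geq \lceil \delta/2 \rceil$. I would assemble the four resulting conditions $a \geq K_1$, $a \geq \lceil\delta/2\rceil$, $a \leq K_2$, and $a \leq \lfloor(C-\delta-1)/2\rfloor$ and observe that their conjunction is exactly the defining inequality of the magic range in Definition~\ref{defn:magicdistance}, establishing $S = \{a : aab \text{ allowed for all } b\}$ in both directions. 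The hard part will be the careful case analysis of the $K_2$-bound orientation and confirming that no single value of $b$ imposes a stronger constraint than the extremal choices $b = 1$ (or the smallest odd value) and $b = \delta$; I would want to double-check that the parameter admissibility conditions of Theorem~\ref{thm:admissible} guarantee the interval is nonempty so that the equivalence is not vacuous, as already asserted in the note preceding the observation.
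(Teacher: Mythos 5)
Your proposal is correct and follows essentially the same route as the paper: the forward direction is established by the same extremal witness triangles ($aa1$ for the $K_1$- and $K_2$-bounds, $aa\delta$ for metricity, and $aab$ with $b=C-2a$ for the $C$-bounds), and the converse is read off from the definition of $\mathcal A^\delta_{K_1,K_2,C_0,C_1}$. The extra care you flag for the $K_2$-bound orientation and for intermediate values of $b$ is sound but does not change the argument, which the paper records more tersely.
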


\begin{proof}
If a distance $a$ is in $S$, then $a\geq K_1$ (otherwise the triangle $aa1$ has perimeter $2a+1$, which is odd and smaller than $2K_1+1$, hence forbidden by the $K_1$ bound), $a\geq \left\lceil\frac{\delta}{2}\right\rceil$ (otherwise the triangle $aa\delta$ is non-metric), $a\leq \left\lfloor\frac{C-\delta-1}{2}\right\rfloor$ (otherwise the triangle $aab$ has perimeter $C$ for $b = C -2a \leq \delta$), and $a\leq K_2$ (otherwise the triangle $aa1$ has odd perimeter and $2a\geq 2K_2+1$, hence is forbidden by the $K_2$ bound). The other implication follows from the definition of $\mathcal A^\delta_{K_1,K_2,C_0,C_1}$.
\end{proof}

\begin{remark}
A Sage-implementation of the following completion algorithm is available at~\cite{PawliukSage2}. It also contains all the examples discussed in this paper.

The same algorithm is used in our earlier paper~\cite{Aranda2017c}, although there we restrict to a special set of parameters. The analysis here is significantly more detailed and it is necessary to consider the cases where $K_2<\delta$ or $C'>C+1$. It is somewhat surprising that considering these cases does not make the algorithm significantly more complicated.
\end{remark}

Let $M$ be a magic distance and $x\in\{1,\ldots,\delta\}\setminus\{M\}$. Define $$\mathcal F^+_x = \left\{(a,b)\in \mathcal D : a+b=x\right\},$$ $$\mathcal F^-_x = \left\{(a,b)\in \mathcal D : |a-b|=x\right\},$$ $$\mathcal F^C_x = \left\{(a,b)\in \mathcal D : C-1-a-b=x\right\}.$$ We further denote
$$\mathbb F_M(x) =
\begin{cases} 
      \mathcal F^+_x\cup \mathcal F^C_x & \text{if }x < M \\
      \mathcal F^-_x & \text{if }x > M.
\end{cases}
$$
For a magic distance $M$, we also define the function $t_M\colon \{1,\ldots,\delta\}\setminus \{M\} \rightarrow \mathbb N$ as
$$t_M(x) =
\begin{cases} 
      2x-1 & \text{if } x < M \\
      2(\delta-x) & \text{if }x > M.
\end{cases}
$$
\begin{figure}
\centering
\includegraphics{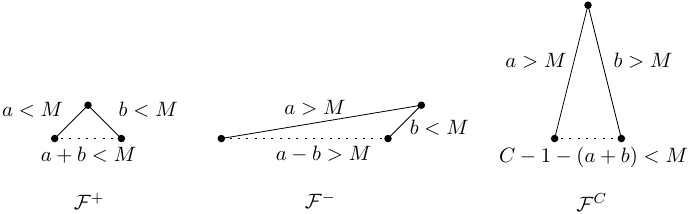}
\caption{Forks used by $\mathbb F_M$.}
\label{fig:Fforks}
\end{figure}
Forks and how they are completed according to $\mathbb F_M$ are schematically depicted in Figure~\ref{fig:Fforks}.

\begin{defn}[Completion with magic parameter $M$]
\label{defn:magiccompletion}
Let $M$ be a magic distance satisfying the following extra conditions:
\begin{enumerate}
\item If the parameters satisfy Case~\ref{III} with $K_1+2K_2 = 2\delta - 1$, then $M>K_1$;
\item if the parameters satisfy Case~\ref{III} and further $C'>C+1$ and $C=2\delta+K_2$, then $M<K_2$.
\end{enumerate}
We then call the $(\mathbb F_M,t_M,M)$-completion (of $\str{G}$) the {\em completion (of $\str{G}$) with magic parameter $M$}. 
\end{defn}

\begin{remark}
The completion with magic parameter $M$ can be equivalently stated as a shortest path completion, but using a different ordered monoid (see Section~\ref{sec:conclussion}, paragraph 4).
\end{remark}

Our main goal of the following section is the proof of Theorem \ref{thm:magiccompletion} that shows that the completion of $\str{G}$ with magic parameter $M$ lies in $\mathcal A^\delta_{K_1,K_2,C_0,C_1}$ if and only if $\str{G}$ has some completion in $\mathcal A^\delta_{K_1,K_2,C_0,C_1}$.

The two extra conditions in Definition \ref{defn:magiccompletion} are a way to deal with certain extremal choices of admissible primitive parameters $(\delta, K_1,K_2,C_0,C_1)$. 
We are going to check that also with those extra conditions, there will always exist a suitable magic distance:

\begin{lem}
For primitive parameters $(\delta, K_1,K_2,C_0,C_1)$ there is always an $M$ satisfying Definitions~\ref{defn:magicdistance} and \ref{defn:magiccompletion}.
\end{lem}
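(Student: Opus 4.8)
The plan is to exhibit an explicit magic distance $M$ that simultaneously satisfies the inequalities of Definition~\ref{defn:magicdistance} and the two extra conditions of Definition~\ref{defn:magiccompletion}. First I would recall that Definition~\ref{defn:magicdistance} requires
$$
\max\left(K_1, \left\lceil\frac{\delta}{2}\right\rceil\right) \leq M \leq \min\left(K_2,\left\lfloor\frac{C-\delta-1}{2}\right\rfloor\right),
$$
and that the remark after Definition~\ref{defn:magicdistance} already asserts the nonemptiness of this interval for primitive admissible parameters; so I would begin by reproving (or citing) that the interval $[\,\max(K_1,\lceil\delta/2\rceil),\ \min(K_2,\lfloor (C-\delta-1)/2\rfloor)\,]$ is nonempty, which unpacks to the two inequalities $K_1\leq K_2$ and $\lceil\delta/2\rceil \leq \lfloor(C-\delta-1)/2\rfloor$ together with the cross-conditions $K_1 \leq \lfloor(C-\delta-1)/2\rfloor$ and $\lceil\delta/2\rceil \leq K_2$. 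These follow from the admissibility constraints of Theorem~\ref{thm:admissible} (in Cases~\ref{II} and~\ref{III}) together with primitivity, i.e. $C \geq 2\delta+2$; I would verify each by the relevant defining inequality ($K_1+K_2\geq\delta$, $3K_2\geq 2\delta$, $C\geq 2\delta+K_2$ where applicable, etc.).

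Next I would impose the two extra conditions. Condition~(1) only bites in Case~\ref{III} when $K_1+2K_2=2\delta-1$, and then demands $M>K_1$; condition~(2) only bites in Case~\ref{III} when $C'>C+1$ and $C=2\delta+K_2$, and then demands $M<K_2$. The strategy is to show these are never forced to conflict with the basic interval or with each other. I would split into cases according to which of (1) and (2) are active. If neither is active, any $M$ in the basic interval works. If only (1) is active, I need a magic $M$ strictly above $K_1$, so it suffices to show $\min(K_2,\lfloor(C-\delta-1)/2\rfloor) > K_1$, i.e. that the upper endpoint strictly exceeds $K_1$; using $K_1+2K_2=2\delta-1$ this gives $K_2=(2\delta-1-K_1)/2$, and combined with the admissibility requirement in this subcase ($C\geq 2\delta+K_1+2$, forcing $\lfloor(C-\delta-1)/2\rfloor \geq \lfloor(\delta+K_1+1)/2\rfloor$) I would check $\min(K_2,\lfloor(C-\delta-1)/2\rfloor)\geq K_1+1$. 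If only (2) is active, symmetrically I need magic $M<K_2$, so it suffices that the lower endpoint $\max(K_1,\lceil\delta/2\rceil)<K_2$; here $C=2\delta+K_2$ with $C\geq 2\delta+2$ gives $K_2\geq 2$, and I would use $3K_2\geq 2\delta$ and $K_1+K_2\geq\delta$ type bounds to separate the lower endpoint from $K_2$ strictly.

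The main obstacle I anticipate is the case where \emph{both} extra conditions are simultaneously active: then I need an $M$ with $K_1 < M < K_2$ inside the basic interval, which requires $K_2 \geq K_1+2$ (a gap of at least one integer strictly between). So the crux is to show that $K_1+2K_2=2\delta-1$ and $C=2\delta+K_2$ with $C'>C+1$ cannot coexist with $K_2 \leq K_1+1$. I would argue by substituting these equalities into the surviving admissibility inequalities of Case~\ref{III} and deriving a contradiction with, say, $3K_2\geq 2\delta$ or the constraint $C \leq 3\delta+2$ from acceptability (Definition~\ref{defn:acceptable}); concretely, $K_1+2K_2=2\delta-1$ forces $K_1$ odd-parity-linked to $\delta$, and $C=2\delta+K_2 \leq 3\delta+2$ bounds $K_2\leq\delta+2$, and I expect these together with $K_2\leq K_1+1$ to collapse the parameter window to a point or an empty set, contradicting $C'>C+1$ (which needs genuine room between $C_0$ and $C_1$). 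Once this incompatibility is established, in every case a suitable $M$ exists, completing the proof. The bookkeeping will be a finite case analysis driven entirely by the defining inequalities of Theorem~\ref{thm:admissible}, with no conceptual difficulty beyond careful handling of floors, ceilings, and parities.
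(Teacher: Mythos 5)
Your overall strategy is essentially the paper's: verify the basic interval of Definition~\ref{defn:magicdistance} is nonempty, then check each of the two extra conditions of Definition~\ref{defn:magiccompletion} separately, and finally handle the case where both are active by establishing $K_2\geq K_1+2$ (which, as you anticipate, falls out immediately from $C=2\delta+K_2$ together with the Case~\ref{III} clause ``if $K_1+2K_2=2\delta-1$ then $C\geq 2\delta+K_1+2$''). One concrete weak point: in your treatment of condition~(2) you propose to get $\max(K_1,\lceil\delta/2\rceil)<K_2$ from ``$3K_2\geq 2\delta$ and $K_1+K_2\geq\delta$ type bounds,'' but these generic inequalities do \emph{not} suffice at the boundary $\delta=3$, $K_2=2$, where $3K_2=2\delta$ and $\lceil\delta/2\rceil=2=K_2$. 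The paper has to eliminate this parameter combination by a separate argument: $\delta=3$, $K_2=2$, $C=2\delta+K_2$ forces $C=8$ and $K_1=1$, whence $K_1+2K_2=5=2\delta-1$, and then the admissibility clause $C\geq 2\delta+K_1+2=9$ gives a contradiction. Your sketch would run into this corner case and would need exactly this extra step (note also that $K_1+K_2\geq\delta$ is stated in Theorem~\ref{thm:admissible} only for Case~\ref{II}, while condition~(2) lives in Case~\ref{III}, so you should derive it rather than cite it). With that corner case patched, the argument goes through as in the paper.
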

\begin{proof}

For Case~\ref{III} with $K_1+2K_2 = 2\delta-1$, we proceed as follows: From admissibility, we have
\[\begin{split}K_1+2K_2&=2\delta-1\\ 3K_2&\geq2\delta\end{split}\] so we conclude that $K_1<K_2$. From this information and $K_1+2K_2=2\delta-1$, we derive $K_1<\frac{2}{3}\delta$. We know that $\delta-1 \geq \frac{2}{3}\delta$ for $\delta \geq 3$ and $K_1\leq \delta-2$, so $\left\lfloor\frac{C-\delta-1}{2}\right\rfloor \geq \left\lfloor\frac{\delta+K_1+1}{2}\right\rfloor\geq \frac{\delta+K_1}{2}$. Hence, $K_1 < \left\lfloor\frac{C-\delta-1}{2}\right\rfloor$ and there is always a magic number greater than $K_1$.\bigskip

In Case~\ref{III} with $C'>C+1$ and $C=2\delta+K_2$, we know from admissibility that $C>2\delta + K_1$, so $K_2 > K_1$. Now we need $\left\lceil\frac{\delta}{2}\right\rceil < K_2$. For $\delta \geq 3$, the inequality $\left\lceil\frac{\delta}{2}\right\rceil \leq \frac{2}{3}\delta$ holds with equality only for $\delta = 3$. Admissibility tells us $3K_2\geq 2\delta$. Now, if $\delta > 3$ or $K_2\neq \frac{2}{3}\delta$, it follows that $\left\lceil\frac{\delta}{2}\right\rceil < K_2$. 

The only remaining possibility is $\delta=3$ and $K_2=2$, which implies $C=8$ and $K_1=1$, which gives us $2K_2+K_1 = 5 = 2\delta-1$. The admissibility condition $C\geq 2\delta +K_1+2$ then yields $C\geq 9$, a contradiction. Hence there always is a magic number smaller than $K_2$.

If both these situations occur simultaneously, then we further require $M$ with $K_1<M<K_2$. But that follows as $C=2\delta+K_2$ and whenever $K_1+2K_2=2\delta-1$, from admissibility we have $C\geq 2\delta+K_1+2$, hence $K_2\geq K_1+2$.
\end{proof}

Observe that the algorithm only makes use of $C$, $\delta$ and $M$. The
interplay of individual parameters of algorithm is schematically depicted in
Figure~\ref{fig:algorithm}.
\begin{figure}
\centering
\includegraphics{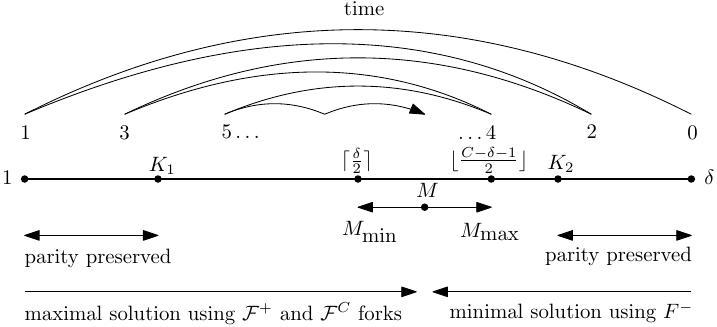}
\caption{A sketch of the main parameters of the completion algorithm, the Optimality Lemma~\ref{lem:bestcompletion} and the Parity Lemma~\ref{lem:sameparity}.}
\label{fig:algorithm}
\end{figure}

\begin{example}[Case~\ref{IIb}]
In our proofs, Case~\ref{IIb} will often form a special case.
The smallest (in terms of diameter) set of acceptable parameters that is in Case \ref{IIb} is: $$\delta=5, C=C_1 = 13, C^\prime=C_0 = 16, K_1 = K_2 = \frac{2\delta-1}{3} = 3.$$ Here $M=3$, and it is the only choice for a magic number.

Forbidden triangles are those that are non-metric (113, 114, 115, 124, 125, 135, 225), or rejected for the $K_1$-bound $(111,122)$, the $K_2$-bound (144, 155, 245), or the $C_1$-bound (355, 445, 555). There are no triangles forbidden by $C_0$.
\begin{figure}
\centering
\includegraphics{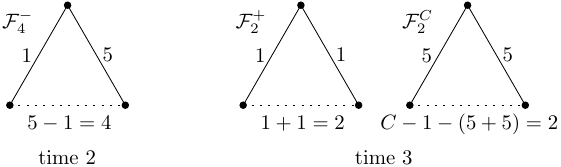}
\caption{Forks considered by the algorithm to complete to $\mathcal A^5_{3,3,16,13}$ with $M=3$.}
\label{fig:forks}
\end{figure}
\begin{table}[t]
\centering
\begin{tabular}{|c|c|c|c|c|c|}
\hline
&$j=1$&$j=2$&$j=3$&$j=4$&$j=5$\\ \hline
$i=1$&\textbf{2} &  $1,\textbf{3}$  &  $2,\textbf{3},4$    & $\textbf{3},5$   		& \textbf{4} 		\\ 
$i=2$&					&  $2,\textbf{3},4$&  $1,2,\textbf{3},4,5$& $2,\textbf{3},4$ 		& $\textbf{3},5$ 	\\ 
$i=3$&					&  							&  $1,2,\textbf{3},4,5$& $1,2,\textbf{3},4,5$	& $2,\textbf{3},4$	\\ 
$i=4$&					&  							&  									&  $2,\textbf{3},4$		& $1,\textbf{3},5$	\\ 
$i=5$&					&  							&  									&  									& $\textbf{2},4$		\\ \hline
\end{tabular}
\caption{Possible ways to complete $(i,j)$ forks, the bold number is the completion with magic parameter $M = 3$.}
\label{tab:forks}
\end{table}
Table~\ref{tab:forks} lists all possible completions of forks, with the completion preferred by our algorithm in bold type. Completions for forks in this class depicted in Figure~\ref{fig:forks}.
Notably, the magic number $M=3$ is chosen for all forks except $(1,1)$, which is completed by $d^+((1,1)) = 2$, $(1,5)$, which is completed by $d^-((1,5)) = 4$, and $(5,5)$, which is a $C$-bound case. 
Those cases are the only forks where $M=3$ cannot be chosen, so instead the algorithm chooses the nearest possible completion. What makes Case~\ref{IIb} special is the situation where one can choose $M-1$ or $M+1$ but not $M$ when completing a fork (for $\delta=5$ it is the fork $(5,5)$, as both the triangles $5,5,2$ and $5,5,4$ are allowed, while $5,5,3$ is forbidden by the $C_1$ bound; this behaviour is going to force us to deal with some corner cases later). 

The algorithm will thus effectively run in three steps.  First (at time 2) it will complete all forks $(1,5)$ with distance 4, next (at time 3) it will complete all forks $(1,1)$ and $(5,5)$ with distance 2 and finally it will turn all non-edges into edges of distance 3. Examples of runs of this algorithm are given later, see Figures~\ref{fig:1555} and~\ref{fig:11555}.

See also~\cite{Aranda2017c} for an additional example of a run of the algorithm for the space $\mathcal A^6_{2,1,16,15}$, or see the Sage implementation in \cite{PawliukSage2}.
\end{example}

\subsection{What do forbidden triangles look like?}
\label{sec:forbtriangles}
The majority of the proofs in the following sections assume that the completion algorithm with magic parameter $M$ introduces some forbidden triangle and then we argue that the triangle must be forbidden in any completion, hence the input structure has no completion into $\mathcal A^\delta_{K_1,K_2,C_0,C_1}$. In such an argument it will be helpful to know how the different types of forbidden triangles relate to the magic parameter $M$. Therefore, in the following paragraphs we will study how triangles forbidden by different bounds are related to the magic parameter $M$. We will use $a,b,c$ for the lengths of the sides of the triangle and without loss of generality assume $a\leq b\leq c$. All conclusions are summarised in Figure~\ref{fig:Ftriangles}.
\begin{figure}[t]
\centering
\includegraphics{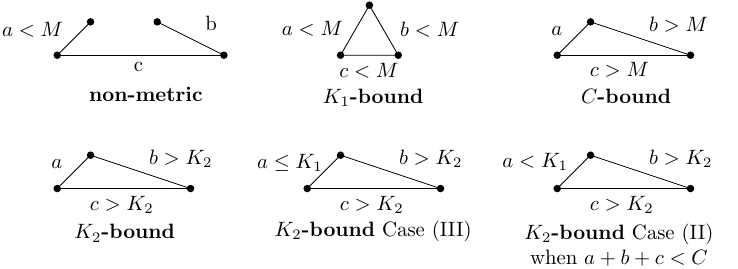}
\caption{Types of forbidden triangles.}
\label{fig:Ftriangles}
\end{figure}

\begin{description}
\item[non-metric:]
If $a+b<c$, then $a < M$, because otherwise $a+b\geq 2M \geq \delta$.

\item[$K_1$-bound:]
If $a+b+c < 2K_1+1$, $a+b+c$ is odd and $abc$ is metric, then $a,b,c < K_1\leq M$, because if $c\geq K_1$, then from the metric condition $a+b\geq c\geq K_1$ and hence $a+b+c\geq 2K_1$, for odd $a+b+c$ this means $a+b+c\geq 2K_1+1$.

\item[$C$-bound:]
If $a+b+c\geq C$ then $b,c > M$. Suppose for a contradiction that $a,b\leq M$. We then have $a+b\geq C-c\geq C-\delta$, but on the other hand $a+b\leq 2M\leq 2\left\lfloor \frac{C-\delta-1}{2} \right\rfloor\leq C-\delta-1$, which together yield $C-\delta-1\geq C-\delta$, a contradiction. Note that in some cases $C'\neq C+1$, but this observation still holds, as it only uses $a+b+c\geq C$.

\item[$K_2$-bound:]
If $abc$ is a metric triangle with odd perimeter, then $abc$ breaks the $K_2$ condition if and only if $b+c\geq 2K_2+a+1$ (the 1 on the right side comes from $a+b+c$ being odd and all distances being integers). Then $b,c>K_2$, because if $b\leq K_2$, from metricity we have $c\leq a+b$, hence $a+2K_2\geq (a+b)+b \geq c+b\geq 2K_2+a+1$, a contradiction.

Moreover, in Case~\ref{III} of Theorem~\ref{thm:admissible} we have $a \leq K_1$ because if $a > K_1$, we have $b+c\geq 2K_2+a+1 > 2K_2+K_1+1$ and from admissibility conditions for Case~\ref{III} we have $2K_2+K_1\geq 2\delta-1$, which gives $b+c>2\delta$, a contradiction. (Note that if $2K_2+K_1>2\delta-1$, we have $a<K_1$.)

Finally if $a+b+c < C$ (which is stronger than not being forbidden by the $C$ bound, as it also includes the $C'>C+1$ cases) and we are in Case~\ref{II} (where $C=2K_1+2K_2+1$), we get $a < K_1$, because if $a\geq K_1$, we would get $a+b+c\geq 2K_2+2a+1\geq 2K_2+2K_1+1 = C$.

Note that later we shall refer to all the corner cases mentioned in these paragraphs.
\end{description}

\subsection{Basic properties of the completion algorithm}
In this section we develop several technical observations about the algorithm which will be used
in Section~\ref{sec:magiccompletion} to show the main result about the correctness of the algorithm.

Recall the definition of $t_M$ and $\mathbb F_M$:
$$t_M(x) =
\begin{cases} 
      2x-1 & \text{if } x < M \\
      2(\delta-x) & \text{if }x > M.
\end{cases}
$$
$$\mathbb F_M(x) =
\begin{cases} 
      \mathcal F^+_x\cup \mathcal F^C_x & \text{if }x < M \\
      \mathcal F^-_x & \text{if }x > M.
\end{cases}
$$
Intuitively, the function $\mathbb F_M$ selects the forks that will be completed to triangles with an edge of type $t^{-1}_M(x)$ at time $x$. At time 0 it looks for forks that can be completed with distance $\delta$, then with distance 1, jumping back and forth on the distance set and approaching $M$ (cf. Figure \ref{fig:algorithm}). Observe that all forks that cannot be completed with $M$ are in some $\mathbb F_M(x)$.

Now we shall precisely state and prove that $t_M$ gives a suitable injection for the algorithm, as claimed before Definition~\ref{defn:ftmcompletion}.

\begin{lem}[Time Consistency Lemma]\label{lem:expandtime}
Let $a,b$ be distances different from $M$. If $a$ depends on $b$, then $t_M(a) > t_M(b)$.
\end{lem}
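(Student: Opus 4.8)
The plan is to unwind the two definitions and then run a short case analysis driven entirely by the position of $a$ and $b$ relative to $M$. By definition, ``$a$ depends on $b$'' means there is a distance $e$ with $(b,e)\in\mathbb F_M(a)$, and recall that $t_M(x)=2x-1$ for $x<M$ while $t_M(x)=2(\delta-x)$ for $x>M$. The structural fact I want to exploit is that each of the fork sets $\mathcal F^+_a$, $\mathcal F^C_a$, $\mathcal F^-_a$ pins $b$ onto a definite side of $M$, after which $t_M(a)>t_M(b)$ becomes a one-line computation. So I would split first on whether $a<M$ or $a>M$, and within each branch on which fork set contains $(b,e)$.

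Suppose first $a<M$, so $\mathbb F_M(a)=\mathcal F^+_a\cup\mathcal F^C_a$ and $t_M(a)=2a-1$. If $(b,e)\in\mathcal F^+_a$ then $b+e=a$ with $e\geq 1$, hence $b<a<M$ and $t_M(b)=2b-1<2a-1$ immediately. If $(b,e)\in\mathcal F^C_a$ then $b+e=C-1-a$; here I would use the upper magic bound $2M\leq C-\delta-1$ (that is, $C\geq 2M+\delta+1$) together with $a\leq M-1$ and $e\leq\delta$ to conclude $b\geq M+1$, so that $t_M(b)=2(\delta-b)$. The desired $2a-1>2(\delta-b)$ then reduces, by an even/odd comparison, to $a+b\geq\delta+1$; and since $a+b=C-1-e\geq C-1-\delta$, the primitivity assumption $C\geq 2\delta+2$ closes it.

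Now suppose $a>M$, so $\mathbb F_M(a)=\mathcal F^-_a$, $t_M(a)=2(\delta-a)$, and $|b-e|=a$. If $b$ is the larger endpoint, then $b=a+e>a>M$ gives $t_M(b)=2(\delta-b)<2(\delta-a)$ at once. If $b$ is the smaller endpoint, then $e=a+b\leq\delta$; the lower magic bound $2M\geq\delta$ (from $M\geq\lceil\delta/2\rceil$) rules out $b>M$, since otherwise $a+b>2M\geq\delta$ would contradict $e\leq\delta$. Hence $b<M$, $t_M(b)=2b-1$, and $2(\delta-a)>2b-1$ reduces once more to $a+b\leq\delta$, which is exactly $e\leq\delta$.

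The only genuine obstacle is determining on which side of $M$ the endpoint $b$ lies in the two ``crossing'' sub-cases (namely $\mathcal F^C_a$ with $a<M$, and the smaller branch of $\mathcal F^-_a$ with $a>M$): until that sign is fixed, one does not know which clause of $t_M$ applies. This is precisely where the two defining inequalities of a magic distance, $2M\geq\delta$ and $2M\leq C-\delta-1$, and the primitivity bound $C\geq 2\delta+2$, each get used exactly once. Everything else is bookkeeping, with only mild care needed for the integer parity arguments that convert a strict inequality between $t_M$ values into an inequality between sums of distances.
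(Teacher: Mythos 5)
Your proof is correct and follows essentially the same route as the paper's: a three-way split on the fork sets $\mathcal F^+_a$, $\mathcal F^C_a$, $\mathcal F^-_a$, using $2M\leq C-\delta-1$ together with $C\geq 2\delta+2$ to place $b$ above $M$ in the $\mathcal F^C$ sub-case and $2M\geq\delta$ to place it below $M$ in the subtractive sub-case, followed by the same arithmetic comparison of $t_M$ values. No gaps.
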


\begin{proof}
We consider three types of forks used by the algorithm:
\begin{description}
\item[$\mathcal F^+$:]
If $a<M$ and $\mathcal F^+_a\neq\emptyset$, then $b<a<M$, hence $t_M(b) < t_M(a)$.

\item[$\mathcal F^C$:]
If $a < M$ and $\mathcal F^C_a\neq\emptyset$, then we must have $b,c > M$ ($(b,c)\in \mathcal F^C_a$ with $a=C-1-b-c$). Otherwise, if for instance $b\leq M$, then $C-\delta-1\leq C-1-c = a+b < 2M \leq 2\left\lfloor \frac{C-\delta-1}{2} \right\rfloor$, a contradiction. As $C\geq 2\delta+2$ (we are dealing with the primitive case), we obtain the inequality $b = (C-1)-c-a \geq (2\delta + 1) - \delta - a = \delta+1-a$. Hence $t_M(b) \leq 2(a-1) < 2a-1 = t_M(a)$.

\item[$\mathcal F^-$:]
Finally, we consider the case where $a > M$ and $\mathcal F^-_a\neq\emptyset$. Then either $a = b-c$, which implies $b>a>M$ and thus $t_M(b)<t_M(a)$, or $a = c-b$, which means $b = c-a\leq \delta-a$. Because of $a>M\geq \left\lceil\frac{\delta}{2}\right\rceil$, we have $b<M$. So $t_M(b) \leq 2(\delta-a) - 1 < 2(\delta-a) = t_M(a)$.
\end{description}
\end{proof}

\begin{lem}[$\mathbb F_M$ Completeness Lemma]\label{lem:misgood}
Let $\str{G}\in \mathcal G^\delta$ and $\overbar{\str{G}}$ be its completion with magic parameter $M$. If there is a forbidden triangle (w.r.t. $\mathcal A^\delta_{K_1,K_2,C_0,C_1}$) or a triangle with perimeter at least $C$ in $\overbar{\str{G}}$ with an edge of length $M$, then this edge is also present in $\str{G}$.
\end{lem}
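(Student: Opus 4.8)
\emph{Plan.} Suppose for contradiction that $\overbar{\str{G}}$ contains such a bad triangle $T$ on vertices $u,v,w$, carrying an edge $e$ of length $M$ that is \emph{not} an edge of $\str{G}$. Since the time function $t_M$ is defined only on $\{1,\dots,\delta\}\setminus\{M\}$, the stepwise part of the completion never inserts an edge of length $M$; hence $e$ is one of the pairs that receive distance $M$ only in the final step of Definition~\ref{defn:ftmcompletion}, i.e. $e$ is never completed during the run. Writing $e=\{u,v\}$ and letting $p,q$ be the lengths of the other two sides $\{u,w\},\{v,w\}$ in $\overbar{\str{G}}$, the aim is to show that this forces $T$ to be metric and of perimeter $<C$, contradicting that $T$ is bad.

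The first thing to pin down is which bad triangles can even carry a side of length $M$, using the structural analysis of Section~\ref{sec:forbtriangles} together with the magic inequalities $\max(K_1,\lceil\delta/2\rceil)\le M\le\min(K_2,\lfloor (C-\delta-1)/2\rfloor)$ from Definition~\ref{defn:magicdistance}. A $K_1$-forbidden triangle has all three sides $<K_1\le M$, so it has no side equal to $M$. A $K_2$-forbidden triangle of perimeter $<C$ has its two longer sides $>K_2\ge M$, and by the Case~\ref{II}/\ref{III} analysis its shortest side satisfies $a<K_1\le M$; in the single boundary subcase $K_1+2K_2=2\delta-1$ of Case~\ref{III}, where one only obtains $a\le K_1$, the extra condition $M>K_1$ of Definition~\ref{defn:magiccompletion} still yields $a<M$. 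So neither type carries an $M$-edge. A $C_0$- or $C_1$-forbidden triangle, and a $K_2$-forbidden triangle of perimeter $\ge C$, both have perimeter $\ge C$. Thus a bad triangle carrying an $M$-edge is either non-metric or has perimeter $\ge C$. Moreover, in the perimeter-$\ge C$ case the $C$-bound analysis shows the two larger sides exceed $M$, so $e$ is the shortest side and $p,q\neq M$; in the non-metric case $M,M,b$ is always metric (as $2M\ge\delta$), so $T$ cannot have a second $M$-side and again $p,q\neq M$.

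Now the core step. Since $p,q\neq M$, both $t_M(p)$ and $t_M(q)$ are defined, and I claim the fork $(p,q)$ lies in no $\mathbb F_M(x)$. Indeed, if $(p,q)\in \mathbb F_M(x)$ then $x$ depends on $p$ and on $q$, so by the Time Consistency Lemma~\ref{lem:expandtime} the edges $\{u,w\},\{v,w\}$ are present strictly before time $t_M(x)$; then $w$ would witness the completion of $e$ to the distance $x\neq M$ at that step, contradicting that $e$ is set only in the final step. Hence $(p,q)$ is completed with $M$, and unwinding the definitions of $\mathcal F^+_x,\mathcal F^-_x,\mathcal F^C_x$ gives $p+q\ge M$ and $|p-q|\le M$ (so $T$ is metric, killing the non-metric case), together with $C-1-p-q\ge M$, i.e. $p+q+M\le C-1<C$ (so $T$ has perimeter $<C$, killing the perimeter-$\ge C$ case). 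For the last inequality the primitive assumption $C\ge 2\delta+2$ is essential: it excludes the degenerate alternative $p+q\ge C-1$, since $p,q\le\delta$ forces $p+q\le 2\delta<C-1$. Either conclusion contradicts the previous paragraph, so $e$ must already be an edge of $\str{G}$.

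The main obstacle is the bookkeeping in the core step: extracting both metricity \emph{and} the strict bound $p+q+M<C$ from the single fact that the fork is completed with $M$. This is precisely what the $\mathcal F^C$ forks were designed to guarantee, and it is where $C\ge 2\delta+2$ is indispensable. The second delicate point is the reduction showing that $K_1$- and $K_2$-forbidden triangles never carry an $M$-edge, which is driven entirely by the magic-distance inequalities $K_1\le M\le K_2$ together with the boundary-case condition $M>K_1$ of Definition~\ref{defn:magiccompletion}.
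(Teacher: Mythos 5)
Your proof is correct and follows essentially the same route as the paper: the structural exclusion of the $K_1$- and $K_2$-forbidden triangles via the magic inequalities (including the boundary subcase $K_1+2K_2=2\delta-1$ handled by $M>K_1$) is identical, and your unified "the fork $(p,q)$ lies in no $\mathbb F_M(x)$" step is just a repackaging of the paper's per-type argument that the Time Consistency Lemma would force the algorithm to complete the edge with $a+b$, $|a-b|$, or $C-1-a-b$ rather than leave it for the final $M$-step. The only cosmetic difference is that you derive the positive conclusions (metricity and perimeter $<C$) from non-membership in the fork sets, rather than deriving a contradiction separately in each forbidden-triangle case.
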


Observe that for $C'\neq C+1$, this lemma is talking not only about forbidden triangles, but about all triangles with perimeter at least $C$.

\begin{proof}
By Observation \ref{obs:magicismagic} no triangle of type $aMM$ is forbidden, so suppose that there is a forbidden triangle $abM$ in $\overbar{\str{G}}$ such that the edge of length $M$ is not in $\str{G}$. For convenience define $t_M(M) = \infty$, which corresponds to the fact that edges of length $M$ are added in the last step.

\begin{description}
\item[non-metric:] If $abM$ is non-metric then either $a+b<M$ or $|a-b|>M$. By Lemma \ref{lem:expandtime} we have in both cases that $t_M(a+b)$ (respectively, $t_M(|a-b|)$) is greater than both $t_M(a)$ and $t_M(b)$. Therefore the completion algorithm would chose $a+b$ (resp, $|a-b|$) as the length of the edge instead of $M$.

\item[$K_1$-bound:] Now that we know that $abM$ is metric, we also know that it is not forbidden by the $K_1$ bound, because $M\geq K_1$.

\item[$C$-bound:] If $a+b+M\geq C$ (which includes all the triangles forbidden by $C_0$ or $C_1$ bounds), then $t_M(C-1-a-b)>t_M(a),t_M(b)$ by Lemma \ref{lem:expandtime}, so the algorithm would set $C-1-a-b$ instead of $M$ as the length of the third edge.

\item[$K_2$-bound:] Finally we deal with the $K_2$ bound. Suppose that $abM$ is metric, its perimeter is less than $C$, and it is forbidden by the $K_2$ bound. From Section \ref{sec:forbtriangles} we have that the two long edges have to be longer that $K_2$, and the shortest edge is at most $K_1$ with equality only in Case~\ref{III} with $K_1+2K_2=2\delta-1$.

As $M\leq K_2,$ we know that $M$ is the shortest edge. But also $M\geq K_1$, hence this situation can happen only when $K_1$ is the length of the shortest edge, which is only in Case~\ref{III} with $K_1+2K_2=2\delta-1$. But from Definition~\ref{defn:magiccompletion} we have in this case $M>K_1$. Hence this situation never occurs.
\end{description}
\end{proof}

It may seem strange that the algorithm does not differentiate between $C_0$ and $C_1$. The following observation justifies this
by showing that in the case where $C'>C+1$, these bounds have a relatively limited effect on the run of the algorithm.
\begin{observation}\label{obs:FCforks}
If $C'>C+1$, then either $\mathcal F^C_x$ is empty for all $x < M$ or the parameters satisfy $\ref{IIb}$. In the latter case, only $\mathcal F^C_{M-1} = \{ (\delta,\delta) \}$ is non-empty. Furthermore, in this case $t_M(M-1)$ is the maximum of the time-function. This implies that $(\delta,\delta)$-forks are completed to $M-1$ in the penultimate step of the completion algorithm.
\end{observation}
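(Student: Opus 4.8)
The plan is to split according to which case of the Admissibility Theorem~\ref{thm:admissible} can produce the hypothesis $C'>C+1$. Since we are in the primitive setting, Case~\ref{I} is excluded; within Case~\ref{II} the inequality $C'>C+1$ holds only in Case~\ref{IIb}, because Case~\ref{IIa} is \emph{defined} by $C'=C+1$. Hence $C'>C+1$ forces either Case~\ref{III} with $C'>C+1$, or Case~\ref{IIb}, and I would handle these two possibilities separately. The first alternative of the statement will come from Case~\ref{III}, and the precise description $\mathcal F^C_{M-1}=\{(\delta,\delta)\}$ from Case~\ref{IIb}.

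First I would dispose of Case~\ref{III}. A fork $(a,b)\in\mathcal F^C_x$ satisfies $a+b=C-1-x$, and since $a,b\le\delta$ we get $a+b\le 2\delta$, whence $x\ge C-1-2\delta$. The admissibility hypothesis for Case~\ref{III} with $C'>C+1$ gives $C\ge 2\delta+K_2$, so $x\ge K_2-1$. If this inequality on $C$ is strict then $x\ge K_2\ge M$ (using $M\le K_2$ from Definition~\ref{defn:magicdistance}), contradicting $x<M$; and if $C=2\delta+K_2$ then the extra condition of Definition~\ref{defn:magiccompletion} gives $M<K_2$, so $x<M\le K_2-1$ contradicts $x\ge K_2-1$. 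Either way $\mathcal F^C_x=\emptyset$ for all $x<M$, which is the first alternative.

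The main content is Case~\ref{IIb}. Here $K_1=K_2$ and $3K_2=2\delta-1$, so $C=2K_1+2K_2+1=4K_2+1$, and the magic interval of Definition~\ref{defn:magicdistance} collapses (its right end $\min(K_2,\cdot)\le K_2=K_1\le\max(K_1,\cdot)$, its left end) to the single value $M=K_1=K_2$. For $x<M$, a fork $(a,b)\in\mathcal F^C_x$ satisfies $a+b=4K_2-x>3K_2=2\delta-1$, hence $a+b=2\delta$ and so $a=b=\delta$; this forces $x=4K_2-2\delta=K_2-1=M-1$. Thus $\mathcal F^C_{M-1}=\{(\delta,\delta)\}$ is the only nonempty set among the $\mathcal F^C_x$ with $x<M$. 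For the ``furthermore'' clause I would just compare values of $t_M$: it is increasing on $\{1,\dots,M-1\}$ (odd values) and decreasing on $\{M+1,\dots,\delta\}$ (even values), so its maximum is $\max(2M-3,\,2(\delta-M-1))$, and $2M-3\ge 2(\delta-M-1)$ reduces to $4M\ge 2\delta+1$, which holds for $M=(2\delta-1)/3$ and $\delta\ge 5$. The parity mismatch (odd versus even) rules out a tie, so $t_M(M-1)$ is the strict maximum; therefore $(\delta,\delta)$-forks are completed to $M-1$ at the last insertion step before the final pass setting all remaining non-edges to $M$, i.e. in the penultimate step.

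I expect the only genuinely delicate point to be the bookkeeping in Case~\ref{III}: one must invoke exactly the admissibility inequality $C\ge 2\delta+K_2$ together with the two interlocking upper bounds on $M$ — the generic $M\le K_2$ from Definition~\ref{defn:magicdistance} and the extremal $M<K_2$ from Definition~\ref{defn:magiccompletion} — and keep the strict and non-strict subcases of $C\ge 2\delta+K_2$ carefully apart. Everything else is a direct computation from $C=4K_2+1$ and $3K_2=2\delta-1$.
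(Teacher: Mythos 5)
Your proposal is correct and follows essentially the same route as the paper's proof: split into Case~\ref{III} with $C'>C+1$ (killed via the admissibility bound $C\geq 2\delta+K_2$ together with $M\leq K_2$ from Definition~\ref{defn:magicdistance} and $M<K_2$ from Definition~\ref{defn:magiccompletion}) and Case~\ref{IIb} (where $C=2\delta+K_2$ forces $a=b=\delta$ and $x=M-1$), then verify $t_M(M-1)>t_M(M+1)$ via $4M>2\delta+1$. Your treatment of the strict/non-strict subcases of $C\geq 2\delta+K_2$ and the parity tie-breaking are slightly more explicit than the paper's terse wording, but the argument is the same.
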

\begin{proof}
Consider a fork $(a,b) \in \mathcal F^C_x$ and the cases where $C'>C+1$ is allowed.

In Case~\ref{III} with $C'>C+1$ we have (by admissibility) $C\geq 2\delta+K_2$, so $x = C-1-a-b\geq K_2-1$ with equality only for $C=2\delta+K_2$. From the extra condition for a magic parameter in Definition~\ref{defn:magiccompletion}, we get that $M < K_2$.

In Case~\ref{IIb} we have $M=K_2=K_1=\frac{2\delta-1}{3}$, hence $C=2K_1+2K_2+1 = 2\delta+K_2$, thus again we have $C-1-a-b\geq K_2-1$. This means that the only fork in $\mathcal F^C_{M-1}$ is going to be $(\delta,\delta)$, which will be completed by $K_2-1=M-1$.

In order to see that $t_M(M-1)$ is maximal, it is enough to check $t_M(M+1) < t_M(M-1)$. We have $3M=3K_2=2\delta-1$, so by definition $t_M(M-1) = 2M-3$ and $t_M(M+1) = 2\delta-2M-2$, so we want $2M-3>2\delta-2M-2$, or $4M>2\delta+1$ which is true for $\delta\geq 5$ and this always holds in Case \ref{IIb}. So $t_M(M-1) > t_M(M+1)$ and therefore $t_M(M-1)>t_M(a)$ for any $a$ different from $M$ and $M-1$. 
\end{proof}

\begin{lem}[Optimality Lemma]\label{lem:bestcompletion}
Let $\str{G}=(G,d)\in \mathcal G^\delta$ such that it has a completion in $\mathcal A^\delta_{K_1,K_2,C_0,C_1}$. Denote by $\overbar{\str{G}}=(G,\bar d)$ the completion of $\str{G}$ with magic parameter $M$ and let $\str{G}'=(G,d')\in\mathcal A^\delta_{K_1,K_2,C_0,C_1}$ be an arbitrary completion of $\str{G}$. 
Then for every pair of vertices $u,v\in G$ one of the following holds:
\begin{enumerate}
 \item $d'(u,v) \geq \bar d(u,v) \geq M$,
 \item $d'(u,v) \leq \bar d(u,v) \leq M$,
 \item the parameters $(\delta,K_1,K_2,C_0,C_1)$ satisfy Case~\ref{IIb}, $\bar d(u,v) = M-1$, $d'(u,v) > M$ and $d'(u,v)$ has the same parity as $\bar d(u,v)$.
\end{enumerate}
\end{lem}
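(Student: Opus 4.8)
The Optimality Lemma asserts that the magic-parameter completion $\bar d$ is ``extremal'' among all valid completions: at every pair $u,v$, the value $\bar d(u,v)$ is pushed as close to $M$ as any completion $d'$ will permit, and the ordering of $\bar d(u,v)$ and $d'(u,v)$ relative to $M$ is controlled. The natural approach is induction on the \emph{time step} at which $\bar d$ assigns a value to the edge $u,v$, using the time function $t_M$. Recall from the Time Consistency Lemma~\ref{lem:expandtime} that if $\bar d(u,v)$ depends on a shorter-completed edge, that witnessing edge was inserted strictly earlier; so induction on $t_M(\bar d(u,v))$ (with $t_M(M)=\infty$ for edges closed in the final step) is well-founded. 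The base case is an edge already present in $\str{G}$, where $\bar d(u,v)=d'(u,v)$ trivially and conclusion (1) or (2) holds.

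\textbf{Inductive step.}
For an edge $u,v$ not in $\str{G}$, the completion with magic parameter sets $\bar d(u,v)$ to one of $d^+(\vec f)$, $d^-(\vec f)$, or $M$ depending on which bracket the fork falls into. First I would treat the two ``monotone'' outcomes. If $\bar d(u,v)=d^+(a,b)=a+b<M$ for a witness $w$ with $\{d(u,w),d(v,w)\}=\{a,b\}$, then $a,b<M$ and by induction the earlier edges $uw,vw$ satisfy $d'(u,w)\le a$, $d'(v,w)\le b$ on the ``$\le M$'' side; the triangle inequality in $\str{G}'$ then forces $d'(u,v)\le d'(u,w)+d'(v,w)\le a+b=\bar d(u,v)$, giving conclusion~(2). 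Symmetrically, if $\bar d(u,v)=d^-(a,b)=|a-b|>M$, the reverse triangle inequality $d'(u,v)\ge |d'(u,w)-d'(v,w)|$ combined with the induction hypotheses on the ``$\ge M$'' side yields $d'(u,v)\ge|a-b|=\bar d(u,v)$, giving conclusion~(1). The $\mathcal F^C$ forks, completed to some $x<M$ via $x=C-1-a-b$, require the observation that the witnesses are the \emph{long} edges ($a,b>M$, as in the proof of the $\mathbb F_M$ Completeness Lemma~\ref{lem:misgood}); here the argument runs through the requirement that $\str{G}'$ avoid a forbidden $C$-bound triangle rather than through a direct triangle inequality.

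\textbf{The case $\bar d(u,v)=M$ and the main obstacle.}
When the algorithm assigns $M$ in the final step, I expect conclusions (1) and (2) to be recovered by a parity/feasibility argument: $d'(u,v)$ is some allowed value, and I must show it lies on one side of $M$ with the right monotonicity, or else land in the exceptional Case~\ref{IIb} alternative~(3). This is where the main difficulty lies. The troublesome configuration is exactly the one flagged in the Case~\ref{IIb} example: a fork (such as $(\delta,\delta)$) that can be completed by $M-1$ or $M+1$ but \emph{not} by $M$, because the triangle $\delta\delta M$ is $C$-forbidden while $\delta\delta(M-1)$ and $\delta\delta(M+1)$ are allowed. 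By Observation~\ref{obs:FCforks}, in Case~\ref{IIb} the only such fork is $(\delta,\delta)$ and it is completed to $M-1$ at the penultimate step. The hard part will be proving that in this situation any valid $d'$ must satisfy $d'(u,v)>M$ with $d'(u,v)\equiv M-1\pmod 2$: the upper value $M-1$ chosen by $\bar d$ is forced below $M$, while a competing completion can only escape upward (to $M+1$ or more) and must respect the parity constraint dictated by the $K_2$- and $C$-bounds. Establishing that no completion can squeeze $d'(u,v)$ down to $\le M$ in this corner — so that conclusion~(2) genuinely fails and must be replaced by~(3) — is the delicate step; everything else reduces to careful bookkeeping of the triangle inequalities against $M$ under the induction.
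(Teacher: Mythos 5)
Your overall strategy is the paper's: the paper runs a minimal-counterexample argument over the time steps of the algorithm (equivalently, your induction on $t_M(\bar d(u,v))$ via the Time Consistency Lemma~\ref{lem:expandtime}), splits on whether the edge was introduced by an $\mathcal F^+$, $\mathcal F^-$ or $\mathcal F^C$ fork, closes the first two cases by the triangle inequality exactly as you describe, and closes the $\mathcal F^C$ case with $C'=C+1$ by noting both witnesses exceed $M$ so that $\str{G}'$ would contain a $C$-forbidden triangle. Your aside about ``the case $\bar d(u,v)=M$'' is moot: when $\bar d(u,v)=M$ the disjunction of (1) and (2) holds for any value of $d'(u,v)$, as the paper notes immediately after the statement.

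The genuine problem is in your plan for the Case~\ref{IIb} corner. You propose to show that ``any valid $d'$ must satisfy $d'(u,v)>M$'' so that ``conclusion~(2) genuinely fails and must be replaced by~(3).'' That statement is false, and the step as you have set it up cannot be completed: for $\delta=5$, $M=3$, the $(\delta,\delta)$ fork admits the completion $d'(u,v)=2=M-1$ (the triangle $5,5,2$ is allowed), which falls squarely under conclusion~(2). The lemma is a disjunction; nothing forces $d'$ above $M$ here. What actually has to be shown is only the conditional claim for a putative violating pair: if $d'(u,v)>\bar d(u,v)=M-1$, then $d'(u,v)>M$ and $d'(u,v)\equiv M-1 \pmod 2$. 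This is a one-line consequence of the $C$-bound rather than a delicate step: by Observation~\ref{obs:FCforks} the witnesses satisfy $d_{i-1}(u,w)=d_{i-1}(v,w)=\delta$, hence $d'(u,w)=d'(v,w)=\delta$, and in Case~\ref{IIb} one has $C=2K_1+2K_2+1=2\delta+K_2=2\delta+M$ with $M$ odd, so every triangle $\delta\,\delta\,x$ with $x\geq M$ and $x$ odd has odd perimeter at least $C_1$ and is forbidden; in particular $x=M$ itself is excluded and any admissible $x\geq M$ is even, i.e.\ has the parity of $M-1$. With that correction (and dropping the claim that (2) fails), your induction goes through and coincides with the paper's proof.
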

Note that for $\bar d(u,v)=M$ the statement trivially holds.

\begin{proof}
Suppose that the statement is not true and take any witness $\str{G}'=(G, d')$ (i.e. a completion of ${\str{G}}$ into $\mathcal A^\delta_{K_1,K_2,C_0,C_1}$ such that there is a pair of vertices violating the statement). Recall that the completion with magic parameter $M$ is defined as a limit of a sequence $\str{G}_1, \str{G}_2, \ldots$ of edge-labelled graphs such that $\str{G}_1={\str{G}}$ and each two subsequent graphs differ at most by adding edges of a single distance.

Take the smallest $i$ such that in the graph $\str{G}_i = (G,d_i)$ there are vertices $u,v\in G$ with $d_i(u,v) > M$ and $d_i(u,v) > d'(u,v)$ or $d_i(u,v) < M$ and $d_i(u,v) < d'(u,v)$. Let $w\in G$ be the witness of $d_i(u,v)$. In Case~\ref{IIb}, by Observation~\ref{obs:FCforks} edges of length $M-1$ are added in the last step of our completion algorithm. Therefore we know that the distances $d_{i-1}(u,w)$ and $d_{i-1}(v,w)$ satisfy the optimality conditions in point 1 or 2.

We shall distinguish three cases, based on whether $d_{i}(u,v)$ was introduced by $\mathcal F^-$, $\mathcal F^+$ or $\mathcal F^C$:

\paragraph{$\mathcal F^-$ case} We have $M < d_i(u,v) = |d_{i-1}(u,w)-d_{i-1}(v,w)|$. Without loss of generality let us assume $d_{i-1}(u,w) > d_{i-1}(v,w)$, which means that $d_{i-1}(u,w) > M$ and $d_{i-1}(v,w) < M$ (as $M\geq \left\lceil\frac{\delta}{2}\right\rceil$). From the minimality of $i$, it follows that $d'(u,w) \geq d_{i-1}(u,w)$ and $d'(v,w)\leq d_{i-1}(v,w)$. Since $\str{G}'$ is metric we have $d_i(u,v) = d_{i-1}(u,w)-d_{i-1}(v,w) \leq d'(u,w)-d'(v,w) \leq d'(u,v)$, which is a contradiction.

\paragraph{$\mathcal F^+$ case} We have $M > d_i(u,v) = d_{i-1}(u,w)+d_{i-1}(v,w)$, hence $d_{i-1}(u,w),\allowbreak d_{i-1}(v,w)<M$. By the minimality of $i$ we have $d'(u,w)\leq d_{i-1}(u,w)$ and $d'(v,w)\leq d_{i-1}(v,w)$. Since $\str{G}'$ is metric, we get $d'(u,v)\leq d_i(u,v)$, which contradicts our assumptions.

\paragraph{$\mathcal F^C$ case} We have $M > d_i(u,v) = C-1-d_{i-1}(u,w)-d_{i-1}(v,w)$.

First suppose that $C'=C+1$. Recall that, by the admissibility of $C$, we have $C-1\geq 2\delta+1$ and $M\leq \left\lfloor \frac{C-\delta-1}{2} \right\rfloor$. Thus we get $d_{i-1}(u,w),d_{i-1}(v,w)>M$ (otherwise, if, say, $d_{i-1}(u,w)\leq M$, we obtain the contradiction $C-\delta-1\geq 2M > d_{i-1}(u,w)+d_i(u,v) = C-1-d_{i-1}(v,w) \geq C-\delta-1$). So again $d'(u,w)\geq d_{i-1}(u,w)$ and $d'(v,w)\geq d_{i-1}(v,w)$, which means that the triangle $u,v,w$ in $\str{G}'$ is forbidden by the $C$ bound, which is absurd as $\str{G}'$ is a completion of $\str{G}$ in $\mathcal A^\delta_{K_1,K_2,C_0,C_1}$.

\medskip

It remains to discuss the case where $C' > C+1$. By Observation~\ref{obs:FCforks}, we only need to consider Case~\ref{IIb}, $d_{i}(u,v) = K_2-1 = M-1$ and $d_{i-1}(u,w) = d_{i-1}(v,w) = \delta$. By our assumption we have $d'(u,v) > d_i(u,v)$. Hence if $d'(u,v) \geq M$ it has to have the same parity as $d_i(u,v)$ (otherwise the triangle $u,v,w$ would be forbidden in $\str{G'}$ by the $C$ bound).
\end{proof}

Next we show that the algorithm initially runs in a way that preserves the parity of completions to $\mathcal A^\delta_{K_1,K_2,C_0,C_1}$.

\begin{lem}[Parity Lemma]\label{lem:sameparity}
Let $\str{G}$, $\overbar{\str G}$ and $\str{G'}$ be as in Lemma \ref{lem:bestcompletion}. Then for every pair of vertices $u,v\in G$ such that either $\bar d(u,v) \leq \min(K_1,M-1)$ or $\bar d(u,v) \geq \max(K_2,M+1)$, at least one of the following holds:
\begin{enumerate}
\item The parity of $\bar d(u,v)$ is the same as the parity of $d'(u,v)$;
\item the parameters come from Case~\ref{III}, $C=2\delta+K_1+1$, $C\neq  2K_1+2K_2+1$, $M>K_1>1$ and $\bar d(u,v)=K_1$.
\end{enumerate}
\end{lem}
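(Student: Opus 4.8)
The plan is to argue by induction on the step $i$ at which the edge $\{u,v\}$ receives its value $\bar d(u,v)=d_i(u,v)$ in the completion with magic parameter $M$; the Time Consistency Lemma~\ref{lem:expandtime} guarantees that the witness $w$ and the two witnessing edges $\{u,w\},\{v,w\}$ are completed strictly earlier, so the inductive hypothesis is available for them whenever they lie in the regime of the statement. The base case is an edge of $\str{G}$ itself, where $\bar d(u,v)=d'(u,v)$ and point~1 holds trivially. The driving observation is that the fork operations control perimeter parity: a completion by $\mathcal F^+$ or $\mathcal F^-$ produces a triangle $u,v,w$ of \emph{even} perimeter in $\overbar{\str G}$ (namely $2(d_{i-1}(u,w)+d_{i-1}(v,w))$ and $2\max(d_{i-1}(u,w),d_{i-1}(v,w))$ respectively), whereas a completion by $\mathcal F^C$ produces perimeter exactly $C-1$. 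Hence $\bar d(u,v)$ is congruent mod~$2$ to a fixed combination of $d_{i-1}(u,w)$ and $d_{i-1}(v,w)$, and it suffices to (a) transfer the parities of the witnessing edges from $\overbar{\str G}$ to $\str{G'}$ by the inductive hypothesis, and (b) show that the corresponding triangle $u,v,w$ has the \emph{same} perimeter parity in $\str{G'}$.

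For step (b) I would use the forbidden-triangle analysis of Section~\ref{sec:forbtriangles} together with the Optimality Lemma~\ref{lem:bestcompletion}, splitting by fork type. In the $\mathcal F^+$ case (so $\bar d(u,v)\le\min(K_1,M-1)$) the witnessing edges satisfy $d_{i-1}(u,w),d_{i-1}(v,w)\le K_1-1$, so in $\str{G'}$ the triangle has perimeter at most $2K_1$; an odd perimeter would be forbidden by the $K_1$-bound, so it is even, and the witnesses (being of length $<K_1$) fall under point~1 of the inductive hypothesis. In the $\mathcal F^-$ case ($\bar d(u,v)\ge\max(K_2,M+1)$) I would write $a=d_{i-1}(u,w)>M>d_{i-1}(v,w)=b$ as in the Optimality Lemma and observe $a\ge K_2+1$ and, crucially, $b\le\delta-K_2\le K_1$, the last inequality using $K_1+K_2\ge\delta$ (which follows from admissibility in both Cases~\ref{II} and~\ref{III}); thus \emph{both} witnessing edges again lie in the regime. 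The Optimality Lemma gives $d'(u,w)\ge a$ and $d'(v,w)\le b$, so the triangle in $\str{G'}$ has shortest side $d'(v,w)$ with $d'(u,w)+d'(u,v)-d'(v,w)\ge 2(a-b)\ge 2K_2$; an odd perimeter would then be forbidden by the $K_2$-bound, so it is even. The $\mathcal F^C$ case (again $\bar d(u,v)<M$) is analogous: both witnesses exceed $M$ and, using $K_1+K_2\ge\delta$ once more, are at least $K_2$, so they lie in the regime, and a short-versus-long $K_2$-bound computation forces the $\str{G'}$-triangle to have perimeter of the same parity as $C-1$.

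The exceptional conclusion, point~2, should arise exactly from the $\mathcal F^C$ forks whose witnesses are $a=b=\delta$, which complete to $C-1-2\delta$. This value lies in the regime and equals $K_1$ precisely when $C=2\delta+K_1+1$, forcing $\str{G'}$ to contain a triangle $\delta\,\delta\,c'$ with $c'\le K_1$ (by Optimality). Here the $K_2$-bound pins down the parity of $c'$: when $K_1+2K_2=2\delta$ (equivalently $C=2K_1+2K_2+1$) one computes that $c'$ must be even and that $K_1$ is even too, so point~1 still holds; when $K_1+2K_2=2\delta-1$ (the only other Case~\ref{III} possibility near the boundary) $K_1$ is odd while $c'$ is forced even, the parity genuinely flips, and I would check that the remaining side-conditions of point~2 hold automatically --- $M>K_1$ because otherwise $\bar d(u,v)=K_1$ would not lie in the regime (and it is also guaranteed by Definition~\ref{defn:magiccompletion} in this boundary subcase), and $C=2\delta+K_1+1\ne 2\delta+K_1=2K_1+2K_2+1$.

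I expect the main obstacle to be controlling how the exceptional behaviour of point~2 propagates through the induction, that is, ruling out that a witnessing edge of length $K_1$ is itself exceptional and thereby corrupts the parity of a longer edge for which point~2 is \emph{not} available. The decisive point is the bound $b\le\delta-K_2$ on the short witness in the $\mathcal F^-$ case: in the exceptional parameter regime $K_1+2K_2=2\delta-1$ this gives $b\le(K_1+1)/2<K_1$ for $K_1>1$, so the short witness is never an exceptional $K_1$-edge and the inductive hypothesis delivers its true parity, closing the argument. A separate, finite check is needed in Case~\ref{IIb}, where the Optimality Lemma's own exceptional clause (its point~3) may move a witnessing edge of value $M-1$ to a value exceeding $M$; since that clause nevertheless preserves parity, the parity bookkeeping still closes, but the magnitudes of the witness edges must be re-examined by hand in this bounded family (using Observation~\ref{obs:FCforks}).
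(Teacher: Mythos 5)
Your overall strategy --- induction on the completion step, a case split by fork type, and forbidden-triangle arguments (the $K_1$-bound for $\mathcal F^+$, the $K_2$-bound for $\mathcal F^-$ and $\mathcal F^C$) forcing the perimeter parity of the triangle $u,v,w$ in $\str{G}'$ --- is exactly the paper's. The one genuinely different ingredient is how you stop the exceptional conclusion (point~2) from propagating through the induction: you bound the \emph{magnitude} of the short $\mathcal F^-$ witness strictly below $K_1$, whereas the paper shows that $t_M(K_1)$ exceeds $t_M(b)$ for every other distance $b$ in the regime, so the exceptional $K_1$-edges are created only after everything that could use them as a witness. Both routes close the argument.

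However, your analysis of the exceptional case itself contains a concrete error. You restrict the Case~\ref{III} parameters with $C=2\delta+K_1+1$ to $K_1+2K_2\in\{2\delta-1,2\delta\}$. This is wrong in both directions: admissibility forces $C\geq 2\delta+K_1+2$ whenever $K_1+2K_2=2\delta-1$, so that subcase cannot occur at all; and $K_1+2K_2$ can exceed $2\delta$ (e.g.\ $\mathcal A^5_{2,5,14,13}$, where $C=13=2\delta+K_1+1$ and $K_1+2K_2=12=2\delta+2$). The omitted subcase $K_1+2K_2>2\delta$ is precisely the one where the $K_2$-bound does \emph{not} pin down the parity of $c'$ in the triangle $\delta\,\delta\,c'$ and point~2 is non-vacuously needed; there one must simply verify the side conditions of point~2, which do hold. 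The same misidentification infects your ``decisive'' bound $b\leq(K_1+1)/2$ for the short $\mathcal F^-$ witness: it is computed under the impossible hypothesis $K_1+2K_2=2\delta-1$. Fortunately the correct hypothesis $K_1+2K_2\geq 2\delta$ yields the stronger bound $b\leq\delta-K_2\leq K_1/2<K_1$, so your propagation argument survives once the numerology is repaired. Finally, Case~\ref{IIb} is an infinite family of parameters, not a bounded one, so ``a separate, finite check'' is not available; the paper instead absorbs clause~3 of the Optimality Lemma directly into each fork case, which is what your parity-preservation remark amounts to anyway.
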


Note that we are only interested in distances not equal to $M$. 

\begin{proof}
Suppose that the statement is not true, and let $\str{G}'=(G, d')$ be a counterexample. Recall that the completion with magic parameter $M$ is defined as a limit of a sequence $\str{G}_1, \str{G}_2, \ldots$ of edge-labelled graphs  such that $\str{G}_1={\str{G}}$ and each two subsequent edge-labelled graphs differ at most by adding edges of a single distance.

Take the smallest $i$ such that in $\str{G}_i = (G,d_i)$ there are vertices $u,v\in G$ with $d_i(u,v)$ and $d'(u,v)$ not satisfying the lemma. Denote by $w$ a witness of the distance $d_i(u,v)$. As in the proof Lemma~\ref{lem:bestcompletion}, we can argue that $d_{i-1}(u,w)$ respectively $d_{i-1}(v,w)$ satisfy the optimality conditions 1 or 2 in Lemma~\ref{lem:bestcompletion}.

First we will show that the exceptional case 2 from the statement only happens at the very end of the induction, hence when using the induction hypothesis (or minimality of $i$), we can work only with the first part of the statement.

Suppose that the parameters satisfy Case~\ref{III} and further $C=2\delta+K_1+1$, $C\neq 2K_1+2K_2+1$ and $M>K_1>1$. We have $t_M(K_1) > t_M(a)$ for any distance $a<K_1$ and also, by admissibility, $t_M(K_1) > t_M(b)$ for any distance $b\geq K_2$ and $b>M$: since $t_M(K_1) = 2K_1-1$ and $t_M(b) \leq 2\delta-2K_2$, we need to verify that $2K_1-1 > 2\delta-2K_2$ and thus $2K_1+2K_2 > 2\delta+1$. By admissibility it follows $2K_2+K_1\geq 2\delta$ (when $2K_2+K_1=2\delta-1$, admissibility implies $C\geq 2\delta+K_1+2$), which give the desired bound.

\medskip

Next observe that if $K_1=1$ then from Lemma \ref{lem:bestcompletion} we have that whenever $\bar d(u,v)=1$ for some vertices $u,v$, then in any completion the edge has also length 1, hence also fixed parity.

As in the proof of Lemma \ref{lem:bestcompletion}, we will now distinguish three cases based on whether $d_i(u,v)$ was introduced due to $\mathcal F^+$, $\mathcal F^-$ or $\mathcal F^C$:

\paragraph{$\mathcal F^+$ case} In this case $d_i(u,v)<M$ and $d_i(u,v)=d_{i-1}(u,w)+d_{i-1}(w,v)$. Because of our assumption $d_i(u,v)\leq K_1$, the perimeter of the triangle $uvw$ in $\str{G}_i$ is even and at most $2K_1$. By Lemma~\ref{lem:bestcompletion} either the third possibility happened, hence $\bar{d}(u,v)$ has the same parity as $d'(u,v)$, or we have $d'(u,v)\leq d_{i}(u,v)$, $d'(u,w)\leq d_{i-1}(u,w)$ and $d'(v,w)\leq d_{i-1}(v,w)$, hence $d'(u,v)+d'(u,w)+d'(w,v)$ is odd and smaller than $2K_1+1$. Thus the triangle $uvw$ is forbidden by the $K_1$ bound in $\str{G}'$, a contradiction.

\paragraph{$\mathcal F^-$ case} Here $d_i(u,v) > M$ and (without loss of generality) $d_i(u,v)=d_{i-1}(u,w)-d_{i-1}(w,v)$. Then the triangle $uvw$ has even perimeter with respect to $d_i$. By our assumption we have $d_i(u,v)\geq K_2$ and thus $d_{i-1}(u,w)>d_i(u,v)\geq K_2$ and $d_{i-1}(v,w)<M$.

This implies $d_i(u,v)+d_{i-1}(u,w) = 2d_i(u,v)+d_{i-1}(v,w)\geq 2K_2+d_{i-1}(v,w)$. From Lemma \ref{lem:bestcompletion} we get that $d'(u,v)\geq d_i(u,v)$, $d'(u,w)\geq d_{i-1}(u,w)$ and $d'(v,w)\leq d_{i-1}(v,w)$, hence also $d'(u,v)+d'(u,w) \geq 2K_2+d'(v,w)$ holds. Thus the triangle $uvw$ is forbidden by the $K_2$ bound in $\str{G}'$, a contradiction.

\paragraph{$\mathcal F^C$ case} Here $d_i(u,v)<M$ and $d_i(u,v)=C-1-d_{i-1}(u,w)-d_{i-1}(w,v)$. From our assumption it follows that $d_i(u,v)\leq K_1$.

In Case~\ref{III} we have $C\geq 2\delta+K_1+1$, hence $d_i(u,v) = K_1$ if and only if $C=2\delta+K_1+1$, $M>K_1$ and $d(u,w)= d(v,w) = \delta$; this case is treated in point 2.

It remains to consider Case~\ref{II}. Hence we can assume that $d'(u,v) < d(u,v)$ and these edges have different parity. Note that then the triangle $uvw$ has even perimeter $C-1$. By Lemma~\ref{lem:bestcompletion} we have $d'(u,w)+d'(v,w)\geq d_{i-1}(u,w)+d_{i-1}(v,w) = C-1-d(u,v) = 2K_1+2K_2-d_i(u,v)$. But as $d'(u,v) \leq d_i(u,v)\leq K_1$ we have $d'(u,w)+d'(v,w)\geq 2K_2+d'(u,v)$, so the triangle $uvw$ is forbidden by the $K_2$ bound in $\str{G}'$, a contradiction.
\end{proof}

\begin{lem}[Automorphism Preservation Lemma]
\label{lem:aut}
Let $\str{G}\in \mathcal G^\delta$ and let $\overbar{\str{G}}$ be its completion with magic parameter $M$. Then every automorphism of $\str{G}$ is also an automorphism of $\overbar{\str{G}}$.
\end{lem}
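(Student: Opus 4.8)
The plan is to show that the completion with magic parameter $M$ is canonical in the sense that it depends only on the isomorphism type of $\str{G}$, not on any labelling of its vertices. Since the completion is built by the deterministic algorithm of Definition~\ref{defn:ftmcompletion} applied with the fixed data $(\mathbb F_M, t_M, M)$, the output $\overbar{\str{G}}$ is determined entirely by which edges are present in $\str{G}$ and their lengths. Automorphisms of $\str{G}$ preserve exactly this information, so intuitively they must carry the run of the algorithm to itself. First I would fix an automorphism $f$ of $\str{G}$ and recall that the completion is the limit of a sequence $\str{G}_1, \str{G}_2, \ldots$ where $\str{G}_1 = \str{G}$ and $\str{G}_{k+1}$ is obtained from $\str{G}_k$ by a single $(\mathbb F_M(t_M^{-1}(k)), t_M^{-1}(k))$-completion step (together with the final step filling all remaining non-edges with distance $M$).

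The key step is an induction on $k$ proving the invariant that $f$ is an automorphism of each $\str{G}_k = (G, d_k)$, i.e. that $d_k(u,v) = d_k(f(u),f(v))$ whenever either side is defined. The base case $k=1$ is the hypothesis that $f \in \Aut(\str{G})$. For the inductive step, suppose $f$ is an automorphism of $\str{G}_k$, and consider the new edges added in passing to $\str{G}_{k+1}$. By the definition of the $(\mathcal{F}, c)$-completion, a non-edge $u,v$ receives length $c = t_M^{-1}(k)$ precisely when there is a witness $w \in G$ and a fork $(a,b) \in \mathbb{F}_M(c)$ with $\{d_k(u,w), d_k(v,w)\} = \{a,b\}$. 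I would argue that $w' = f(w)$ is then a witness for the pair $f(u), f(v)$: since $f$ is an automorphism of $\str{G}_k$, we have $d_k(f(u), f(w)) = d_k(u,w)$ and $d_k(f(v), f(w)) = d_k(v,w)$, so $\{d_k(f(u),w'), d_k(f(v),w')\} = \{a,b\}$ as well, and the same fork $(a,b) \in \mathbb{F}_M(c)$ applies. Hence $u,v$ is completed with distance $c$ if and only if $f(u), f(v)$ is, and they receive the same length $c$. This shows $d_{k+1}(f(u),f(v)) = d_{k+1}(u,v)$ for all newly added edges, and combined with the inductive hypothesis for the old edges, $f$ is an automorphism of $\str{G}_{k+1}$.

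Finally I would handle the limit step: after all the $(\mathbb{F}_M, t_M, M)$-completion steps, every remaining non-edge is set to distance $M$. Since $f$ maps non-edges of the limit graph to non-edges (it permutes edges among themselves by the inductive claim, and is a bijection of $G$), the pairs filled with $M$ are permuted among themselves by $f$, so $f$ preserves these distances as well. Therefore $f$ is an automorphism of $\overbar{\str{G}}$.

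I expect the argument to be essentially routine, and the main (mild) obstacle is bookkeeping rather than any genuine difficulty: one must be careful that the inductive invariant is stated for \emph{both} directions (so that $f$ does not create or destroy edges and genuinely acts as an automorphism at each stage), and that the characterisation of which edges are added at step $k$ is phrased purely in terms of the previous graph $\str{G}_k$, so that applying $f$ transports witnesses correctly. A small point worth verifying is that the final ``fill with $M$'' step is itself canonical, i.e. that $f$ sends the set of pairs still non-adjacent after all completion steps to itself; this follows immediately once one knows $f$ is an automorphism of every $\str{G}_k$, since adjacency in the limit is determined stepwise.
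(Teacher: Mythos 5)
Your proposal is correct and follows essentially the same route as the paper: an induction on the stages $\str{G}_k$ of the algorithm, transporting witnesses by $f$ to show each step is canonical, with the final fill-with-$M$ step handled by the fact that $f$ permutes the remaining non-edges. The paper's proof is just a terser version of the same argument.
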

\begin{proof}
Given $\str{G}$ and an automorphism $f\colon\str{G}\to \str{G}$, it can be verified by induction that for every $k>0$, $f$ is also an automorphism graph $\str{G}_k$ as in Definition~\ref{defn:ftmcompletion}.
For every edge $x,y$ of $\str{G}_k$ which is not an edge of $\str{G}_{k-1}$, it is true that $f(x),f(y)$
is also an edge of $\str{G}_k$ which is not an edge of $\str{G}_{k-1}$, and moreover the edges $x,y$ and $f(x),f(y)$ are of the same length. This follows
directly from the definition of $\str{G}_k$.
\end{proof}

\subsection{Correctness of the completion algorithm}
\label{sec:magiccompletion}
In this section we prove:
\begin{thm} \label{thm:magiccompletion}
Let $\delta$, $K_1$, $K_2$, $C_0$ and $C_1$ be primitive admissible parameters.
Suppose that $\str{G}=(G,d)\in \mathcal G^\delta$ has a completion into $\mathcal A^\delta_{K_1,K_2,C_0,C_1}$, and let $\overbar{\str{G}}=(G,\bar{d})$ be its completion with magic parameter $M$. Then $\overbar{\str{G}}\in\mathcal A^\delta_{K_1,K_2,C_0,C_1}$.

$\overbar{\str{G}}$ is optimal in the following sense:
Let $\str{G}'=(G,d')\in\mathcal A^\delta_{K_1,K_2,C_0,C_1}$ be an arbitrary completion of $\str{G}$ in  $\mathcal A^\delta_{K_1,K_2,C_0,C_1}$, then 
for every pair of vertices $u,v\in G$ one of the following holds:
\begin{enumerate}
 \item $d'(u,v) \geq \bar{d}(u,v) \geq M$,
 \item $d'(u,v) \leq \bar{d}(u,v) \leq M$,
 \item the parameters $\delta$, $K_1$, $K_2$, $C_0$ and $C_1$ satisfy Case \ref{IIb}, $d'(u,v) \neq M$ and $\bar{d}(u,v) = M-1$.
\end{enumerate}

Finally, every automorphism of $\str{G}$ is also an automorphism of $\overbar{\str{G}}$.
\end{thm}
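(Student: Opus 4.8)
The plan is to get two of the three assertions essentially for free and to concentrate all the work on membership. The optimality clause is verbatim the conclusion of the Optimality Lemma~\ref{lem:bestcompletion}: its cases 1 and 2 coincide with the ones stated here, and its case~3 (which additionally records $d'(u,v)>M$ and a parity coincidence) is strictly stronger than the case~3 asked for, so there is nothing to prove. The final sentence is literally the Automorphism Preservation Lemma~\ref{lem:aut}. I would therefore dispose of these in one line each and devote the proof to showing $\overbar{\str{G}}\in\mathcal A^\delta_{K_1,K_2,C_0,C_1}$.

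For membership I would argue by contradiction. Suppose $\overbar{\str{G}}$ contains a forbidden triangle on vertices $u,v,w$, and fix, using the hypothesis, a completion $\str{G}'=(G,d')\in\mathcal A^\delta_{K_1,K_2,C_0,C_1}$ of $\str{G}$; the aim is to show that the triangle $uvw$ is already forbidden in $\str{G}'$, contradicting $\str{G}'\in\mathcal A^\delta_{K_1,K_2,C_0,C_1}$. First I would set up the transfer rules on the three edges: at least one edge is genuinely produced by the algorithm (if all three lay in $\str{G}$ the identical triangle would already be forbidden in $\str{G}'$), any edge of length $M$ lies in $\str{G}$ by the $\mathbb F_M$ Completeness Lemma~\ref{lem:misgood} and hence keeps length $M$ in $\str{G}'$, and every remaining edge is controlled by the Optimality Lemma~\ref{lem:bestcompletion}, which forces $d'\geq\bar d$ where $\bar d>M$ and $d'\leq\bar d$ where $\bar d<M$ (apart from the Case~\ref{IIb} exception $\bar d=M-1<M<d'$). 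I would then organise the argument by the bound that forbids the triangle, reading off from Section~\ref{sec:forbtriangles} where the side lengths $a\leq b\leq c$ sit relative to $M$.

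The routine cases are the ones where the Optimality squeeze moves every side the \emph{helpful} way. For the $K_1$-bound all sides are $\leq K_1\leq M$, so every $d'$-side is pushed down, the perimeter stays below $2K_1+1$, and the Parity Lemma~\ref{lem:sameparity} keeps it odd, reproducing a $K_1$-forbidden triangle in $\str{G}'$. For the $K_2$-bound the two long sides exceed $K_2\geq M$ (so $d'\geq\bar d$ there) while the short side $a\leq K_1\leq M$ is pushed down, which only strengthens the inequality $b+c\geq 2K_2+a+1$; the Parity Lemma keeps the perimeter odd and again yields a forbidden triangle in $\str{G}'$. Non-metricity of the completion I would rule out separately, by a direct induction on the algorithm checking that $\mathcal F^+$, $\mathcal F^-$, $\mathcal F^C$ and the final $M$-step each complete a fork $(a,b)$ by a value lying in $[\,|a-b|,a+b\,]$, which is exactly what the magic conditions on $M$ guarantee.

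The hard part will be the $C$-bound when the short side lies strictly below $M$. Here the two long sides satisfy $b,c>M$, so $d'\geq\bar d$ on them, but the Optimality Lemma sends the short side $a<M$ the \emph{wrong} way ($a'\leq a$), so the perimeter need not reach $C$ in $\str{G}'$ by magnitude alone. To close this I would not transfer naively but exploit the time function: since $a+b\geq C-c\geq\delta+2$, one computes $t_M(a)>t_M(b)\geq t_M(c)$, so the short edge is the \emph{last} of the three to be inserted; were it produced by the algorithm, the fork $(b,c)$ at witness $w$ would lie in $\mathcal F^C$ and, by the Time Consistency Lemma~\ref{lem:expandtime}, would already complete $uv$ to $C-1-b-c\leq a-1$ at a strictly earlier step, contradicting $\bar d(u,v)=a$. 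Hence the short edge must be a given edge of $\str{G}$, so $a'=a$, and with the Parity Lemma controlling the two long sides the perimeter and its parity transfer, giving a $C$-forbidden triangle in $\str{G}'$. I expect essentially all the residual subtlety to be bookkeeping: reconciling the restricted ranges of the Parity Lemma ($\bar d\leq\min(K_1,M-1)$ or $\bar d\geq\max(K_2,M+1)$) with its Case~\ref{III} exception, and treating the Case~\ref{IIb} comparison $\bar d=M-1<d'$ via its own parity coincidence, for which Observation~\ref{obs:FCforks} guarantees that such sides come only from $(\delta,\delta)$-forks completed to $M-1$.
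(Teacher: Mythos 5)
Your overall architecture is exactly the paper's: the theorem is proved there by combining the Optimality Lemma~\ref{lem:bestcompletion}, the Automorphism Preservation Lemma~\ref{lem:aut}, and five per-bound lemmas that each transfer a forbidden triangle of $\overbar{\str{G}}$ into the hypothetical completion $\str{G}'$. Your treatments of the $C$-bound (including the time-function argument forcing the short edge to be an original edge of $\str{G}$), the $K_1$-bound and the $K_2$-bound match Lemmas~\ref{lem:Cbound}, \ref{lem:K1bound}, \ref{lem:C01bound} and \ref{lem:K2bound} closely, though the Case~\ref{IIb} corner cases you defer to ``bookkeeping'' are not entirely mechanical: for the $K_1$-bound the paper needs metricity of $\str{G}'$ together with the Parity Lemma~\ref{lem:sameparity} to exclude $c'=c+1$, and for the $K_2$-bound with $a=K_2-1$ the contradiction is obtained from a $C$-bound violation in $\str{G}'$ rather than a $K_2$-bound one.

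The genuine gap is your treatment of non-metric triangles. You propose to show that the completion is metric by checking locally that every fork $(a,b)$ is completed by a value in $[\,|a-b|,a+b\,]$. That local fact is true, but it only controls the triangle formed by the new edge and its own witness; it says nothing about the other triangles the new edge lies in, whose remaining sides may have been produced by entirely different witnesses. Indeed the conclusion your argument would establish --- that the output of the algorithm is always metric --- is false: on inputs with no completion the algorithm routinely outputs non-metric triangles (this is precisely what the Finite Obstacles Lemma~\ref{lem:obstacles} exploits by running the algorithm backwards from forbidden triangles, and Figure~\ref{fig:11555} shows a run arriving at the non-metric triangle $124$ even though every individual fork was completed inside its metric interval). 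The correct argument, the Metric Lemma~\ref{lem:metric}, is of the same transfer type as your other cases: assuming a non-metric triangle $a\le b<c$ appears in $\overbar{\str{G}}$, it splits on the position of $a,b,c$ relative to $M$, uses the time function to force certain edges to be original, and pushes the violation into $\str{G}'$ via the Optimality and Parity Lemmas; it is in fact the longest and most delicate of the five lemmas, with a substantial Case~\ref{IIb} analysis (the triangle $522$ for $\delta=5$, the identity $c-a=M$, and a terminal $K_2$-bound contradiction) that your sketch does not anticipate.
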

In the next five lemmas we will use Lemmas \ref{lem:bestcompletion} and \ref{lem:sameparity} to show that $\str{G}\in \mathcal G^\delta$ has a completion into $\mathcal A^\delta_{K_1,K_2,C_0,C_1}$, if and only if the algorithm with a magic parameter $M$ yields such a completion. We will deal with each type of forbidden triangle separately, and in doing that, we will implicitly use the results of Section~\ref{sec:forbtriangles}.

\begin{lem}[$C$-bound Lemma]\label{lem:Cbound}
Suppose $C'=C+1$, and let ${\str{G}}=(G,{d})\in \mathcal G^\delta$ be such that there is a completion of ${\str{G}}$ into $\mathcal A^\delta_{K_1,K_2,C_0,C_1}$; let $\overbar{\str{G}}=(G,\bar d)$ be its completion with magic parameter $M$. Then there is no triangle forbidden by the $C$ bound in $\overbar{\str{G}}$.
\end{lem}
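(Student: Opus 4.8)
The plan is to argue by contradiction: assume $\overbar{\str{G}}$ contains a triangle $u,v,w$ with perimeter at least $C$ (since $C'=C+1$, being forbidden by the $C$ bound is exactly having perimeter $\geq C$). Order its edges so that $\bar d(u,v)=a\leq \bar d(v,w)=b\leq \bar d(u,w)=c$; by the $C$-bound analysis of Section~\ref{sec:forbtriangles} the two longer edges satisfy $b,c>M$. The crucial structural remark I would record first is that, because $C'=C+1$, the parameters are never in Case~\ref{IIb}, so the third alternative of the Optimality Lemma~\ref{lem:bestcompletion} cannot occur: for every pair either $d'\geq \bar d\geq M$ or $d'\leq\bar d\leq M$. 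The natural split is then on whether the shortest edge satisfies $a\geq M$ or $a<M$.

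For the easy case $a\geq M$ all three edges are $\geq M$. On the edges of length $>M$ the only available Optimality alternative is $d'(\cdot)\geq\bar d(\cdot)$, and on any edge of length exactly $M$ the $\mathbb F_M$ Completeness Lemma~\ref{lem:misgood} forces that edge to already lie in $\str{G}$, hence $d'=M=\bar d$ there as well. Summing, the triangle $u,v,w$ has perimeter at least $C$ in $\str{G}'$ too, contradicting $\str{G}'\in\mathcal A^\delta_{K_1,K_2,C_0,C_1}$.

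The main obstacle is the case $a<M$: here the Optimality Lemma points the wrong way on the short edge ($d'(u,v)\leq a$), so the triangle does \emph{not} transfer to $\str{G}'$ and a purely local comparison fails. My plan is to resolve this with an \emph{alternative-witness} argument at the level of the completion algorithm. If $uv\in\str{G}$, then $d'(u,v)=a$ and optimality on the two long edges again yields perimeter $\geq C$ in $\str{G}'$, a contradiction. Otherwise $uv$ was inserted by the algorithm, and the key observation is that the long edges themselves form an admissible fork: setting $y=C-1-b-c$, the inequalities $b+c\geq C-a$ and $b,c\leq\delta$ together with $C\geq 2\delta+2$ give $1\leq y\leq a-1<M$, so $(b,c)\in\mathcal F^C_y\subseteq\mathbb F_M(y)$. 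Thus $w$ is a witness completing $uv$ to length $y$ at step $t_M(y)=2y-1$, provided $uw,vw$ are already present by then; this I would verify from $b,c\leq\delta\leq C-\delta-2$, which forces $t_M(c),t_M(b)\leq 2y-2<2y-1$. Consequently $uv$ is completed no later than step $2y-1<2a-1=t_M(a)$, so $\bar d(u,v)\neq a$, contradicting the choice of the triangle.

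I expect the timing bookkeeping in this last case to be the delicate point: one must check simultaneously that $y$ is a genuine distance strictly below $M$, that $(b,c)$ indeed lands in $\mathcal F^C_y$ rather than some other fork, and that both long edges are available strictly before time $2y-1$ so that the $\mathcal F^C$-completion via $w$ actually fires early. All three reduce to the standing hypothesis $C\geq 2\delta+2$ of the primitive case, so no further appeal to $\str{G}'$ is needed in this subcase. The remaining cases of forbidden triangles (non-metric, $K_1$, $K_2$) I would leave to the subsequent lemmas, exactly as the theorem's proof structure anticipates.
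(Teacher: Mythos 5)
Your proof is correct and follows essentially the same route as the paper's: a case split on whether the shortest edge lies above or below $M$, the Optimality Lemma~\ref{lem:bestcompletion} for the two long edges, and, when the short edge was inserted by the algorithm, the observation that the fork $(b,c)\in\mathcal F^C_{C-1-b-c}$ with witness $w$ would have completed $u,v$ to $C-1-b-c$ at an earlier step, contradicting $\bar d(u,v)=a$. The only cosmetic differences are that you handle the boundary case $a=M$ via Lemma~\ref{lem:misgood} inside your first case and verify the timing inequalities by direct computation rather than citing the Time Consistency Lemma~\ref{lem:expandtime}.
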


\begin{proof}
Suppose for a contradiction that there is a triangle with vertices $u,v,w$ in $\overbar{\str{G}}$ such that $\bar d(u,v)+\bar d(v,w)+\bar d(u,w)\geq C$. For brevity let $a=\bar d(u,v)$, $b=\bar d(v,w)$ and $c=\bar d(u,w)$. Assume without loss of generality that $a\leq b\leq c$. Let $a',b',c'$ be the corresponding edge lengths in an arbitrary completion of ${\str{G}}$ into $\mathcal A^\delta_{K_1,K_2,C_0,C_1}$. Then two cases can appear. 

Either $a,b,c > M$, and then by Lemma \ref{lem:bestcompletion} we have $a'\geq a$, $b'\geq b$ and $c'\geq c$, so we get the contradiction $a' + b' + c' \geq C$; or $a\leq M$, $c\geq b>M$ and $a+b+c\geq C$. In this case Lemma \ref{lem:bestcompletion} implies $b'\geq b$ and $c'\geq c$ and $a'\leq a$. If the edge $(u,v)$ was already in $\str{G}$, then clearly $a' + b' + c' \geq a+b+c\geq C$, which is a contradiction. If $(u,v)$ was not already an edge in $\str{G}$, then it was added by the completion algorithm with magic parameter $M$ in step $t_M(a)$. Let $\bar{a}=C-1-b-c$. Then clearly $\bar{a}<a$, which means that $t_M(\bar{a}) < t_M(a)$, and as $\bar{a}$ depends on $b,c$, we have $t_M(b),t_M(c)<t_M(\bar{a})$. But then the completion with magic parameter $M$ actually sets the length of the edge $u,v$ to be $\bar{a}$ in step $t_M(\bar{a})$, which is a contradiction.
\end{proof}

\begin{lem}[Metric Lemma]\label{lem:metric}
Let $\str{G}$ and $\overbar{\str{G}}$ be as in Lemma \ref{lem:Cbound}. Then there are no non-metric triangles in $\str{G}$.
\end{lem}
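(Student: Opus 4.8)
The plan is to show that the completion $\overbar{\str{G}}=(G,\bar d)$ contains no non-metric triangle (the assertion for $\str{G}$ itself is immediate, since any triangle fully present in $\str{G}$ already embeds into the assumed completion). So suppose for contradiction that $x,y,z\in G$ span a non-metric triangle in $\overbar{\str{G}}$, and write its edge lengths as $a=\bar d(x,y)\le b=\bar d(y,z)$ and $c=\bar d(x,z)$, so that $a+b<c$ and hence $a\le b<c$. The opening move is to recall from Section~\ref{sec:forbtriangles} that the shortest edge of a non-metric triangle satisfies $a<M$, since $a\ge M$ would force $a+b\ge 2M\ge\delta\ge c$. Throughout I fix an arbitrary completion $\str{G}'=(G,d')\in\mathcal A^\delta_{K_1,K_2,C_0,C_1}$ of $\str{G}$ and aim to produce a non-metric (hence forbidden) triangle in $\str{G}'$, contradicting $\str{G}'\in\mathcal A^\delta_{K_1,K_2,C_0,C_1}$.

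I would split the argument according to the position of $b$ and $c$ relative to $M$. The genuinely easy regime is $b<M<c$: here the Optimality Lemma~\ref{lem:bestcompletion} gives $d'(x,z)\ge c$ (case~1, as $c>M$) together with $d'(x,y)\le a$ and $d'(y,z)\le b$ (case~2, as $a,b<M$), whence $d'(x,y)+d'(y,z)\le a+b<c\le d'(x,z)$ and the triangle is non-metric in $\str{G}'$. The regime $b<M$ and $c\le M$ (so all three edges are at most $M$) is handled by a timing observation: since $a+b<c\le M$, the fork $(a,b)$ at the witness $y$ lies in $\mathcal F^+_{a+b}\subseteq\mathbb F_M(a+b)$ and, by the Time Consistency Lemma~\ref{lem:expandtime}, is processed strictly before the step at which an edge of length $c$ could be created; as edges are never overwritten, this forces $x,z$ to be an \emph{original} edge of $\str{G}$. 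But then $d'(x,z)=c$ is preserved in every completion, while $d'(x,y)\le a$ and $d'(y,z)\le b$, again yielding a non-metric triangle in $\str{G}'$.

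The main obstacle is the remaining regime $b\ge M$ (equivalently the ``severe'' non-metricity $c>a+M$, which forces $c>M$), where both non-minimal edges exceed $M$ and the Optimality Lemma alone is powerless: it only gives $d'(x,z)\ge c$ and $d'(y,z)\ge b$, which is consistent with a metric triangle in $\str{G}'$. The idea is to exploit the algorithm's own dynamics on a \emph{different} edge. Since $a<M<c$ and $c-a>b\ge M$, the fork $(a,c)$ at the witness $x$ lies in $\mathcal F^-_{c-a}$; as both $x,y$ and $x,z$ are present before that point, and as $c-a>b$ reverses the relevant comparison of $t_M$-values, the Time Consistency Lemma~\ref{lem:expandtime} shows this fork fires strictly before any step producing an edge of length $b$. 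Hence the algorithm must set $\bar d(y,z)\ge c-a>b$, contradicting $\bar d(y,z)=b$ --- unless $y,z$ is itself original. In that last sub-case $d'(y,z)=b$ is preserved while $d'(x,z)\ge c$ by case~1 of Lemma~\ref{lem:bestcompletion}, so metricity of $\str{G}'$ forces $d'(x,y)\ge c-b>a$, which by Lemma~\ref{lem:bestcompletion} is possible only in Case~\ref{IIb} (its exceptional case~3).

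This reduces everything to checking the Case~\ref{IIb} corners, which I expect to be the only real bookkeeping. There the Optimality Lemma permits $d'(e)>M$ for an edge with $\bar d(e)=M-1$, so the clean inequalities $d'(x,y)\le a$ and $d'(y,z)\le b$ used in the first two regimes can fail precisely when a short edge equals $M-1$. The plan is to control these using the parity data supplied by case~3 of Lemma~\ref{lem:bestcompletion} and by the Parity Lemma~\ref{lem:sameparity}: the exceptional completions preserve parity while pushing distances above $M$, so a short edge of length $M-1$ cannot coexist with the observed non-metricity without violating a $K_1$- or $C$-bound, cases already disposed of. In summary, I expect the two low regimes to be immediate from optimality, the $b\ge M$ regime to be the conceptual heart (the one step that genuinely requires the completion's timing rather than mere optimality), and the Case~\ref{IIb} verification to be the most tedious but routine part.
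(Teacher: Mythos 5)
Your case decomposition and your treatment of the generic regimes track the paper's own proof closely: the all-short regime is closed by the same timing argument (the $\mathcal F^+_{a+b}$ fork fires before an edge of length $c$ could appear, forcing $x,z$ to be original), the regime $b<M<c$ by the Optimality Lemma, and the regime $b\ge M$ by the same $\mathcal F^-_{c-a}$ fork-firing argument forcing $y,z$ to be original. Those parts are correct and are essentially the paper's cases 1--3.

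The gap is the deferred Case~\ref{IIb} analysis, which you describe as "tedious but routine" and propose to close by deriving a $K_1$- or $C$-bound violation from "cases already disposed of." That is not how this case closes, and it is in fact the hard half of the paper's proof. In the regime $a,b<M\le c$ with $b=K_1-1=M-1$ (where clause 3 of Lemma~\ref{lem:bestcompletion} permits $d'(y,z)>M$), the contradiction must be exhibited inside the arbitrary completion $\str{G}'$, and the bound that actually bites is the \emph{$K_2$} bound, not $K_1$ or $C$. Concretely the paper must: (i) exclude the doubly-exceptional sub-case $a=b=K_1-1$, which survives down to $\delta=5$ and requires an ad hoc check that the triangle $522$ supported by $(1,1)$ or $(5,5)$ forks has no completion; (ii) show $b$ is not original and $a<K_1-1$, so that $a'\le a$ genuinely holds; (iii) establish the exact equality $c-a=M$ (not merely $c-a\ge M$) and note that $M=(2\delta-1)/3$ is odd, so $a$ and $c$ have opposite parities; and then (iv) invoke the Parity Lemma~\ref{lem:sameparity} to force $b'$ even with $b'>b$, whence $a'+b'+c'$ is odd and $b'+c'\ge 2K_2+a'$, a $K_2$-bound violation in $\str{G}'$. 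None of these steps follows from the mechanism you sketch. (The Case~\ref{IIb} corner of your $b\ge M$ regime, by contrast, closes by pure arithmetic: $a=M-1$ and $b\ge M$ give $a+b\ge 2M-1\ge\delta\ge c$, so the triangle was metric after all --- again not a parity or bound argument.) Until this exceptional case is actually carried out, the proof is incomplete precisely where the difficulty lies.
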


\begin{proof}
Suppose for a contradiction that there is a triangle with vertices $u,v,w$ in $\str{G}$ such that $d(u,v)+d(v,w)<d(u,w)$. Denote $a=d(u,v)$, $b=d(v,w)$ and $c=d(u,w)$ and assume without loss of generality that $a\leq b < c$. Let $a',b',c'$ be the corresponding edge lengths in an arbitrary completion of $\bar{\str{G}}$ into $\mathcal A^\delta_{K_1,K_2,C_0,C_1}$. We shall distinguish three cases based on Section~\ref{sec:forbtriangles}:

\begin{enumerate}

\item First suppose $a,b,c < M$. Then $t_M(a)\leq t_M(b) < t_M(a+b) < t_M(c)$, which means that $c$ must be already in $\str{G}$. Note that in Case~\ref{IIb} if $b=K_1-1=M-1$, then $c\geq M$, hence we can use Lemma~\ref{lem:bestcompletion} for $a$ and $b$, which gives us that $a' + b' \leq a + b < c = c'$, which is a contradiction.

\item Another possibility is $a<M$ and $b,c\geq M$ (actually $c>M$, since $abc$ is non-metric).

Suppose $a'\leq a$ and $c'\geq c$ (the first possibility of Lemma~\ref{lem:bestcompletion}). If $b$ was already in $\str{G}$, then $\str{G}$ has no completion which is a contradiction. Otherwise clearly $c-a > b \geq M$, so $t_M(c-a) < t_M(b)$. But as $c-a$ depends on $c$ and $a$, we get $t_M(c-a) > t_M(c),t_M(a)$, which means that the completion algorithm with magic parameter $M$ would complete the edge $v,w$ with the length $c-a$ and not with $b$.

If the previous paragraph does not apply we have Case~\ref{IIb} and $a=K_1-1=K_2-1$. But then as $M=K_2$, we have $b\geq K_2$, which means $a+b\geq 2K_2-1 = \frac{4\delta-2}{3}-1\geq \delta$ for $\delta\geq 5$, which holds in \ref{IIb}, but that means that $abc$ is actually metric, a contradiction.

\item The last possibility is $a,b<M$ and $c\geq M$. Then either (by Lemma~\ref{lem:bestcompletion} and Lemma~\ref{lem:misgood} if $c=M$) we have $a'\leq a$, $b'\leq b$ and $c'\geq c$, hence the triangle $a',b',c'$ is again non-metric, or we have Case~\ref{IIb}, $b=K_1-1$, $a\leq K_1-1$.   The rest of proof of this lemma consists of a verification of this special case.
\end{enumerate}

From admissibility of \ref{IIb} we have $M=K_1=K_2=\frac{2\delta-1}{3}$ and $\delta\geq 5$. Note that $c-a \geq b+1 = K_1 = M$ from non-metricity of $abc$, hence $c>M$.

If both $a$ and $b$ were already in $\bar{\str{G}}$, then $abc$ is non-metric in any completion by Lemma~\ref{lem:bestcompletion}. The same thing is true if $b$ was already in $\bar{\str{G}}$ and $a'\leq a$ in any completion (i.e. either $a<K_1-1$ or $a$ was not introduced by $\mathcal F^C$ due to a $(\delta,\delta)$ fork).

Note that for $\delta\geq 8$ it cannot happen that $a=b=K_1-1$, as then $a+b=2K_1-2=2\frac{2\delta-1}{3} - 2 \geq \delta$, hence $a+b<c$ is absurd. So the only case when $a=b=K_1-1$ is $\delta=5$ (because from \ref{IIb} it follows that $\delta=3m+2$ for some $m\geq 1$). In that case we have triangle $5,2,2$ and each of the twos either was in $\bar{\str{G}}$ or is supported by a fork $(1,1)$ or by a fork $(5,5)$. And it can be shown that none of these structures has a strong completion into $\mathcal A^\delta_{K_1,K_2,C_0,C_1}$.

Hence $b$ was not in the input graph and $a<K_1-1$.

Observe that $c-a = M$. From non-metricity of $abc$ we have $c-a \geq b+1 = K_1=M$. And if $c-a\geq M+1$, then $t_M(c-a)\leq t_M(M+1) = 2\delta-2M-2$. And this is strictly less than $t_M(M-1) = 2M-3$ since $M=\frac{2\delta-1}{3}$ and $\delta\geq 5$. Further as $M=\frac{2\delta-1}{3}$ is odd, we see that $a$ and $c$ have different parities.

From Lemma~\ref{lem:bestcompletion} we have that in any completion $c'\geq c$ and $a'\leq a$. So the only way that the triangle $u,v,w$ can be metric is to have $b'> b$. Note that $c'-a'\geq c-a = M = K_2$, hence $c'\geq a'+K_2$. And from Lemma~\ref{lem:sameparity} we have that the parities of $a,b,c$ are preserved.

Note that as $M$ is odd, $b'$ is even. And since the parities of $c'$ and $a'$ are different, we have that $a'+b'+c'$ is odd. Also note that $c'+b'\geq a'+K_2+K_2+1 \geq 2K_2+a'$. Hence $u,v,w$ is forbidden by the $K_2$ bound in $\str{G}'$, which is a contradiction.
\end{proof}

\begin{lem}[$K_1$-bound Lemma]
\label{lem:K1bound}
Let $\str{G}$, $\overbar{\str{G}}$ be as in Lemma \ref{lem:Cbound}. Then there are no triangles forbidden by the $K_1$-bound in $\overbar{\str{G}}$.
\end{lem}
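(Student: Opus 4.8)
The plan is to argue by contradiction: assume $\overbar{\str{G}}$ contains a triangle $u,v,w$ forbidden by the $K_1$-bound, with edge lengths $a=\bar d(u,v)$, $b=\bar d(v,w)$, $c=\bar d(u,w)$, and show that then \emph{every} completion of $\str{G}$ into $\mathcal A^\delta_{K_1,K_2,C_0,C_1}$ would also contain a forbidden triangle, contradicting the standing hypothesis that $\str{G}$ has such a completion. First I would record the shape of this triangle. Since by the already-proved Metric Lemma~\ref{lem:metric} the graph $\overbar{\str G}$ contains no non-metric triangle, the triangle $u,v,w$ is metric, and so the $K_1$-bound discussion in Section~\ref{sec:forbtriangles} applies and gives $a,b,c<K_1\le M$.

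Now fix an arbitrary completion $\str{G}'=(G,d')\in\mathcal A^\delta_{K_1,K_2,C_0,C_1}$ and write $a',b',c'$ for the corresponding edge lengths. Because each edge satisfies $\bar d\le K_1-1\le\min(K_1,M-1)$, and the exceptional outcome~2 of the Parity Lemma~\ref{lem:sameparity} requires $\bar d=K_1$, the Parity Lemma yields that $a',b',c'$ have the same parities as $a,b,c$; in particular $a'+b'+c'$ is odd. By the Optimality Lemma~\ref{lem:bestcompletion}, each edge is in one of two situations: the ``shrinking'' case $d'\le\bar d$ (possibility~2 of that lemma, as $\bar d<M$ rules out possibility~1), or the exceptional Case~\ref{IIb} ``blow-up'' where $\bar d=M-1$, the edge was created by a $(\delta,\delta)$-fork, and $d'>M$ with $d'$ of the same parity as $M-1$. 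If no edge blows up, then $a'+b'+c'\le a+b+c<2K_1+1$ and $a'+b'+c'$ is odd, so $u,v,w$ is forbidden by the $K_1$-bound in $\str{G}'$, the desired contradiction.

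It remains to rule out the blow-up situations, and this is where the real work lies. Here $M=K_1=K_2=\tfrac{2\delta-1}{3}$ is odd, so $M-1$ is even and each blown-up edge has $d'\ge M+1$. I would split on the number of blown-up edges. If exactly one edge, say $uw$, blows up, then the other two shrink, so metricity of $\str{G}'$ forces $a+b\ge a'+b'\ge d'(u,w)\ge M+1$; combined with $a+b+c<2M+1$ and $c=M-1$ this gives $a+b=M+1$ and hence an \emph{even} perimeter $a+b+c=2M$, contradicting that the triangle is $K_1$-forbidden. If all three blow up, the perimeter is $3(M-1)$, again even, so this cannot occur either. The remaining possibility is exactly two blown-up edges; the third edge then has odd length strictly below $3$, hence length $1$, so (after relabelling) the triangle is $(M-1,M-1,1)$ with the two long edges $uv,uw$ meeting at $u$.

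For this last configuration the key observation is that all relevant edges originate in the input graph $\str{G}$: edges of length $\delta$ can only come from $\str{G}$, since $\mathcal F^-_\delta=\emptyset$ and hence no $\delta$-edge is ever inserted; and edges of length $1$ likewise come from $\str{G}$, since $\mathcal F^+_1=\emptyset$ and, by Observation~\ref{obs:FCforks}, $\mathcal F^C_1=\emptyset$ in Case~\ref{IIb} (as $M-1\ge 2$). Thus $\str{G}$ already contains the two $(\delta,\delta)$-forks witnessing the long edges, with witnesses $p$ satisfying $d(u,p)=d(v,p)=\delta$ and $q$ satisfying $d(u,q)=d(w,q)=\delta$, together with the edge $d(v,w)=1$. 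I would then verify directly that this five-point configuration has no completion into $\mathcal A^\delta_{K_1,K_2,C_0,C_1}$: in any completion the two forks force $d'(u,v)$ and $d'(u,w)$ to be \emph{even} (every odd completion of a $\delta\delta$-fork being forbidden by the $K_2$- or $C$-bounds), the unit edge forces $d'(u,v)=d'(u,w)=:p_0$ via the triangle inequality, and the triangle $p_0,p_0,1$ is then forbidden by the $K_1$-bound when $p_0<M$ and by the $K_2$-bound when $p_0>M$. This contradicts completability of $\str{G}$ and finishes the proof. The main obstacle is exactly this two-blow-up configuration in Case~\ref{IIb}; away from it the Optimality and Parity Lemmas do the work almost mechanically.
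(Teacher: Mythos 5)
Your proposal is correct, and its first half (the generic case where every edge of the bad triangle satisfies $d'\le\bar d$) coincides with the paper's argument: shrinkage from the Optimality Lemma plus parity preservation from the Parity Lemma immediately transports the $K_1$-forbidden triangle into any completion. Where you diverge is in the treatment of the Case~\ref{IIb} exception. The paper singles out one exceptional edge $c=K_1-1$, uses metricity of the triangle in $\overbar{\str{G}}$ to force $a+b=c+1$, and then combines $a'\le a$, $b'\le b$ with parity to rule out $c'=c+1$; this is shorter but tacitly assumes the other two edges do shrink. You instead enumerate the number of blown-up edges: one or three blow-ups are killed outright because they force an even perimeter, and the genuinely delicate two-blow-up case is pinned down to the shape $(M-1,M-1,1)$ and then resolved by descending to the input graph — extracting the two $(\delta,\delta)$-fork witnesses and the original unit edge, and checking that this explicit configuration admits no completion at all (every odd completion of a $\delta\delta$-fork being excluded by the $K_2$- and $C$-bounds, and the resulting isoceles triangle over the unit edge being excluded on either side of $M$). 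This ``back-propagation to an uncompletable obstacle'' technique is one the paper deploys in its Metric Lemma and in the Finite Obstacles Lemma but not here, and your use of it makes the sub-case where two edges of the triangle are simultaneously exceptional fully explicit, which the paper's shorter argument does not. The one presentational caveat is that your claim that a blown-up edge must have been created by a $(\delta,\delta)$-fork relies on the \emph{proof} of the Optimality Lemma (its $\mathcal F^+$ and original-edge cases always yield $d'\le\bar d$) rather than on its bare statement; this is true and easy to justify, but worth saying.
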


\begin{proof}
Suppose for a contradiction that there is a metric (from Lemma~\ref{lem:metric} we already know that all triangles in $\str{G}$ are metric) triangle with vertices $u,v,w$ in $\str{G}$ such that $\bar d(u,v)+\bar d(v,w)+\bar d(u,w)$ is odd and less than $2K_1+1$. Denote $a=\bar d(u,v)$, $b=\bar d(v,w)$ and $c=\bar d(u,w)$. From Section~\ref{sec:forbtriangles} we get $a,b,c < K_1\leq M$.

First suppose that Lemma~\ref{lem:bestcompletion} gives us that for any completion $a',b',c'$ that $a'\leq a$, $b'\leq b$ and $c'\leq c$. Also $a$ has the same parity as $a'$, $b$ as $b'$ and $c$ as $c'$ by Lemma~\ref{lem:sameparity}, hence $a'+b'+c'\leq a+b+c$ and those two expressions have the same parity, hence $a',b',c'$ is also forbidden by the $K_1$ bound, a contradiction.

Otherwise we have Case~\ref{IIb} and $c=K_1-1$. But then from metricity of $abc$ either $a+b=c$ (but then $a+b+c$ is even, a contradiction), or $a+b=c+1$ (if $a+b\geq c+2$, then the perimeter of the triangle is too large to be forbidden by the $K_1$ bound). But again in any completion $a'\leq a$ and $b'\leq b$, so either $c'\leq c$ or $c'=c+1$ (from metricity). From Lemma~\ref{lem:sameparity} we know that the parity of $c$ is preserved, hence $c'=c+1$ is absurd, so $a'\leq a$, $b'\leq b$ and $c'\leq c$, and we can apply the same argument as in the preceding paragraph.
\end{proof}

\begin{lem}[$C_0,C_1$-bound Lemma]
\label{lem:C01bound}
Let $C'>C+1$ and let $\str{G}$, $\overbar{\str{G}}$ be as in Lemma \ref{lem:Cbound}. Then there are no triangles forbidden by either of the $C_0$ and $C_1$ bounds in $\overbar{\str{G}}$.
\end{lem}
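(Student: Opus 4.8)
The plan is again a proof by contradiction, transferring a hypothetical forbidden triangle of $\overbar{\str{G}}$ to an arbitrary completion $\str{G}' = (G, d')$ into $\mathcal{A}^\delta_{K_1,K_2,C_0,C_1}$, as in the $C$-bound Lemma~\ref{lem:Cbound}. The new feature, and the apparent source of difficulty, is twofold: since $C' > C + 1$ the two bounds no longer coincide, so I must preserve the \emph{parity} of the perimeter (an even perimeter must stay $\geq C_0$, an odd one $\geq C_1$); and the $\mathcal{F}^C$-compression step that powered Lemma~\ref{lem:Cbound}, inserting the shorter edge $\bar a = C-1-b-c$, is no longer available, because by Observation~\ref{obs:FCforks} the forks $\mathcal{F}^C_x$ are essentially inactive when $C' > C+1$. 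I would first use Observation~\ref{obs:FCforks} to reduce to two regimes: Case~\ref{III} with $\mathcal{F}^C_x = \emptyset$ for all $x < M$, and Case~\ref{IIb} with $\mathcal{F}^C_{M-1} = \{(\delta,\delta)\}$ the only active $C$-fork.

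The idea that replaces the compression step is a magnitude argument. Suppose $\overbar{\str{G}}$ contains a triangle with lengths $a \leq b \leq c$ forbidden by $C_0$ or $C_1$; its perimeter is then at least $C$. In both regimes one has $C \geq 2\delta + K_2$: by admissibility in Case~\ref{III} (where $C' > C+1$ forces $C \geq 2\delta + K_2$), and by the identity $C = 2K_1 + 2K_2 + 1 = 2\delta + K_2$ in Case~\ref{IIb}. Since $b, c \leq \delta$, the shortest edge satisfies $a \geq (a+b+c) - 2\delta \geq C - 2\delta \geq K_2$. Hence every edge of the triangle is at least $K_2 \geq M$; in particular the delicate configurations where a short edge is compressed below $M$ (the ones that forced the $\mathcal{F}^C$ argument in Lemma~\ref{lem:Cbound}, and the $M-1$ edges produced by the $(\delta,\delta)$-fork in Case~\ref{IIb}) simply cannot occur inside a forbidden triangle.

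With all edges at least $K_2$, the transfer becomes routine. Any edge strictly above $M$ satisfies $d' \geq \bar d$ by Lemma~\ref{lem:bestcompletion}, and an edge equal to $M$ (possible only in Case~\ref{IIb}, where $K_2 = M$) must already belong to $\str{G}$ by the $\mathbb{F}_M$ Completeness Lemma~\ref{lem:misgood}, hence is fixed; so the perimeter of $\str{G}'$ does not decrease. For the parity I would apply the Parity Lemma~\ref{lem:sameparity} to each edge: since every edge is at least $K_2$ and $K_2 > K_1$ in both regimes (as $M < K_2$ in Case~\ref{III}, and $K_1 = K_2$ with the $M$-edge excluded in Case~\ref{IIb}), the hypothesis $\bar d \geq \max(K_2, M+1)$ holds and the exceptional alternative of Lemma~\ref{lem:sameparity}, which requires $\bar d = K_1$, never fires. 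Thus each edge keeps its parity, the perimeter of $\str{G}'$ keeps both its parity and its lower bound, and the triangle remains forbidden by the same bound, contradicting $\str{G}' \in \mathcal{A}^\delta_{K_1,K_2,C_0,C_1}$.

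The step I expect to require the most care is the magnitude computation together with its parameter bookkeeping: establishing $C \geq 2\delta + K_2$ uniformly (reading off the correct admissibility inequality in Case~\ref{III} and the arithmetic identity in Case~\ref{IIb}), confirming $M < K_2$ so that $\max(K_2, M+1) = K_2$ and the Parity Lemma's exception is excluded, and dealing cleanly with the boundary value $a = M$ in Case~\ref{IIb} via Lemma~\ref{lem:misgood}. Once these are in place, the absence of any compressed short edge removes the only genuine obstacle, and the even/odd threshold argument closes the $C_0$ and the $C_1$ cases simultaneously.
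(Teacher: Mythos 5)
Your proposal is correct and follows essentially the same route as the paper's proof: establish $C\geq 2\delta+K_2$ in both Case~\ref{IIb} and Case~\ref{III}, deduce that every edge of the offending triangle is at least $K_2\geq M$, dispose of an edge of length exactly $M$ via Lemma~\ref{lem:misgood} (it must already lie in $\str{G}$), and transfer the triangle to an arbitrary completion using Lemmas~\ref{lem:bestcompletion} and~\ref{lem:sameparity}, noting that the exceptional clause of the Parity Lemma cannot fire because no edge equals $K_1$. The extra framing via Observation~\ref{obs:FCforks} and the explicit parity-of-perimeter bookkeeping are only presentational differences from the paper's argument.
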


\begin{proof}
Suppose for a contradiction that there is a triangle with vertices $u,v,w$ in $\overbar{\str{G}}$, such that $a+b+c\geq C$ and has parity such that it is forbidden by one of the $C$ bounds, where $a=\bar d(u,v)$, $b=\bar d(v,w)$ and $c=\bar d(u,w)$.

In Case~\ref{IIb}, we have $K_1=K_2$, $C=2K_1+2K_2+1 = 4K_2+1$ and $3K_2=2\delta-1$, hence $C=2\delta + K_2$. For parameters from Case~\ref{III} we have $C\geq 2\delta+K_2$, which means that we always have $C\geq 2\delta+K_2$. This implies that $b,c>K_2\geq M$ and $a\geq K_2$. If $a$ was already present in $\str{G}$, then by Lemmas \ref{lem:misgood}, \ref{lem:bestcompletion} and \ref{lem:sameparity} we have that any completion $a',b',c'$ has $a'=a$, $b'\geq b$ and $c'\geq c$ and the parities are preserved, hence $a',b',c'$ is forbidden by the $C$ bound as well, a contradiction to $\str{G}$ having a completion. If $a$ is not in $\str{G}$, we have $a\neq M$ (by Lemma~\ref{lem:misgood}) and actually $a>M$ as $a\geq K_2\geq M$. Thus we can again use Lemmas \ref{lem:bestcompletion} and \ref{lem:sameparity} to get a contradiction.
\end{proof}

\begin{lem}[$K_2$-bound Lemma]
\label{lem:K2bound}
Let $\str{G}$, $\overbar{\str{G}}$ be as in Lemma \ref{lem:Cbound}.  Then there are no triangles forbidden by the $K_2$-bound in $\overbar{\str{G}}$.
\end{lem}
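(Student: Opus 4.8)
The plan is to follow the template of Lemmas~\ref{lem:Cbound}--\ref{lem:C01bound}: assume for contradiction that $\overbar{\str G}$ contains a triangle $u,v,w$ forbidden by the $K_2$-bound, say with $a=\bar d(u,v)\le b=\bar d(v,w)\le c=\bar d(u,w)$, so that $a+b+c$ is odd and $b+c\ge 2K_2+a+1$, and then transport this triangle into an arbitrary completion $\str G'=(G,d')\in\mathcal A^\delta_{K_1,K_2,C_0,C_1}$ to produce a forbidden triangle there, contradicting $\str G'\in\mathcal A^\delta_{K_1,K_2,C_0,C_1}$. First I would record the location of the edges from Section~\ref{sec:forbtriangles}: the long edges satisfy $b,c>K_2\ge M$, so the Optimality Lemma~\ref{lem:bestcompletion} gives $b'\ge b$ and $c'\ge c$, and the Parity Lemma~\ref{lem:sameparity} preserves their parities (its exceptional clause is excluded since $b,c>K_1$). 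For the short edge, Section~\ref{sec:forbtriangles} gives $a\le K_1$; triangles forbidden by a $C$-bound are already removed by Lemmas~\ref{lem:Cbound} and~\ref{lem:C01bound}, so in Case~\ref{II} I may assume $a+b+c<C$ and hence $a<K_1$, while in Case~\ref{III} the bound $a\le K_1$ is unconditional. Using that Definition~\ref{defn:magiccompletion} forces $M>K_1$ exactly in the Case~\ref{III} situation where $a=K_1$ can occur, I would conclude $a\le M-1$ throughout. In the generic situation the short edge is also controlled -- $a'\le a$ by Lemma~\ref{lem:bestcompletion} and the parity of $a$ preserved by Lemma~\ref{lem:sameparity} -- whence $a'\le b',c'$, the perimeter $a'+b'+c'$ stays odd, and $b'+c'\ge b+c\ge 2K_2+a+1\ge 2K_2+a'+1$, so $u,v,w$ is $K_2$-forbidden in $\str G'$, the desired contradiction.

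The main obstacle is the two extremal configurations in which the short edge evades this control, one for each of the exceptional clauses of Lemmas~\ref{lem:sameparity} and~\ref{lem:bestcompletion}. The first is when the Parity Lemma might flip the parity of $a$; its exceptional clause requires $a=K_1$ in Case~\ref{III} with $C=2\delta+K_1+1$ and $C\ne 2K_1+2K_2+1$. I would show this is vacuous for a $K_2$-forbidden triangle: these conditions, together with the admissibility rule that $K_1+2K_2=2\delta-1$ forces $C\ge 2\delta+K_1+2$, yield $K_1+2K_2\ge 2\delta+1$; but $b+c\ge 2K_2+a+1$ with $b,c\le\delta$ gives $a\le 2\delta-2K_2-1\le K_1-2<K_1$, contradicting $a=K_1$. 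Hence the parity of $a$ is always preserved and this configuration never arises.

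The second, and genuinely delicate, configuration is the exceptional clause of the Optimality Lemma, which occurs only in Case~\ref{IIb}, where $a=M-1=K_2-1$ may be completed in $\str G'$ by some $a'>M$ of the same (even, as $M$ is odd) parity. By Observation~\ref{obs:FCforks} such an edge of $\overbar{\str G}$ is produced from a $(\delta,\delta)$-fork, so there is a witness $x$ with $\bar d(u,x)=\bar d(v,x)=\delta$, whence $d'(u,x)=d'(v,x)=\delta$ by optimality. I would then observe that in $\str G'$ all three edges of $u,v,w$ now exceed $K_2$, that $b+c\ge 2K_2+a+1=3M$ together with $a'\ge M+1$ forces the perimeter $a'+b'+c'\ge 4M+1=C$, and that $a'$ even while $b'+c'$ is odd keeps the perimeter odd; since $C=C_1$ in Case~\ref{IIb}, the triangle $u,v,w$ is forbidden by the $C_1$-bound in $\str G'$, again a contradiction. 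Combining the generic argument with these two case analyses disposes of the last forbidden-triangle type and, together with Lemmas~\ref{lem:Cbound}--\ref{lem:C01bound}, completes the proof of Theorem~\ref{thm:magiccompletion}.
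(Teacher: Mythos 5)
Your proposal is correct and follows essentially the same route as the paper: the generic case via the Optimality and Parity Lemmas, the observation that the Parity Lemma's exceptional clause is vacuous for a $K_2$-forbidden triangle (since $C=2\delta+K_1+1$ is incompatible with $K_1+2K_2=2\delta-1$), and the Case~\ref{IIb} configuration $a=K_2-1$, $a'>M$ dispatched by showing the completed perimeter reaches the odd bound $C=C_1$. Two harmless blemishes: the intermediate claim $K_1+2K_2\geq 2\delta+1$ should read $K_1+2K_2\geq 2\delta$ (which still yields $a\leq K_1-1<K_1$), and the $(\delta,\delta)$-witness $x$ you extract via Observation~\ref{obs:FCforks} is never actually used in the final contradiction.
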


\begin{proof}
We know that all triangles are metric and not forbidden by the $C$ bounds. Suppose for a contradiction that there is a triangle with vertices $u,v,w$ in $\overbar{\str{G}}$ such that $b+c\geq 2K_2+a+1$, where $a=\bar d(u,v)$, $b=\bar d(v,w)$ and $c=\bar d(u,w)$. We know that $b,c > K_2$ and $a\leq K_1$ by Section~\ref{sec:forbtriangles}, where equality can occur only in Case~\ref{III} when $2K_2+K_1=2\delta-1$ and furthermore $M > a$ (because of Definition~\ref{defn:magiccompletion}).

Note that from the conditions for Case~\ref{III}, we know that if $2K_2+K_1=2\delta-1$, then $C\geq 2\delta+K_1+2$, which means that for edges $a,b,c$ Lemma~\ref{lem:sameparity} guarantees that the parity is preserved.

Unless $a=K_1-1$ and Case~\ref{IIb}, Lemmas~\ref{lem:bestcompletion} and~\ref{lem:sameparity} yield that $a'+b'+c'$ has the same parity as $a+b+c$ for any completion $a',b',c'$ and $b'\geq b$, $c'\geq c$ and $a'\leq a$, hence triangle $u,v,w$ is forbidden by the $K_2$ bound in any completion of ${\str{G}}$, which is a contradiction.

The last case remaining is $a=K_1-1=K_2-1$, Case~\ref{IIb}. But then $b+c\geq 2K_2+a+1 = 3K_2=2\delta-1$, so either $b+c=2\delta-1$, or $b+c=2\delta$. But from being forbidden by the $K_2$-bound we know that $a+b+c$ is odd, hence $b+c$ has different parity than $a$. And we know that $a=K_2-1 = \frac{2\delta-1}{3}-1$, which is even, hence $b+c=2\delta-1$. We also know that parities are preserved, so if $a'\geq K_2+1$, then $a'+b'+c'\geq 2\delta + K_2$ and it is thus forbidden by one of the $C$ bounds.
\end{proof}
\begin{proof}[Proof of Theorem~\ref{thm:magiccompletion}]
From the Lemmas~\ref{lem:Cbound}, \ref{lem:metric}, \ref{lem:K1bound}, \ref{lem:C01bound}, \ref{lem:K2bound} we conclude that the algorithm will correctly complete every graph $\str{G}$
which has completion into $\mathcal A^\delta_{K_1,K_2,C_0,C_1}$. The optimality statement follows by Lemma~\ref{lem:bestcompletion}. Automorphisms are preserved according to Lemma~\ref{lem:aut}.
\end{proof}

\subsection{Stationary independence relation}
In this section we show a corollary to Theorem~\ref{thm:magiccompletion} proving that the completion with magic parameter $M$ has the right properties needed
to define a stationary independence relation.

As pointed out in \cite{Tent2013}, certain ``canonical'' ways of amalgamation give rise to stationary independence relations, while in \cite{Muller2016} it was observed that a stationary independence relation gives rise to a well-defined notion of amalgamation with desirable properties established in Lemma 4.4 of \cite{Muller2016}. We are going to refine and formalise this relationship to show that every stationary independence relation on a homogeneous structure induces a canonical symmetric amalgamation operator on its age, a characterisation we use repeatedly in determining whether or not various metric graphs admit (local) stationary independence relations. We start with some useful observations on stationary independence relations.

\begin{lem} \label{lem:SIRproperties}
Let $\ind$ be a (local) SIR. Then the following properties hold:
\begin{enumerate}
\item $\str{A} \ind_{\str{C}} \str{B} \leftrightarrow \langle \str{AC} \rangle \ind_{\str{C}} \str{B} \leftrightarrow \str{A} \ind_{\str{C}} \langle \str{BC} \rangle$,
\item\emph{(Transitivity)}. $\str{A} \ind_{\str{C}} \str{B}  \land \str{A} \ind_{\langle \str{BC} \rangle} \str{D} \to \str{A} \ind_{\str{C}} \str{D}$,
\item $\str{A} \ind_{\str{C}} \langle \str{BD} \rangle \leftrightarrow \str{A} \ind_{\str{C}} \str{B} \land \str{A} \ind_{\langle \str{BC} \rangle} \str{D}$.
\end{enumerate}
\end{lem}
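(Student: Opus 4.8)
The plan is to derive all three items from the five axioms \ref{invariance}--\ref{stationarity}, isolating the converse implication of item~(3) as the technical heart; its forward direction is literally monotonicity, and once (3) is known, item~(2) is a one-line corollary and item~(1) follows by a parallel but shorter copy-and-transfer argument. I would present them in the order (3), then (2), then (1), and treat the local version at the very end by checking that every base used is nonempty whenever $\str{C}$ is.

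The forward direction of (3) is just \ref{monotonicity}: from $\str{A}\ind_\str{C}\struc{\str{B}\str{D}}$ we read off $\str{A}\ind_\str{C}\str{B}$ and $\str{A}\ind_{\struc{\str{B}\str{C}}}\str{D}$. The converse is the main obstacle, and I would prove it by the standard ``independent copy'' transfer. By \ref{existence} choose $\str{A}^{*}\models\tp(\str{A}/\str{C})$ with $\str{A}^{*}\ind_\str{C}\struc{\str{B}\str{D}}$, and apply \ref{monotonicity} to obtain $\str{A}^{*}\ind_\str{C}\str{B}$ and $\str{A}^{*}\ind_{\struc{\str{B}\str{C}}}\str{D}$. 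Now $\str{A}$ and $\str{A}^{*}$ have the same type over $\str{C}$ and, by the hypothesis $\str{A}\ind_\str{C}\str{B}$ together with the previous line, are both independent over $\str{C}$ from $\str{B}$; hence \ref{stationarity} gives $\tp(\str{A}/\struc{\str{B}\str{C}})=\tp(\str{A}^{*}/\struc{\str{B}\str{C}})$. Feeding this equality and the independences $\str{A}\ind_{\struc{\str{B}\str{C}}}\str{D}$, $\str{A}^{*}\ind_{\struc{\str{B}\str{C}}}\str{D}$ into \ref{stationarity} a second time, now with base $\struc{\str{B}\str{C}}$, yields $\tp(\str{A}/\struc{\str{B}\str{C}\str{D}})=\tp(\str{A}^{*}/\struc{\str{B}\str{C}\str{D}})$. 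Since $\struc{\str{B}\str{C}\str{D}}$ contains $\str{C}$ and $\struc{\str{B}\str{D}}$, homogeneity of $\str{M}$ provides an automorphism fixing $\struc{\str{B}\str{C}\str{D}}$ pointwise and sending $\str{A}^{*}$ to $\str{A}$, so \ref{invariance} transports $\str{A}^{*}\ind_\str{C}\struc{\str{B}\str{D}}$ to the desired $\str{A}\ind_\str{C}\struc{\str{B}\str{D}}$.

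Granting (3), item~(2) is immediate: its hypotheses $\str{A}\ind_\str{C}\str{B}$ and $\str{A}\ind_{\struc{\str{B}\str{C}}}\str{D}$ form exactly the right-hand side of (3), so $\str{A}\ind_\str{C}\struc{\str{B}\str{D}}$; writing $\struc{\str{B}\str{D}}=\struc{\str{D}\str{B}}$ and applying \ref{monotonicity} extracts $\str{A}\ind_\str{C}\str{D}$. For (1) it suffices to prove $\str{A}\ind_\str{C}\str{B}\leftrightarrow\struc{\str{A}\str{C}}\ind_\str{C}\str{B}$, the second equivalence then following by sandwiching with \ref{symmetry}. The implication $\struc{\str{A}\str{C}}\ind_\str{C}\str{B}\Rightarrow\str{A}\ind_\str{C}\str{B}$ is routine: \ref{symmetry} gives $\str{B}\ind_\str{C}\struc{\str{A}\str{C}}$, \ref{monotonicity} (splitting off $\str{A}$) gives $\str{B}\ind_\str{C}\str{A}$, and \ref{symmetry} again gives the claim. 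For the forward direction I would reuse the existence/stationarity pattern: by \ref{existence} pick a realization of $\tp(\struc{\str{A}\str{C}}/\str{C})$ independent from $\str{B}$ over $\str{C}$, necessarily of the form $\struc{\str{A}'\str{C}}$ with $\str{A}'\models\tp(\str{A}/\str{C})$; the routine direction yields $\str{A}'\ind_\str{C}\str{B}$; then \ref{stationarity} makes $\str{A}$ and $\str{A}'$ agree over $\struc{\str{B}\str{C}}$, and \ref{invariance} transfers $\struc{\str{A}'\str{C}}\ind_\str{C}\str{B}$ to $\struc{\str{A}\str{C}}\ind_\str{C}\str{B}$. Finally, for the local statement one notes that the only bases invoked above are $\str{C}$, $\struc{\str{B}\str{C}}$ and $\struc{\str{B}\str{C}\str{D}}$, all of which are nonempty as soon as $\str{C}\neq\emptyset$, so each application of an axiom remains legitimate.
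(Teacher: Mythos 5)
Your proof is correct, but it takes a genuinely different route from the paper. The paper does not prove items (1) and (2) at all: it cites them to Tent--Ziegler and to M\"uller respectively, and then derives (3) in two lines \emph{from} (1) and (2) (rewriting $\str{A}\ind_{\struc{\str{B}\str{C}}}\str{D}$ as $\str{A}\ind_{\struc{\str{B}\str{C}}}\struc{\str{B}\str{D}}$ via (1) and feeding that into transitivity). You instead work entirely from the five axioms, proving (3) first by the existence--stationarity--invariance transfer (choose $\str{A}^{*}\models\tp(\str{A}/\str{C})$ independent from $\struc{\str{B}\str{D}}$, apply stationarity twice to match types over $\struc{\str{B}\str{C}}$ and then over $\struc{\str{B}\str{C}\str{D}}$, and transport by invariance), and then obtaining (2) as a corollary of (3) and (1) by a shorter instance of the same pattern. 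The logical dependencies are thus reversed: the paper has $(1),(2)\Rightarrow(3)$ with (1),(2) outsourced, while you have $(3)\Rightarrow(2)$ with (3) and (1) proved from scratch; there is no circularity since your proof of (3) uses only the axioms. What your approach buys is a self-contained verification of all three items (including checking that every base occurring is nonempty, so the local case goes through), at the cost of repeating arguments that already appear in the cited literature; the paper's version is shorter but leans on external references for the two items that carry most of the work.
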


\begin{proof}
Point (1) and (2) are shown in \cite{Tent2013} and \cite{Muller2016} respectively. In order to show (3), i.e. that the inverse direction of the implication in the Monotonicity axiom holds, note that by (1) we have $\str{A} \ind_{\langle\str{BC} \rangle} \str{D} \leftrightarrow \str{A} \ind_{\langle \str{BC} \rangle} \langle \str{BD} \rangle$. Then, by (2) $\str{A} \ind_{\str{C}} \str{B} \land \str{A} \ind_{\langle \str{BC} \rangle} \langle \str{BD} \rangle \to \str{A} \ind_{\str{C}} \langle \str{BD} \rangle$. 
\end{proof}

\begin{defn}
Let $\mathcal C$ be an amalgamation class. We say that $\oplus$ is an \emph{amalgamation operator}, if it assigns to every triple of structures $\str{A}, \str B_1, \str B_2 \in \mathcal C$ with embeddings $e_1\colon \str{A} \to \str B_1$ and $e_2\colon \str{A} \to \str B_2$ a unique amalgam, i.e. a structure $\str{D} \in \mathcal C$ and embeddings $f_1 \colon \str B_1 \to \str{D}$, $f_2 \colon \str B_2 \to \str{D}$, such that $f_1 \circ e_1 = f_2 \circ e_2$. In short, we write $\str{D} = \str B_1 \oplus_\str{A} \str B_2$. All the amalgamation operators in this paper are \emph{symmetric}\footnote{We would like to thank  A. Kwiatkowska, T. Rzepecki and R. Sullivan for pointing out the inadvertent omission to mention symmetry for amalgamation operators in an earlier version of this paper.} 
in that $\str B_1 \oplus_\str{A} \str B_2 = \str B_2 \oplus_\str{A} \str B_1$. We call $\oplus$ a \emph{local} symmetric amalgamation operator if it is only defined for non-empty $\str{A}$, and $\oplus$ is \emph{canonical} on $\mathcal C$ if additionally the following hold:
\begin{enumerate}
\item $\str B_1 \oplus_\str{A} \str B_2$ has minimal domain, i.e. it is generated by the union of $f_1(\str B_1)$ and  $f_2(\str B_2)$
\item Monotonicity: If $\str B_1 \oplus_\str{A} \str B_2 = \langle f_1(\str B_1) \cup f_2(\str B_2) \rangle$, then $\str B_1 \oplus_\str{A} \str{B_2'} = \langle f_1(\str B_1) \cup f_2(\str{B_2'}) \rangle$ for all substructures $e_2(\str{A}) \subseteq \str{B_2'} \subseteq \str B_2$
\item Associativity: $(\str{A} \oplus_{\str C_1} \str{B}) \oplus_{\str C_2} \str{D} = \str{A} \oplus_{\str C_1} (\str{B} \oplus_{\str C_2} \str{D})$.
\end{enumerate}
\end{defn}

Then the following holds:

\begin{thm} \label{thm:canonicalamalg}
A homogeneous structure $\str{M}$ admits a (local) stationary independence relation if and only if $\Age(\str{M})$ has a (local) canonical symmetric amalgamation operator. Moreover, there is a one-to-one correspondence between  (local) stationary independence relations $\ind$ and (local) canonical symmetric amalgamation operators $\oplus$ by: $\str{A} \ind_{\str{C}} \str{B}$ if and only if $\langle \str{ABC} \rangle$ is isomorphic to $\langle \str{AC} \rangle \oplus_{\str{C}} \langle \str{BC} \rangle$.
\end{thm}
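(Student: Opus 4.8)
The plan is to establish the claimed bijection by constructing maps in both directions, checking that each yields an object of the required type, and finally that the two constructions are mutually inverse. Throughout I work with a fixed homogeneous $\str{M}$ and write $\mathcal C=\Age(\str{M})$; the local case is handled identically by insisting that every base structure $\str{C}$ (respectively amalgamation base $\str{A}$) be nonempty, so I describe only the global case.

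\textbf{From a SIR to an operator.} Given a (local) SIR $\ind$, I would define $\oplus$ as follows: realize $\str{B_1}$ in $\str{M}$ and, using Existence (\ref{existence}), choose a copy $\str{B_2}'\models\tp(\str{B_2}/\str{A})$ with $\str{B_2}'\ind_{\str{A}}\str{B_1}$, and set $\str{B_1}\oplus_{\str{A}}\str{B_2}:=\langle \str{B_1}\str{B_2}'\rangle$. The first point to verify is that this is well defined up to isomorphism fixing the two factors: this is exactly Stationarity (\ref{stationarity}), which forces any two admissible $\str{B_2}'$ to have the same type over $\langle\str{B_1}\str{A}\rangle$, so homogeneity yields an isomorphism of the amalgams fixing $\str{B_1}$ and matching the copies of $\str{B_2}$. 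Minimality of the domain holds by construction; the monotonicity clause of ``canonical'' is a direct translation of Monotonicity (\ref{monotonicity}), which lets me shrink the second factor while preserving independence. Note also that the resulting $\oplus$ is automatically commutative, since $\str{B_2}'\ind_{\str{A}}\str{B_1}$ is equivalent to $\str{B_1}\ind_{\str{A}}\str{B_2}'$ by Symmetry (\ref{symmetry}).

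\textbf{From an operator to a SIR.} Conversely, given a canonical $\oplus$, define $\str{A}\ind_{\str{C}}\str{B}$ to hold iff $\langle\str{ABC}\rangle\cong\langle\str{AC}\rangle\oplus_{\str{C}}\langle\str{BC}\rangle$ over the two factors. Invariance (\ref{invariance}) is immediate, since both the operator and the relation depend only on isomorphism types. Existence (\ref{existence}) follows by forming $\langle\str{AC}\rangle\oplus_{\str{C}}\langle\str{BC}\rangle\in\mathcal C$ and embedding it into the universal homogeneous $\str{M}$: the image of $\langle\str{AC}\rangle$ is the required $\str{A}'$. Stationarity (\ref{stationarity}) is precisely the uniqueness of the amalgam: if $\str{A},\str{A}'$ share a type over $\str{C}$ and both are independent from $\str{B}$, then $\langle\str{ABC}\rangle$ and $\langle\str{A}'\str{BC}\rangle$ are both isomorphic to the unique amalgam over $\langle\str{BC}\rangle$, hence have the same type over $\langle\str{BC}\rangle$. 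For Monotonicity (\ref{monotonicity}) the first conclusion is the operator's monotonicity clause, while the second, $\str{A}\ind_{\langle\str{BC}\rangle}\str{D}$, is where I would invoke associativity of $\oplus$, rewriting $\langle\str{AC}\rangle\oplus_{\str{C}}\langle\str{BDC}\rangle$ as an iterated amalgam; this mirrors Lemma~\ref{lem:SIRproperties}(3). Symmetry (\ref{symmetry}) reduces, via $\langle\str{ABC}\rangle=\langle\str{BAC}\rangle$, to commutativity of $\oplus$, which must therefore be recorded as a property of the canonical operators in the correspondence (it is present on every operator arising from a SIR, as noted above).

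\textbf{Inverseness and the main obstacle.} To finish I would check the two round trips. Starting from $\ind$, building $\oplus$, and reading off the relation returns $\ind$ directly from the definition of the amalgam together with \ref{symmetry}; starting from $\oplus$ and recovering it from the associated $\ind$ is likewise immediate from the defining equivalence. The hard part will be associativity. In the forward direction I must \emph{prove} the constructed $\oplus$ is associative, which I would reduce to Transitivity (Lemma~\ref{lem:SIRproperties}(2)) together with Lemma~\ref{lem:SIRproperties}(3), the delicate step being to confirm that the two bracketings $(\str{A}\oplus_{\str{C_1}}\str{B})\oplus_{\str{C_2}}\str{D}$ and $\str{A}\oplus_{\str{C_1}}(\str{B}\oplus_{\str{C_2}}\str{D})$ produce realizations satisfying the same type over the common substructure, so that homogeneity makes them isomorphic; in the backward direction I need associativity of $\oplus$ to obtain the second half of Monotonicity. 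Keeping the embeddings $e_1,e_2,f_1,f_2$ and the two bases $\str{C_1},\str{C_2}$ correctly aligned in these computations, and pinning down commutativity for Symmetry, is where the genuine care lies; the remaining axioms are essentially bookkeeping translations.
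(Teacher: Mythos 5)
Your proposal is correct and follows essentially the same route as the paper's proof: the same two constructions (Existence plus Stationarity and homogeneity to build $\oplus$ from $\ind$, and the defining equivalence in the other direction), with Monotonicity traded against the operator's monotonicity and associativity exactly as in the paper, and associativity of the constructed operator obtained via Lemma~\ref{lem:SIRproperties}. Your observation that Symmetry of the induced relation requires commutativity of $\oplus$ (which holds for every operator arising from a SIR but is not listed among the paper's axioms for a canonical amalgamation operator) is a fair point that the paper passes over by declaring Symmetry to follow ``straightforwardly,'' and your explicit check that the two constructions are mutually inverse is likewise a detail the paper omits.
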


\begin{proof}
We first show that every canonical symmetric amalgamation operator gives rise to a stationary independence relation. Examples of this fact were already given in \cite{Tent2013}. Let $\oplus$ be a canonical symmetric amalgamation operator on $\Age(\str{M})$. Then we define a stationary independence relation by setting $ \str{A} \ind_{\str{C}} \str{B}$ if and only if $\langle \str{ABC} \rangle$ is isomorphic to $\langle \str{AC} \rangle \oplus_{\str{C}} \langle \str{BC} \rangle$ under an isomorphism commuting with the embeddings. The axioms \ref{invariance}, \ref{existence} and \ref{stationarity} follow straightforwardly from the fact that $\oplus$ is an amalgamation operator together with the homogeneity and universality of $\str{M}$, while \ref{symmetry} follows directly from the symmetry of $\oplus$.

For \ref{monotonicity}, observe first that by the minimality of $\oplus$ we have that $\langle \str{XY} \rangle = \str{X} \oplus_{\str{X}} \langle \str{XY} \rangle$ for all $\str{X}, \langle \str{XY} \rangle \in \Age(\str{M})$. 
Let $\str{A} \ind_{\str{C}} \langle \str{BD} \rangle$; by our observation this is equivalent to 
$$\langle \str{ABDC} \rangle = \langle \str{AC} \rangle \oplus_{\str{C}} \langle \str{BDC} \rangle = \langle \str{AC} \rangle \oplus_{\str{C}} (\langle \str{BC} \rangle \oplus_{\langle \str{BC} \rangle} \langle \str{BCD} \rangle).$$
Since $\oplus$ is associative, this is equivalent to $\langle \str{ABDC} \rangle = (\langle \str{AC} \rangle \oplus_{\str{C}} \langle \str{BC} \rangle) \oplus_{\langle \str{BC} \rangle} \langle \str{BCD} \rangle$. Since $\oplus$ is monotone, this implies  $(\langle \str{AC} \rangle \oplus_{\str{C}} \langle \str{BC} \rangle) = \langle \str{ABC} \rangle$, hence $\str{A} \ind_{\str{C}} \str{B} $ and $\str{A} \ind_{\langle \str{BC} \rangle} \str{D}$. This concludes the proof that $\ind$ is a SIR.

\medskip
For the opposite direction, let $\ind$ be a stationary independence relation on the finitely generated substructures of $\str{M}$. Let $\str{A}$, $\str{B}$ and $\str{C}$ be in the age of $\str{M}$ and let $e_1 \colon \str{C} \to \str{A}$ and $e_2 \colon \str{C} \to \str{B}$ be embeddings. By the homogeneity of $\str{M}$ there are embeddings $f_1\colon \str{A} \to \str{M}$ and $f_2\colon \str{B} \to \str{M}$ such that $f_1 e_1 = f_2 e_2$. By \ref{existence} and homogeneity, there is an $\str{A}'$ such that $\str{A}' \ind_{f_1 e_1(\str{C})} f_2(\str{B})$ and such that there is an automorphism $\alpha$ of $\str{M}$ fixing $f_1 e_1(\str{C})$ that maps $f_1(\str{A})$ to $\str{A}'$. We then define $\str{A} \oplus_{\str{C}} \str{B}$ as the amalgam $ \langle \str{A}' \cup f_2(\str{B}) \rangle$ with respect to the embeddings $\alpha f_1$ and $f_2$, noting that by \ref{stationarity}, homogeneity and \ref{invariance}, the isomorphism type of $ \langle \str{A}' \cup f_2(\str{B}) \rangle$ does not depend on the choices of $\alpha, f_1$ or $f_2$ so that $\str{A} \oplus_{\str{C}} \str{B}$ is well-defined. Similarly employing \ref{symmetry} along with \ref{invariance}, \ref{stationarity} and homogeneity, we have $\str{A} \oplus_{\str{C}} \str{B} = \str{B} \oplus_{\str{C}} \str{A}$. Now by definition $\oplus$ is monotone and $\str{A} \oplus_{\str{C}} \str{B}$ has minimal domain, so $\oplus$ is a canonical symmetric amalgamation operator.

For showing associativity, assume $(\str{A} \oplus_{\str C_1} \str{B}) \oplus_{\str C_2} \str{D}$ and $\str{A} \oplus_{\str C_1} (\str{B} \oplus_{\str C_2} \str{D})$ are defined and
let $\str{A}'$ and $\str{B}'$ be such that $\str{B} \oplus_{\str C_2} \str{D} = \langle \str{B'D} \rangle$ and $\str{A} \oplus_{\str C_1} (\str{B} \oplus_{\str C_2} \str{D} )= \langle \str{A' B' D} \rangle$. 
Thus we have $\str{B}' \ind_{\str C_2} \str{D}$ and $\str{A}' \ind_{\str C_1} \langle \str{B'D} \rangle$. By Lemma \ref{lem:SIRproperties} (3) this is equivalent to 
$$\str{A}' \ind_{\str C_1} \str{B}'  \land \str{A}' \ind_{\str{B}'} \str{D} \land \str{B}' \ind_{\str C_2} \str{D}.$$
Again by Lemma \ref{lem:SIRproperties} (3), \ref{symmetry} and $\str C_2 \subseteq \str{B}'$ this is equivalent to
$$\str{A}' \ind_{\str C_1} \str{B}' \land \str{D} \ind_{\str C_2} \langle \str A'\str B' \rangle.$$
By our definition of $\oplus$ we then have
$$\langle \str A' \str B' \str D \rangle = \str{A} \oplus_{\str C_1} (\str{B} \oplus_{\str C_2} \str{D}) = (\str{A} \oplus_{\str C_1} \str{B}) \oplus_{\str C_2} \str{D},$$
proving that $\oplus$ is associative.
\end{proof}

\begin{corollary}
\label{cor:SIR}
Let $\delta$, $K_1$, $K_2$, $C_0$ and $C_1$ be primitive admissible parameters.
For every magic parameter $M$ we can define a {\em stationary independence relation with magic parameter $M$} on $\Gamma^\delta_{K_1,K_2,C_0,C_1}$ 
as follows:
 $\str{A}\ind^M_{\str{C}}\str{B}$ if and only if $\struc{\str{A}\str{B}\str{C}}$ is isomorphic to the completion with magic parameter $M$ of the free amalgamation of $\struc{\str{A}\str{C}}$ and $\struc{\str{B}\str{C}}$ over $\str{C}$.
\end{corollary}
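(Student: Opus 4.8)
The plan is to apply Theorem~\ref{thm:canonicalamalg}, which reduces the problem to exhibiting a local canonical amalgamation operator on $\Age(\Gamma^\delta_{K_1,K_2,C_0,C_1})$. I would define $\oplus^M$ by sending a triple $\str{A},\str{B_1},\str{B_2}$ with common part $\str{C}$ to the completion with magic parameter $M$ of their \emph{free} amalgamation over $\str{C}$; then $\str{A}\ind^M_{\str{C}}\str{B}$ precisely when $\struc{\str{A}\str{B}\str{C}}\cong\struc{\str{A}\str{C}}\oplus^M_{\str{C}}\struc{\str{B}\str{C}}$, which is the statement to be proved. The first thing to check is that $\oplus^M$ is well-defined as an amalgamation operator into the class: the free amalgamation is a finite edge-labelled graph in $\mathcal G^\delta$, and since both factors already lie in $\mathcal A^\delta_{K_1,K_2,C_0,C_1}$ they each have a completion (namely themselves), so the free amalgam has \emph{a} completion and hence, by Theorem~\ref{thm:magiccompletion}, the completion with magic parameter $M$ lands in $\mathcal A^\delta_{K_1,K_2,C_0,C_1}$. (Here one uses that the two factors overlap exactly in $\str{C}$ and introduce no cross relations, so any forbidden configuration in the free amalgam would already appear in one factor or force a non-metric/forbidden cycle, contradicting the existence of a completion.) Invariance, symmetry, existence and stationarity of $\ind^M$ then follow formally from $\oplus^M$ being a well-defined amalgamation operator, exactly as in the first half of the proof of Theorem~\ref{thm:canonicalamalg}.

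The substance is in verifying the three properties that make $\oplus^M$ \emph{canonical}: minimality of domain, monotonicity, and associativity. Minimality is immediate, since the completion algorithm of Definition~\ref{defn:ftmcompletion} adds no new vertices and only fills in edges, so the amalgam is generated by $f_1(\str{B_1})\cup f_2(\str{B_2})$. For monotonicity, I would argue that the distance the algorithm assigns to a non-edge $u,v$ depends only on the forks $(d(u,w),d(v,w))$ available at each stage, and passing from $\str{B_2}$ to a substructure $\str{B_2'}\supseteq e_2(\str{A})$ only removes witnesses; the key point is that restricting the free amalgam of $\str{B_1}$ with $\str{B_2}$ to the vertices of $\str{B_1}\cup\str{B_2'}$ and then running the completion gives the same result as completing the smaller free amalgam directly. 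This should follow because the $(\mathbb F_M,t_M,M)$-completion is an induced-subgraph-compatible, order-deterministic procedure: the Time Consistency Lemma~\ref{lem:expandtime} guarantees that edges are inserted in a fixed dependency order, so the value on any pair is determined by the completed values on strictly earlier pairs among the same vertex set, and deleting vertices outside that set cannot change those earlier values.

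Associativity is where I expect the main obstacle to lie. Concretely I would need $(\str{A}\oplus^M_{\str{C_1}}\str{B})\oplus^M_{\str{C_2}}\str{D}=\str{A}\oplus^M_{\str{C_1}}(\str{B}\oplus^M_{\str{C_2}}\str{D})$, i.e. that running the magic-parameter completion on the iterated free amalgam is independent of the bracketing. Both sides share the same underlying vertex set and the same set of \emph{input} edges (those present in $\str{A}$, $\str{B}$, $\str{D}$ together with the overlaps along $\str{C_1}$, $\str{C_2}$), because free amalgamation is itself associative on edge-labelled graphs. Hence the two sides are the $(\mathbb F_M,t_M,M)$-completions of one and the same edge-labelled graph $\str{G}$, provided one checks that an intermediate completion followed by a further free amalgamation and completion yields the same final labelling as completing $\str{G}$ in one pass. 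The delicate point is that the first completion step already fills in some edges before the second amalgamation, so one must show this cannot alter the outcome: the optimality statement of Theorem~\ref{thm:magiccompletion} says the magic-parameter completion assigns to every pair the distance closest to $M$ consistent with being completable, and this extremal characterization is manifestly bracketing-independent. I would therefore prove associativity by invoking optimality directly, arguing that on the common vertex set every pair receives the same $M$-optimal value regardless of the order of amalgamation, rather than tracking the step-by-step dynamics.

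Finally, I would note that for parameters not admitting strong amalgamation the operator is only defined for non-empty $\str{C}$ (the overlap must carry enough information to pin down the completion uniquely via stationarity), which is exactly why the corollary, read through Theorem~\ref{thm:canonicalamalg}, yields a \emph{local} stationary independence relation in general and a global one precisely when the empty base also works; in the present primitive case strong amalgamation holds, so $\ind^M$ is a genuine (global) SIR. The whole argument is thus a reduction: Theorem~\ref{thm:magiccompletion} supplies well-definedness and optimality, Theorem~\ref{thm:canonicalamalg} supplies the translation to a SIR, and the only real work is checking minimality (trivial), monotonicity (from the deterministic fork-based dynamics and Lemma~\ref{lem:expandtime}), and associativity (from optimality).
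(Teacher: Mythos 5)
Your overall route is the paper's: define $\oplus^M$ as the completion with magic parameter $M$ of the free amalgam, use Theorem~\ref{thm:magiccompletion} to see that this lands in $\mathcal A^\delta_{K_1,K_2,C_0,C_1}$ (the free amalgam has \emph{some} completion because the class has strong amalgamation for primitive parameters), and reduce everything to checking that $\oplus^M$ is a canonical amalgamation operator so that Theorem~\ref{thm:canonicalamalg} applies. The paper's proof is exactly this and attributes both monotonicity and associativity to the optimality clauses of Theorem~\ref{thm:magiccompletion}, which is what you do for associativity.

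The one step that would not survive scrutiny is your justification of monotonicity. You claim the $(\mathbb F_M,t_M,M)$-completion is ``induced-subgraph-compatible'' because the value assigned to a pair is determined by earlier values among the same vertex set, so deleting vertices outside $B_1\cup B_2'$ changes nothing. That is false of the dynamics: the value the algorithm assigns to a non-edge $u,v$ at time $k$ is dictated by witnesses $w$ ranging over the \emph{whole} current graph, and after the first round such a witness can lie in $B_2\setminus B_2'$ (an edge $u,w$ with $u\in B_1\setminus A$, $w\in B_2\setminus A$ created at an earlier time, together with the edge $w,v$ inside $\str{B_2}$, forms a fork in some $\mathbb F_M(c)$). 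Removing $w$ removes that fork, and the step-by-step runs on the two free amalgams genuinely differ; Lemma~\ref{lem:expandtime} orders the \emph{distances}, not the vertices, and gives no locality of this kind. The conclusion is still correct, but the argument must go through optimality, exactly as for associativity: the restriction to $B_1\cup B_2'$ of the magic completion of the larger free amalgam is \emph{a} completion of the smaller free amalgam, so clauses (1)--(2) of Theorem~\ref{thm:magiccompletion} sandwich it against the magic completion of the smaller amalgam from one side; for the other side, extend the magic completion of the smaller amalgam to a completion of the larger free amalgam (strong amalgamation again) and apply optimality there. A short check shows the Case~\ref{IIb} exception in clause (3) cannot arise in this two-sided comparison, so the two values coincide. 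With monotonicity repaired in this way your proof coincides with the paper's.
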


\begin{proof}
For structures $\str{A}, \str{B}, \str{C}$ in $\mathcal{A}^\delta_{K_1,K_2,C_0,C_1}$, such that $\str{C}$ embeds into $\str{A}$ and $\str{B}$ we define $\str{A} \oplus_{\str{C}} \str{B}$ to be the completion with magic parameter $M$ of the free amalgam of $\str{A}$ and $\str{B}$ over $\str{C}$. If we can show that $\oplus$ is a canonical symmetric amalgamation operator, then we are done by Theorem \ref{thm:canonicalamalg}. By Theorem \ref{thm:magiccompletion}, $\str{A} \oplus_{\str{C}} \str{B}$ is an element of $\mathcal{A}^\delta_{K_1,K_2,C_0,C_1}$, hence $\oplus$ is a symmetric amalgamation operator. Monotonicity and associativity of $\oplus$ follow straightforwardly from the optimality property of the completion with magic parameter $M$ (Theorem \ref{thm:magiccompletion} (1),(2)).
\end{proof}

\subsection{Ramsey property and EPPA}
\label{sec:ramseyEPPA1}
Theorem \ref{thm:magiccompletion} implies the following lemma which is crucial when applying Theorems~\ref{thm:herwiglascar} and~\ref{thm:localfini}.
\begin{lem}[Finite Obstacles Lemma]
\label{lem:obstacles}
Let $\delta$, $K_1$, $K_2$, $C_0$ and $C_1$ be primitive admissible parameters.
Then the class $\mathcal A^\delta_{K_1,K_2,C_0,C_1}$ has a finite set of obstacles which are all cycles of diameter at most $2^\delta \cdot 3$.
\end{lem}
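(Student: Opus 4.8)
The plan is to use the completion algorithm of Theorem~\ref{thm:magiccompletion} as the engine and to read off an obstacle by \emph{unwinding} the witnesses that the algorithm uses. By Theorem~\ref{thm:magiccompletion}, a graph $\str{G}\in\mathcal G^\delta$ has a completion into $\mathcal A^\delta_{K_1,K_2,C_0,C_1}$ if and only if its completion $\overbar{\str{G}}$ with magic parameter $M$ lies in $\mathcal A^\delta_{K_1,K_2,C_0,C_1}$, that is, if and only if $\overbar{\str{G}}$ contains no forbidden triangle. Thus a graph fails to be completable precisely when the algorithm creates a forbidden triangle, and it suffices to certify every such failure by a small cyclic subconfiguration of $\str{G}$ (edges of length greater than $\delta$ are non-completable for trivial reasons and are excluded by working inside $\mathcal G^\delta$). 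I would therefore take $\mathcal O$ to be the set of all edge-labelled cycles on at most $3\cdot 2^\delta$ vertices that have no completion into $\mathcal A^\delta_{K_1,K_2,C_0,C_1}$; this set is finite since both the number of vertices and the edge lengths (at most $\delta$) are bounded.

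The core is the implication ``$\str{G}\in\Forb(\mathcal O)\Rightarrow\str{G}$ completable,'' which I prove by contraposition. If $\str{G}$ has no completion, then $\overbar{\str{G}}$ contains a forbidden triangle on vertices $p,q,r$. Each of its three edges is either an original edge of $\str{G}$ or was introduced at some step of Definition~\ref{defn:ftmcompletion} via a witness $w$ whose two supporting edges were introduced at strictly earlier steps; this strictness is exactly the Time Consistency Lemma~\ref{lem:expandtime}. Unwinding each edge recursively, replacing it by its witness and its two supporting edges, produces a binary branching tree whose leaves are original edges of $\str{G}$. Along any root-to-leaf branch the values of $t_M$ strictly decrease, and $t_M$ is injective on $\{1,\dots,\delta\}\setminus\{M\}$, so each branch has length at most $\delta$ and each triangle edge unwinds into a walk of at most $2^\delta$ original edges. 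Concatenating the three unwound walks yields a closed walk $W$ in $\str{G}$ through at most $3\cdot 2^\delta$ vertices; regarded as an edge-labelled cycle $\str{O}$, it comes with a homomorphism $\str{O}\to\str{G}$ sending each walk position to the corresponding vertex of $\str{G}$.

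It remains to show that $\str{O}$ itself is non-completable, and this is the step I expect to be the main obstacle. The content is \emph{locality}: running the magic completion on the vertex set of $\str{O}$ must reproduce exactly the distances that $\overbar{\str{G}}$ assigned to the unwound edges, and in particular the forbidden triangle on $p,q,r$, so that $\overbar{\str{O}}\notin\mathcal A^\delta_{K_1,K_2,C_0,C_1}$ and Theorem~\ref{thm:magiccompletion} yields non-completability of $\str{O}$. I would prove this by induction on the introduction time in $\overbar{\str{G}}$: every witness used in the unwinding already lies in $\str{O}$, so at the step where an unwound edge is scheduled its supporting edges are present with the correct lengths, and the algorithm completes it as before. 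The delicate point is ruling out a \emph{spurious} completion inside $\str{O}$, namely a different witness completing an unwound edge earlier (hence more extremely) than in $\overbar{\str{G}}$; such a shortcut would also be available in $\str{G}$, so the determinism of the algorithm together with Lemma~\ref{lem:expandtime} forces agreement and contradicts the actual run on $\str{G}$. The exceptional behaviour of Case~\ref{IIb} --- the penultimate $M-1$ completions of Observation~\ref{obs:FCforks} and the third option of the Optimality Lemma~\ref{lem:bestcompletion} --- has to be tracked separately in this induction.

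Finally, for $\mathcal A^\delta_{K_1,K_2,C_0,C_1}\subseteq\Forb(\mathcal O)$ I would show that no non-completable cycle admits a homomorphism into a member $\str{M}$ of the class. Pulling back the metric of $\str{M}$ along such a homomorphism gives a pseudometric on $\str{O}$ all of whose triangles satisfy the defining constraints of $\mathcal A^\delta_{K_1,K_2,C_0,C_1}$; using that the class is closed under strong amalgamation one can separate any vertices collapsed by the homomorphism to obtain a genuine completion of $\str{O}$, contradicting $\str{O}\in\mathcal O$. This routine direction, together with the size bound established above, shows that $\mathcal O$ is a finite set of obstacles all of which are cycles on at most $2^\delta\cdot 3$ vertices.
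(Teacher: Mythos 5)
Your proposal is correct and takes essentially the same route as the paper: both unwind a forbidden triangle of the magic completion backward through the witnesses of Definition~\ref{defn:ftmcompletion}, obtain the $3\cdot 2^\delta$ bound from the at most $\delta$ effective steps in which each edge spawns at most one new vertex, and handle homomorphic images of the resulting cycles via strong amalgamation. The ``locality'' issue you single out is precisely the step the paper compresses into ``one can verify that the completion algorithm will fail to complete $\str{O}_k$ the same way,'' and your resolution --- that any spurious earlier witness inside the unwound cycle would push forward along the homomorphism to one in $\str{G}$, contradicting the actual run --- is the intended justification.
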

\begin{example}
Consider $\mathcal A^5_{3,3,16,13}$ discussed in Section~\ref{sec:algorithm}.
The set of obstacles of this class contains all the forbidden triangles listed
earlier, but in addition to that it also contains some cycles with 4 or more vertices. A complete
list of those can be obtained by running the algorithm backwards from the forbidden
triangles.

All such cycles with 4 vertices can be constructed from the triangles by substituting distances by the forks depicted at Figure~\ref{fig:forks}. This means substituting $2$ for $11$ or $55$, and $4$ for $15$ or $51$. With equivalent cycles removed this give the following list: 
$$
\begin{array}{rrcl}
\hbox{non-metric:}&124&\implies& 1\mathbf{11}4, 1\mathbf{55}4, 12\mathbf{15}, 12\mathbf{51}\\
&125&\implies& 1\mathbf{11}5, 1\mathbf{55}5^{**}\\
&114&\implies& 11\mathbf{15}^*\\
&225&\implies& \mathbf{11}25^{*}, \mathbf{55}25\\
\hbox{$K_1$-bound:}&122&\implies& 1\mathbf{11}2, 1\mathbf{55}2\\
\hbox{$K_2$-bound:}&144&\implies& 1\mathbf{15}4, 1\mathbf{51}4, 14\mathbf{15}\\
&245&\implies& \mathbf{11}45, \mathbf{55}45, 2\mathbf{15}5^*, 25\mathbf{15}\\
\hbox{$C$-bound:}&445&\implies& \mathbf{15}45, \mathbf{51}45, 4\mathbf{15}5\\
\end{array}
$$
Observe that running the algorithm may produce multiple forbidden triangles which leads
to duplicated cycles in the list.
Such duplicates are denoted by $*$. For example, 125 was expanded to $1\mathbf{11}5$. The algorithm will first notice the fork $(1,5)$ and produce $114$. This is also a forbidden triangle, but a different one.  In the case of $1\mathbf{55}5$ (another expansion of 125) the algorithm will again use the fork $(1,5)$ first and produce the triangles $455$ and $145$, which are valid triangles, see Figure~\ref{fig:1555}. Not all expansions here are necessarily forbidden, because not all of them correspond to a valid run of the algorithm.  However with the exception of cases denoted by $**$ all the above 4-cycles are forbidden.
\begin{figure}[t]
\centering
\includegraphics{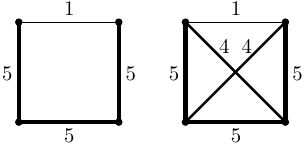}
\caption{Completing the cycle 1555.}
\label{fig:1555}
\end{figure}

Repeating the procedure one obtains the following cycles with five edges that cannot be completed into this class of metric graphs:
 $$11111,\allowbreak 11115,\allowbreak  11155,\allowbreak  11515,\allowbreak  15155,\allowbreak  11555,\allowbreak  15555,\allowbreak  55555.$$ 
An example of a failed run of algorithm trying to complete one of the forbidden cycles is depicted in Figure~\ref{fig:11555}.
\begin{figure}
\centering
\includegraphics{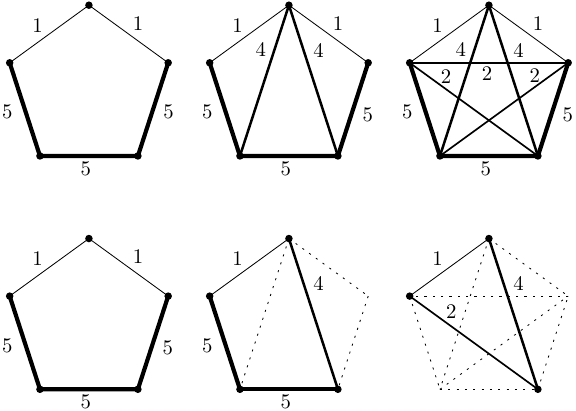}
\caption{Failed run attempting to complete the cycle 11555. In the bottom row is the backward run from the non-metric triangle 124 to the original obstacle used in the proof of Lemma~\ref{lem:obstacles}.}
\label{fig:11555}
\end{figure}
Because there are no distances of 2 or 4 in the cycles with five edges that cannot be completed into this class, it follows that all cycles with at least six edges can be completed.
\end{example}
 
\begin{proof}[Proof of Lemma \ref{lem:obstacles}]
Let $\str{G}=(G,d)\in \mathcal G^\delta$ be an edge-labelled graph no completion in
$\mathcal A^\delta_{K_1,K_2,C_0,C_1}$.  We seek a subgraph of $\str{G}$ of bounded size
which has also no completion into $\mathcal A^\delta_{K_1,K_2,C_0,C_1}$.

Consider the sequence of graphs $\str{G}_0, \str{G}_1,\ldots,\str{G}_{2M+1}$ as given by Definition~\ref{defn:magiccompletion}
when completing $\str{G}$ with magic parameter $M$.  Set $\str{G}_{2M+2}$ to be the actual completion.

Because $\str{G}_{2M+2}\notin \mathcal A^\delta_{K_1,K_2,C_0,C_1}$ we know it contains a forbidden triangle $\str{O}$.
This triangle always exists, because $\mathcal A^\delta_{K_1,K_2,C_0,C_1}$ is 3-constrained.
By backward induction on $k=2M+1,2M,\ldots, 0$ we obtain cycles $\str{O}_k$
of $\str{G}_k$ such that $\str{O}_k$ has no completion in $\mathcal A^\delta_{K_1,K_2,C_0,C_1}$
and there exists a homomorphism $f\colon\str{O}_k\to \str{G}_k$.

Put $\str{O}_{2M+1}=\str{O}$. By Lemma~\ref{lem:misgood} we know that this triangle is also in $\str{G}_{2M+1}$.
At step $k$ consider every edge $u,v$ of $\str{O}_{k+1}$ which is not an edge of $\str{G}_k$ considering
its witness $w$ (i.e. vertex $w$ such that the edges $u,w$ and $v,w$ implied the addition of the edge $u,v$) and extending
$\str{O}_k$ by a new vertex $w'$ and edges $d(u,w')=d(u,w)$ and $d(v,w')=d(v,w)$.
One can verify that the completion algorithm will fail to complete $\str{O}_k$ the same way
as it failed to complete $\str{O}_{k+1}$ and moreover there is a homomorphism $\str{O}_{k+1}\to \str{G}_k$.

At the end of this procedure we obtain $\str{O}_0$, a subgraph of $\str{G}$, that
has no completion into $\mathcal A^\delta_{K_1,K_2,C_0,C_1}$.
The bound on the size of the cycle follows from the fact that only $\delta$ steps
of the algorithm are actually changing the graph and each time every edge may
introduce at most one additional vertex.

Let $\mathcal O$ consist of all edge-labelled cycles with at most $2^\delta 3$
vertices that are not completable in $\mathcal A^\delta_{K_1,K_2,C_0,C_1}$. Clearly $\mathcal O$ is finite. To check that $\mathcal O$ is a set of obstacles it remains
to verify that there is no $\str{O}\in \mathcal O$ with a homomorphism
to some $\str{M}\in \mathcal A^\delta_{K_1,K_2,C_0,C_1}$. Denote by $\mathcal{O}'$ the set of all homomorphic images
of structures in $\mathcal{O}$ that are not completable in $\mathcal A^\delta_{K_1,K_2,C_0,C_1}$.
Assume, to the contrary, the existence of such an
$\str{O}=(O,d)\in \mathcal{O}'$ and $\str{M}=(M,d')$ and a homomorphism $f\colon\str{O}\to\str{M}$ and among all those choose one minimising 
the difference of $\lvert \str{O}\rvert$ and $\lvert \str{M}\rvert$. It follows that $\lvert \str{O}\rvert-\lvert \str{M}\rvert=1$. 
Denote by $x,y$ the pair of vertices identified by $f$. Let $\str{O}'=(O,d'')$  be
a metric graph such that $d''(z,z')=d(f(z),f(z'))$ for every pair $\{z,z'\}\neq \{x,x'\}$.
It follows that because $\mathcal A^\delta_{K_1,K_2,C_0,C_1}$ has the strong amalgamation property, and also
$\str{O}'=(O,d'')$ has a completion in $\mathcal A^\delta_{K_1,K_2,C_0,C_1}$.
\end{proof}
To demonstrate the use of these results, we show the following theorems which we later strengthen in Section~\ref{sec:henson} for
classes with Henson constraints.
\begin{thm}
\label{thm:regularramsey}
For every choice of admissible primitive parameters $\delta$, $K_1$, $K_2$, $C_0$ and $C_1$, the class $\overrightarrow{\mathcal A}^\delta_{K_1,K_2,C_0,C_1}$ of free orderings of $\mathcal A^\delta_{K_1,K_2,C_0,C_1}$ is Ramsey and
has the expansion property.
\end{thm}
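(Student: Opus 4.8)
The plan is to derive the Ramsey property from the Hubi\v cka--Ne\v set\v ril locally finite subclass theorem (Theorem~\ref{thm:localfini}), feeding it the completion algorithm and the finite obstacle set produced above, and then to obtain the expansion property from the Ramsey property by the classical ordering-property argument for free orders. First I would fix the ambient Ramsey class. Let $\overrightarrow{\mathcal M}^\delta$ be the class of all finite metric spaces with integer distances at most $\delta$ equipped with a free linear order. Since the distance set $\{1,\dots,\delta\}$ is feasible, $\overrightarrow{\mathcal M}^\delta$ is Ramsey (Ne\v set\v ril; Nguyen Van Th\'e; Hubi\v cka--Ne\v set\v ril, as cited in Section~\ref{subsec:ramsey}), and its members, being complete edge-labelled graphs carrying a linear order, are irreducible. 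Set $\mathcal R=\overrightarrow{\mathcal M}^\delta$ and $\mathcal K=\overrightarrow{\mathcal A}^\delta_{K_1,K_2,C_0,C_1}$; then $\mathcal K\subseteq\mathcal R$, as every space in $\mathcal A^\delta_{K_1,K_2,C_0,C_1}$ is a metric space of diameter at most $\delta$.

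Next I would check the hypotheses of Theorem~\ref{thm:localfini}. Hereditariness of $\mathcal K$ is immediate. For strong amalgamation, the reducts form $\mathcal A^\delta_{K_1,K_2,C_0,C_1}$, which has strong amalgamation for primitive parameters; a free linear order on a strong amalgam is built by interleaving the two given orders in the gaps they cut out on the common part, so $\mathcal K$ is a strong amalgamation class. The essential point is local finiteness, for which I claim $n=2^\delta\cdot 3$ works uniformly in $\str{C}_0$. Suppose $\str{C}$ admits a homomorphism-embedding into some $\str{C}_0\in\mathcal R$ and that every substructure of $\str{C}$ on at most $2^\delta\cdot 3$ vertices has a strong $\mathcal K$-completion. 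If the metric reduct of $\str{C}$ had no completion into $\mathcal A^\delta_{K_1,K_2,C_0,C_1}$, then by the Finite Obstacles Lemma~\ref{lem:obstacles} some obstacle $\str{O}\in\mathcal O$, a cycle on at most $2^\delta\cdot 3$ vertices, admits a homomorphism into $\str{C}$. Its image spans a substructure $\str{C}'$ of $\str{C}$ on at most $2^\delta\cdot 3$ vertices into which $\str{O}$ maps homomorphically; composing such a map with any completion of $\str{C}'$ into $\mathcal A^\delta_{K_1,K_2,C_0,C_1}\subseteq\Forb(\mathcal O)$ would give a homomorphism from $\str{O}$ into a member of $\Forb(\mathcal O)$, which is impossible. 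Hence $\str{C}'$ has no $\mathcal K$-completion, contradicting the hypothesis. Therefore the reduct of $\str{C}$ completes into $\mathcal A^\delta_{K_1,K_2,C_0,C_1}$; the completion with magic parameter $M$ (Theorem~\ref{thm:magiccompletion}) realises such a completion on the same vertex set, and the order of $\str{C}$, being the restriction of a linear order, extends to a linear order, yielding a strong $\mathcal K$-completion. Theorem~\ref{thm:localfini} then gives that $\mathcal K$ is Ramsey.

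For the expansion property, unwinding Definition~\ref{defn:ordering} shows it is exactly the ordering property: for each $\str{A}\in\mathcal A^\delta_{K_1,K_2,C_0,C_1}$ there must be $\str{B}$ in the class such that every linear order on $B$ realises every order type of $\str{A}$ on some copy. I would prove this by the standard Ne\v set\v ril--R\"odl argument, which derives the ordering property from the Ramsey property of $\mathcal K$ together with strong amalgamation. By joint embedding of the finitely many witnesses it suffices to secure one target order type $\tau$ of $\str{A}$ at a time (a witness $\str{B}_\tau$ for each $\tau$ can be combined into a single $\str{B}$, since any order on $B$ restricts to $B_\tau\subseteq B$); for a fixed $\tau$ one then forces a copy of $(\str{A},\tau)$ to appear under an arbitrary ordering of the witness by an application of the Ramsey property of $\mathcal K$ just established.

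The hard part here is not really the Ramsey conclusion: almost all of the genuine difficulty has already been absorbed into Theorem~\ref{thm:magiccompletion} and Lemma~\ref{lem:obstacles}, so the Ramsey half is essentially an assembly of those results inside the locally finite subclass framework, with only the uniform obstacle bound $2^\delta\cdot 3$ needing to be tracked. The one ingredient not packaged by the preceding lemmas is the expansion property, and that is where I expect the remaining work to lie: it cannot be read off the completion machinery directly and instead requires the separate (classical) ordering-property argument, applied to the free order.
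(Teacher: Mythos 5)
Your Ramsey half is correct and is the same argument the paper intends: the paper's proof of this theorem disposes of the Ramsey property in one line by combining Theorem~\ref{thm:localfini} with Lemma~\ref{lem:obstacles}, and your elaboration (ambient class of freely ordered $\delta$-bounded metric spaces, hereditariness, free interleaving of orders for strong amalgamation, and local finiteness with the uniform bound $n=2^\delta\cdot 3$ extracted from the obstacle set) is exactly the intended filling-in and is sound. In particular your verification that a homomorphic image of an obstacle inside $\str{C}$ spans a small substructure with no $\mathcal K$-completion is the right contrapositive use of the definition of a set of obstacles.

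The expansion property half has a genuine gap. You reduce to realising one order type $\tau$ of $\str{A}$ at a time and then say that a copy of $(\str{A},\tau)$ is ``forced \ldots by an application of the Ramsey property of $\mathcal K$ just established,'' but you never say what is being coloured or why a monochromatic configuration yields the target order type; that unexecuted step is precisely where the class-specific work lives, and the generic phrase ``the classical ordering-property argument'' does not discharge it for these constrained classes. The paper's argument is: form $\overrightarrow{\str{B}}_0$ as the disjoint union of \emph{all} orderings of $\str{A}$; for every pair $a<b$ with $d(a,b)\neq M$ insert a new vertex $c$ with $d(a,c)=d(b,c)=M$ and $a<c<b$; Observation~\ref{obs:magicismagic} guarantees the new triangles $MMd(a,b)$ are allowed, so this augmented structure completes to some $\overrightarrow{\str{B}}_1$ in the class; then apply the Ramsey property to the two-point structure $\overrightarrow{\str{E}}$ at distance $M$ and two-colour its copies by whether an arbitrary order $\leq'$ agrees with the built-in order. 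On a monochromatic copy of $\overrightarrow{\str{B}}_1$ the inserted midpoints propagate agreement (or reversal) of $\leq'$ across every pair of $\str{B}_0$, so $\leq'$ restricted to the copy of $\str{B}_0$ is the built-in order or its reverse, and either way every ordering of $\str{A}$ appears. The essential ingredients you are missing are (i) the choice of what to colour (pairs at the magic distance $M$), (ii) the midpoint construction that links arbitrary pairs through $M$-pairs so that monochromaticity controls the whole order, and (iii) the appeal to Observation~\ref{obs:magicismagic} to certify that this auxiliary structure actually lies in $\mathcal A^\delta_{K_1,K_2,C_0,C_1}$ --- a point that is automatic for the class of all metric spaces but not for the triangle-constrained classes treated here.
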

\begin{proof}
Ramsey property follows by a combination of Theorem~\ref{thm:localfini} and Lemma~\ref{lem:obstacles}.

To show the expansion property we use now standard argument that edge-Ramsey implies ordering property~\cite{Nevsetvril1995,Jasinski2013}:
Given metric space $\str{A}\in \mathcal A^{\delta}_{K_1,K_2,C_0,C_1}$ construct ordered metric space $\overrightarrow{\str{B}}_0\in \overrightarrow{\mathcal A}^\delta_{K_1,K_2,C_0,C_1}$ as a disjoint union of all
possible orderings of $\str{A}$. Now consider every pair of
vertices $a < b$, $d(a,b)\neq M$ and add third vertex $c$ in distance $M$ from both $a$ and $b$ with order extended in a way so
$a < c < b$ holds. Because $M$ is magic, by Observation~\ref{obs:magicismagic}, all new triangles are allowed and thus it is possible to complete this structure to ordered metric space $\overrightarrow{\str{B}}_1\in \overrightarrow{\mathcal A}^\delta_{K_1,K_2,C_0,C_1}$.
Now denote by $\overrightarrow{\str{E}}$ an ordered metric space consisting of two vertices in distance $M$ and construct
$$\overrightarrow{\str{B}}\longrightarrow(\overrightarrow{\str{B}_1})^{\overrightarrow{\str{E}}}_2.$$

We claim that $\str{B}$ (the unordered reduct of $\overrightarrow{\str{B}}$) has the property that every ordering of $\str{B}$
contains every ordering of $\str{A}$.  Denote by $\leq$ the order of $\overrightarrow{\str{B}}$ and chose arbitrary linear order order $\leq'$ of vertices of $\str{B}$. $\leq'$ implies two-coloring of copies of $\overrightarrow{\str{E}}$
in $\overrightarrow{\str{B}}$: color copy red if both orders agree and blue otherwise.  Because $\overrightarrow{\str{B}}$ is Ramsey, we obtain a monochromatic
copy of $\overrightarrow{\str{B}}_1$ which contains a copy of $\overrightarrow{\str{B}}_0$ with the property that $\leq'$ restricted to this copy either agrees either with $\leq$ or with $\geq$.
In both cases we obtain a copy of every ordering of $\str{A}$ within this copy of $\overrightarrow{\str{B}}_0$.

\end{proof}
\begin{thm}
\label{thm:regulareppa}
For every choice of admissible primitive parameters $\delta$, $K_1$, $K_2$, $C_0$ and $C_1$, the class $\mathcal A^\delta_{K_1,K_2,C_0,C_1}$  has coherent EPPA.
\end{thm}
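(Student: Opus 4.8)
The plan is to apply the Solecki--Siniora theorem (Theorem~\ref{thm:herwiglascar}) to a suitable incomplete witness and then repair the result with the completion with magic parameter $M$ (Theorem~\ref{thm:magiccompletion}), whose symmetry-preserving nature (Lemma~\ref{lem:aut}) is exactly what turns an abstract extension into one living inside $\mathcal A^\delta_{K_1,K_2,C_0,C_1}$.

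First I would fix $\str{A}\in \mathcal A^\delta_{K_1,K_2,C_0,C_1}$, viewed as an edge-labelled graph, and let $P$ be the set of all its partial automorphisms. Let $\mathcal O$ be the finite obstacle set furnished by Lemma~\ref{lem:obstacles}, so that $\mathcal A^\delta_{K_1,K_2,C_0,C_1}\subseteq \Forb(\mathcal O)$ and every member of $\Forb(\mathcal O)$ has a completion into the class. Take $\str{M}=\Gamma^\delta_{K_1,K_2,C_0,C_1}$, the \Fraisse{} limit. By homogeneity every $p\in P$ extends to an automorphism of $\str{M}$; and since any homomorphism from a finite obstacle into $\str{M}$ would have image a finite substructure of $\str{M}$ lying in the class, the final paragraph of the proof of Lemma~\ref{lem:obstacles} rules this out, so $\str{M}\in \Forb(\mathcal O)$. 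Thus the hypotheses of Theorem~\ref{thm:herwiglascar} are met, producing a finite $\str{B}_0\in \Forb(\mathcal O)$ with $A\subseteq B_0$ and a coherent map $\phi\colon P\to \Aut(\str{B}_0)$ with each $\phi(p)$ extending $p$.

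The structure $\str{B}_0$ need not be complete, so next I would pass to $\str{B}=\overbar{\str{B}_0}$, its completion with magic parameter $M$. Because $\str{B}_0\in \Forb(\mathcal O)$ it admits a completion into $\mathcal A^\delta_{K_1,K_2,C_0,C_1}$, hence by Theorem~\ref{thm:magiccompletion} the magic completion satisfies $\str{B}\in \mathcal A^\delta_{K_1,K_2,C_0,C_1}$. Since $\str{A}$ is already complete and the completion never alters existing edges, $\str{A}$ remains an induced substructure of $\str{B}$. By the Automorphism Preservation Lemma~\ref{lem:aut}, every automorphism of $\str{B}_0$ is an automorphism of $\str{B}$, so $\Aut(\str{B}_0)\subseteq \Aut(\str{B})$ and we may regard $\phi$ as a map $P\to \Aut(\str{B})$ with each $\phi(p)$ still extending $p$.

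The main point to verify is that coherence survives this last step. For a coherent triple $(f,g,h)$ in $P$ the images $\phi(f),\phi(g),\phi(h)$ satisfy $\phi(h)=\phi(g)\circ\phi(f)$ as automorphisms of $\str{B}_0$; viewed in $\Aut(\str{B})$ they are the very same bijections of the common vertex set $B=B_0$, so the identity $\phi(h)=\phi(g)\circ\phi(f)$ and the domain/range conditions persist verbatim. Hence $\phi\colon P\to \Aut(\str{B})$ is coherent, and $\str{B}$ is a coherent EPPA-extension of $\str{A}$ inside $\mathcal A^\delta_{K_1,K_2,C_0,C_1}$. The crux of the argument --- and the place where EPPA is genuinely harder than the Ramsey property --- is precisely this interaction of the completion with the extension: Theorem~\ref{thm:herwiglascar} delivers only symmetries of an incomplete graph, and it is Lemma~\ref{lem:aut} that guarantees the magic completion does not destroy them.
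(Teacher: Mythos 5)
Your proof is correct and follows exactly the route the paper takes: its proof of this theorem is literally the one-line combination of Theorem~\ref{thm:herwiglascar}, Lemma~\ref{lem:obstacles} and Lemma~\ref{lem:aut} (with Theorem~\ref{thm:magiccompletion} supplying membership of the completion in the class), which you have simply written out in full detail, including the routine verification that coherence survives the completion step.
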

\begin{proof}
This follows by a combination of Theorem~\ref{thm:herwiglascar}, Lemma~\ref{lem:obstacles} and Lemma~\ref{lem:aut}.
\end{proof}
\section{Primitive spaces with Henson constraints}
\label{sec:henson}
In this section we extend the results of Section~\ref{sec:basic3} to classes with Henson constraints.
An important fact about spaces with Henson constraints is the following~\cite[Section 1.3]{Amato2016}:

\begin{remark}\label{rem:Finite_Henson}
Let $\mathcal{O}$ be a set of $(1,\delta)$-spaces. Then there exists a finite set $\mathcal{S}\subset\mathcal{O}$, such that $\Forb(\mathcal{S})=\Forb(\mathcal{O})$.
\end{remark}

A reflexive and transitive relation on a set $X$ is said to be a quasi order. It is a well quasi order if every non-empty subset has at least one, but only finitely many, minimal elements. Now, up to isometry, $(1,\delta)$ spaces are characterised by the sizes of their cliques i.e., maximal subsets of vertices at mutual distance 1. If we express every such space as a finite non-decreasing sequence of non negative integers we see that a $(1,\delta)$-space is embeddable in another if and only if the latter contains a subsequence which majorises the former term by term. It is a well known fact that the set of finite sequences of a well quasi ordered set is a well quasi ordered set with respect to the above relation, see ~\cite{higman1952ordering}.

\begin{lem}
\label{lem:hensoncompletions}
Let $(\delta,K_1,K_2,\allowbreak C_0,C_1,\mathcal S)$ be a primitive admissible sequence of parameters and let $\mathcal S$ be a non-empty class of Henson constraints.
There is an $M$ such that the completion with magic parameter $M$ does not introduce
distances $1$ or $\delta$.
\end{lem}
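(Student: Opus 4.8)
The plan is to produce a magic parameter $M$ with $1 < M < \delta$ and to check that for such an $M$ the completion algorithm inserts neither an edge of length $1$ nor an edge of length $\delta$. Recall that the completion with magic parameter $M$ assigns a length $x \neq M$ to a previously absent edge only when it completes a fork lying in $\mathbb{F}_M(x)$, whereas edges of length $M$ appear solely as the leftover distance of the final step. Hence it suffices to analyse $\mathbb{F}_M(1)$ and $\mathbb{F}_M(\delta)$ and to keep $M$ away from $1$ and $\delta$.

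First I would dispose of distance $1$, which works for any admissible magic $M$. Since $M \geq \max(K_1,\lceil \delta/2\rceil) \geq 2$, the value $1$ is never the leftover, so a length-$1$ edge could only come from a fork in $\mathbb{F}_M(1) = \mathcal{F}^+_1 \cup \mathcal{F}^C_1$ (using $1 < M$). Here $\mathcal{F}^+_1 = \emptyset$ because $a+b \geq 2$ for every fork $(a,b)$, while $\mathcal{F}^C_1 = \{(a,b) : a+b = C-2\}$ is non-empty only if $C - 2 \leq 2\delta$; together with the primitive inequality $C \geq 2\delta + 2$ this would force $C = 2\delta + 2$. This is exactly where the hypothesis $\mathcal{S} \neq \emptyset$ is used: in Case~\ref{II} the value $C = 2K_1 + 2K_2 + 1$ is odd, hence $\neq 2\delta + 2$, and in Case~\ref{III} Definition~\ref{defn:admissible_henson} forbids $C = 2\delta + 2$ once $\mathcal{S}$ is non-empty. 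Thus $\mathcal{F}^C_1 = \emptyset$ and no length-$1$ edge is ever inserted.

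Next I would rule out distance $\delta$, and this is where $M < \delta$ enters. If $M < \delta$ then $\delta$ is not the leftover, so a length-$\delta$ edge could arise only from a fork in $\mathbb{F}_M(\delta) = \mathcal{F}^-_\delta = \{(a,b) : |a-b| = \delta\}$; but $|a-b| \leq \delta - 1$ for every fork, so $\mathcal{F}^-_\delta = \emptyset$. It remains to secure a valid magic $M$ with $M < \delta$. By the existence lemma following Definition~\ref{defn:magiccompletion} the range of admissible magic parameters is non-empty, so its least element is a valid magic distance, and I would check that this least element is $< \delta$. Its value is $\max(K_1 + \varepsilon, \lceil \delta/2\rceil)$, where $\varepsilon = 1$ precisely in the Case~\ref{III} subcase $K_1 + 2K_2 = 2\delta - 1$ (coming from the extra condition $M > K_1$) and $\varepsilon = 0$ otherwise. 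Now $\lceil \delta/2\rceil < \delta$ for $\delta \geq 3$, and $K_1 < \delta$: in Case~\ref{II} this is automatic since $3K_1 \leq K_1 + 2K_2 \leq 2\delta - 1$, while in Case~\ref{III} it follows from Definition~\ref{defn:admissible_henson} (which forbids $K_1 = \delta$ when $\mathcal{S} \neq \emptyset$) together with $K_1 \leq \delta$; moreover in the $\varepsilon = 1$ subcase the proof of the existence lemma already yields $K_1 \leq \delta - 2$, so $K_1 + 1 < \delta$. Hence the least admissible magic parameter lies below $\delta$, and taking $M$ to be this value finishes the argument.

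The principal obstacle --- and the only place where non-emptiness of $\mathcal{S}$ is indispensable --- is the distance-$1$ analysis: if $C = 2\delta + 2$ were permitted then the fork $(\delta,\delta) \in \mathcal{F}^C_1$ would force the algorithm to create length-$1$ edges for every choice of $M$, so the conclusion would simply fail. The distance-$\delta$ half is comparatively routine, reducing to keeping $M$ strictly below $\delta$, which the parameter bounds allow as soon as $K_1 < \delta$ is established.
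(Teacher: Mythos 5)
Your proposal is correct and follows essentially the same route as the paper: both arguments isolate the only two mechanisms by which $1$ or $\delta$ can appear (the leftover distance when $M=\delta$, and a $(\delta,\delta)$-fork in $\mathcal F^C_1$ when $C=2\delta+2$) and rule them out via the Henson admissibility conditions, choosing $M<\delta$ using $K_1<\delta$. Your treatment is somewhat more careful than the paper's — in particular you note that in Case~\ref{II} the value $C=2K_1+2K_2+1$ is odd and hence automatically different from $2\delta+2$, a case the paper's bare appeal to Definition~\ref{defn:admissible_henson} (whose relevant clause is stated only for Case~\ref{III}) does not literally cover.
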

\begin{proof}
For $\delta>3$ the completion algorithm with magic parameter $M \geq \lceil \frac{\delta}{2} \rceil$ introduces distances $\delta$ or $1$ either when $M=\delta$ or when $1$ is the only possible completion of some fork.

For $K_1< \delta$ we can choose $M$ to be smaller than $\delta$, hence the first case only appears if $K_1 = \delta$. However then by Theorem~\ref{thm:admissible_henson}, $\mathcal S=\emptyset$.
The second case only appears if $C=2\delta+2$ and $C' = C+1$, since then $(\delta,\delta)$-forks can only be completed by distance $1$. Then again, by Theorem~\ref{thm:admissible_henson}, $\mathcal S=\emptyset$.
\end{proof}
\begin{thm}
\label{thm:SIRhenson}
Let $(\delta,K_1,K_2,\allowbreak C_0,C_1,\mathcal S)$ be a primitive admissible sequence of parameters and let $\mathcal S$ be a non-empty class of Henson constraints.
For every magic parameter $M$ we can define a {\em stationary independence relation with magic parameter $M$} on $\Gamma^\delta_{K_1,K_2,C_0,C_1,\mathcal S}$ 
as follows:
 $\str{A}\ind_{\str{C}}\str{B}$ if and only if $\struc{\str{A}\str{B}\str{C}}$ is isomorphic to the completion with magic parameter $M$ of the free amalgamation of $\struc{\str{A}\str{C}}$ and $\struc{\str{B}\str{C}}$ over $\str{C}$.
\end{thm}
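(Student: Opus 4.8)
The plan is to follow the proof of Corollary~\ref{cor:SIR} almost verbatim, the only difference being that the target class is now the Henson-restricted subclass $\mathcal A^\delta_{K_1,K_2,C_0,C_1}\cap\Forb(\mathcal S)$ rather than $\mathcal A^\delta_{K_1,K_2,C_0,C_1}$ itself. First I would fix a magic parameter $M$ as provided by Lemma~\ref{lem:hensoncompletions}, so that the completion with magic parameter $M$ introduces no edges of length $1$ or $\delta$; one also checks that this $M$ may be taken to satisfy the extra conditions of Definition~\ref{defn:magiccompletion}, so that Theorem~\ref{thm:magiccompletion} applies to it. For structures $\str{A},\str{B},\str{C}$ in $\Age(\Gamma^\delta_{K_1,K_2,C_0,C_1,\mathcal S})$ with $\str{C}$ embedding into both, I define $\str{A}\oplus_{\str{C}}\str{B}$ to be the completion with magic parameter $M$ of the free amalgam of $\str{A}$ and $\str{B}$ over $\str{C}$. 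It then suffices to verify that $\oplus$ is a canonical amalgamation operator on $\Age(\Gamma^\delta_{K_1,K_2,C_0,C_1,\mathcal S})$, since the relation asserted in the theorem, $\str{A}\ind_{\str{C}}\str{B}$ iff $\struc{\str{A}\str{B}\str{C}}\cong\struc{\str{A}\str{C}}\oplus_{\str{C}}\struc{\str{B}\str{C}}$, is exactly the one that Theorem~\ref{thm:canonicalamalg} produces from such a $\oplus$.

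Almost all of the required properties are inherited from Section~\ref{sec:basic3}. That $\str{A}\oplus_{\str{C}}\str{B}\in\mathcal A^\delta_{K_1,K_2,C_0,C_1}$ is immediate from Theorem~\ref{thm:magiccompletion}; the minimal-domain property holds because the completion preserves the vertex set of the free amalgam; and monotonicity and associativity follow from the optimality statement in Theorem~\ref{thm:magiccompletion}(1),(2) by precisely the argument of Corollary~\ref{cor:SIR}, since these are properties of the completion algorithm itself and are entirely indifferent to the presence of Henson constraints. So the one genuinely new point, and the step I expect to be the main obstacle, is showing that $\oplus$ lands in $\Forb(\mathcal S)$, i.e. that completing a free amalgam creates no new forbidden Henson constraint.

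To handle this I would argue as follows. Suppose some $\str{S}\in\mathcal S$ admits a homomorphism into $\str{A}\oplus_{\str{C}}\str{B}$. As $\str{S}$ is a $(1,\delta)$-space it is complete and all its distances are positive, so the homomorphism cannot collapse points and is in fact an isometric embedding whose image is a set of points pairwise at distance $1$ or $\delta$. By the choice of $M$ (Lemma~\ref{lem:hensoncompletions}), every edge of length $1$ or $\delta$ in $\str{A}\oplus_{\str{C}}\str{B}$ already occurs in the free amalgam; and by definition of free amalgamation there are no edges at all between $\struc{\str{A}\str{C}}\setminus\str{C}$ and $\struc{\str{B}\str{C}}\setminus\str{C}$. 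Consequently no two points of the image can lie strictly in different factors, which forces the entire image into $\struc{\str{A}\str{C}}$ or into $\struc{\str{B}\str{C}}$. This would give a homomorphism from $\str{S}$ into one of the two factors, contradicting $\struc{\str{A}\str{C}},\struc{\str{B}\str{C}}\in\Forb(\mathcal S)$. Hence no such $\str{S}$ embeds, and $\str{A}\oplus_{\str{C}}\str{B}\in\Forb(\mathcal S)$.

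With $\oplus$ established as a canonical amalgamation operator on $\Age(\Gamma^\delta_{K_1,K_2,C_0,C_1,\mathcal S})$, I would conclude by applying Theorem~\ref{thm:canonicalamalg}, which converts $\oplus$ into a stationary independence relation $\ind$ on $\Gamma^\delta_{K_1,K_2,C_0,C_1,\mathcal S}$ with exactly the defining clause in the statement. The delicate part throughout is the interplay between the chosen $M$ and the admissibility of $\mathcal S$: the whole argument rests on the fact that a suitable $M$ avoids distances $1$ and $\delta$, which is where the hypothesis that $\mathcal S$ is a genuine (admissible) family of Henson constraints is used, via Lemma~\ref{lem:hensoncompletions}.
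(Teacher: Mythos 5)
Your proposal is correct and follows essentially the same route as the paper: the paper's proof is a one-line reduction to the argument of Corollary~\ref{cor:SIR}, with the correctness of the completion into the Henson-restricted class supplied by Theorem~\ref{thm:magiccompletion} together with Lemma~\ref{lem:hensoncompletions}, exactly as you do. Your explicit verification that no forbidden $(1,\delta)$-space can arise (completeness of Henson constraints plus the absence of cross-edges in the free amalgam plus the fact that the chosen $M$ introduces no distances $1$ or $\delta$) is the same reasoning the paper uses in the neighbouring Theorem~\ref{thm:ramseyHenson}, just spelled out rather than cited.
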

\begin{proof}
This result can be shown as Corollary~\ref{cor:SIR}, the correctness of the completion algorithm can be verified by a combination of Theorem~\ref{thm:magiccompletion} and Lemma~\ref{lem:hensoncompletions}.
\end{proof}
\begin{thm}
\label{thm:ramseyHenson}
Let $(\delta,K_1,K_2,\allowbreak C_0,C_1,\mathcal S)$ be a primitive admissible sequence
of parameters such that $\mathcal S$ is a non-empty class of Henson constraints. Then the class of $\overrightarrow{\K}$ of free
linear orderings of $\K=\mathcal A^\delta_{K_1,K_2,\allowbreak C_0,C_1}\cap
\mathcal A^\delta_\mathcal S$ is Ramsey and has the expansion property. 
\end{thm}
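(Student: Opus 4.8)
The plan is to derive the Ramsey property from the Hubi\v cka--Ne\v set\v ril transfer principle (Theorem~\ref{thm:localfini}) and then obtain the expansion property by repeating the edge-Ramsey argument from the proof of Theorem~\ref{thm:regularramsey}. For the Ramsey part I would take $\mathcal R$ to be the class of all finite linearly ordered \emph{complete} edge-labelled graphs with distances in $\{1,\dots,\delta\}$. Every such structure is irreducible (every pair of vertices forms an edge), and $\mathcal R$ is a Ramsey class by the Ne\v set\v ril--R\"odl theorem for ordered edge-coloured complete graphs. Since $\overrightarrow{\K}$ is a hereditary subclass of $\mathcal R$, it then suffices to check two things: that $\overrightarrow{\K}$ has strong amalgamation and that it is a locally finite subclass of $\mathcal R$ in the sense of Definition~\ref{def:localfinite}.

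Strong amalgamation is immediate from the completion machinery: the completion with a magic parameter $M$ of the free amalgam of two members of $\K$ over a common part lies in $\K$ (this is the content used in Theorem~\ref{thm:SIRhenson}), identifies no vertices and creates no relation across the two sides, so it is a strong amalgam, and the free order carries over unchanged. The heart of the argument is local finiteness, which reduces to exhibiting a \emph{finite} set of obstacles for $\K$. Here I would combine three facts. First, Lemma~\ref{lem:obstacles} supplies a finite set of cyclic obstacles (of size at most $2^\delta\cdot 3$) for the purely $3$-constrained class $\mathcal A^\delta_{K_1,K_2,C_0,C_1}$. Second, Remark~\ref{rem:Finite_Henson} guarantees that the Henson constraints can be taken from the finite basis $\mathcal S$. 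Third --- and this is the only point where a non-empty $\mathcal S$ could cause trouble --- I must ensure the completion does not manufacture new $(1,\delta)$-spaces; this is exactly Lemma~\ref{lem:hensoncompletions}: choosing $M$ so that the completion with magic parameter $M$ never introduces an edge of length $1$ or $\delta$, any induced $(1,\delta)$-space of the completed graph uses only edges already present in the input, so no Henson constraint appears that was not already there.

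Putting these together, set $\mathcal O_\K$ to be the union of the cyclic obstacles of Lemma~\ref{lem:obstacles} with the finite family $\mathcal S$; then I claim $\K=\Forb(\mathcal O_\K)$. Indeed, if an ordered graph $\str{G}\in\mathcal G^\delta$ admits no homomorphism from a member of $\mathcal O_\K$, then its completion $\overbar{\str{G}}$ with the magic parameter $M$ from Lemma~\ref{lem:hensoncompletions} lies in $\mathcal A^\delta_{K_1,K_2,C_0,C_1}$ by Theorem~\ref{thm:magiccompletion}, and it contains no copy of any $\str{S}\in\mathcal S$ by the preceding paragraph, so $\overbar{\str{G}}\in\K$. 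Taking $n=\max\bigl(2^\delta\cdot 3,\ \max_{\str{S}\in\mathcal S}\lvert S\rvert\bigr)$, any $\str{G}$ (which, via a homomorphism-embedding into $\str{C}_0\in\mathcal R$, automatically lies in $\mathcal G^\delta$) all of whose $\le n$-vertex substructures $\K$-complete contains no obstacle and hence $\K$-completes; this is precisely local finiteness, and Theorem~\ref{thm:localfini} yields that $\overrightarrow{\K}$ is Ramsey.

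Finally, for the expansion property I would run the argument of Theorem~\ref{thm:regularramsey} essentially unchanged: from $\str{A}\in\K$ build $\overrightarrow{\str{B}}_0$ as the disjoint (free) union of all orderings of $\str{A}$, insert for every edge $a<b$ with $d(a,b)\neq M$ a midpoint $c$ at distance $M$ from both with $a<c<b$, complete to $\overrightarrow{\str{B}}_1\in\overrightarrow{\K}$, and apply the Ramsey property just established with $\overrightarrow{\str{E}}$ the ordered two-point space at distance $M$ to obtain a monochromatic copy. The only extra care needed is that these auxiliary constructions stay inside $\overrightarrow{\K}$: the inserted triangles are of the form $MMb$ and so are allowed by Observation~\ref{obs:magicismagic}, and since the magic distance of Lemma~\ref{lem:hensoncompletions} satisfies $M\notin\{1,\delta\}$, neither the inserted midpoints nor the subsequent completion create any $(1,\delta)$-space, so $\mathcal S$ is respected throughout. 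I expect the genuine obstacle of the whole proof to be exactly this interaction between the completion algorithm and the Henson constraints; once Lemma~\ref{lem:hensoncompletions} is in hand, the remainder is a transcription of the primitive case.
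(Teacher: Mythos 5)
Your proof is correct and rests on the same engine as the paper's: local finiteness plus Theorem~\ref{thm:localfini} for the Ramsey part, with Lemma~\ref{lem:hensoncompletions} doing the real work of ensuring the magic completion never manufactures a new $(1,\delta)$-space. There are two points where your route differs from the paper's. First, you take the ambient Ramsey class $\mathcal R$ to be the full Ne\v set\v ril--R\"odl class of ordered complete edge-labelled graphs and fold the cyclic obstacles of Lemma~\ref{lem:obstacles} together with $\mathcal S$ into one obstacle set, whereas the paper exhibits $\overrightarrow{\K}$ as a locally finite subclass of $\overrightarrow{\mathcal A}^\delta_{K_1,K_2,C_0,C_1}$, reusing Theorem~\ref{thm:regularramsey} and only having to control the Henson constraints at this stage. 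Second, to bound the witness size $n$ you invoke Remark~\ref{rem:Finite_Henson} to replace $\mathcal S$ by a finite basis; the paper instead avoids any finiteness assumption on $\mathcal S$ by observing that Henson constraints are irreducible, so under a homomorphism-embedding into $\str{C}_0$ they embed, and hence any Henson constraint occurring in $\str{C}$ has at most $|C_0|$ vertices. Your version is self-contained but leans on the well-quasi-ordering fact; the paper's is marginally slicker and would survive even without Remark~\ref{rem:Finite_Henson}. Your treatment of the expansion property (rerunning the edge-Ramsey argument of Theorem~\ref{thm:regularramsey} and checking that the inserted distance-$M$ midpoints, with $M\notin\{1,\delta\}$, cannot create a forbidden $(1,\delta)$-space) is in fact more explicit than the paper, whose proof of this theorem leaves the expansion property to the standard argument without comment.
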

\begin{proof}
Recall the definition of locally finite subclasses in Definition~\ref{def:localfinite}.
We show that $\overrightarrow{\mathcal K}$ is a locally finite subclass of $\overrightarrow{\mathcal A}^\delta_{K_1,K_2,\allowbreak C_0,C_1}$. Given $\str{C}_0\in \overrightarrow{\mathcal A}^\delta_{K_1,K_2,\allowbreak C_0,C_1}$ we put $n=|C_0|$.
Consider any $\str{C}$ with a homomorphism to $\str{C}_0$ such that every substructure with at most $n$ vertices can be completed to $\str{C}'\in \overrightarrow{\K}$. Because Henson constrains are complete edge-labelled graphs, we know that every Henson constraint in $\str{C}$ is mapped to an isomorphic copy of itself by any homomorphism $\str{C} \to \str{C}_0$. Therefore there there is no $\str{H}\in \mathcal S$, $|H|\geq n$ such that $\str{H}\to \str{C}$. Because every subgraph with at most $n$ vertices can be completed to $\overrightarrow{\K}$ we also know that there is no $\str{H}\in \mathcal S$, $|H|\leq n$ such that $\str{H}\to \str{C}$.

By Lemma~\ref{lem:hensoncompletions} we know that the magic completion $\str{C}'$ of $\str{C}$ (which exists by Theorem~\ref{thm:magiccompletion}) will not introduce any edges of distance $1$ and $\delta$ and thus also $\str{C}'$ contains no forbidden Henson constraints.
The Ramsey property follows by Theorem~\ref{thm:localfini}.
\end{proof}
\begin{thm}
\label{thm:EPPAHenson}
Let $(\delta,K_1,K_2,\allowbreak C_0,C_1,\mathcal S)$ be a primitive admissible sequence of parameters such that $S$ is a non-empty class of Henson constraints. Then $\mathcal A^\delta_{K_1,K_2,\allowbreak C_0,C_1}\cap     \mathcal A^\delta_\mathcal S$ has coherent EPPA.
\end{thm}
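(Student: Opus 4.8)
The plan is to mirror the proof of Theorem~\ref{thm:regulareppa}, using the Solecki--Siniora Theorem~\ref{thm:herwiglascar} with a \emph{finite} obstacle set and then repairing the (possibly incomplete) witness it produces by the completion algorithm with a carefully chosen magic parameter. First I would assemble the obstacle family $\mathcal O=\mathcal O_3\cup\mathcal S'$, where $\mathcal O_3$ is the finite set of non-completable cycles of diameter at most $2^\delta\cdot 3$ supplied by Lemma~\ref{lem:obstacles} for the $3$-constrained class $\mathcal A^\delta_{K_1,K_2,C_0,C_1}$, and $\mathcal S'\subseteq\mathcal S$ is a finite subfamily of Henson constraints with $\Forb(\mathcal S')=\Forb(\mathcal S)$, whose existence is exactly the content of Remark~\ref{rem:Finite_Henson}. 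Finiteness of $\mathcal O$ is the first place where the Henson setting differs from the primitive case, and it is handled entirely by the well-quasi-order argument behind Remark~\ref{rem:Finite_Henson}.

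Next, given $\str{A}\in\K$ and the set $P$ of all partial automorphisms of $\str{A}$, I would take $\str{M}$ to be the \Fraisse{} limit $\Gamma^\delta_{K_1,K_2,C_0,C_1,\mathcal S}$ of $\K$, which contains $\str{A}$. By homogeneity every $p\in P$ extends to an automorphism of $\str{M}$, so it remains to verify that no obstacle admits a homomorphism into $\str{M}$. For the cycles in $\mathcal O_3$ this is immediate, since $\str{M}\in\mathcal A^\delta_{K_1,K_2,C_0,C_1}$ and $\mathcal O_3$ is a set of obstacles for that class. For a Henson constraint $\str{H}\in\mathcal S'$, observe that $\str{H}$ is complete with all distances at least $1$, so any homomorphism $\str{H}\to\str{M}$ must be injective (a collapse would force $d'(v,v)\in\{1,\delta\}$) and hence an embedding; as $\str{M}$ omits every member of $\mathcal S$, no such homomorphism exists. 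Theorem~\ref{thm:herwiglascar} then yields a finite $\str{B}\in\Forb(\mathcal O)$ containing $\str{A}$ together with a coherent map $\phi\colon P\to\Aut(\str{B})$ with $\phi(p)$ extending $p$.

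Finally I would complete $\str{B}$ into $\K$ while preserving all symmetries. Since $\str{B}\in\Forb(\mathcal O_3)$ it has a metric completion into $\mathcal A^\delta_{K_1,K_2,C_0,C_1}$, so by Theorem~\ref{thm:magiccompletion} its completion $\overbar{\str{B}}$ with a magic parameter lies in $\mathcal A^\delta_{K_1,K_2,C_0,C_1}$. The crucial point is to choose, via Lemma~\ref{lem:hensoncompletions}, a magic $M$ for which the completion introduces no edge of length $1$ or $\delta$; then the $(1,\delta)$-subspaces of $\overbar{\str{B}}$ coincide with those of $\str{B}$, so $\overbar{\str{B}}$ still omits every member of $\mathcal S'$ and therefore lies in $\K=\mathcal A^\delta_{K_1,K_2,C_0,C_1}\cap\mathcal A_\mathcal S$. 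By the Automorphism Preservation Lemma~\ref{lem:aut} each $\phi(p)$ is also an automorphism of $\overbar{\str{B}}$; since completion adds no vertices and does not alter the edges inside the already-complete structure $\str{A}$, the functions $\phi(p)$ are literally unchanged, $\str{A}$ remains an induced substructure of $\overbar{\str{B}}$, and coherence of $\phi$ is inherited verbatim. Thus $\overbar{\str{B}}$ is a coherent EPPA-extension of $\str{A}$ in $\K$.

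The main obstacle I anticipate is not the Solecki--Siniora machinery, which is already packaged in Theorem~\ref{thm:herwiglascar}, but the interaction between the completion algorithm and the Henson constraints. One must be certain both that the magic parameter $M$ avoiding distances $1$ and $\delta$ actually exists for admissible Henson parameters and that avoiding these two distances genuinely prevents any new $(1,\delta)$-space from appearing in $\overbar{\str{B}}$ --- precisely the two facts I would draw from Lemma~\ref{lem:hensoncompletions}.
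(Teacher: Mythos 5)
Your proposal is correct and follows essentially the same route as the paper's proof: combine the finite obstacle set of Lemma~\ref{lem:obstacles} with a finite subfamily of Henson constraints via Remark~\ref{rem:Finite_Henson}, apply Theorem~\ref{thm:herwiglascar}, and then repair the witness with the magic completion, invoking Lemma~\ref{lem:hensoncompletions} to avoid introducing distances $1$ and $\delta$ and Lemma~\ref{lem:aut} (via Theorem~\ref{thm:magiccompletion}) to preserve the extended automorphisms. Your added verifications (that no obstacle maps homomorphically into the \Fraisse{} limit, and that Henson constraints, being complete, can only map injectively) are details the paper leaves implicit but are correct.
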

\begin{proof}
By Lemma~\ref{lem:obstacles} there is a finite set of obstacles $\mathcal O$ for  $\mathcal A^\delta_{K_1,K_2,\allowbreak C_0,C_1}$.
By Remark~\ref{rem:Finite_Henson} $\mathcal S$ is finite. Given $A\in \K$, apply Theorem~\ref{thm:herwiglascar} to obtain an EPPA-witness $\str{B}\in \Forb(\mathcal O\cup \mathcal S)$.
Denote by $\str{C}$ its completion with magic parameter $M$. By Lemma~\ref{lem:hensoncompletions} we know that $\str{C}\in \Forb(\mathcal S)$
and by Lemma~\ref{thm:magiccompletion} we know that the automorphism group is unaffected. It follows that $\str{C}$ is the desired completion.
\end{proof}

\section{Bipartite 3-constrained spaces}
\label{sec:bipartite}

In this section we discuss the bipartite classes of finite diameter in Cherlin's catalogue (Case \ref{I} in Theorem \ref{thm:admissible}). These are classes of metric spaces $\mathcal A^\delta_{K_1,K_2,C_0,C_1}$ with parameters $$\delta < \infty, K_1=\infty, K_2=0, C_1=2\delta+1.$$ Furthermore we assume that $$C_0 > 2\delta +3.$$ The antipodal case where $C_0 = 2\delta +2$ will be treated in Section \ref{sec:antipodal}. The parameter $C_0$ has to be even, so $2\delta+3$ is not an acceptable value for $C_0$.  We also discuss the Henson constraints for bipartite graphs.

By the condition $K_1=\infty$, the metric spaces in $\mathcal A^\delta_{\infty,0,C_0,2\delta+1}$ contain no triangles of odd perimeter. As a direct consequence the relation consisting of all pairs $(x,y)$ such that $d(x,y)$ is even, is an equivalence relation with two equivalence classes; this fact also motivates the name ``bipartite 3-constrained spaces''.

\subsection{Generalised completion algorithm for bipartite 3-con\-strained classes}
Our aim in this section is to again describe a procedure that completes a given edge-labelled graph $\str{G}$ to metric spaces in $\mathcal A^\delta_{\infty,0,C_0,2\delta+1}$ whenever possible. The completion algorithm constructed in Section \ref{sec:algorithm} fails in general in the bipartite setting, since it might introduce triangles with odd perimeter (for instance when adding the magic distance in the final step). Hence Theorem \ref{thm:magiccompletion} cannot be applied here.

In the following we show how we can slightly adapt the algorithm to ensure that no new odd cycles are generated. The basic idea for our completion algorithm is again to optimize the length of the newly introduced edges towards a magic parameter $M$, respectively its successor $M+1$. The length of the remaining edges are then set to $M$ or $M+1$ depending on the parity prescribed by the bipartition.

\begin{defn}[Bipartite magic distances]
\label{defn:bimagiccompletion}
Let $M\in\{1,2,\ldots, \delta\}$ be a distance. We say that $M$ is {\em magic (with respect to $\mathcal A^\delta_{\infty,0,C_0,2\delta+1}$)} if $$\left\lfloor\frac{\delta}{2}\right\rfloor \leq M < M+1 \leq \left\lfloor\frac{C_0-\delta-1}{2}\right\rfloor.$$
\end{defn}
Compare this with Definition~\ref{defn:magicdistance}. By the assumption $C_0 > 2\delta +3$ a magic distance always exists. 
\begin{observation}\label{obs:bimagicismagic}
$M$ is magic if and only if it has the following property:
for all even $1< b\leq \delta$ the triangles $MMb$ and $(M+1)(M+1)b$ are in $\mathcal A^\delta_{\infty,0,C_0,2\delta+1}$; and for all odd $1\leq b\leq \delta$ the triangle $M(M+1)b$ is in $\mathcal A^\delta_{\infty,0,C_0,2\delta+1}$.
\end{observation}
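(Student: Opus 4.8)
The plan is to mimic the proof of Observation~\ref{obs:magicismagic}, exploiting the special shape of the bipartite class. The first step is to record what membership in $\mathcal A^\delta_{\infty,0,C_0,2\delta+1}$ means for a single triangle (edge triple) in this setting: since $K_1=\infty$ and $K_2=0$, every triangle of odd perimeter is forbidden by the $K_1$-bound, while a triangle of even perimeter is allowed precisely when it is metric and has perimeter strictly less than $C_0$. I would then observe that each of the three families in the statement has even perimeter for the prescribed parity of $b$: the perimeters $2M+b$ and $2M+2+b$ are even exactly when $b$ is even, and $2M+1+b$ is even exactly when $b$ is odd. Thus for the listed triangles, lying in the class reduces to the conjunction of metricity and the $C_0$-bound.

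Next I would translate metricity and the $C_0$-bound into explicit inequalities. For the isosceles triangles the only nontrivial triangle inequality bounds the base: $MMb$ is metric iff $b\le 2M$, the triangle $(M+1)(M+1)b$ iff $b\le 2M+2$, and $M(M+1)b$ iff $b\le 2M+1$; the $C_0$-bound asks the respective perimeters to be $<C_0$. All of these are monotone in $b$, so it suffices to test them at the largest admissible $b$ of each parity. Writing $b_e$ and $b_o$ for the largest even and odd values $\le\delta$ (one of which is $\delta$ and the other $\delta-1$, according to the parity of $\delta$), the metricity requirements collapse to $b_e\le 2M$ (the $(M+1)$-version being weaker) together with $b_o\le 2M+1$, while the $C_0$ requirements collapse to $2M+2+b_e<C_0$ and $2M+1+b_o<C_0$, the perimeters of $(M+1)(M+1)b_e$ and $M(M+1)b_o$ dominating the others.

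The heart of the argument is then the parity bookkeeping identifying these four inequalities with the two defining inequalities of a bipartite magic distance. I would split into the cases $\delta$ even and $\delta$ odd; in each case $b_e,b_o\in\{\delta,\delta-1\}$ are known explicitly, and a direct computation shows that both metricity inequalities are equivalent to $M\ge\lfloor\delta/2\rfloor$, and that the binding $C_0$-inequality (namely $2M+2+\delta<C_0$ when $\delta$ is even and $2M+1+\delta<C_0$ when $\delta$ is odd) is equivalent to $M+1\le\lfloor(C_0-\delta-1)/2\rfloor$. Since $M<M+1$ holds automatically, this is exactly Definition~\ref{defn:bimagiccompletion}, yielding both implications simultaneously.

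The main obstacle is not conceptual but the careful handling of floors and of strict-versus-nonstrict inequalities: one must track the parity of each perimeter against the even parameter $C_0$ so that, for example, $2M+2+\delta<C_0$ (an even integer below an even integer) is correctly rewritten as $2(M+1)\le C_0-\delta-2$, matching $\lfloor(C_0-\delta-1)/2\rfloor$. A secondary point worth verifying in passing is that a bipartite magic $M$ automatically satisfies $M+1\le\delta$, so that $M+1$ is a legitimate distance; this follows from the acceptable-parameter bound $C_0\le 3\delta+2$.
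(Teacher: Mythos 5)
Your proof is correct and follows essentially the same route as the paper's: both directions reduce to testing the extremal triangles with base $\delta$ or $\delta-1$ (according to parity) against metricity and the $C_0$-bound, which is exactly what the paper's own one-paragraph argument does. Your version is more explicit about the converse direction and about the floor/parity arithmetic (and states the $C_0$-side inequality in the form $M+1\leq\lfloor(C_0-\delta-1)/2\rfloor$, which is precisely what Definition~\ref{defn:bimagiccompletion} requires), but the underlying argument is the same.
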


\begin{proof}
If $M$ has this property then $M\geq \left\lfloor\frac{\delta}{2}\right\rfloor$ (otherwise for even $\delta$ the triangle $MM\delta$ would be non-metric; similarly for odd $\delta$ the triangle $M(M+1)\delta$ would be non-metric). Also, $M\leq \left\lfloor\frac{C_0-\delta-1}{2}\right\rfloor$ (otherwise if $\delta$ is even the triangle $(M+1)(M+1)\delta$ has perimeter $C_0$, hence is forbidden by the $C_0$ bound; and if $\delta$ is odd the triangle $M(M+1)\delta$ has perimeter $C_0$). The other implication follows from the definition of $\mathcal A^\delta_{\infty,0,C_0,2\delta+1}$.
\end{proof}

With the following simple lemma we reduce our discussion to completions of \textit{connected} edge-labelled graphs, i.e. edge-labelled graphs such that there exists a path connecting each pair of vertices.

\begin{lem} \label{lem:biconnected}
$\str{G}\in \mathcal G^\delta$ has a completion to $\mathcal A^\delta_{\infty,0,C_0,2\delta+1}$ if and only if all of its connected components have a completion to $\mathcal A^\delta_{\infty,0,C_0,2\delta+1}$.
\end{lem}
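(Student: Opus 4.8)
The forward implication is immediate from hereditarity: each connected component of $\str{G}$ is an induced subgraph, so the restriction to it of any completion of $\str{G}$ in $\mathcal A^\delta_{\infty,0,C_0,2\delta+1}$ is again a completion lying in this (hereditary) class. For the converse the plan is to glue together the completions of the individual components, using the magic pair $M$, $M+1$ as ``universal connectors.'' Let $\str{G}_1,\dots,\str{G}_k$ be the connected components of $\str{G}$ (finitely many, since $\str{G}$ is finite), fix for each $i$ a completion $\str{M}_i=(G_i,d_i)\in\mathcal A^\delta_{\infty,0,C_0,2\delta+1}$, and fix a magic distance $M$. Since $\str{M}_i$ contains no odd triangle, the relation ``$d_i$ is even'' is an equivalence relation with two classes, so I would choose a $2$-colouring $c_i\colon G_i\to\{0,1\}$ realising it, i.e.\ with $d_i(u,v)\equiv c_i(u)+c_i(v)\pmod 2$ inside $\str{M}_i$. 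Writing $e$ and $o$ for the even and odd member of $\{M,M+1\}$, I define a complete edge-labelled graph $\str{H}$ on $G=\bigcup_i G_i$ by keeping all distances inside each $\str{M}_i$ and, for $u\in G_i$, $v\in G_j$ with $i\neq j$, setting $d(u,v)=e$ if $c_i(u)=c_j(v)$ and $d(u,v)=o$ otherwise. As $\str{G}$ has no edges between distinct components, $\str{H}$ agrees with $\str{G}$ on every edge of $\str{G}$, so it is a candidate completion; since the class is $3$-constrained (Definition~\ref{defn:numerical}), it remains only to check that every triangle of $\str{H}$ is allowed.

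I would carry this out by splitting triangles according to how their vertices distribute among the components. The combined colouring $c$ (the union of the $c_i$) satisfies $d(u,v)\equiv c(u)+c(v)\pmod 2$ for \emph{all} pairs, so every triangle of $\str{H}$ has even perimeter and no odd triangle can arise. A triangle with all three vertices in one component lies in some $\str{M}_i$ and is allowed by assumption. A triangle with its three vertices in distinct components has all sides in $\{e,o\}$; since a $2$-colouring forces an even number of bichromatic pairs among three vertices, it is of type $eee$ or $eoo$, and both are instances of $MMb$ and $(M+1)(M+1)b$ with $b$ even, hence allowed by Observation~\ref{obs:bimagicismagic}. Finally, for a triangle with two vertices $u,v$ in $\str{M}_i$ and a third $w$ in another component, the side $d(u,v)=b$ is an arbitrary distance $\le\delta$ of $\str{M}_i$, while the other two sides both equal $e$ or both equal $o$ when $b$ is even (as $c_i(u)=c_i(v)$) and are $e$ and $o$ when $b$ is odd; these are exactly the triangles $MMb$, $(M+1)(M+1)b$ (even $b$) and $M(M+1)b$ (odd $b$) guaranteed to be allowed by Observation~\ref{obs:bimagicismagic}. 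Thus every triangle of $\str{H}$ is allowed and $\str{H}\in\mathcal A^\delta_{\infty,0,C_0,2\delta+1}$ is the desired completion.

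The only genuine obstacle is the triangle case analysis, and it is resolved precisely by Observation~\ref{obs:bimagicismagic}: the magic pair $(M,M+1)$ was defined so that attaching a new vertex to two others at distances drawn from $\{M,M+1\}$, with parities dictated by the bipartition, never produces a non-metric triangle, an odd triangle, or a triangle of perimeter $\geq C_0$. The parity bookkeeping --- that the chosen colours yield only even-perimeter triangles and that each mixed triangle has the side pattern the observation handles --- is routine once the colouring is set up componentwise as above, the single point requiring care being that the $e$/$o$ assignment stays consistent with each component's own bipartition.
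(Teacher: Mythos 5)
Your proof is correct and follows essentially the same route as the paper's: cross-component distances are set to $M$ or $M+1$ according to the parity forced by the bipartitions, and Observation~\ref{obs:bimagicismagic} disposes of all new triangles. The only cosmetic difference is that you glue all components simultaneously via explicit $2$-colourings, whereas the paper reduces to two components and encodes the same colouring through base points $x\in A$, $y\in B$.
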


See Remark~\ref{rem:biunconnected} for more discussion about the disconnected case.

\begin{proof}
It suffices to show that every $\str{G}=(G,d)$ that is the disjoint union of two graphs $A,B$ from $\mathcal A^\delta_{\infty,0,C_0,2\delta+1}$ has a completion $(G,d') \in \mathcal A^\delta_{\infty,0,C_0,2\delta+1}$. 

Fix some $x\in A$ and $y\in B$. Then, for every non-edge $(x',y')$ with $x'\in A$ and $y'\in B$ let $d'(x',y') = M$ if $d(x,x') + d(y,y')$ is even and $d'(x',y') = M+1$ otherwise. It is not hard to verify that all the newly introduced triangles are of the form $MMb$, $(M+1)(M+1)b$ where $b$ is even, or $M(M+1)b$ where $b$ is odd. Hence $(G,d') \in \mathcal A^\delta_{\infty,0,C_0,2\delta+1}$.
\end{proof}

For connected graphs we now give the following definition of a completion algorithm:

\begin{defn}[Bipartite completion algorithm]\label{defn:biftmcompletion}
Given $1\leq M\leq M+1\leq \delta$, a one-to-one function $t\colon\{1,2,\ldots,\delta\}\setminus \{M,M+1\}\to \mathbb N$ 
 and a function $\mathbb F$ from $\{1,2,\ldots,\delta\}\setminus \{M,M+1\}$ to the power set of $\mathcal D$, then we define the {\em $(\mathbb F,t,M,M+1)$-completion} of a connected edge-labelled graph $\str{G} = (G,d)$ as the limit of the sequence $\str{G}_1, \str{G}_2,\ldots$ that is constructed as in Definition \ref{defn:ftmcompletion}; the length of all remaining non-edges $(u,v)$ in this limit is then set to $M$ if $d^+(u,v)$ has the same parity as $M$ and to $M+1$ otherwise. In addition we will stick to the other notational conventions introduced in Section \ref{sec:algorithm} (time function, $a$ depends on $b$, \ldots).
\end{defn}

Let $M$ be a magic distance and let $1\leq x\leq \delta$ with $x\neq M, M+1$. Then we define the fork sets $\mathcal F^{+}_x$ and $\mathcal F^{-}_x$ as in the last section and $\mathcal F^{C_0}_x = \left\{(a,b)\in \mathcal D : C_0-2-a-b=x\right\}$, i.e. $(a,b) \in \mathcal F^{C_0}_x$ if and only if $a+b+x$ is equal to $C_0 -2$ and hence also even.
\begin{figure}
\centering
\includegraphics{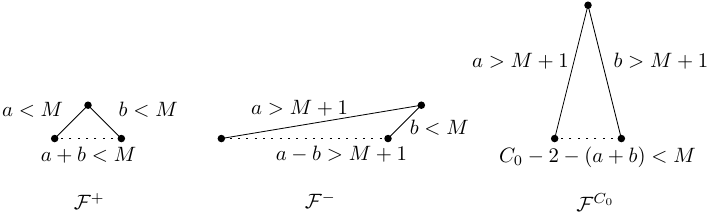}
\caption{Forks used by $\mathbb F$ by the bipartite algorithm.}
\label{fig:BFforks}
\end{figure}
Forks used by the bipartite algorithm are schematically depicted in Figure~\ref{fig:BFforks}.

We further define
$$\mathbb F_M(x) =
\begin{cases} 
      \mathcal F^+_x\cup \mathcal F^{C_0}_x & x < M \\
      \mathcal F^-_x & x > M +1.
\end{cases}
$$
For a magic distance $M$, we define the function $t_M\colon \{1,\ldots,\delta\}\setminus \{M, M+1\} \rightarrow \mathbb N$ as
$$t_M(x) =
\begin{cases} 
      2x-1 & x < M \\
      2(\delta-x) & x > M+1.
\end{cases}
$$

We then call the $(\mathbb F_M,t_M,M,M+1)$-completion of $\str{G}$ the {\em bipartite completion of $\str{G}$ with magic parameter $M$}.

\begin{lem}[Bipartite Time Consistency Lemma]\label{lem:biexpandtime}
Let $a,b$ be distances different from $M$ and $M+1$. If $a$ depends on $b$, then $t_M(a) > t_M(b)$.
\end{lem}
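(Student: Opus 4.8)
The plan is to follow the proof of the non-bipartite Time Consistency Lemma~\ref{lem:expandtime} almost verbatim, splitting into three cases according to which of the fork families $\mathcal F^+$, $\mathcal F^{C_0}$ or $\mathcal F^-$ witnesses the dependence. First I would unwind the definition: ``$a$ depends on $b$'' means there is a distance $c$ with $(b,c)\in\mathbb F_M(a)$. By the definition of $\mathbb F_M$ this splits into the case $a<M$ (where $(b,c)\in\mathcal F^+_a\cup\mathcal F^{C_0}_a$) and the case $a>M+1$ (where $(b,c)\in\mathcal F^-_a$). Throughout I must keep in mind that now two consecutive values $M,M+1$ are excluded from the domain of $t_M$, so whenever I locate $b$ I have to check that it lands \emph{strictly} below $M$ or \emph{strictly} above $M+1$ before applying the appropriate branch of $t_M$.

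The two easy cases go exactly as before. If $(b,c)\in\mathcal F^+_a$ with $a<M$, then $b+c=a$ forces $b<a<M$, so $t_M(b)=2b-1<2a-1=t_M(a)$. If $(b,c)\in\mathcal F^-_a$ with $a>M+1$, then $|b-c|=a$; either $b=a+c>a>M+1$, giving $t_M(b)=2(\delta-b)<2(\delta-a)=t_M(a)$ directly, or $b=c-a\le\delta-a$. In the latter subcase the inequality $M\ge\lfloor\delta/2\rfloor$ from Definition~\ref{defn:bimagiccompletion} yields $b\le\delta-a<\delta-(M+1)\le M$, hence (as $b\neq M$) $b<M$ and $t_M(b)=2b-1\le 2(\delta-a)-1<t_M(a)$.

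The only genuinely new step is the $\mathcal F^{C_0}$ case, and this is where the bipartite bound $C_0>2\delta+3$ (so $C_0\ge 2\delta+4$ by evenness) does the work that $C\ge2\delta+2$ did before. Here $a<M$ and $a=C_0-2-b-c$. First I would show $b>M+1$: if instead $b<M$ then $a+b<2M\le C_0-\delta-3$ (using $M+1\le\lfloor(C_0-\delta-1)/2\rfloor$), whence $c=C_0-2-a-b>\delta$, which is impossible; since moreover $b\neq M,M+1$ this forces $b\ge M+2$, so that $t_M(b)=2(\delta-b)$. Finally, from $c\le\delta$ I get $b=C_0-2-a-c\ge C_0-2-a-\delta$, hence $\delta-b\le 2\delta+2-C_0+a\le a-2$ by $C_0\ge2\delta+4$, giving $t_M(b)=2(\delta-b)\le 2a-4<2a-1=t_M(a)$. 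I expect this $\mathcal F^{C_0}$ computation to be the main (indeed essentially the only) obstacle, precisely because it is the one place where the replacement of the parameter $C$ by $C_0$ and the widened gap $C_0\ge2\delta+4$ must be invoked in order to keep both witnesses $b,c$ safely above $M+1$.
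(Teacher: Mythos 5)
Your proposal is correct and follows essentially the same route as the paper's proof: the same three-way case split on $\mathcal F^+$, $\mathcal F^{C_0}$ and $\mathcal F^-$, with the $\mathcal F^+$ and $\mathcal F^-$ cases carried over from Lemma~\ref{lem:expandtime} and the $\mathcal F^{C_0}$ case handled by exactly the paper's computation (forcing $b\ge M+2$ via the magic-distance bound and then using $C_0\ge 2\delta+4$ to get $t_M(b)\le 2(a-2)<2a-1$). The only cosmetic difference is that you derive the contradiction in the $\mathcal F^{C_0}$ case as $c>\delta$ after excluding $b<M$, whereas the paper phrases it as an arithmetic clash starting from $b\le M+1$; both are equivalent.
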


\begin{proof}
Analogously to Lemma~\ref{lem:expandtime} we consider three types of forks.
\begin{description}
\item[$\mathcal F^+$:]
This follows in complete analogy to Lemma~\ref{lem:expandtime}: If $a<M$ and $\mathcal F^+_a\neq\emptyset$, then $b<a<M$, hence $t_M(b) < t_M(a)$.

\item[$\mathcal F^{C_0}$:]
If $a < M$ and $\mathcal F^{C_0}_a\neq\emptyset$, then we must have $b,c > M+1$. Otherwise, if for instance $b\leq M+1$, then $C_0-\delta-2\leq C_0-2-c = a+b < 2M+1 \leq 2\left\lfloor \frac{C_0-\delta-1}{2} \right\rfloor - 2 + 1$, a contradiction. As $C_0 \geq 2\delta+4$, we obtain the inequality $b = (C_0-2)-c-a \geq (2\delta + 2) - \delta - a = \delta+2-a$. Hence $t_M(b) \leq 2(a-2) < 2a-1 = t_M(a)$.

\item[$\mathcal F^-$:]
Finally, we consider the case where $a > M+1$ and $\mathcal F^-_a\neq\emptyset$. Then either $a = b-c$, which implies $b>a>M+1$ and thus $t_M(b)<t_M(a)$, or $a = c-b$, which means $b = c-a\leq \delta-a$. Because of $a>M+1\geq \left\lceil\frac{\delta}{2}\right\rceil$, we have $b<M$. So $t_M(b) \leq 2(\delta-a) - 1 < 2(\delta-a) = t_M(a)$.
\end{description}
\end{proof}

\begin{lem}[Bipartite $\mathbb F_M$ Completeness Lemma]\label{lem:bimisgood}
Let ${\str{G}}\in \mathcal G^\delta$ and $\overbar{\str{G}}$ be its bipartite completion with magic parameter $M$. If there is a triangle of even perimeter in $\overbar{\str{G}}$ that is either non-metric or forbidden due to the $C_0$-bound, and that contains an edge of length $M$ or $M+1$, then that edge was already in ${\str{G}}$.
\end{lem}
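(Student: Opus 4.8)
The plan is to mirror the proof of the non-bipartite $\mathbb F_M$ Completeness Lemma (Lemma~\ref{lem:misgood}), adapting the case analysis to the bipartite algorithm where the ``magic'' step now fills non-edges with either $M$ or $M+1$ according to parity. First I would set up the contradiction: suppose $\overbar{\str{G}}$ contains a triangle $u,v,w$ of even perimeter that is either non-metric or $C_0$-forbidden, and whose edge (say) $u,v$ has length $M$ or $M+1$ but is \emph{not} present in $\str{G}$. Since this edge is not in $\str{G}$, it was inserted in the very last (``magic'') step of the bipartite completion, and so both $u,w$ and $v,w$ already carry lengths that were set no later than that step. As in Lemma~\ref{lem:misgood}, I would formally put $t_M(M)=t_M(M+1)=\infty$ to encode that $M$- and $(M+1)$-edges are added last, so that the Bipartite Time Consistency Lemma~\ref{lem:biexpandtime} governs the order in which the other two edges appeared.

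Next I would split into the two forbidden types. For the non-metric case, write $\ell=\bar d(u,v)\in\{M,M+1\}$ and let $a=\bar d(u,w),\,b=\bar d(v,w)$. Non-metricity of even perimeter means either $a+b<\ell$ or $|a-b|>\ell$. In the first subcase $a+b$ is a strictly smaller distance that depends on $a,b$, so by Lemma~\ref{lem:biexpandtime} $t_M(a+b)>t_M(a),t_M(b)$; hence the algorithm would have completed the fork $(a,b)\in\mathcal F^+_{a+b}$ with value $a+b<\ell\le M+1$ rather than leaving it for the final magic step, contradicting that $u,v$ received length $\ell$. Symmetrically, if $|a-b|>\ell$ then $(a,b)\in\mathcal F^-_{|a-b|}$ and $t_M(|a-b|)>t_M(a),t_M(b)$, so the edge would have been set to $|a-b|>M+1$ instead. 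Either way the magic step never applies, the contradiction we seek.

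For the $C_0$-forbidden case the perimeter satisfies $a+b+\ell\ge C_0$, and since the perimeter is even this is exactly a $C_0$-bound violation. Here I would use that $C_0-2-a-b$ is a legitimate distance determined by $a,b$: because $a+b+\ell\ge C_0$ and $\ell\le M+1$ we have $C_0-2-a-b\le \ell-2<M$, so $(a,b)\in\mathcal F^{C_0}_{C_0-2-a-b}$ and by Lemma~\ref{lem:biexpandtime} the time $t_M(C_0-2-a-b)$ exceeds both $t_M(a)$ and $t_M(b)$. Thus the algorithm would have completed this fork with the value $C_0-2-a-b$ strictly before the magic step, again contradicting the assumption that $u,v$ was filled as a magic edge of length $M$ or $M+1$. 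Note that in both forbidden types I rely on the fact (from Observation~\ref{obs:bimagicismagic}) that no triangle of type $MMb$, $(M+1)(M+1)b$ with $b$ even, nor $M(M+1)b$ with $b$ odd, is forbidden; this rules out the degenerate possibility that the witnessing edges themselves already have length $M$ or $M+1$, so the offending edge $u,v$ must be the one filled in the final step.

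I expect the main obstacle to be bookkeeping the parity bookkeeping introduced by the bipartite magic step: unlike the single magic value $M$ of Lemma~\ref{lem:misgood}, here the final step chooses between $M$ and $M+1$ according to the parity of $d^+(u,v)$, so I must check that the relevant fork set membership ($\mathcal F^+$, $\mathcal F^-$, or $\mathcal F^{C_0}$) and the time-consistency inequalities from Lemma~\ref{lem:biexpandtime} are unaffected by which of $M,M+1$ the edge would have received. The inequalities $C_0\ge 2\delta+4$ and $M+1\le\lfloor(C_0-\delta-1)/2\rfloor$ are what make these estimates go through, exactly as in the proof of Lemma~\ref{lem:biexpandtime}, so the argument is structurally the same but requires care that the $\pm 1$ slack never flips a strict inequality.
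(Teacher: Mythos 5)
Your proposal is correct and follows essentially the same route as the paper: the same convention $t_M(M)=t_M(M+1)=\infty$, the same split into the non-metric and $C_0$-bound cases handled via the Bipartite Time Consistency Lemma, and the same appeal to Observation~\ref{obs:bimagicismagic} to ensure the other two edges have finite insertion times. The parity check you flag at the end is exactly the one-line observation the paper supplies: since the perimeter is even, $a+b$ (resp.\ $|a-b|$) differs from $N\in\{M,M+1\}$ by at least $2$, so the computed value is never $M$ or $M+1$ and the corresponding fork set is genuinely in the domain of $\mathbb F_M$.
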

\begin{proof}
By Observation \ref{obs:bimagicismagic} no triangles of type $aMM$, $aM(M+1)$ or $a(M+1)(M+1)$ of even perimeter are forbidden. 
Suppose then that there is a forbidden triangle $abN$ of even perimeter in $\overbar{\str{G}}$ such that $M\leq N\leq M+1$ and the edge of length $N$ is not in ${\str{G}}$.  For convenience define $t_M(M) = t_M(M+1) = \infty$, which corresponds to the fact that edges of lengths $M$ and $M+1$ are added in the last step.
\begin{description}
\item[non-metric:] If $abN$ is non-metric then either $a+b<N$ or $|a-b|>N$. By Lemma \ref{lem:biexpandtime} we have in both cases that $t_M(a+b)$ (respectively, $t_M(|a-b|)$) is greater than both $t_M(a)$ and $t_M(b)$. Therefore the completion algorithm would chose $a+b$ (resp. $|a-b|$) as the length of the edge instead of $N$. Observe that because $abN$ has even perimeter it follows that this value differs from $N$ by at least 2, so it is not equal to $M$ or $M+1$.
\item[$C_0$-bound:] If $a+b+N\geq C_0$ (which includes all the triangles forbidden by $C_0$-bound), then $t_M(C_0-2-a-b)>t_M(a),t_M(b)$ by Lemma \ref{lem:biexpandtime}, so the algorithm would set $C_0-2-a-b$ instead of $N$ as the length of the third edge.
\end{description}
\end{proof}

\begin{lem}[Bipartite Optimality and Parity Lemma]\label{lem:bibestcompletion}
Let ${\str{G}}=(G,d)\in \mathcal G^\delta$ be connected such that there is a completion of $\str{G}$ into $\mathcal A^\delta_{\infty,0,C_0,2\delta+1}$. Denote by $\overbar{\str{G}}=(G,\bar d)$ its bipartite completion of $\str{G}$ with magic parameter $M$ and let $\str{G}'=(G,d')\in\mathcal A^\delta_{\infty,0,C_0,2\delta+1}$ be an arbitrary completion of $\str{G}$.
Then for every pair of vertices $u,v\in G$ one of the following holds:
\begin{enumerate}
 \item $d'(u,v) \geq \bar d(u,v) \geq M + 1$,
 \item $d'(u,v) \leq \bar d(u,v) \leq M$,
 \item $M\leq \bar d(u,v)\leq M+1$.
\end{enumerate}
Furthermore the parities of $d'(u,v)$ and $\bar d(u,v)$ are equal.
\end{lem}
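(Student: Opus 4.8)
The plan is to prove both assertions together by adapting the scheme of the primitive Optimality Lemma~\ref{lem:bestcompletion}: the three optimality alternatives will be obtained by tracing each newly added edge back to the fork and witness that produced it. The structural novelty here is that the $\mathcal F^{C_0}$ step interacts with the $C_0$-bound only once parity information is in hand, so I would first dispose of the parity clause on its own and then feed it into the optimality argument.

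First I would establish the parity statement independently of optimality. Since $\str{G}$ is connected and every completion into $\mathcal A^\delta_{\infty,0,C_0,2\delta+1}$ is bipartite, the even-distance equivalence relation partitions $G$ into two classes already determined by $\str{G}$; write $p(u,v)\in\{0,1\}$ for the resulting parity, which is the same in every completion and is additive along paths, i.e. $p(u,v)=p(u,w)+p(w,v)\pmod 2$. In particular every $d'(u,v)$ has parity $p(u,v)$. I would then verify by induction along the completion sequence that $\bar d(u,v)\equiv p(u,v)\pmod 2$ as well: original edges agree with $d'$; a fork completed via $\mathcal F^+$, $\mathcal F^-$ or $\mathcal F^{C_0}$ receives length $d_{i-1}(u,w)+d_{i-1}(v,w)$, $|d_{i-1}(u,w)-d_{i-1}(v,w)|$ or $C_0-2-d_{i-1}(u,w)-d_{i-1}(v,w)$ respectively, each congruent to $p(u,w)+p(w,v)=p(u,v)$ modulo $2$ (here using that $C_0$ is even); and by Definition~\ref{defn:biftmcompletion} the final magic edges are set to $M$ or $M+1$ exactly according to the parity of the path distance, which is $p(u,v)$. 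This settles the parity clause for all pairs.

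For the three optimality alternatives I would argue by contradiction. Edges of magic length $M$ and $M+1$ are introduced only in the final step, so they fall under alternative (3) and cannot violate the statement; hence every fork-completed edge is non-magic. Taking a counterexample $\str{G}'$ and the least $i$ for which some edge $u,v$ of $\str{G}_i$ with $\bar d(u,v)>M+1$ has $d'(u,v)<\bar d(u,v)$, or with $\bar d(u,v)<M$ has $d'(u,v)>\bar d(u,v)$, I would let $w$ be its witness; minimality forces the witness edges to satisfy alternative (1) or (2). In the $\mathcal F^+$ case $d_i(u,v)=d_{i-1}(u,w)+d_{i-1}(v,w)<M$ puts both witness edges below $M$, so $d'(u,w)\le d_{i-1}(u,w)$ and $d'(v,w)\le d_{i-1}(v,w)$, and the triangle inequality in $\str{G}'$ yields $d'(u,v)\le d_i(u,v)$, a contradiction. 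In the $\mathcal F^-$ case $d_i(u,v)>M+1$, and, using the magic bound $M+1\ge\lceil\delta/2\rceil$ exactly as in Lemma~\ref{lem:biexpandtime}, the larger witness edge exceeds $M+1$ while the smaller is below $M$, so $d'(u,w)\ge d_{i-1}(u,w)$ and $d'(v,w)\le d_{i-1}(v,w)$, and metricity gives $d'(u,v)\ge d_i(u,v)$, again a contradiction. These two cases are routine and do not use parity.

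The main obstacle is the $\mathcal F^{C_0}$ case, where the bipartite setting genuinely differs. Here $d_i(u,v)=C_0-2-d_{i-1}(u,w)-d_{i-1}(v,w)<M$, and from $M+1\le\lfloor(C_0-\delta-1)/2\rfloor$ (so $C_0\ge 2M+\delta+3$) together with each witness edge being at most $\delta$, I would deduce $d_{i-1}(u,w),d_{i-1}(v,w)>M+1$, whence $d'(u,w)\ge d_{i-1}(u,w)$ and $d'(v,w)\ge d_{i-1}(v,w)$ by minimality. The violation under consideration is $d'(u,v)>d_i(u,v)$; invoking the already-established parity clause, $d'(u,v)$ and $d_i(u,v)$ share the parity $p(u,v)$, so in fact $d'(u,v)\ge d_i(u,v)+2$. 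Then the perimeter of the even triangle $u,v,w$ in $\str{G}'$ is at least $(d_i(u,v)+2)+d_{i-1}(u,w)+d_{i-1}(v,w)=C_0$, so it is forbidden by the $C_0$-bound, contradicting $\str{G}'\in\mathcal A^\delta_{\infty,0,C_0,2\delta+1}$. Having exhausted the three fork types, no such least $i$ exists, proving the optimality alternatives and completing the lemma.
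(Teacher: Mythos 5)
Your proof is correct and follows essentially the same route as the paper's: a minimal-counterexample induction along the completion sequence with a case split by fork type for optimality, and the observation that connectivity plus the absence of odd triangles pins down all parities for the parity clause. The only (harmless) difference is that you establish parity first and feed it into the $\mathcal F^{C_0}$ case to get $d'(u,v)\geq d_i(u,v)+2$, whereas the paper proves optimality first and handles the $\mathcal F^{C_0}$ witnesses via an evenness/interval argument; both yield the same forbidden triangle in $\str{G}'$.
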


\begin{proof}
The first part of the Lemma can be proven analogously to Lemma \ref{lem:bestcompletion}:

Suppose that the statement is not true and take any witness $\str{G}'=(G, d')$ (i.e. a completion of ${\str{G}}$ into $\mathcal A^\delta_{\infty,0,C_0,2\delta+1}$ such that there is a pair of vertices violating the statement). Recall that the completion with magic parameter $M$ is defined as a limit of a sequence $\str{G}_1, \str{G}_2, \ldots$ of edge-labelled graphs such that $\str{G}_1={\str{G}}$ and each two subsequent graphs differ at most by adding edges of a single distance.

Take the smallest $i$ such that in the graph $\str{G}_i = (G,d_i)$ there are vertices $u,v\in G$ with $d_i(u,v) > M+1$ and $d_i(u,v) > d'(u,v)$ or $d_i(u,v) < M$ and $d_i(u,v) < d'(u,v)$. Let $w\in G$ be the witness of $d_i(u,v)$.

Now again we shall distinguish three cases, based on whether $d_{i}(u,v)$ was introduced by $\mathcal F^-$, $\mathcal F^+$ or $\mathcal F^{C_0}$.
The cases $\mathcal F^-$ and $\mathcal F^+$ follow exactly the same way as in the proof of Lemma~\ref{lem:bestcompletion}. We verify the remaining case $\mathcal F^{C_0}$.

Recall that, by the admissibility of $C_0$, we have $C_0-1\geq 2\delta+3$ and $M+1\leq \left\lfloor \frac{C_0-\delta-1}{2} \right\rfloor$.
 Thus we get $d_{i-1}(u,w),d_{i-1}(v,w)>M+1$ (otherwise, if, say, $d_{i-1}(u,w)\leq M+1$, we obtain contradiction $C_0-\delta-1\geq 2M + 2 > d_{i-1}(u,w)+d_i(u,v) = C_0-2-d_{i-1}(v,w) \geq C_0-\delta-2$ where $2M+2\neq C_0-\delta-2$ are both even but within an interval of size 1). So again by the Bipartite Optimality and the Parity Lemma \ref{lem:bibestcompletion} $d'(u,w)\geq d_{i-1}(u,w)$ and $d'(v,w)\geq d_{i-1}(v,w)$, which means that the triangle $u,v,w$ in $\str{G}'$ is forbidden by the $C_0$ bound, which is absurd as $\str{G}'$ is a completion of $\str{G}$ in $\mathcal A^\delta_{\infty,0,C_0,2\delta+1}$.

\medskip

For the second part (about parities), observe first that the parity of an edge $d'(u,v)$ in $\str{G'}$ has to be equal to $d'(u,w)+d'(w,v)$ for every other vertex $w$, since there are no triangles of odd perimeter in $\mathcal A^\delta_{\infty,0,C_0,2\delta+1}$. Since $\str{G}$ is connected, the parity of $d'(u,v)$ is equal to the parity of the path distance of $(u,v)$ in $\str{G}$.

By the definition of the bipartite completion algorithm as a limit of graphs $\str{G}_1, \str{G}_2, \ldots$, if the statement is not true, then there has to be a smallest $i$ such that in the graph $\str{G}_i = (G,d_i)$ there are vertices $u,v\in G$ where the parity of $d_i(u,v)$ differs from the parity of $d'(u,v)$. Let $w$ be a witness for the edge $(u,v)$. Then three cases can appear (corresponding to $\mathcal F^-$, $\mathcal F^+$, $\mathcal F^{C_0}$).

\begin{description}
\item[$\mathcal F^-$:]
In the first case we have $d_i(u,v) = |d_{i-1}(u,w)-d_{i-1}(v,w)|$, which has the same parity as $d_{i-1}(u,w) + d_{i-1}(v,w)$. By minimality of $i$ this value has the same parity as $d'(u,w)+d'(v,w)$, hence this case cannot appear. 

\item[$\mathcal F^+$:]
In the second case we have $d_i(u,v) = d_{i-1}(u,w)+d_{i-1}(v,w)$. Analogously to the first case then $d_{i}(u,v)$ has to have the same parity as $d'(u,w)+d'(v,w)$, which is a contradiction.

\item[$\mathcal F^{C_0}$:]
In the third case $d_i(u,v) = C_0-2-d_{i-1}(u,w)-d_{i-1}(v,w)$. Since $C_0 -2$ is even, this distance has again the same parity $d_{i-1}(u,w)+d_{i-1}(v,w)$ and hence $d'(u,w)+d'(v,w)$, which is a contradiction.
\end{description}

In the last step, the distances $M$ and $M+1$ are added according to the parity of the path distance, hence also in this step the parity of edges is preserved.
\end{proof}

Note that Lemma \ref{lem:biexpandtime} implies that any magic completion of a graph $\str{G}$ contains a triangle with odd perimeter if and only if every completion of $\str{G}$ contains such a triangle. In order to verify the correctness of our completion algorithm it is therefore only left to verify the analogous statement for the two other types of forbidden triangles: non-metric triangles, and even triangles that are forbidden due to the $C_0$-bound. But for those triangles the result can be shown just by following the corresponding proofs in the primitive case.

\begin{figure}[t]
\centering
\includegraphics{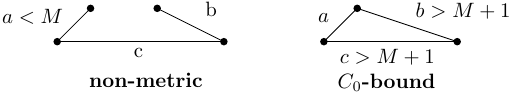}
\caption{Types of forbidden triangles in bipartite spaces.}
\label{fig:BFtriangles}
\end{figure}
\begin{lem}[Bipartite Metric Lemma]\label{lem:biometric}
Let ${\str{G}}=(G,{d})\in \mathcal G^\delta$ be a connected edge-labelled graph such that there is a completion of ${\str{G}}$ into $\mathcal  A^\delta_{\infty,0,C_0,2\delta+1}$; let $\overbar{\str{G}}=(G,\bar d)$ be its bipartite completion with magic parameter $M$. Then there are no non-metric triangles in $\overbar{\str{G}}$.
\end{lem}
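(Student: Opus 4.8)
The plan is to mirror the proof of the primitive Metric Lemma (Lemma~\ref{lem:metric}), exploiting that in the bipartite setting there is no analogue of Case~\ref{IIb} and that a global parity constraint makes the argument cleaner. The first observation I would record is that \emph{every} triangle of $\overbar{\str{G}}$ has even perimeter: by the parity part of Lemma~\ref{lem:bibestcompletion}, for each pair $u,v$ the distances $\bar d(u,v)$ and $d'(u,v)$ have equal parity, and since the witnessing completion $\str{G}'$ lies in $\mathcal A^\delta_{\infty,0,C_0,2\delta+1}$ (which contains no triangle of odd perimeter), every triple of $\overbar{\str{G}}$ inherits even total parity. This is exactly what is needed to invoke Lemma~\ref{lem:bimisgood} on any non-metric triangle that happens to carry an edge of length $M$ or $M+1$.

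Assume for contradiction a non-metric triangle $u,v,w$ with $a=\bar d(u,v)\le b=\bar d(v,w)<c=\bar d(u,w)$ and $a+b<c$. First I would note $a\le M$: if $a\ge M+1$ then $a+b\ge 2(M+1)>\delta\ge c$ (using $M\ge\lfloor\delta/2\rfloor$), contradicting $a+b<c$. The goal in every case is to show that the same triple is non-metric in an \emph{arbitrary} completion $\str{G}'=(G,d')\in\mathcal A^\delta_{\infty,0,C_0,2\delta+1}$, i.e. $a'+b'\le a+b<c\le c'$, which contradicts the existence of a completion of $\str{G}$. The upper bound $a'+b'\le a+b$ and the lower bound $c'\ge c$ will be supplied by the Bipartite Optimality Lemma~\ref{lem:bibestcompletion} for non-central edges, and by Lemma~\ref{lem:bimisgood} (forcing the edge into the input $\str{G}$, so that $d'=\bar d$ there) for edges lying in the central band $\{M,M+1\}$.

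I would then split according to the size of $c$ (and, when $c$ is large, of $b$). When $c\in\{M,M+1\}$ the edge $c$ is central, so Lemma~\ref{lem:bimisgood} places it in $\str{G}$ and $c'=c$; the edges $a,b\le M$ give $a'\le a$, $b'\le b$ (Optimality for the small ones, Lemma~\ref{lem:bimisgood} for central ones), and the contradiction follows. When $c\ge M+2$ Optimality already yields $c'\ge c$; if in addition $b\le M+1$ the same treatment of $a,b$ finishes the case, while if $b\ge M+2$ I fall back on the fork argument of primitive Case~2: the fork $(a,c)$ at $u$ lies in $\mathcal F^-_{c-a}$ with $c-a>b>M+1$, so by Bipartite Time Consistency (Lemma~\ref{lem:biexpandtime}) it is processed at time $t_M(c-a)<t_M(b)$; hence the edge $v,w$ would be set to $c-a\ne b$ unless it already belongs to $\str{G}$, forcing $b'=b$ and again $a'+b'\le a+b<c\le c'$. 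The remaining regime is $c\le M-1$, where all three edges are small: here the fork $(a,b)\in\mathcal F^+_{a+b}$ at $v$ would complete $u,w$ to $a+b\ne c$ at time $t_M(a+b)<t_M(c)$, so $c$ must lie in $\str{G}$, giving $c'=c$ together with $a'\le a$, $b'\le b$ from Optimality.

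The main subtlety to watch is the two-wide central band $\{M,M+1\}$, which replaces the single magic value of the primitive case and slightly complicates the bookkeeping of which edges are pinned to the input; the even-perimeter observation is precisely what lets Lemma~\ref{lem:bimisgood} handle every central edge uniformly, so I expect no genuine obstacle beyond carefully verifying that each fork-override argument has its witnessing edges already present at the relevant time step (which is guaranteed by Lemma~\ref{lem:biexpandtime}).
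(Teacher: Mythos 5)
Your proposal is correct and follows essentially the same route as the paper's proof: contradiction, the even-perimeter observation from the parity part of Lemma~\ref{lem:bibestcompletion}, and a case analysis combining the Bipartite Optimality Lemma, Lemma~\ref{lem:bimisgood} for edges in the central band $\{M,M+1\}$, and the $\mathcal F^+/\mathcal F^-$ fork-plus-Time-Consistency arguments to pin the decisive edge into the input graph. The paper organises the cases by which of $a,b,c$ lie below or above $M$ rather than by the size of $c$, but the substance of each case is the same as yours.
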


\begin{proof}
We proceed in analogy to the proof of Lemma~\ref{lem:metric}.

Suppose for a contradiction that there is a triangle with vertices $u,v,w \in \overbar{\str{G}}$ such that $\bar d(u,v)+\bar d(v,w)< \bar d(u,w)$. Denote $a=\bar d(u,v)$, $b=\bar d(v,w)$ and $c=\bar d(u,w)$ and assume without loss of generality that $a\leq b < c$. By Lemma~\ref{lem:bibestcompletion} we know that $a+b+c$ is even. Let $a',b',c'$ be the corresponding edge lengths in an arbitrary completion of ${\str{G}}$ into $\mathcal  A^\delta_{\infty,0,C_0,2\delta+1}$. We shall distinguish three cases:

\begin{enumerate}

\item First suppose $a,b,c < M$. Then $t_M(a)\leq t_M(b) < t_M(a+b) < t_M(c)$, which means that $c$ must be already in $\str{G}$. By Lemma~\ref{lem:bibestcompletion}  $a' + b' \leq a + b < c = c'$, which is a contradiction.

\item Another possibility is $a<M$ and $b,c\geq M$ (actually $c> M+1$, since $abc$ is non-metric). 
By Lemma~\ref{lem:bibestcompletion} we know that $a'\leq a$ and $c'\geq c$.  If $b$ was already in $\str{G}$, then $\str{G}$ has no completion -- a contradiction. Otherwise clearly $c-a > b \geq M$, so $t_M(c-a) < t_M(b)$ (define $t_M(M) = t_M(M+1)=\infty$). But as $c-a$ depends on $c$ and $a$, we get $t_M(c-a) > t_M(c),t_M(a)$, which means that the bipartite completion algorithm with magic parameter $M$ would complete the edge $v,w$ with the length $c-a$ and not with $b$.

\item The last possibility is $a,b<M$ and $c\geq M$. Then (by Lemma~\ref{lem:bibestcompletion} and Lemma~\ref{lem:bimisgood} if $M\leq c\leq M+1$) we have $a'\leq a$, $b'\leq b$ and $c'\geq c$, hence the triangle $a',b',c'$ is again non-metric.
\end{enumerate}
\end{proof}

\begin{lem}[Bipartite $C_0$-bound Lemma]\label{lem:biCbound}
Let $\overbar{\str{G}}, \str{G}$ be as in Lemma \ref{lem:biometric}.
Then there are no triangles forbidden by the $C_0$-bound in $\overbar{\str{G}}$.
\end{lem}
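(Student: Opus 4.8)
The plan is to follow the proof of the primitive $C$-bound Lemma~\ref{lem:Cbound} in its structure, substituting the bipartite tools: the Bipartite Optimality and Parity Lemma~\ref{lem:bibestcompletion} in place of Lemma~\ref{lem:bestcompletion}, the Bipartite $\mathbb F_M$ Completeness Lemma~\ref{lem:bimisgood} in place of Lemma~\ref{lem:misgood}, and the Bipartite Time Consistency Lemma~\ref{lem:biexpandtime} in place of Lemma~\ref{lem:expandtime}. Suppose for contradiction that $\overbar{\str{G}}$ contains a triangle $u,v,w$ forbidden by the $C_0$-bound, and write $a=\bar d(u,v)\leq b=\bar d(v,w)\leq c=\bar d(u,w)$; by definition the perimeter $a+b+c$ is even and at least $C_0$. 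Fix an arbitrary completion $\str{G}'=(G,d')\in\mathcal A^\delta_{\infty,0,C_0,2\delta+1}$ of $\str{G}$ and let $a',b',c'$ denote the images of the three edges.

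First I would show that the two longer edges are non-magic, i.e. $b,c>M+1$. This is the bipartite counterpart of the $C$-bound discussion in Section~\ref{sec:forbtriangles}: if $b\leq M+1$ then $a+b\leq 2(M+1)\leq C_0-\delta-1$ by the magic bound $M+1\leq\left\lfloor\frac{C_0-\delta-1}{2}\right\rfloor$, while on the other hand $a+b\geq C_0-c\geq C_0-\delta$ because $c\leq\delta$ and $a+b+c\geq C_0$, a contradiction. Since $b,c>M+1$, only the first alternative of Lemma~\ref{lem:bibestcompletion} can hold for these two edges, so $b'\geq b$ and $c'\geq c$; that lemma also guarantees that $a',b',c'$ have the same parities as $a,b,c$, so $a'+b'+c'$ is even.

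It then remains to control the shortest edge. If $(u,v)$ is already present in $\str{G}$ then $a'=a$, and if instead $a>M+1$ then the first alternative of Lemma~\ref{lem:bibestcompletion} applies and gives $a'\geq a$; in either case $a'+b'+c'\geq a+b+c\geq C_0$ is an even perimeter, so $u,v,w$ is forbidden by the $C_0$-bound already in $\str{G}'$, contradicting that $\str{G}'$ lies in the class. Otherwise $(u,v)$ was inserted by the algorithm; by Lemma~\ref{lem:bimisgood} a magic edge of a $C_0$-forbidden even triangle must already lie in $\str{G}$, so $a\notin\{M,M+1\}$ and hence $a<M$. Put $\bar a=C_0-2-b-c$. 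Since $a+b+c\geq C_0$ is even we get $\bar a\leq a-2<a<M$, while $\bar a\geq C_0-2-2\delta\geq 2$, so $\bar a$ is a legitimate non-magic distance with $t_M(\bar a)<t_M(a)$. As $(b,c)\in\mathcal F^{C_0}_{\bar a}\subseteq\mathbb F_M(\bar a)$, the distance $\bar a$ depends on both $b$ and $c$, so Lemma~\ref{lem:biexpandtime} yields $t_M(b),t_M(c)<t_M(\bar a)$; thus at time $t_M(\bar a)$ both edges $(v,w)$ and $(u,w)$ are present, and the fork at $w$ forces the algorithm to set $\bar d(u,v)=\bar a$ at that strictly earlier step, contradicting $\bar d(u,v)=a$.

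The main obstacle, compared with the primitive case, is the bookkeeping around the two fallback values $M$ and $M+1$: whereas Lemma~\ref{lem:Cbound} adds a single magic distance last and can compare every edge to its image, here an edge of value $M$ or $M+1$ records only a parity and cannot be directly compared with $\str{G}'$. The decisive tool is therefore Lemma~\ref{lem:bimisgood}, which confines a magic edge of a $C_0$-forbidden triangle to the input graph, so that it is preserved (not merely bounded) in every completion; the remaining verifications---that $\bar a$ lands in $\{2,\dots,M-1\}$ and that the $\mathcal F^{C_0}$ fork fires before time $t_M(a)$---are routine once the evenness of $C_0$ and of the perimeter is used.
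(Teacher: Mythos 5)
Your proof is correct and follows essentially the same route as the paper's: reduce to the case $b,c>M+1$, use the Bipartite Optimality and Parity Lemma to push $b',c'$ up, and for the shortest edge either appeal to the Bipartite $\mathbb F_M$ Completeness Lemma (to rule out $a\in\{M,M+1\}$) or to the $\mathcal F^{C_0}$ fork at $\bar a=C_0-2-b-c$ firing at an earlier time. The only difference is that you spell out the justification that $b,c>M+1$ and the bounds $2\leq\bar a\leq a-2$, which the paper leaves implicit.
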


\begin{proof}
Again we proceed analogously to Lemma \ref{lem:Cbound}.
Suppose for contradiction that there is a triangle with vertices $u,v,w$ in $\overbar{\str{G}}$ such that $\bar d(u,v)+\bar d(v,w)+\bar d(u,w)\geq C_0$. For brevity let $a=\bar d(u,v)$, $b=\bar d(v,w)$ and $c=\bar d(u,w)$. Assume without loss of generality $a\leq b\leq c$. Let $a',b',c'$ be the corresponding edge lengths in an arbitrary completion of $\bar{\str{G}}$ into $\mathcal  A^\delta_{\infty,0,C_0,2\delta+1}$. Then two cases can appear. 

Either $a,b,c > M+1$, and then by Lemma \ref{lem:bibestcompletion} we have $a'\geq a$, $b'\geq b$ and $c'\geq c$, so we get the contradiction $a' + b' + c' \geq C_0$; or $a\leq M+1$, $c\geq b>M+1$ and $a+b+c\geq C_0$. In this case Lemmas \ref{lem:bimisgood} and \ref{lem:bibestcompletion} imply $b'\geq b$ and $c'\geq c$ and $a'\leq a$. If the edge $(u,v)$ was already in $\str{G}$, then clearly $a' + b' + c' \geq a+b+c\geq C_0$, which is a contradiction. If $(u,v)$ was not already an edge in $\str{G}$, then it was added by the bipartite completion algorithm with magic parameter $M$ in step $t_M(a)$. Let $\bar{a}=C_0-2-b-c$. Then clearly $\bar{a}<a$, which means that $t_M(\bar{a}) < t_M(a)$, and as $\bar{a}$ depends on $b,c$, we have $t_M(b),t_M(c)<t_M(\bar{a})$. But then the bipartite completion with magic parameter $M$ actually sets the length of the edge $u,v$ to be $\bar{a}$ in step $t_M(\bar{a})$, which is a contradiction.
\end{proof}

\begin{lem}[Bipartite automorphism Preservation Lemma]
\label{lem:biaut}
Let $\str{G}\in \mathcal G^\delta$ be connected and $\overbar{\str{G}}$ be its bipartite completion with magic parameter $M$. Then every automorphism of $\str{G}$ is also an automorphism of $\overbar{\str{G}}$.
\end{lem}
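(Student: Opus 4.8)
The plan is to follow the proof of the Automorphism Preservation Lemma~\ref{lem:aut} from the primitive setting, supplemented by one observation to handle the final, parity-dependent step of the bipartite algorithm.

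Fix an automorphism $f:\str{G}\to\str{G}$. Recall that $\overbar{\str{G}}$ is obtained as the limit of a sequence $\str{G}_1,\str{G}_2,\dots$ of edge-labelled graphs $\str{G}_k=(G,d_k)$ as in Definition~\ref{defn:biftmcompletion}, where $\str{G}_1=\str{G}$, each $\str{G}_{k+1}$ is the fork-completion prescribed by $\mathbb F_M$ and $t_M$, and where in the final step every remaining non-edge is assigned length $M$ or $M+1$ according to a parity condition. First I would show by induction on $k$ that $f$ is an automorphism of every $\str{G}_k$. The base case $k=1$ is the hypothesis. For the inductive step, let $x,y$ be an edge of $\str{G}_{k+1}$ that is not an edge of $\str{G}_k$; it was inserted with length $c=t_M^{-1}(k)$ because of some witness $w$ and a fork $(a,b)\in\mathbb F_M(c)$ with $\{d_k(x,w),d_k(y,w)\}=\{a,b\}$. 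By the inductive hypothesis $f$ is an automorphism of $\str{G}_k$, so $f(w)$ is a witness of the same fork for the pair $f(x),f(y)$; hence $f(x),f(y)$ is an edge of $\str{G}_{k+1}$ of length $c$ that is not an edge of $\str{G}_k$. Applying the same reasoning to $f^{-1}$ gives the reverse implication, so $f$ is an automorphism of $\str{G}_{k+1}$. This part is word-for-word the argument of Lemma~\ref{lem:aut}.

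The one genuinely new ingredient concerns the final step. Let $\str{H}$ denote the limit of the sequence before the distances $M$ and $M+1$ are filled in; by the induction just carried out, $f$ is an automorphism of $\str{H}$. Since $\str{G}$ is connected, so is $\str{H}$, and an automorphism of a connected edge-labelled graph preserves path distances, because it maps each edge to an edge of the same length and hence any $u$--$v$ walk to an $f(u)$--$f(v)$ walk of equal total length, and symmetrically. Thus $d^+(u,v)=d^+(f(u),f(v))$ in $\str{H}$ for every non-edge $(u,v)$; in particular these path distances have equal parity, so the final step assigns $(u,v)$ and $(f(u),f(v))$ the same value in $\{M,M+1\}$. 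Therefore $f$ also preserves the edges added in the final step, and combining this with the inductive claim shows that $f$ is an automorphism of $\overbar{\str{G}}$.

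The only subtlety---and it is a mild one---is confirming that the parity datum governing the last step is an automorphism invariant. This reduces entirely to the invariance of $d^+$ under a length-preserving bijection on edges, which is immediate once connectedness guarantees that $d^+(u,v)$ is finite; no appeal to the correctness Theorem~\ref{thm:magiccompletion} or to the parity statement inside Lemma~\ref{lem:bibestcompletion} is required. All remaining checks are routine and identical to the primitive case.
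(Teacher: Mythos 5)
Your proof is correct and follows exactly the route the paper takes: the paper's own proof is just the one-line remark ``Cf.\ proof of Lemma~\ref{lem:aut}; since $\str{G}$ is connected, the final step assigning $M$ and $M+1$ is canonical,'' and your write-up simply fills in the details of that same induction plus the observation that a length-preserving automorphism of a connected edge-labelled graph preserves path distances and hence their parities.
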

\begin{proof}
Cf. proof of Lemma \ref{lem:aut}. Observe that since $\str{G}$ is assumed to be connected, the final step of the algorithm that includes edges of length $M$ and $M+1$ is canonical. Hence automorphisms are preserved.
\end{proof}

\begin{thm} \label{thm:bimagiccompletion}
Let $3\leq\delta < \infty$, $C_0>2\delta+3$ and $\mathcal S$ be an admissible set of Henson constraints for $\mathcal A^\delta_{\infty,0,C_0,2\delta+1}$.
Let $\str{G}=(G,d)$ be a connected edge-labelled graph such that there is a completion of $\str{G}$ into $\mathcal A^\delta_{\infty,0,C_0,2\delta+1}\cap \mathcal A^\delta_\mathcal S$ and let $\overbar{\str{G}}=(G,\bar d)$ be its bipartite completion with magic parameter $M$. Then $\overbar{\str{G}}\in\mathcal A^\delta_{\infty,0,C_0,2\delta+1}\cap \mathcal A^\delta_\mathcal S$.

$\overbar{\str{G}}$ is optimal in the following sense:
Let $\str{G}'=(G,d')\in\mathcal  A^\delta_{\infty,0,C_0,2\delta+1}$ be an arbitrary completion of $\str{G}$ in  $\mathcal  A^\delta_{\infty,0,C_0,2\delta+1,\mathcal S}$, then 
for every pair of vertices $u,v\in G$ one of the following holds:
\begin{enumerate}
 \item $d'(u,v) \geq \bar{d}(u,v) \geq M+1$,
 \item $d'(u,v) \leq \bar{d}(u,v) \leq M$,
 \item $M\leq \bar{d}(u,v)\leq M+1$.
\end{enumerate}
Furthermore the parity of every distance in $\str{G}'$ is
the same as the parity of the corresponding distance in $\overbar{\str{G}}$
and every automorphism of $\str{G}$ is also an automorphism of
$\overbar{\str{G}}$.
\end{thm}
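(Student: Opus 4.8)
The plan is to prove the theorem by assembling the lemmas already established in this section, since all the genuinely new work has been done there. Because $\mathcal A^\delta_{\infty,0,C_0,2\delta+1}$ is $3$-constrained, to show $\overbar{\str{G}}\in \mathcal A^\delta_{\infty,0,C_0,2\delta+1}$ it suffices to check that $\overbar{\str{G}}$ contains none of the three kinds of forbidden triangle: non-metric triangles, triangles of even perimeter at least $C_0$, and triangles of odd perimeter. The first kind is excluded by the Bipartite Metric Lemma~\ref{lem:biometric} and the second by the Bipartite $C_0$-bound Lemma~\ref{lem:biCbound}, both of which apply since the hypothesis that $\str{G}$ has a completion into $\mathcal A^\delta_{\infty,0,C_0,2\delta+1}\cap\mathcal A_\mathcal S$ in particular gives a completion into $\mathcal A^\delta_{\infty,0,C_0,2\delta+1}$.

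It remains to exclude odd triangles, and here I would use the parity clause of the Bipartite Optimality and Parity Lemma~\ref{lem:bibestcompletion}. Fix a completion $\str{G}'=(G,d')\in \mathcal A^\delta_{\infty,0,C_0,2\delta+1}$ of $\str{G}$, which exists by hypothesis. For any triple $u,v,w$, the distances $\bar d(u,v),\bar d(v,w),\bar d(u,w)$ have the same parities as $d'(u,v),d'(v,w),d'(u,w)$ respectively, so the perimeter of $u,v,w$ in $\overbar{\str{G}}$ has the same parity as its perimeter in $\str{G}'$; the latter is even because $K_1=\infty$ forces $\str{G}'$ to be bipartite. Hence $\overbar{\str{G}}$ has no odd triangle, and combining the three exclusions gives $\overbar{\str{G}}\in \mathcal A^\delta_{\infty,0,C_0,2\delta+1}$.

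For membership in $\mathcal A_\mathcal S$, the plan is to show that the bipartite completion with a suitably chosen $M$ introduces no edge of length $1$ or $\delta$, and then to run the standard Henson argument. A direct check shows $\mathcal F^+_1=\emptyset$ (as $a+b\geq 2$), $\mathcal F^{C_0}_1=\emptyset$ (as $C_0\geq 2\delta+4$ forces $a+b=C_0-3>2\delta$), and $\mathcal F^-_\delta=\emptyset$ (as $|a-b|\leq \delta-1$ for $a,b\in\{1,\dots,\delta\}$), so no fork ever produces a $1$ or a $\delta$; one then picks a magic $M$ with $M\neq 1$ and $M+1\neq\delta$ so that the final step of the algorithm avoids these lengths as well. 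That such an $M$ exists whenever $\mathcal S\neq\emptyset$ follows, exactly as in Lemma~\ref{lem:hensoncompletions}, from the admissibility conditions of Definition~\ref{defn:admissible_henson} (and the requirement is vacuous when $\mathcal S=\emptyset$). Since every Henson constraint is a complete $(1,\delta)$-space, any copy of a member of $\mathcal S$ inside $\overbar{\str{G}}$ would use only edges of length $1$ and $\delta$, all of which already lie in $\str{G}$; such a copy would then embed into $\str{G}$ and hence into the assumed completion in $\mathcal A_\mathcal S=\Forb(\mathcal S)$, a contradiction. Thus $\overbar{\str{G}}\in \mathcal A_\mathcal S$.

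Finally, the optimality alternatives~(1)--(3), the parity statement, and the automorphism preservation are not new: the first two are precisely the content of Lemma~\ref{lem:bibestcompletion}, and the last is Lemma~\ref{lem:biaut}, which applies since $\str{G}$ is assumed connected. I expect the only delicate point to be the Henson step, namely verifying that a magic $M$ avoiding the lengths $1$ and $\delta-1$ can be chosen under the constraints of Definition~\ref{defn:admissible_henson}, mirroring Lemma~\ref{lem:hensoncompletions}; everything else is a direct assembly of the section's lemmas.
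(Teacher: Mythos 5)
Your proposal is correct and follows essentially the same route as the paper: the paper's proof is exactly the assembly of Lemmas~\ref{lem:biometric}, \ref{lem:biCbound}, \ref{lem:bibestcompletion} and \ref{lem:biaut} (with odd triangles excluded via the parity clause, as you do), followed by a check that the completion avoids the distances relevant to $\mathcal S$. The only cosmetic difference is that the paper shortcuts your Henson step by observing that the only non-empty admissible $\mathcal S$ here is a single anti-clique with $\delta$ even (so $\delta\geq 4$ and only the distance $\delta$ need be avoided, via $M\leq\delta-2$), whereas you argue slightly more generally that both $1$ and $\delta$ can be avoided; both verifications are valid.
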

\begin{proof}
For $\mathcal S=\emptyset$ the statement follows from Lemmas \ref{lem:biometric}, \ref{lem:biCbound}, \ref{lem:bibestcompletion} and \ref{lem:biaut}.

Observe that the only non-empty admissible set $\mathcal S$ consists of a single anti-clique (that is a metric space with all distances $\delta$) and in this case $\delta$ is even.
Then we can use the fact that $\delta\geq 4$ and thus $M$ can be always chosen to be at most $\delta-2$. In this case the bipartite completion with magic parameter $M$ will never introduce an edge of distance $\delta$.
\end{proof}

\begin{remark} \label{rem:biunconnected}
Note that if we drop the condition of $\str{G}$ being connected in Theorem \ref{thm:bimagiccompletion}, we can still compute a completion $\overbar{\str{G}}$ of $\str{G}$ by first completing all connected components according to Theorem \ref{thm:bimagiccompletion} and then adding edges $M$ and $M+1$ as described in the proof of Lemma \ref{lem:biconnected}. This completion $\overbar{\str{G}}$ still satisfies the optimality conditions 1,2 and 3; however we will lose other important features: 
\begin{enumerate}
\item The constructed completion is not uniquely determined by $\str{G}$, but also depends on how we connect the different components of $\str{G}$. (The proof of Lemma~\ref{lem:biconnected} used a non canonical choice of $x\in A$ and $y \in B$, two connected components.)
\item The automorphism group of $\overbar{\str{G}}$ can be a proper subgroup of $\Aut(\str{G})$,
\item Edges in $\str{G'}$ and $\overbar{\str{G}}$ may have different parities.
\end{enumerate}
The above observations have an impact on the results of the following section: Point 1 corresponds to the fact that we only have local, but not global stationary independence relation (see Corollary \ref{cor:biSIR}).
 \end{remark}

\subsection{Local stationary independence relation}
\begin{corollary} 
\label{cor:biSIR}
Let $\delta\geq 3$, $C_0>2\delta+3$ and $\mathcal S$ be an admissible set of Henson constraints for $\mathcal A^\delta_{\infty,0,C_0,2\delta+1}$. Then there is no stationary independence relation on $\Gamma^\delta_{\infty,0,C_0,2\delta+1,\mathcal S}$. However, for every magic parameter $M$ there is a local stationary independence relation on $\Gamma^\delta_{\infty,0,C_0,2\delta+1,\mathcal S}$ as follows:
 $\str{A}\ind_{\str{C}}\str{B}$ if and only if $\struc{\str{A}\str{B}\str{C}}$ is isomorphic to the completion of the completion with magic parameter $M$ of the free amalgamation of $\struc{\str{A}\str{C}}$ and $\struc{\str{B}\str{C}}$ over $\str{C}$.
\end{corollary}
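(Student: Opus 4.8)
The plan is to treat the two claims separately: the non-existence of a global stationary independence relation, and the existence of a local one.

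For the \emph{existence of the local SIR} I would mirror the proof of Corollary~\ref{cor:SIR}. Define a local amalgamation operator $\oplus$ on $\Age(\Gamma^\delta_{\infty,0,C_0,2\delta+1,\mathcal S})$ by letting $\str{A}\oplus_\str{C}\str{B}$ be the bipartite completion with magic parameter $M$ of the free amalgam of $\str{A}$ and $\str{B}$ over $\str{C}$, defined only for nonempty $\str{C}$. The observation that makes this work, and that simultaneously explains the restriction to nonempty $\str{C}$, is that when $\str{C}\neq\emptyset$ the free amalgam is a \emph{connected} edge-labelled graph: every vertex of $\str{A}$ and of $\str{B}$ is joined by edges to all of $\str{C}$. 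Hence Theorem~\ref{thm:bimagiccompletion} applies and guarantees that $\str{A}\oplus_\str{C}\str{B}$ lies in the class, has minimal domain, and satisfies the optimality and parity conditions. From these conditions the monotonicity and associativity of $\oplus$ follow exactly as in Corollary~\ref{cor:SIR} and in the associativity paragraph of Theorem~\ref{thm:canonicalamalg}, so $\oplus$ is a local canonical amalgamation operator and Theorem~\ref{thm:canonicalamalg} yields the local SIR. For empty $\str{C}$ the completion is genuinely non-canonical (cf.\ Remark~\ref{rem:biunconnected}), which is precisely why we obtain only a \emph{local} relation.

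For the \emph{non-existence of a global SIR} I would argue by contradiction directly from the axioms rather than through amalgamation operators, since one must rule out every conceivable canonical operator, not merely the algorithmic one. Suppose $\ind$ is a global SIR on $\Gamma:=\Gamma^\delta_{\infty,0,C_0,2\delta+1,\mathcal S}$ and fix a vertex $b$. I first show that the independent distance from $b$ is a single value: as $\Gamma$ is vertex-transitive, all single vertices realise the same type over $\emptyset$, so Stationarity~\ref{stationarity} applied with $\str{C}=\emptyset$ and $\str{B}=\{b\}$ forces any two vertices $w,w'$ with $w\ind_\emptyset b$ and $w'\ind_\emptyset b$ to satisfy $d(w,b)=d(w',b)$; call this common value $e$. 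By Invariance~\ref{invariance} $e$ is independent of the choice of $b$, and by Existence~\ref{existence} (taking $\str{A}$ a single vertex) it is actually realised. Now take vertices $a_1,a_2$ with $d(a_1,a_2)=1$ and set $\str{A}=\{a_1,a_2\}$, $\str{B}=\{b\}$. By Existence~\ref{existence} there is a copy $\str{A}'=\{a_1',a_2'\}$ with $d(a_1',a_2')=1$ and $\str{A}'\ind_\emptyset b$. Applying Symmetry~\ref{symmetry}, then Monotonicity~\ref{monotonicity}, then Symmetry again gives $a_1'\ind_\emptyset b$ and $a_2'\ind_\emptyset b$, whence $d(a_1',b)=d(a_2',b)=e$. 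Thus $\{a_1',a_2',b\}$ is a triangle of edge lengths $1,e,e$ and odd perimeter $2e+1$; but $\Gamma$ is bipartite ($K_1=\infty$) and contains no odd-perimeter triangle, a contradiction. Therefore no global SIR exists.

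The main obstacle is locating the correct obstruction in the non-existence part: the key idea is that Stationarity over the empty set collapses all independent distances from a fixed vertex to a single value $e$, after which the mere presence of an odd edge (distance $1$) is exactly what is incompatible with the bipartiteness of $\Gamma$. Everything else is a routine transcription of Corollary~\ref{cor:SIR}, using Theorem~\ref{thm:bimagiccompletion} in place of Theorem~\ref{thm:magiccompletion} and invoking connectedness of the free amalgam over nonempty $\str{C}$.
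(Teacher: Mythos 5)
Your proposal is correct and follows essentially the same route as the paper: the local SIR is obtained exactly as in Corollary~\ref{cor:SIR} via the bipartite completion of the (connected, since $\str{C}\neq\emptyset$) free amalgam using Theorem~\ref{thm:bimagiccompletion}, and the non-existence of a global SIR rests on the same obstruction, namely that amalgamating a distance-$1$ pair with a single point over $\emptyset$ forces an isoceles triangle of odd perimeter $2e+1$, contradicting bipartiteness. The only cosmetic difference is that you derive the equidistance directly from Stationarity, Existence, Monotonicity and Symmetry, whereas the paper routes the same argument through the canonical amalgamation operator of Theorem~\ref{thm:canonicalamalg}; both are sound.
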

\begin{proof}

First, suppose for a contradiction that there is a stationary independence relation $\ind$ in $\Gamma^\delta_{\infty,0,C_0,2\delta+1}$. By Theorem \ref{thm:canonicalamalg} this is equivalent to the existence of a canonical symmetric amalgamation operator $\oplus$ on $\mathcal{A}^\delta_{\infty,0,C_0,2\delta+1}$. Let $\str{A}, \str{B} \in \mathcal{A}^\delta_{\infty,0,C_0,2\delta+1}$ such that $\str{A}$ consist of two vertices $u,v$ with $d(u,v)=1$ and let $\str{B}$ contains only one vertex $w$. In the canonical amalgam over the empty set $\str{A} \oplus_{\empty} \str{B}$, monotonicity implies that both $(\{u,w\},d)$ and $(\{u,v\},d)$ are canonical amalgams of two points over the empty set. By the uniqueness of $\oplus$ we have that $d(u,w) = d(v,w)$. But then the triangle $(u,v,w)$ has odd perimeter, which contradicts that $\str{A} \oplus_{\empty} \str{B} \in \mathcal{A}^\delta_{\infty,0,C_0,2\delta+1}$.

The local stationary independence relation follows by same argument as in proof of Corollary~\ref{cor:SIR}.
By locality of the stationary independence relation we note that all graphs that need to be completed in the proof are already connected, and thus Theorem~\ref{thm:bimagiccompletion} applies.
\end{proof}

\subsection{Ramsey property and EPPA}
We follow the general direction of Section~\ref{sec:ramseyEPPA1}. The extra
difficulty is that the bipartiteness cannot be expressed by means of a finite set of
obstacles, because such a set must contain odd cycles of unbounded length. However we can
obtain the following variant of Lemma~\ref{lem:obstacles}.

\begin{lem}[Bipartite Finite Obstacles Lemma]
\label{lem:biobstacles}
Let $\delta\geq 3$, $C_0>2\delta+3$. 
Then there is finite set $\mathcal O$ of edge-labelled cycles such that every edge-labelled graph $\str{G}\in \Forb(\mathcal O)$ without odd cycles is in $\mathcal A^\delta_{\infty,0,C_0,2\delta+1}$
\end{lem}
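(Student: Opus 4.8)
The plan is to follow the proof of the Finite Obstacles Lemma~\ref{lem:obstacles} almost verbatim, replacing the primitive completion by the bipartite completion with magic parameter $M$ and the correctness statement of Theorem~\ref{thm:magiccompletion} by that of Theorem~\ref{thm:bimagiccompletion}. Concretely, I would let $\mathcal{O}$ be the set of all edge-labelled cycles of \emph{even} perimeter, with at most $2^\delta\cdot 3$ vertices, that admit no completion into $\mathcal A^\delta_{\infty,0,C_0,2\delta+1}$. This set is finite, and I would prove the contrapositive: if a connected $\str{G}=(G,d)\in\mathcal G^\delta$ has no odd cycle yet no completion into $\mathcal A^\delta_{\infty,0,C_0,2\delta+1}$, then some member of $\mathcal{O}$ maps homomorphically into $\str{G}$, so $\str{G}\notin\Forb(\mathcal{O})$. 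The general case reduces to the connected one by Lemma~\ref{lem:biconnected}, since each connected component again has no odd cycle and lies in $\Forb(\mathcal{O})$.

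So assume $\str{G}$ is connected, without odd cycles, and without a completion. Its bipartite completion $\overbar{\str{G}}$ with magic parameter $M$ is itself a completion of $\str{G}$, so $\overbar{\str{G}}\notin\mathcal A^\delta_{\infty,0,C_0,2\delta+1}$ and hence $\overbar{\str{G}}$ contains a forbidden triangle $\str{O}$. I would first observe that $\str{O}$ must have even perimeter, arguing \emph{without} invoking the existence of a completion (so Lemma~\ref{lem:bibestcompletion} cannot be quoted directly). Each fork rule preserves the parity of the completed edge: an $\mathcal F^+$ edge has length $a+b$; an $\mathcal F^-$ edge has length $|a-b|$, which has the same parity as $a+b$; and an $\mathcal F^{C_0}$ edge has length $C_0-2-a-b$, again of the same parity as $a+b$ since $C_0$ is even. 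The final step moreover assigns $M$ or $M+1$ exactly according to the parity of the path distance. By induction every edge of $\overbar{\str{G}}$ has the parity of the $\str{G}$-path distance of its endpoints, and since $\str{G}$ has no odd cycle this path-distance parity is a consistent $2$-colouring of $G$. Consequently every triangle of $\overbar{\str{G}}$ has even perimeter, so $\str{O}$ is either non-metric or forbidden by the $C_0$-bound. This is precisely where the ``no odd cycles'' hypothesis enters, and it leaves only the obstructions that a \emph{finite} obstacle set can capture.

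With an even forbidden triangle in hand, I would run the backward induction of Lemma~\ref{lem:obstacles}: using Lemma~\ref{lem:bimisgood}, every edge of $\str{O}$ of length $M$ or $M+1$ is already an edge of $\str{G}$ and needs no tracing, while every other edge was introduced at a definite step through a witness and is replaced by the two edges to a fresh copy of that witness. Since only the distances different from $M$ and $M+1$ are ever inserted, at most $\delta$ steps actually modify the graph, and each step at most doubles the number of vertices; starting from the three vertices of $\str{O}$ this yields, exactly as in Lemma~\ref{lem:obstacles} and with correctness backed by Theorem~\ref{thm:bimagiccompletion}, a cycle $\str{O}_0$ with at most $2^\delta\cdot 3$ vertices, a homomorphism $\str{O}_0\to\str{G}$, and no completion into $\mathcal A^\delta_{\infty,0,C_0,2\delta+1}$. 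Finally $\str{O}_0$ is even: its image under $\str{O}_0\to\str{G}$ is a closed walk in $\str{G}$, and in a graph without odd cycles every closed walk has even total length. Hence $\str{O}_0\in\mathcal{O}$ and $\str{G}\notin\Forb(\mathcal{O})$, completing the contrapositive.

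The main obstacle is the parity bookkeeping at the two points above. First one must establish, in the absence of any completion, that the bipartite magic completion of an odd-cycle-free connected graph introduces no triangle of odd perimeter; this is what forces the relevant forbidden triangles to be even and thus representable by finitely many bounded cycles. Complementarily, the fact that the traced obstacle $\str{O}_0$ is again even must be secured from the homomorphism $\str{O}_0\to\str{G}$ rather than from the completion, so that $\str{O}_0$ genuinely lands in $\mathcal{O}$ and is not conflated with the unbounded family of odd cycles that no finite obstacle set can ever exclude.
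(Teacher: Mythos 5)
Your proposal is correct and takes essentially the same route as the paper: the paper's own proof is just a two-line reference back to Lemma~\ref{lem:obstacles}, noting that the absence of odd cycles is what makes the final step of the bipartite algorithm succeed, and your argument is a faithful expansion of exactly that plan (backward tracing from a forbidden triangle via Lemma~\ref{lem:bimisgood} and Theorem~\ref{thm:bimagiccompletion}). The parity bookkeeping you supply — that the completion only produces even-perimeter triangles on an odd-cycle-free connected input, and that the traced obstacle is itself an even cycle — is precisely the detail the paper leaves implicit.
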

\begin{proof}
Cf. proof of Lemma \ref{lem:obstacles}. To verify that the final step of algorithm will succeed we use the fact that there are no odd cycles in $\str{G}$.
\end{proof}
\begin{thm}
\label{thm:biramseyregular}
Let $3 \leq \delta < \infty$, $C_0>2\delta+3$ and $\mathcal S$ be an admissible set of Henson constraints for $\mathcal A^\delta_{\infty,0,C_0,2\delta+1}$. 
The class $\overrightarrow{\mathcal A}^\delta_{\infty,0,C_0,2\delta+1}\cap \overrightarrow{\mathcal A}_{\delta,\mathcal S}$ of convex orderings of $\mathcal  A^\delta_{\infty,0,C_0,2\delta+1}\cap \mathcal A^\delta_\mathcal S$ with an additional unary predicate determining the bipartition is Ramsey and has the expansion property.
\end{thm}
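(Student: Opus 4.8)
The plan is to derive the Ramsey property from Theorem~\ref{thm:localfini}, exactly as in the proofs of Theorems~\ref{thm:regularramsey} and~\ref{thm:ramseyHenson}, but with the ambient Ramsey class carrying the bipartition predicate and a convex order. I would let $\mathcal R$ be the class of all finite \emph{complete} edge-labelled graphs with distances in $\{1,\ldots,\delta\}$, equipped with the bipartition unary predicate and a linear order convex with respect to it. These are irreducible structures, and $\mathcal R$ is Ramsey by a standard application of the Ne\v set\v ril--R\"odl theorem: the convexity of the order with respect to the \emph{named} two-block partition is preserved by the partite construction. The target class $\overrightarrow{\K}$ of convexly ordered, predicated members of $\mathcal A^\delta_{\infty,0,C_0,2\delta+1}\cap \mathcal A_\mathcal S$ is then a hereditary subclass of $\mathcal R$.

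Next I would check that $\overrightarrow{\K}$ has strong amalgamation and is a locally finite subclass of $\mathcal R$. For strong amalgamation, the point is that the common part honours the bipartition predicate, so the free amalgam inherits a consistent predicate and a common convex order (interleave the two blocks freely); the bipartite completion of Theorem~\ref{thm:bimagiccompletion}, or Remark~\ref{rem:biunconnected} in the disconnected case, then yields a member of $\overrightarrow{\K}$ without identifying vertices. For local finiteness, the crucial role of the predicate is that it replaces the infinite family of odd-cycle obstacles by a purely local constraint: any structure $\str{C}$ hom-embedding into a member of $\mathcal R$ whose substructures on at most three vertices complete into $\overrightarrow{\K}$ must have its predicate consistent with the distances (even within a class, odd across), and such a structure contains \emph{no} odd cycle, since any cycle crosses between the classes an even number of times. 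Taking $n$ to be the maximum of $|C_0|$, the largest size of an obstacle in the finite set $\mathcal O$ of Lemma~\ref{lem:biobstacles}, and the (finitely many, by Remark~\ref{rem:Finite_Henson}) Henson constraint sizes, condition~(2) of Definition~\ref{def:localfinite} forces $\str{C}\in\Forb(\mathcal O)$ with no odd cycle, so Lemma~\ref{lem:biobstacles} and Theorem~\ref{thm:bimagiccompletion} produce a completion; choosing $M\leq\delta-2$ as in Theorem~\ref{thm:bimagiccompletion} keeps the completion free of new distance-$\delta$ edges and hence of new Henson constraints (which, being complete, are preserved by the hom-embedding into $\str{C}_0$). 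Theorem~\ref{thm:localfini} then delivers the Ramsey property.

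For the expansion property I would adapt the edge-Ramsey argument from the proof of Theorem~\ref{thm:regularramsey}. Given $\str{A}$, let $\overrightarrow{\str{B}}_0$ be the completion of the disjoint union of all convex orderings and both predicate labellings of $\str{A}$, and enrich it to $\overrightarrow{\str{B}}_1$ by inserting, for each pair $a<b$ lying in the same block with $d(a,b)\neq m$, a new vertex $c$ \emph{in that same block} at distance $m$ from both, ordered $a<c<b$, where $m\in\{M,M+1\}$ is chosen to be even. By Observation~\ref{obs:bimagicismagic} the triangle $m\,m\,d(a,b)$ is allowed (as $d(a,b)$ is even), and keeping $c$ in the block preserves convexity. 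With $\overrightarrow{\str{E}}$ the two-point space at distance $m$, taking $\overrightarrow{\str{B}}\to(\overrightarrow{\str{B}}_1)^{\overrightarrow{\str{E}}}_2$ and colouring each copy of $\overrightarrow{\str{E}}$ by whether an arbitrary convex order of $\str{B}$ agrees with the Ramsey order, a monochromatic $\overrightarrow{\str{B}}_1$ carries a copy of $\overrightarrow{\str{B}}_0$ on which the new order agrees with the Ramsey order or with its reverse. Since $\overrightarrow{\str{B}}_0$ contains every convex ordering and labelling of $\str{A}$, every expansion of $\str{A}$ is realised; the block ordering and the two predicate labellings cause no trouble, since an embedding preserves the predicate and the two labellings are realised by mapping the two canonical classes of $\str{A}$ into the two classes of $\str{B}$ in either alignment.

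The main obstacle I expect is the interaction of convexity with this midpoint construction: unlike in the primitive case, a midpoint between two vertices must not leave their block, which forces both the use of an even magic distance and the same-block placement above, and requires knowing such a midpoint always exists in the class---this is precisely what Observation~\ref{obs:bimagicismagic} guarantees. The secondary point requiring care is the verification that the bipartition predicate genuinely stands in for the infinite odd-cycle obstruction, i.e.\ that local predicate-consistency forces the global absence of odd cycles; this is what makes $\overrightarrow{\K}$ locally finite in $\mathcal R$ in spite of the failure of the finite-obstacle Lemma~\ref{lem:obstacles} in the bipartite setting.
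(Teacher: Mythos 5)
Your proposal is correct and follows essentially the same route as the paper: the finite obstacle set of Lemma~\ref{lem:biobstacles} together with the two-vertex parity constraints attached to the bipartition predicate (which force the global absence of odd cycles and so make the subclass locally finite), the bipartite completion algorithm of Theorem~\ref{thm:bimagiccompletion} with $M\leq\delta-2$ to avoid new Henson constraints, and an application of Theorem~\ref{thm:localfini}. The paper dismisses the expansion property as ``the standard argument''; your spelled-out adaptation --- same-block midpoints at the even distance among $\{M,M+1\}$ and the inclusion of both predicate labellings in $\overrightarrow{\str{B}}_0$ --- is exactly the intended one.
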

Here a convex ordering is any ordering such that vertices in first bipartition (denoted by the unary predicate) form an initial segment.
\begin{proof}
Let $\mathcal O$ be given by Lemma~\ref{lem:biobstacles}.  Denote by $\mathcal O'$ the family of all possible expansions of $\mathcal O$ by a unary predicate determining the bipartition
with additional structure on two vertices which forbids vertices from the same bipartition from being connected by an edge of odd length and vertices from different bipartitions from being connected by an edge of even length.  Observe that structures in $\Forb(\mathcal O')$ have no odd cycles and thus Theorem~\ref{thm:localfini} and Lemma~\ref{lem:biobstacles} apply.

If $\mathcal S$ is non-empty, then observe that $M$ can be chosen to be at most $\delta-2$ and one can use the same argument as in proof of Theorem~\ref{thm:ramseyHenson}.

The expansion property again follows by the standard argument.
\end{proof}

\begin{thm}
\label{thm:biregulareppa}
Let $\delta\geq 3$, $C_0>2\delta+3$ and $\mathcal S$ be an admissible set of Henson constraints for $\mathcal A^\delta_{\infty,0,C_0,2\delta+1}$. 
Then the class $\mathcal A^\delta_{\infty,0,C_0,2\delta+1}\cap \mathcal A^\delta_\mathcal S$ has coherent EPPA.
\end{thm}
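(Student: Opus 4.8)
The plan is to follow the same two-stage strategy used for the primitive classes in Theorems~\ref{thm:regulareppa} and~\ref{thm:EPPAHenson}: first invoke the Solecki--Siniora machine (Theorem~\ref{thm:herwiglascar}) to obtain a coherent EPPA-extension inside a class cut out by a \emph{finite} family of forbidden homomorphic images, and then repair that extension with the bipartite completion algorithm (Theorem~\ref{thm:bimagiccompletion}), whose automorphism-preservation property (Lemma~\ref{lem:biaut}) guarantees that the coherent system of extensions survives the completion. Concretely, I would start from $\str{A}\in\K$ with $\K=\mathcal A^\delta_{\infty,0,C_0,2\delta+1}\cap\mathcal A_\mathcal S$ and its set $P$ of partial automorphisms, take the \Fraisse{} limit $\str{M}$ of $\K$ as the infinite witness required by Theorem~\ref{thm:herwiglascar} (by homogeneity each $p\in P$ extends to an automorphism of $\str{M}$), and feed in the finite obstacle family from Lemma~\ref{lem:biobstacles} together with the Henson constraints $\mathcal S$, which is finite by Remark~\ref{rem:Finite_Henson}.

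The main obstacle, exactly as for the Ramsey property, is that bipartiteness is \emph{not} captured by any finite set of forbidden cycles, so the finite structure $\str{B}'$ returned by Theorem~\ref{thm:herwiglascar} lies only in $\Forb(\mathcal O\cup\mathcal S)$ and may contain odd cycles, placing it outside $\K$; moreover the bipartite completion is only guaranteed to work on odd-cycle-free inputs. I would resolve this precisely as in Theorem~\ref{thm:biramseyregular}: augment the obstacles to the family $\mathcal O'$ obtained by recording the bipartition and forbidding the two local parity violations (same-class pairs at odd distance, different-class pairs at even distance), so that every structure in $\Forb(\mathcal O')$ is automatically odd-cycle-free and hence, by Lemma~\ref{lem:biobstacles}, bipartite. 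The crucial point that makes this legitimate for EPPA rather than only for Ramsey is that the bipartition is the definable ``$d$ even'' equivalence relation: every partial automorphism of $\str{A}$, being a partial isometry, preserves the parity of distances and therefore respects this relation. Consequently the set $P$ of partial automorphisms and the relevant automorphism groups are literally unchanged by passing to the bipartition-aware class, so a coherent EPPA-extension there yields one for $\K$.

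With $\str{B}'\in\Forb(\mathcal O'\cup\mathcal S)$ now odd-cycle-free, I would apply the bipartite completion with a suitable magic parameter $M$ to obtain $\str{B}\in\K$. Here a genuine subtlety appears: Lemma~\ref{lem:biaut} preserves automorphisms only for \emph{connected} inputs, and Remark~\ref{rem:biunconnected} warns that the naive disconnected completion is neither canonical nor automorphism-preserving. The remedy is that once the parity classes are present on all of $\str{B}'$, the final step of the algorithm is no longer a free choice: each remaining non-edge $u,v$ must receive distance $M$ or $M+1$ according to whether $u,v$ lie in the same parity class, so this step becomes canonical and hence automorphism-preserving across components as well. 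Thus every $\hat p$ extending some $p\in P$ on $\str{B}'$ remains an automorphism of $\str{B}$, and coherence is inherited from Theorem~\ref{thm:herwiglascar}. When $\mathcal S\neq\emptyset$ one chooses $M\le\delta-2$ so that the completion introduces no edge of length $\delta$ (cf.\ the argument of Lemma~\ref{lem:hensoncompletions} and Theorem~\ref{thm:bimagiccompletion}), keeping $\str{B}\in\Forb(\mathcal S)$.

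I expect the hardest part to be precisely the disconnected case just described: verifying that, after the bipartition data has been added, the assignment of $M$ versus $M+1$ is genuinely forced and therefore the completion commutes simultaneously with all the extended maps $\hat p$, which is what preserves coherence. The remainder is bookkeeping that parallels Theorems~\ref{thm:EPPAHenson} and~\ref{thm:biramseyregular}.
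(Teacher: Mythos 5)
Your overall two-stage architecture (Herwig--Lascar/Solecki--Siniora followed by the bipartite completion) matches the paper's, but the way you dispose of the odd-cycle problem contains a genuine gap. You propose to import the device from the Ramsey proof of Theorem~\ref{thm:biramseyregular}: augment the obstacle family to $\mathcal O'$ by adding a unary predicate naming the bipartition and forbidding local parity violations, and you justify this by asserting that every partial automorphism of $\str{A}$ ``respects this relation,'' so that ``the set $P$ of partial automorphisms \ldots{} [is] literally unchanged.'' That assertion is false. A partial isometry preserves the \emph{equivalence relation} ``$d(x,y)$ is even'' (it cannot merge the two classes), but it need not preserve any fixed \emph{labelling} of the two classes: a partial automorphism may map a vertex of the first pode of the bipartition to a vertex of the second (e.g.\ the transposition of two vertices at odd distance). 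Such a map is a partial automorphism of the metric space but not of its expansion by the unary predicate, so passing to the predicate-aware class $\Forb(\mathcal O')$ strictly shrinks $P$; equivalently, the hypothesis of Theorem~\ref{thm:herwiglascar} that each $p\in P$ extends to an automorphism of some $\str{M}\in\Forb(\mathcal O')$ fails for class-swapping $p$. Your argument therefore only yields coherent EPPA for $\mathcal A^\delta_{\infty,0,C_0,2\delta+1}$ \emph{expanded by the bipartition predicate}, which is weaker than the stated theorem (compare the deliberate case split in Theorem~\ref{thm:EPPAmain}, where some antipodal classes genuinely require such a predicate and the bipartite non-antipodal ones do not).

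The paper avoids this by running Theorem~\ref{thm:herwiglascar} in the \emph{plain} language with the finite obstacle set of Lemma~\ref{lem:biobstacles}, accepting that the resulting coherent extension $\str{B}$ may contain odd cycles, and then repairing it with a two-to-one cover $\str{C}$ on vertex set $B\times\{0,1\}$ in which odd-length edges cross layers and even-length edges stay within a layer. The key step you are missing is that an automorphism $\bar\psi$ of $\str{B}$ lifts to $\str{C}$ by $(u,i)\mapsto(\bar\psi(u),i)$, and when this lift fails to extend the given partial isometry (precisely because that isometry swaps the podes) one composes with the layer-swap $(v,0)\leftrightarrow(v,1)$; this composition is what handles the class-interchanging partial automorphisms while preserving coherence. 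Your secondary concerns about disconnectedness are moot in the paper's argument, which simply replaces $\str{B}$ by the connected component containing $\str{A}$.
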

\begin{proof}
Let $\mathcal O$ be given by Lemma~\ref{lem:biobstacles}.  By an application of
Theorem~\ref{thm:herwiglascar} obtain $\str{B}\in \Forb(\mathcal O)$ which is
a coherent EPPA-witness of $\str{A}$. Without loss of generality we can assume that $\str{B}$
is connected (otherwise the connected component of $\str{B}$ containing $\str{A}$ is a coherent EPPA-witness, too).  If $\str{B}$ has no odd cycles, apply Theorem~\ref{thm:bimagiccompletion}
to obtain the desired EPPA-witness of $\str{A}$.

If $\str{B}$ contains odd cycles, construct $\str{C}=(C,d')$ as follows.
 The vertex set $C$ is $B\times\{0,1\}$, and 
$$d'((u,i),(v,j)) =
\begin{cases}
 0 & \hbox{if } (u,i)=(v,j)\\
 d(u,v) & \hbox{if $d(u,v)$ is odd and $i\neq j$}\\
 d(u,v) & \hbox{if $d(u,v)$ is even and $i=j$}.\\
\end{cases}
$$
Observe that $\str{C}$ is connected and contains no odd cycles. Let $p\colon A\to \{0,1\}$ be a function
determining the bipartition of $\str{A}$. We verify that $\str{C}$ is an
extension of $\phi(\str{A})$ for the following embedding $\phi(v)=(v,p(v))$.
Denote by $\pi(v,j)=v$ the projection, which is a homomorphism of $\str{C}\to \str{B}$
and also embedding $\phi(\str{A})\to \str{A}$.

Every partial isometry $\psi$ of $\phi(\str{A})$ induces a partial isometry $\psi\circ\pi$ of $\str{A}$
which extends to automorphism $\bar\psi$ of $\str{B}$. This automorphism induces an automorphism $\psi'$ 
of $\str{C}$ by mapping $(u,i)\mapsto (\bar\psi(u),i).$
It however might not be an extension of $\psi$ because $\psi'$ may change the values of the function $p$.
In this case combine it with an automorphism mapping $(v,0) \mapsto (v,1)$ and $(v,1)\mapsto (v,0)$. It is easy to see
that this construction preserves coherence of the extensions in $\str{B}$.

If $\mathcal S$ is non-empty, again observe that $M$ can be chosen to be at most $\delta-2$ and use same argument as in proof of Theorem~\ref{thm:EPPAHenson}.
\end{proof}

\section{Antipodal spaces}
\label{sec:antipodal}
In this section we discuss the antipodal classes in Cherlin's catalogue. We say that an amalgamation class $\K$ of metric spaces of diameter $\delta$ is {\em antipodal} if the edges of distance $\delta$ form a matching in the \Fraisse{} limit of $\K$. In particular then there are no triangles with more than one edge of length $\delta$ in $\K$. Note that this also implies that $\K$ has no strong amalgamation. For admissible parameters $(\delta,K_1,K_2,C_0,C_1, \mathcal S)$ the class $\mathcal A^{\delta}_{K_1,K_2,C_0,C_1, \mathcal S}$ is antipodal if and only if $C = 2\delta +1$. More precisely only the two cases can appear:
\begin{defn}
The admissible parameters $3\leq \delta<\infty$, $K_1$, $K_2$, $C_0$ and $C_1$ are {\em antipodal} when
\begin{enumerate}[label=(\Roman*)]
\setlength\itemsep{0em}
\item[\ref{I}] $K_1=\infty$, $C_0=2\delta+2$ (the bipartite case; so $K_2=0$ and $C_1=2\delta+1$), or
\item[\ref{IIa}] $1\leq K_1\leq \frac{\delta}{2}, K_2 = \delta - K_1, C_0=2\delta+2, C_1 = 2\delta+1$.
\end{enumerate}
\end{defn}
The parameters in this case are pushed to the extreme situation where, either there is no magic parameter, or $M=\lfloor\frac{\delta}{2}\rfloor$ is the only parameter satisfying Definition~\ref{defn:magiccompletion}, respectively Definition~\ref{defn:bimagiccompletion}.
\begin{defn}
\label{defn:antipodal}
Let $\delta \geq 3$. For $\str{A}=(A,d)\in \mathcal G^{\delta-1}$, an {\em antipodal companion} of $\str{A}$ is any $\str{A}^*=(A,d^*)\in \mathcal G^{\delta-1}$ such that there exists $B\subseteq A$ and:
$$d^*(u,v) =
\begin{cases} 
      d(u,v)& \text{if $d(u,v)$ is defined and $u,v\in B$ or $u,v\notin B$} \\
      \delta-d(u,v) & \text{if $d(u,v)$ is defined and $u\in B,v\notin B$ or vice versa}.
\end{cases}
$$
\end{defn}

Observe that for every admissible antipodal parameters, the class $\mathcal A^\delta_{K_1,K_2,C_0,C_1, \mathcal S}$, and also the subclass of metric spaces of diameter at most $\delta-1$ form amalgamation classes.  This subclass is equal to $\mathcal A^{\delta-1}_{K_1,K_2,C_0,C_1, \mathcal S}$ and corresponds to either a primitive case (Case~\ref{IIa} for $K_1>2$ or \ref{III} for $1\leq K_1\leq 2$) or bipartite case (Case~\ref{I}) of Cherlin's catalogue. The following describes a reverse way to produce an antipodal space of diameter $\delta$ from a space of diameter $\delta-1$.

\begin{figure}[t]
\centering
\includegraphics{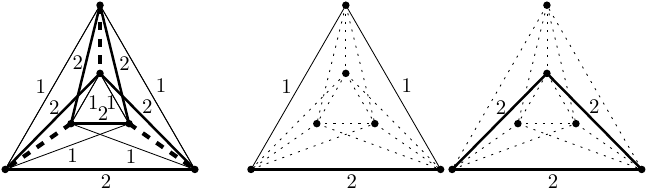}
\caption{Antipodal extension of the triangle 122 for $\delta=3$, the matching formed by edges of distance 3 is denoted by dashed lines. The second and third picture highlight the original triangle and one of its antipodal companions.}
\label{fig:antipodal122}
\end{figure}

\begin{defn}[Antipodal extensions]
\label{defn:antipodalext}
Given an edge-labelled graph $\str{M}=(M,d) \in \mathcal G^{\delta-1}$ its \emph{antipodal extension} is the edge-labelled graph $\str{M}^\ominus=(M\times \{0,1\},d') \in \mathcal G^{\delta-1}$ such
that $d'((u,i),(v,i))=d(u,v)$ and $d'((u,i),(v,1-i))=\delta-d(u,v)$ for $u,v\in M$ and $i\in \{0,1\}$.

For an ordered edge-labelled graph $\overrightarrow{\str{M}}=(M,d,\leq_\str{M})$, its \emph{ordered antipodal extension} $\overrightarrow{\str{M}}^\ominus=(M\times \{0,1\},d',\leq_{\str{M}^\ominus})$ 
where $(M,d)^\ominus=(M\times \{0,1\},d')$ and $(u,i)\leq_{\str{M}^\ominus} (v,j)$ if and only if $i<j$ or $i=j$ and $u\leq_\str{M} v$.

For an ordered bipartite edge-labelled graph $\overrightarrow{\str{M}}=(M,d,\leq_\str{M},B)$ where $B$ is unary predicate determining the bipartition we define $\overrightarrow{\str{M}}^\ominus=(M\times \{0,1\},d',\leq_{\str{M}^\ominus},B^\ominus)$ analogously and we put $(v,0)\in B^\ominus \iff (v)\in B$ and if $\delta$ is even then $(v,1)\in B^\ominus \iff (v)\in B$; if $\delta$ is odd then $(v,1)\in B^\ominus \iff (v)\notin B$.
\end{defn}

\subsection{Ramsey property}

The Ramsey property follows from the above correspondence in full generality. Recall that we use arrows to indicate that the structures in a class are ordered (a requirement for any Ramsey class).
\begin{thm}
\label{thm:antiporamsey}
Let $\overrightarrow{\K}$ be a Ramsey class of expansions of metric spaces. Then the class $\overrightarrow{\K}^\ominus$ of all antipodal expansions of $\overrightarrow{\K}$ is Ramsey.
\end{thm}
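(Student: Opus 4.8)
The plan is to reduce the statement directly to the Ramsey property of $\overrightarrow{\K}$ by showing that the antipodal extension operation $(-)^\ominus$ of Definition~\ref{defn:antipodalext} induces, for any two members $\overrightarrow{\str{A}},\overrightarrow{\str{C}}\in\overrightarrow{\K}$, a bijection between the set of copies $\binom{\overrightarrow{\str{C}}}{\overrightarrow{\str{A}}}$ and the set of copies $\binom{\overrightarrow{\str{C}}^\ominus}{\overrightarrow{\str{A}}^\ominus}$, given by $g\mapsto g^\ominus$, where $g^\ominus(a,i)=(g(a),i)$. Granting this structural lemma, the theorem is immediate: given $\overrightarrow{\str{A}}^\ominus,\overrightarrow{\str{B}}^\ominus\in\overrightarrow{\K}^\ominus$ (so that $\overrightarrow{\str{A}},\overrightarrow{\str{B}}\in\overrightarrow{\K}$) and $k$ colours, I would choose $\overrightarrow{\str{C}}\in\overrightarrow{\K}$ with $\overrightarrow{\str{C}}\longrightarrow(\overrightarrow{\str{B}})^{\overrightarrow{\str{A}}}_k$ and take $\overrightarrow{\str{C}}^\ominus\in\overrightarrow{\K}^\ominus$ as the witness for $\overrightarrow{\str{B}}^\ominus$ and $\overrightarrow{\str{A}}^\ominus$.

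The heart of the argument is the bijection. First I would observe that, since $d'((a,0),(a,1))=\delta$ while every other pair of vertices of $\overrightarrow{\str{A}}^\ominus$ lies at distance at most $\delta-1$ (because all distances in $\str{A}$ are between $1$ and $\delta-1$), the distance-$\delta$ pairs of $\overrightarrow{\str{A}}^\ominus$ are \emph{exactly} the antipodal pairs $\{(a,0),(a,1)\}$, and likewise in $\overrightarrow{\str{C}}^\ominus$. Any embedding $f\colon\overrightarrow{\str{A}}^\ominus\to\overrightarrow{\str{C}}^\ominus$ is an isometry, hence carries antipodal pairs to antipodal pairs; so the projection $\pi(u,i)=u$ yields a well-defined injection $g$ with $f(a,i)\in\{(g(a),0),(g(a),1)\}$ for each $a$. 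The crucial step—the one I expect to need the most care—is to rule out \emph{twisted} copies and show that this section is level-preserving. Here I would use that the order on every antipodal extension is convex with respect to the podes, i.e. all level-$0$ vertices precede all level-$1$ vertices, and that $(a,0)<(a,1)$ in $\overrightarrow{\str{A}}^\ominus$: order preservation applied to each antipodal pair forces $f(a,0)=(g(a),0)$ and $f(a,1)=(g(a),1)$, that is, $f=g^\ominus$. A routine check then shows that $g$ is an embedding $\overrightarrow{\str{A}}\to\overrightarrow{\str{C}}$, and conversely that $g^\ominus$ is always an embedding, establishing the bijection.

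For the transfer I would pull a colouring $\chi$ of $\binom{\overrightarrow{\str{C}}^\ominus}{\overrightarrow{\str{A}}^\ominus}$ back to the colouring $\chi'(g)=\chi(g^\ominus)$ on $\binom{\overrightarrow{\str{C}}}{\overrightarrow{\str{A}}}$, extract a $\chi'$-monochromatic copy $\overrightarrow{\str{B}}'\in\binom{\overrightarrow{\str{C}}}{\overrightarrow{\str{B}}}$ from the Ramsey property of $\overrightarrow{\K}$, and claim that $(\overrightarrow{\str{B}}')^\ominus$ is $\chi$-monochromatic. This last step uses functoriality of $(-)^\ominus$, namely that an embedding $\overrightarrow{\str{B}}'\hookrightarrow\overrightarrow{\str{C}}$ induces an embedding $(\overrightarrow{\str{B}}')^\ominus\hookrightarrow\overrightarrow{\str{C}}^\ominus$, together with the same bijection applied internally to $\overrightarrow{\str{B}}'$: every copy of $\overrightarrow{\str{A}}^\ominus$ inside $(\overrightarrow{\str{B}}')^\ominus$ equals $h^\ominus$ for some $h\in\binom{\overrightarrow{\str{B}}'}{\overrightarrow{\str{A}}}$, and its colour $\chi(h^\ominus)=\chi'(h)$ is constant as $h$ ranges over $\overrightarrow{\str{B}}'$.

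The main obstacle is genuinely the no-twisting step: a priori a copy of $\overrightarrow{\str{A}}^\ominus$ in $\overrightarrow{\str{C}}^\ominus$ could arise from an antipodal companion of $\overrightarrow{\str{A}}$ obtained by a different choice of section (Definition~\ref{defn:antipodal}), and it is precisely the convexity of the ordering with respect to the podes that eliminates these alternatives and pins the correspondence down to a bijection. In the ordered bipartite variant I would additionally verify that the parity-twisted predicate $B^\ominus$ of Definition~\ref{defn:antipodalext} is preserved by $g^\ominus$ and that the induced $g$ respects the bipartition, so that the bijection restricts to the bipartite copies correctly; this is where the even/odd diameter distinction enters, but it leaves the order-theoretic core of the argument untouched.
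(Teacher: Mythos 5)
Your proposal is correct and takes essentially the same route as the paper: the paper's proof simply applies the Ramsey property in $\overrightarrow{\K}$ to get $\str{C}\longrightarrow(\str{B})^{\str{A}}_2$ and asserts that $\str{C}^\ominus\longrightarrow(\str{B}^\ominus)^{\str{A}^\ominus}_2$ is ``easy to check.'' Your no-twisting lemma---that order-preservation on each antipodal pair, together with the convexity of $\leq_{\str{M}^\ominus}$ with respect to the podes, forces every copy of $\overrightarrow{\str{A}}^\ominus$ in $\overrightarrow{\str{C}}^\ominus$ to be of the form $g^\ominus$---is exactly the content of that omitted check, correctly carried out.
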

Here $\overrightarrow{\K}$ can be either a Ramsey class of ordered metric spaces (given by Theorems~\ref{thm:regularramsey} and~\ref{thm:ramseyHenson}) or ordered metric spaces with predicate denoting bipartition (given by Theorem~\ref{thm:biramseyregular}).  
\begin{proof}
Given $\str{A}^\ominus,\str{B}^\ominus\in \overrightarrow{\K}^\ominus$, apply the Ramsey property in $\overrightarrow{\K}$ to obtain $\str{C}\longrightarrow (\str{B})^\str{A}_2$. It is easy to check that $\str{C}^\ominus\longrightarrow(\str{B}^\ominus)^{\str{A}^\ominus}_2$.
\end{proof}
\begin{remark}\label{rem:antiporamsey}
Observe that $\overrightarrow{\K}^\ominus$ is not a hereditary class. It consists only of metric spaces where for every vertex there is precisely one vertex in a distance $\delta$.
Thus one can color only structures $\str{A}$ having this property.
The Ramsey property for antipodal expansions can be, equivalently, stated for the hereditary class of all subspaces of spaces in $\overrightarrow{\K}^\ominus$ with a unary predicate determining the podality.

This unary predicate becomes necessary because in $\overrightarrow{\K}^\ominus$ there is a definable equivalence relation on vertices which is
not in $\K^\ominus$, namely $u\sim v$ whenever both $\{w:w\leq u\}$ and $\{w:w\leq v\}$ span an edge of length $\delta$ or none does.  As shown in~\cite{Hubicka2016} an equivalence relation on vertices implies the necessity for unary predicates in the Ramsey lift. It is interesting to observe that in showing EPPA (see Section \ref{sec:antieppa}) there is sometimes no need for further expansion of the language. To maintain that, it is necessary to assume that $\K$ is closed under the operation of forming antipodal companions (Definition~\ref{defn:antipodal}).
\end{remark}

\subsection{Generalised completion algorithms for antipodal classes}

To show the extension property for partial automorphisms we need a way to complete the spaces symmetrically. 

Our completion algorithm for the antipodal classes $\K = \mathcal A^\delta_{K_1,K_2,C_0,C_1,\mathcal S}$ will be based on the completion algorithm for $\K^{\delta-1} = \mathcal A^{\delta-1}_{K_1,K_2,C_0,C_1, \mathcal S}$ with magic parameter $M = \lfloor \frac{\delta}{2} \rfloor$ for non-bipartite spaces and $\K^{\delta-1} = \mathcal A^{\delta-1}_{\infty,0,C_0,2\delta+2,2\delta-1, \mathcal S}$ for bipartite spaces. More precisely, we are not considering the completion to $\mathcal A^\delta_{K_1,K_2,C_0,C_1, \mathcal S}$, but to its expansion $\mathcal B^\delta_{K_1,K_2,C_0,C_1, \mathcal S}$ by an additional unary predicate $P$ determining the podality (so every edge of length $\delta$ has precisely one vertex $v\in P$). Roughly speaking our completion algorithm for an input structure $(G,d,P)$ will first complete the pode $(P,d)$ in $\mathcal A^{\delta-1}_{K_1,K_2,C_0,C_1, \mathcal S}$ and then forms its antipodal extension. 

\medskip

We are going to consider four separate cases:
\begin{enumerate}
  \item Antipodal classes in Case~\ref{IIa} with even $\delta$ (Corollary \ref{cor:antievencompletion}),
  \begin{example}
    An example of such a class is $\mathcal A^4_{1,3,10,9}$. The class of all metric spaces in $\mathcal A^4_{1,3,10,9}$ of diameter 3 (the underlying class of metric space appearing on each pode) is
    $\mathcal A^3_{1,3,10,9}$ which is the class of all finite metric spaces of diameter 3 omitting triangle $333$ (the primitive case covered by Section~\ref{sec:basic3}).
    All metric spaces in $\mathcal A^4_{1,3,10,9}$ are thus subspaces of antipodal extensions of spaces in $\mathcal A^3_{1,3,10,9}$.

    Another choice of such a class is  $\mathcal A^4_{2,2,10,9}$ with underlying class of metric spaces $\mathcal A^3_{2,2,10,9}$ which forbids
    triangles $333$ (by the $C$-bound), $111$ (by the $K_1$-bound) and $133$ (by the $K_2$-bound). It is useful to observe that from the conditions $C=2\delta+1$ and $K_1=\delta-K_2$
    it follows that the underlying metric spaces are always closed for antipodal companions:  the companion of the triangle $333$
    is $311$ which is non-metric and the companion of $111$ is $133$. This property is needed for the class of antipodal metric spaces to form an amalgamation class.
  \end{example}
  \item Antipodal bipartite classes in Case~\ref{I} with odd $\delta$ (Corollary \ref{cor:antibioddcompletion}),
  \begin{example}
    An example of such a class is $\mathcal A^3_{\infty,0,8,7}$.  This is 
    a special case, because the underlying metric space is of diameter 2 and thus not analyzed in this paper.
    The underlying class of metric spaces consists of the complete bipartite graphs where non-edge is represented by distance 2.
    All metric spaces in $\mathcal A^3_{\infty,0,8,7}$ are thus antipodal extensions of complete bipartite graphs.

    The first standard case is $\mathcal A^5_{\infty,0,12,11}$ with underlying class of metric spaces $\mathcal A^4_{\infty,0,12,9}$
    which is the class of all bipartite metric spaces omitting triangle $444$. This is a bipartite case covered in Section~\ref{sec:bipartite}.

    Observe that because $\delta$ is odd, all edges in distance $\delta$ cross the bipartition.
  \end{example}
  \item Antipodal classes in Case~\ref{IIa} with odd $\delta$ (Theorem \ref{thm:anticompletion}), and
  \begin{example}
    In this category lies the class $\mathcal A^3_{1,2,8,7}$ which is again a special case, because the underlying
    class has diameter 2.  It is the class of all metric spaces of diameter 2 (that is, a representation of 
    graphs where non-edge corresponds to distance 2). Structures in $\mathcal A^3_{1,2,8,7}$ can equivalently be seen
    as double-covers of complete graphs which are further discussed in Section~\ref{sec:conclussion}.

    An example of a standard case is the class $\mathcal A^5_{1,4,12,11}$ with underlying space $\mathcal A^4_{1,4,12,11}$
    which is the class of all metric spaces of diameter 4 with triangle $444$ forbidden. Again this is a primitive
    case covered by Section~\ref{sec:basic3}.

    The main difference with the first case is that the graph consisting of two
    disjoint edges of length $\delta$ (and no other edges or vertices) requires two different
    distances to be used in its completion, while in the first case all remaining edges
    can be completed by $\frac{\delta}{2}$.
  \end{example}
  \item Antipodal bipartite classes in Case~\ref{I} with even $\delta$ (Theorem \ref{thm:antibicompletion}).
  \begin{example}
    An example of such a class is $\mathcal A^4_{\infty,0,10,9}$ with underlying class $\mathcal A^3_{\infty,0,10,7}$.
    This is the class of all bipartite metric spaces of diameter 3 covered in Section~\ref{sec:bipartite}.

    The main difference with the second case is that the edges of distance $\delta$ connect pairs of vertices
    in the same bipartition and thus one can define equivalence both by use of the bipartition and by the pairing of $\delta$ edges.
  \end{example}
\end{enumerate}
In the first two cases we will show the perhaps surprising fact that the antipodal completion of $(G,d,P)$ described as above does not depend on the choice of $P$, hence we obtain a unique completion $(G, \bar d)$ to the original class of metric spaces $\mathcal A^\delta_{K_1,K_2,C_0,C_1, \mathcal S}$.
In the latter two cases this however does not hold and similarly to the bipartite case (cf. Remark \ref{rem:biunconnected}) we are confronted with different completions, depending on the choice of $P$. This ambiguity cannot be eliminated and is reflected in the fact that there is no canonical symmetric amalgamation on $\mathcal A^\delta_{K_1,K_2,C_0,C_1, \mathcal S}$ for those cases (Theorem \ref{thm:antiSIR}).

In order to simplify our proof we first show that it is enough to only consider edge-labelled graphs $\str{G}$ that are symmetric according to the following definition:

\begin{figure}[t]
\centering
\includegraphics{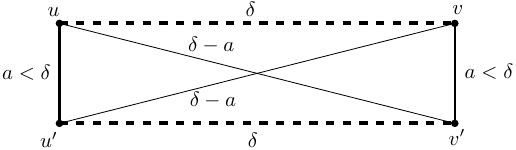}
\caption{Antipodal quadruple.}
\label{fig:antipodalquadruple}
\end{figure}
\begin{defn}[Antipodal quadruple]
A quadruple of distinct vertices $(u,v,u',v')$ in $\str{G}\in\mathcal G^\delta$ is {\em antipodal} if $d(u,v)=d(u',v')=\delta$ and $d(u,u')=d(v,v')$, $d(u,v')=d(v,u')=\delta-d(u,u')$. (In particular all those distances are defined.) See Figure~\ref{fig:antipodalquadruple}.

Let us call $\str{G}\in\mathcal G^\delta$ \emph{antipodally symmetric}, if for every vertex $x$ there is a unique $x^*$ with $d(x,x^*) = \delta$ and if every edge of length $< \delta$ is part of an antipodal quadruple.
\end{defn}

\begin{lem} \label{lem:antisymmetric}
Let $\str{G} = (G,d) \in\mathcal G^\delta$ and suppose that $\str{G}$ has a completion to $\mathcal A^\delta_{K_1,K_2,C_0,C_1, \mathcal S}$. Then there is an edge-labelled graph $(G^*,d^*)$ with $G^* \supseteq G$ and $d^* \supseteq d$ that is antipodally symmetric and every completion $(G,d') \in \mathcal A^\delta_{K_1,K_2,C_0,C_1, \mathcal S}$ of $\str{G}$ has a unique extension to a completion of $(G^*,d^*)$. Furthermore every automorphism of $(G,d)$ extends uniquely to an automorphism of $(G^*,d^*)$.
\end{lem}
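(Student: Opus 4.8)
The plan is to build $(G^*,d^*)$ as the \emph{antipodal closure} of $\str{G}$ and then to reduce everything to the single structural fact that, because antipodal classes satisfy $C=2\delta+1$, the antipodal involution acts as an anti-isometry. Concretely, I would first extend the partial matching formed by the edges of length $\delta$ in $\str{G}$ to a fixed-point-free involution $x\mapsto x^*$ on a vertex set $G^*\supseteq G$: I keep $x^*=y$ whenever $d(x,y)=\delta$ is already defined, and for each remaining $x$ I adjoin a fresh vertex $x^*$. I set $d^*(x,x^*)=\delta$ and define the other distances by propagating along antipodal quadruples, taking $d^*(u,v)=d(u,v)$ where defined and otherwise the value forced through $d(u^*,v)=d(u,v^*)=\delta-d(u,v)$ and $d(u^*,v^*)=d(u,v)$ by whichever position is already filled.

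The engine is the remark that in every space of $\mathcal A^\delta_{K_1,K_2,C_0,C_1,\mathcal S}$, for an antipodal pair $\{x,x^*\}$ and \emph{any} vertex $z$ one has $d(x^*,z)=\delta-d(x,z)$: the triangle $x,x^*,z$ has perimeter $\delta+d(x,z)+d(x^*,z)$, which the triangle inequality forces to be at least $2\delta$ while the condition $C=2\delta+1$ rules out every larger value, so the perimeter equals $2\delta$. This at once delivers well-definedness of $d^*$ (two quadruple entries already present in $\str{G}$ must agree, since by hypothesis $\str{G}$ sits inside a completion in which the anti-isometry holds) and antipodal symmetry of $(G^*,d^*)$, each vertex having the unique antipode $x^*$ and each short edge lying in the quadruple $(u,u^*,v,v^*)$ built during propagation.

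For the correspondence of completions I would use the anti-isometry once more. Every vertex of $G^*$ is matched, so any completion $(G^*,\bar d)$ is antipodally closed and is recovered from its restriction to $G$ through $\bar d(x^*,z)=\delta-\bar d(x,z)$ and $\bar d(x^*,y^*)=\bar d(x,y)$; hence an extension is unique and restriction is injective on completions of $(G^*,d^*)$. For existence I extend a completion $(G,d')$ by exactly these formulas and observe that the outcome is precisely an antipodal extension, in the sense of Definition~\ref{defn:antipodalext}, of a companion (Definition~\ref{defn:antipodal}) of $(G,d')$; as the antipodal classes are closed under forming antipodal companions, the extension reintroduces no forbidden triangle and no Henson constraint and therefore lands in $\mathcal A^\delta_{K_1,K_2,C_0,C_1,\mathcal S}$.

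Finally, an automorphism $f$ of $\str{G}$ is an isometry and so preserves the $\delta$-matching; putting $\hat f(x^*):=f(x)^*$ defines a bijection of $G^*$ that commutes with the quadruple rule, hence preserves $d^*$, and it is the only possible extension because the images of the vertices of $G^*\setminus G$ are forced by the involution. I expect the main obstacle to be the existence half above: checking that the anti-isometric extension of an arbitrary completion remains a metric space inside the class, that is, that flipping distances $a$ to $\delta-a$ across the matching respects the triangle inequality and all the $K_1,K_2,C_0,C_1$ bounds simultaneously. This is exactly what closure under antipodal companions guarantees, and it is where the admissibility identities $C=2\delta+1$ and, in Case~\ref{IIa}, $K_2=\delta-K_1$ are used.
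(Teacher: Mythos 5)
Your proposal is correct and follows essentially the same route as the paper: adjoin a fresh antipode $x^*$ for each unmatched vertex, propagate distances by completing each $\delta$-pair of edges to an antipodal quadruple, recover existence and uniqueness of extended completions from the anti-isometry $d(x^*,z)=\delta-d(x,z)$ forced by the triangle inequality together with $C_0=2\delta+2$, $C_1=2\delta+1$, and extend automorphisms via $\hat f(x^*)=f(x)^*$. You are in fact somewhat more explicit than the paper about why the propagation is well defined and why the extended completion stays in the class (closure under antipodal companions), which is where the paper says only ``it is not hard to see.''
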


\begin{proof}
First observe that if $\str{G}$ has a completion to $\mathcal A^\delta_{K_1,K_2,C_0,C_1, \mathcal S}$ then there are no $(\delta,\delta)$-forks in $\str{G}$. Moreover quadruples of vertices $(u,v,u',v')$ with $d(u,v)=d(u',v')=\delta$ can only be completed to antipodal quadruple. Instead of completing $(G,d)$ we can therefore consider the antipodally symmetric graph $(G^*,d^*)$ that is constructed by adding a unique vertex $x^*$ for every $x$ and with $d^*(x,x^*) = \delta$ (if $x^*$ is not already present in $G$) and completing all quadruples of vertices $(u,v,u',v')$ such that $d^*(u,v)=d^*(u',v')=\delta$ and $d(u,u')$ is defined to their unique completion as antipodal quadruple. It is not hard to see that a completion $(G,d') \in \mathcal A^\delta_{K_1,K_2,C_0,C_1, \mathcal S}$ of $(G,d)$ gives rise to a unique completion of $(G^*,d^*)$ into $\mathcal A^\delta_{K_1,K_2,C_0,C_1, \mathcal S}$ by the setting $d^*(x^*,y) = \delta - d'(x,y)$ and $d^*(x^*,y^*) = d'(x,y)$, for $d(x,x^*) = d(y,y^*) = \delta$. Also every automorphism $f \in \Aut(G,d)$ extends to a unique automorphism $f \in \Aut(G^*,d^*)$ defined by $f(x^*) = f(x)^*$.
\end{proof}

Analogously, a structure $(G,d,P)$ with $(G,d) \in\mathcal G^\delta$ and a unary predicate $P$ has a completion to $(G,d',P) \in \mathcal B^\delta_{K_1,K_2,C_0,C_1, \mathcal S}$ if and only if its extension to an antipodally symmetric space has a completion to $\mathcal B^\delta_{K_1,K_2,C_0,C_1, \mathcal S}$. Therefore, in the following we always assume $(G,d,P)$ to be antipodally symmetric.

Note that, for antipodally symmetric $(G,d,P)$, every completion of $(P,\left.d\right|_{P^2})$ to $\mathcal A^{\delta-1}_{K_1,K_2,C_0,C_1, \mathcal S}$ extends to a unique completion of $(G,d,P)$ to $\mathcal B^{\delta}_{K_1,K_2,C_0,C_1, \mathcal S}$, namely its antipodal expansion.

\begin{defn}[Non-bipartite antipodal completion algorithm]\label{defn:antiftmcompletion}
Let $\delta\geq 3$, $K_1 \leq \frac{\delta}{2}$, $M=\lfloor \frac{\delta}{2} \rfloor$, $C=2\delta+1$ and let $(G,d,P)$ be an antipodally symmetric edge-labelled graph with a predicate $P$ for a pode.

Then we define the \emph{antipodal completion $(G,\bar d,P)$ of $(G,d,P)$ with parameter $M$} as follows: restricted to the pode, $(P,\bar d)$ is the completion of $(P,d)$ with magic parameter $M$ and diameter $\delta-1$ (cf. Definition \ref{defn:magiccompletion}) and $(G,\bar d)$ is its antipodal expansion. Since $(G,d,P)$ is antipodally symmetric, $(G,\bar d,P)$ is in fact a completion of $(G,d,P)$, i.e. $d(x,y) = \bar d(x,y)$, whenever $d(x,y)$ is defined.
\end{defn}

Then the following holds:

\begin{thm} \label{thm:anticompletion}
Let $3\leq \delta<\infty$ and $K_1\leq \frac{\delta}{2}$ and $M=\lfloor \frac{\delta}{2} \rfloor$. Let $(G,d,P)$ be antipodally symmetric. Suppose that $(G,d,P)$ has a completion into $\mathcal B^\delta_{K_1,\delta-K_1,2\delta+2,2\delta+1,\mathcal S}$ and let $(G,\bar{d},P)$ be its antipodal completion with magic parameter $M$. Then $(G,\bar{d},P) \in\mathcal B^\delta_{K_1,\delta-K_1,2\delta+2,2\delta+1, \mathcal S}$ and it is optimal in the following sense:
Let $(G,d',P)\in\mathcal B^\delta_{K_1,\delta-K_1,2\delta+2,2\delta+1, \mathcal S}$ be an arbitrary completion of $(G,d,P)$, then, for all $u,v \in G$:
\begin{enumerate}
 \item $d'(u,v) \geq \bar{d}(u,v) \geq M$ or $d'(u,v) \leq \bar{d}(u,v) \leq M$ if both $u,v \in P$ or if both $u,v \in G \setminus P$
 \item $d'(u,v) \geq \bar{d}(u,v) \geq \delta - M$ or $d'(u,v) \leq \bar{d}(u,v) \leq \delta - M$ else.
\end{enumerate}
Furthermore $\Aut(G,d,P) = \Aut(G,\bar d,P)$.
\end{thm}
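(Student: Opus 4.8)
The plan is to reduce the whole statement to the already-proved primitive completion of the pode. Write $P^*=G\setminus P$ for the set of antipodes. Since $(G,d,P)$ is antipodally symmetric and has a completion $(G,d',P)\in\mathcal B^\delta_{K_1,\delta-K_1,2\delta+2,2\delta+1,\mathcal S}$, and since every edge of length $\delta$ joins $P$ to $P^*$, the restriction $(P,\left.d'\right|_{P^2})$ is a metric space of diameter at most $\delta-1$ lying in $\mathcal A^{\delta-1}_{K_1,\delta-K_1,2\delta+2,2\delta+1,\mathcal S}$; hence the pode $(P,\left.d\right|_{P^2})$ has a completion into this (primitive) class, which is Case~\ref{IIa} for $K_1>2$ and Case~\ref{III} for $K_1\leq 2$. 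First I would check that $M=\lfloor\delta/2\rfloor$ is a legitimate magic distance for diameter $\delta-1$: using $K_2=\delta-K_1$ and $C=2\delta+1$ one verifies it is in fact the unique magic distance and that the extra conditions of Definition~\ref{defn:magiccompletion} are vacuous. Since $C_0=2\delta+2$ and $C_1=2\delta+1$ give $C'=C+1$, the pode parameters are not in Case~\ref{IIb}, so the third (parity) clause of Theorem~\ref{thm:magiccompletion} never triggers. Applying Theorem~\ref{thm:magiccompletion} (together with Lemma~\ref{lem:hensoncompletions} and Theorem~\ref{thm:SIRhenson} when $\mathcal S\neq\emptyset$) then produces $(P,\bar d)\in\mathcal A^{\delta-1}_{K_1,\delta-K_1,2\delta+2,2\delta+1,\mathcal S}$, optimal around $M$, preserving automorphisms of $(P,d)$.

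The substantive step is to verify that the antipodal extension $(G,\bar d)$ of $(P,\bar d)$ (Definition~\ref{defn:antipodalext}) lies in $\mathcal B^\delta_{K_1,\delta-K_1,2\delta+2,2\delta+1,\mathcal S}$. Every triangle of $(G,\bar d)$ is of one of three types. A triangle containing a $\delta$-edge, say on $x,x^*,y$, has perimeter $\bar d(x,y)+(\delta-\bar d(x,y))+\delta=2\delta$, which is even, strictly below $C_0=2\delta+2$, and metric, hence always allowed (and no triangle has two $\delta$-edges, as $\delta$-edges form a matching). A triangle with all three vertices in $P$, or all three in $P^*$, is isometric to a triangle of $(P,\bar d)$ and so allowed. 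The remaining triangles, with two vertices on one side and one on the other, are \emph{exactly} the antipodal companions of triangles of $(P,\bar d)$ (complementing the two sides incident to the crossing vertex, sending $abc$ to $a\,(\delta-b)\,(\delta-c)$). The heart of the proof is therefore to show that $\mathcal A^{\delta-1}_{K_1,\delta-K_1,\dots}$ is closed under antipodal companions of allowed triangles; this is where the identities $K_2=\delta-K_1$ and $C=2\delta+1$ are essential, as flagged in the example preceding Definition~\ref{defn:antipodalext}. Concretely, complementing two sides preserves the parity of the perimeter and interchanges the roles of the $K_1$- and $K_2$-bounds (since $K_1+K_2=\delta$) while respecting the balanced $C_0,C_1$, so a finite check over the bound types of Section~\ref{sec:forbtriangles} confirms that no companion of an allowed triangle is forbidden. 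For nonempty $\mathcal S$ one uses in addition that $\delta\geq 4$ permits $M\leq\delta-2$, so the pode completion introduces no new distance $\delta$ and the antipodal extension omits the antipodal companions of the forbidden cliques.

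Optimality is inherited directly. For $u,v$ on the same side, $\bar d(u,v)$ equals the corresponding pode distance, so clauses~(1),(2) of Theorem~\ref{thm:magiccompletion} (around $M$) give conclusion~(1); for $u,v$ on opposite sides, $\bar d(u,v)=\delta$ minus a pode distance, which reflects the inequalities about $M$ into inequalities about $\delta-M$ and yields conclusion~(2). As noted above, the pode is not in Case~\ref{IIb}, so no third case appears.

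Finally, for $\Aut(G,d,P)=\Aut(G,\bar d,P)$: any automorphism of either structure fixes the predicate $P$ and permutes the $\delta$-edges, hence preserves the antipodal pairing $x\mapsto x^*$ and restricts to a permutation of $P$. An automorphism of $(G,d,P)$ thus restricts to an automorphism of the pode $(P,d)$, which by Theorem~\ref{thm:magiccompletion} is an automorphism of $(P,\bar d)$; since $\bar d$ on $G$ is canonically determined by $\bar d$ on $P$ together with the pairing, the map is an automorphism of $(G,\bar d,P)$, giving $\Aut(G,d,P)\subseteq\Aut(G,\bar d,P)$ (this is the direction needed for the coherent-EPPA application). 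The reverse inclusion follows from the same canonical correspondence: an automorphism of $(G,\bar d,P)$ restricts to an automorphism of $(P,\bar d)$, and being the unique antipodal extension determined by the completed pode, it carries the original partial data to itself. The main obstacle is the closure-under-antipodal-companions computation of the second paragraph; once that is in place, optimality and the automorphism statement are routine bookkeeping that transports the primitive results of Theorem~\ref{thm:magiccompletion} through the canonical antipodal extension.
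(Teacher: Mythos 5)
Your proposal is correct and follows essentially the same route as the paper: reduce to the pode, apply Theorem~\ref{thm:magiccompletion} to $(P,d)$ with $M=\lfloor\delta/2\rfloor$, transfer optimality to general pairs via antipodal quadruples, and transfer automorphisms via the fact that an automorphism of $(G,d,P)$ is determined by its restriction to $P$; you additionally spell out the closure-under-antipodal-companions check that the paper delegates to the observation preceding Definition~\ref{defn:antipodalext}. One immaterial slip: for odd $\delta$ the pode class of diameter $\delta-1$ admits two magic distances ($\lfloor\delta/2\rfloor$ and $\lceil\delta/2\rceil$), so $M$ is not unique, but it is magic and the extra conditions of Definition~\ref{defn:magiccompletion} are indeed vacuous, which is all that is needed.
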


\begin{proof} 
An antipodally symmetric graph $(G,d,P)$ has a completion to $\mathcal B^\delta_{K_1,\delta-K_1,2\delta+2,2\delta+1, \mathcal S}$ if and only if its pode $(P,d)$ has a completion to $\mathcal A^{\delta-1}_{K_1,\delta-K_1,2\delta+2, 2\delta-1, \mathcal S}$. 

By Theorem \ref{thm:magiccompletion} the optimality statement $d'(u,v) \geq \bar{d}(u,v) \geq M$ or $d'(u,v) \leq \bar{d}(u,v) \leq M$ is true for all $u,v \in P$. In general, every pair of vertices $u,v$ in $G$ is part of an antipodal quadruple. This implies the optimality statement in its general form.

The fact that $\Aut(G,d,P) = \Aut(G,\bar d,P)$ follows directly from the automorphism preservation Lemma \ref{lem:aut} for the pode $(P,d)$ and the observation that every automorphism of $(G,d,P)$ is uniquely determined by its restriction to $P$.
\end{proof}

\begin{corollary} \label{cor:antievencompletion}
Let $3\leq \delta<\infty$ be even, $K_1\leq \frac{\delta}{2}$ and $M= \frac{\delta}{2}$. Let $(G,d)$ be antipodally symmetric.
Suppose that $\str{G} = (G,d)$ has a completion into $\mathcal A^\delta_{K_1,\delta-K_1,2\delta+2,2\delta+1,\mathcal S}$. Then there is a unique completion $(G,\bar d) \in \mathcal A^\delta_{K_1,\delta-K_1,2\delta+2,2\delta+1,\mathcal S}$ that is optimal in the following sense:
Let $\str{G}'=(G,d')\in\mathcal A^\delta_{K_1,\delta-K_1,2\delta+2,2\delta+1,\mathcal S}$ be an arbitrary completion of $\str{G}$, then, for all $u,v \in G$:
$$d'(u,v) \geq \bar{d}(u,v) \geq M \text{ or } d'(u,v) \leq \bar{d}(u,v) \leq M.$$
Furthermore $\Aut(G,d) = \Aut(G,\bar d)$.
\end{corollary}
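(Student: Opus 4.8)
The plan is to bootstrap the even-diameter case off Theorem~\ref{thm:anticompletion}; the whole point is that for even $\delta$ the parameter $M=\tfrac{\delta}{2}$ satisfies $\delta-M=M$, which collapses the two optimality clauses of Theorem~\ref{thm:anticompletion} into a single, pode-independent one. So the first step is to fix an arbitrary pode predicate $P$ (a set containing exactly one vertex of each antipodal pair), turning $(G,d)$ into an antipodally symmetric structure $(G,d,P)$. Since $(G,d)$ has a completion into $\mathcal A^\delta_{K_1,\delta-K_1,2\delta+2,2\delta+1,\mathcal S}$, the structure $(G,d,P)$ has a completion into $\mathcal B^\delta_{K_1,\delta-K_1,2\delta+2,2\delta+1,\mathcal S}$, and Theorem~\ref{thm:anticompletion} produces its antipodal completion $(G,\bar d,P)$ in that class; forgetting $P$ gives a completion $(G,\bar d)\in\mathcal A^\delta_{K_1,\delta-K_1,2\delta+2,2\delta+1,\mathcal S}$. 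Substituting $\delta-M=M$ into clauses (1) and (2) of Theorem~\ref{thm:anticompletion} shows that \emph{both} clauses read $d'(u,v)\geq\bar d(u,v)\geq M$ or $d'(u,v)\leq\bar d(u,v)\leq M$, regardless of whether $u,v$ lie on the same pode, which is exactly the single optimality statement claimed.

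The heart of the argument, and the step that genuinely uses evenness, is \textbf{uniqueness}. I would prove a self-improving property of the optimality inequality: if $\bar d_1$ and $\bar d_2$ are two completions of $(G,d)$ in the class that both satisfy the single optimality condition, then $\bar d_1=\bar d_2$. Fixing a pair $(u,v)$ and writing $a=\bar d_1(u,v)$, $b=\bar d_2(u,v)$, one applies optimality of $\bar d_1$ to the competing completion $\bar d_2$ and optimality of $\bar d_2$ to $\bar d_1$; a four-way case split on the resulting inequalities (the two mixed cases forcing $a=b=M$, the two aligned cases forcing $a=b$ directly) yields $a=b$ in every case. Since the antipodal completion obtained from \emph{any} choice of pode $P$ satisfies this same $P$-free optimality condition, uniqueness immediately gives that $\bar d$ does not depend on $P$. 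This is precisely what separates the even case from the odd case of Theorem~\ref{thm:anticompletion}, where $\delta-M\neq M$ keeps the two clauses apart and the completion really does depend on $P$.

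It remains to treat the automorphisms. For $\Aut(G,d)\subseteq\Aut(G,\bar d)$ I would argue via uniqueness rather than re-run Lemma~\ref{lem:aut}: given $f\in\Aut(G,d)$, the relabelled function $e$ defined by $e(u,v)=\bar d(f(u),f(v))$ is again a completion of $(G,d)$ in the class (it is isomorphic to $(G,\bar d)$ through $f$, and agrees with $d$ on edges because $f$ preserves $d$), and the optimality condition is invariant under this relabelling; hence $e$ is an optimal completion and by uniqueness $e=\bar d$, i.e.\ $\bar d(f(u),f(v))=\bar d(u,v)$. The reverse inclusion then follows from Theorem~\ref{thm:anticompletion}: any isometry $g$ of $(G,\bar d)$ preserves the distance-$\delta$ matching, so for a fixed pode $P$ it is an isomorphism $(G,\bar d,g^{-1}(P))\to(G,\bar d,P)$; by $P$-independence these are the antipodal completions of $(G,d,g^{-1}(P))$ and $(G,d,P)$, whence $g$ is already an isomorphism of the corresponding uncompleted structures and in particular preserves $d$, giving $\Aut(G,d)=\Aut(G,\bar d)$.

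I expect the main obstacle to be the uniqueness step — verifying that the collapse $\delta-M=M$ really forces \emph{every} optimal completion, across all pode choices, to agree pair-by-pair. Everything else is inherited from Theorem~\ref{thm:anticompletion}: membership in the class, the optimality inequalities, and automorphism preservation for a fixed pode. The only genuinely new phenomenon is that evenness removes the pode-dependence, and the cleanest way to capture this is the abstract uniqueness lemma above, which feeds directly into both the ``unique completion'' assertion and the forward direction of the automorphism equality.
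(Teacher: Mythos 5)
Your proposal follows the paper's proof of this corollary almost exactly: fix an arbitrary pode $P$, invoke Theorem~\ref{thm:anticompletion}, and use $\delta-M=M$ to collapse the two optimality clauses into one pode-free condition; the paper then simply asserts that this forces $P$-independence, whereas you make the implicit step explicit with the two-sided optimality/four-case argument, which is correct and is exactly the reasoning the paper is relying on. Your derivation of $\Aut(G,d)\subseteq\Aut(G,\bar d)$ via uniqueness is also just a rephrasing of the paper's ``$(G,d,P)$ and $(G,d,f(P))$ are isomorphic under $f$, hence their completions are isomorphic'' argument.

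The one place where your argument has a genuine gap is the reverse inclusion $\Aut(G,\bar d)\subseteq\Aut(G,d)$. From the facts that $g$ is an isomorphism $(G,\bar d,g^{-1}(P))\to(G,\bar d,P)$ and that these two structures are the antipodal completions of $(G,d,g^{-1}(P))$ and $(G,d,P)$, you cannot conclude that ``$g$ is already an isomorphism of the corresponding uncompleted structures'': the completion operation forgets which pairs were edges of the original partial structure, so an isomorphism of the completions need not carry the edge set of $(G,d)$ to itself (two quite different partial graphs can have the same completion). To its credit or discredit, the paper has the same issue: both Theorem~\ref{thm:anticompletion} and this corollary assert equality of automorphism groups, but the proofs (via Lemma~\ref{lem:aut}) only ever establish the forward inclusion $\Aut(G,d)\subseteq\Aut(G,\bar d)$, which is all that is needed for the EPPA applications. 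So you should either restrict the claim to the forward inclusion or supply a genuinely different argument for the converse; the route you sketch does not close it.
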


\begin{proof}
We define $(G,\bar d)$ as the underlying metric space of the completion of $(G,d,P)$ for an arbitrary predicate $P$ for a pode, i.e. for some $P$ such that $|\{x,x^*\} \cap P | = 1$ for all $x,x^*$ with $d(x,x^*) = \delta$. By Theorem \ref{thm:anticompletion}, $(G,\bar d) \in \mathcal A^\delta_{K_1,\delta-K_1,2\delta+2,2\delta+1,\mathcal S}$, if and only if $(G,d)$ has a completion to $\mathcal A^\delta_{K_1,\delta-K_1,2\delta+2,2\delta+1,\mathcal S}$.

Since $\delta$ is even, we have that $M = \delta - M$. Hence, the optimality part of Theorem \ref{thm:anticompletion} states that actually for all $u,v \in G$ and every arbitrary completion $\str{G}'=(G,d')\in\mathcal A^\delta_{K_1,\delta-K_1,2\delta+2,2\delta+1,\mathcal S}$:
$$d'(u,v) \geq \bar{d}(u,v) \geq M \text{ or } d'(u,v) \leq \bar{d}(u,v) \leq M.$$
Hence $(G,\bar d)$ does not depend on the choice of the pode $P$. 

It remains to show that automorphisms of $(G,d)$ (that do not necessarily fix a pode $P$) are preserved by the completion. So let $f \in \Aut(G,d)$ and $P$ be some predicate for a pode. Then $(G,d,P)$ and $(G,d,f(P))$ are isomorphic under $f$. Hence their completions are isomorphic, so $f \in \Aut(G,\bar d)$.
\end{proof}

With the following example we would like to illustrate out that the assumption of $\delta$ to be even cannot be dropped in Corollary~\ref{cor:antievencompletion} and also the original completion algorithm fails for odd $\delta$. 

\begin{example}
Consider $\mathcal A^3_{1,2,8,7}$ which is an example of a class of antipodal metric spaces.  Completing a graph consisting of two edges of length $3$ and no other edges using Definition~\ref{defn:magiccompletion} will result in a non-antipodal metric space where every non-edge will be completed by $M$. Because triangles $322$ and $311$ are forbidden neither $M=1$ or $M=2$ will make the completion algorithm from Definition~\ref{defn:magiccompletion} give the correct answer. Using the completion in Definition~\ref{defn:antiftmcompletion} for some choice of $P$ we obtain a completion, which depends on the choice of the pode $P$ and has fewer automorphisms then the input graph. 
\end{example}

Next we are going to consider the bipartite cases. Again Lemma~\ref{lem:antisymmetric} allows us to only consider antipodally symmetric edge-labelled graphs. Then we define the bipartite antipodal completion of $(G,d,P)$ as follows:

\begin{defn}[Bipartite antipodal completion algorithm]\label{defn:antibipftmcompletion}
Given $\delta\geq 3$, $M=\lfloor \frac{\delta}{2} \rfloor$ and $C=2\delta+1$ and an antipodally symmetric, connected edge-labelled graph $(G,d,P)$ with a predicate $P$ for a pode.

Then we define the \emph{antipodal completion $(G,\bar d,P)$ of $(G,d,P)$ with parameter $M$} by setting $(P,\bar d)$ to be the bipartite completion of $(P,d)$ with magic parameter $M$ and diameter $\delta-1$ (according to Definition~\ref{defn:biftmcompletion}) and $(G,\bar d)$ to be its antipodal expansion. Since $(G,d,P)$ is antipodally symmetric, $(G,\bar d,P)$ is in fact a completion of $(G,d,P)$.
\end{defn}

\begin{thm} \label{thm:antibicompletion}
Let $3\leq \delta$ and $M=\lfloor \frac{\delta}{2} \rfloor$.
Let $(G,d,P)$ be such that $(G,d)$ is antipodally symmetric and connected and suppose that $(G,d,P)$ has a completion into $\mathcal B^\delta_{\infty,0,2\delta+2,2\delta+1,\mathcal S}$. Let $(G,\bar{d},P)$ be its bipartite completion with magic parameter $M$. Then $(G,\bar{d},P) \in\mathcal B^\delta_{\infty,0,2\delta+2,2\delta+1,\mathcal S}$.

The completion is optimal in the following sense:
Let $\str{G}'=(G,d',P)\in\mathcal B^\delta_{\infty,0,2\delta+2,2\delta+1,\mathcal S}$ be an arbitrary completion of $(G,d,P)$ in $\mathcal B^\delta_{\infty,0,2\delta+2,2\delta+1,\mathcal S}$, then 
for every pair of vertices $u,v\in G$, $d'(u,v)$ has the same parity as $\bar d(u,v)$. Furthermore the completion is optimal in the following sense:  For all $u,v\in P$ such that $\bar d(u,v) \neq M, M+1$ we have
$$d'(u,v) \geq \bar{d}(u,v) \geq M +1 \text{ or } d'(u,v) \leq \bar{d}(u,v) \leq M.$$
Furthermore every automorphism of $(G,d,P)$ is also an automorphism of $(G,\bar d,P)$.
\end{thm}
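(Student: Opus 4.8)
The plan is to mirror the proof of Theorem~\ref{thm:anticompletion}, replacing the primitive completion engine (Theorem~\ref{thm:magiccompletion}) by its bipartite counterpart Theorem~\ref{thm:bimagiccompletion}, and to route everything through the reduction from the antipodally symmetric $(G,d,P)$ to its pode $(P,d)$. Write $\mathcal A'=\mathcal A^{\delta-1}_{\infty,0,2\delta+2,2\delta-1,\mathcal S}$ for the diameter-$(\delta-1)$ bipartite pode class; since $K_1=\infty$ already forbids every odd triangle, the value of $C_1$ is immaterial and $\mathcal A'$ coincides with $\mathcal A^{\delta-1}_{\infty,0,2\delta+2,2\delta+1,\mathcal S}$ and is exactly a class treated in Section~\ref{sec:bipartite} (as $C_0=2\delta+2>2(\delta-1)+3$). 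The first thing I would record is that $M=\lfloor\frac\delta2\rfloor$ is a legal bipartite magic distance for $\mathcal A'$: Definition~\ref{defn:bimagiccompletion} asks for $\lfloor\frac{\delta-1}2\rfloor\le M<M+1\le\lfloor\frac{2\delta+2-(\delta-1)-1}2\rfloor=\lfloor\frac\delta2\rfloor+1$, which holds with equality at the upper end for $M=\lfloor\frac\delta2\rfloor$. Thus Definition~\ref{defn:antibipftmcompletion} genuinely sets $(P,\bar d)$ to be the bipartite completion of $(P,d)$ with a valid magic parameter.

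Next I would establish that the pode $(P,d)$ is connected, which is what licenses the use of Theorem~\ref{thm:bimagiccompletion} (stated only for connected inputs). Let $\pi\colon G\to P$ be the retraction fixing $P$ and sending each $x\notin P$ to its unique antipode $x^\ast\in P$. A defined edge $xy$ with $\pi(x)\ne\pi(y)$ projects to a defined edge of $(P,d)$: if $d(x,y)<\delta$ then $xy$ lies in an antipodal quadruple, so $d(\pi(x),\pi(y))$ equals $d(x,y)$ or $\delta-d(x,y)$ and is defined; if $d(x,y)=\delta$ then $x,y$ are antipodes and $\pi(x)=\pi(y)$. Hence any walk of $(G,d)$ projects to a walk of $(P,d)$, and connectivity of $G$ forces connectivity of $P$. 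Membership now follows immediately: by the correspondence recorded after Lemma~\ref{lem:antisymmetric}, the hypothesis that $(G,d,P)$ completes into $\mathcal B^\delta_{\infty,0,2\delta+2,2\delta+1,\mathcal S}$ is equivalent to $(P,d)$ completing into $\mathcal A'$; applying Theorem~\ref{thm:bimagiccompletion} to the connected $(P,d)$ with parameter $M$ gives $(P,\bar d)\in\mathcal A'$, and its antipodal expansion, which by Definition~\ref{defn:antibipftmcompletion} is precisely $(G,\bar d,P)$, lies in $\mathcal B^\delta_{\infty,0,2\delta+2,2\delta+1,\mathcal S}$ by the same correspondence.

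For optimality and parity, take any completion $(G,d',P)\in\mathcal B^\delta_{\infty,0,2\delta+2,2\delta+1,\mathcal S}$. As no $\delta$-edge has both ends in a pode, the restriction $(P,d'|_P)$ is a completion of $(P,d)$ inside $\mathcal A'$, so Theorem~\ref{thm:bimagiccompletion} applied to the pode yields, for $u,v\in P$ with $\bar d(u,v)\notin\{M,M+1\}$, the inequality $d'(u,v)\ge\bar d(u,v)\ge M+1$ or $d'(u,v)\le\bar d(u,v)\le M$, and the parity equality $d'(u,v)\equiv\bar d(u,v)\pmod 2$ for all $u,v\in P$; this is exactly the optimality clause claimed. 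Parity for the remaining pairs follows from the antipodal expansion formulas: if $u\in P$ and $v=w^\ast$ with $w\in P$ then $d'(u,v)=\delta-d'(u,w)$ and $\bar d(u,v)=\delta-\bar d(u,w)$ have equal parity because $d'(u,w)\equiv\bar d(u,w)$; and if $u=w^\ast,\,v=z^\ast$ then $d'(u,v)=d'(w,z)$ and $\bar d(u,v)=\bar d(w,z)$. Automorphism preservation reduces to the pode as in Theorem~\ref{thm:anticompletion}: any $f\in\Aut(G,d,P)$ fixes $P$ setwise and restricts to an automorphism of the connected $(P,d)$, which is an automorphism of $(P,\bar d)$ by the Bipartite Automorphism Preservation Lemma~\ref{lem:biaut}; since $f$ preserves $\delta$-edges it commutes with antipodes, hence is determined by $f|_P$ and is an automorphism of the antipodal expansion $(G,\bar d,P)$.

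The one genuinely new obstacle compared with Theorem~\ref{thm:anticompletion} is the connectivity requirement: Theorem~\ref{thm:bimagiccompletion}, unlike Theorem~\ref{thm:magiccompletion}, applies only to connected graphs, and the projection argument above is what supplies it for the pode. Two routine loose ends remain. When $\delta=3$ the pode has diameter $2$ and falls outside Section~\ref{sec:bipartite}; here $\mathcal A'$ is just the complete bipartite graphs with distances $1,2$, and the completion filling each non-edge by $M=1$ or $M+1=2$ according to the bipartition is verified directly. For nonempty $\mathcal S$ the argument is unchanged once one checks that the bipartite completion of the pode introduces no forbidden Henson constraint; this is arranged exactly as in the Henson part of Theorem~\ref{thm:bimagiccompletion} (choosing $M$ small enough to avoid the extremal distance) and transported between pode and antipodal space by the correspondence after Lemma~\ref{lem:antisymmetric}.
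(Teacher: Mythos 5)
Your proof is correct and takes exactly the paper's route: the paper's own proof is a one-line reduction to the bipartite completion algorithm on the pode class $\mathcal A^{\delta-1}_{\infty,0,2\delta+2,2\delta-1}$, which is precisely what you carry out. You additionally supply details the paper leaves implicit (validity of $M=\lfloor\delta/2\rfloor$ as a bipartite magic parameter, connectivity of the pode via the projection $\pi$, and the diameter-$2$ pode when $\delta=3$), all of which check out.
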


\begin{proof}
As in Theorem \ref{thm:anticompletion} this follows directly from the properties of the bipartite completion algorithm with parameter $M$ for the non-antipodal class $\mathcal A^{\delta-1}_{\infty,0,2\delta+2,2\delta-1}$.
\end{proof}

As a direct consequence we obtain the following for odd diameters $\delta$:

\begin{corollary} \label{cor:antibioddcompletion}
Let $3\leq \delta<\infty$ be odd and $M= \frac{\delta-1}{2}$. Let $(G,d,P)$ be antipodally symmetric and connected.
Suppose that $\str{G} = (G,d)$ has a completion into $\mathcal A^{\delta-1}_{\infty,0,2\delta+2,2\delta-1}$. Then there is a unique completion $(G,\bar d) \in \mathcal A^{\delta-1}_{\infty,0,2\delta+2,2\delta+1}$ that is optimal in the following sense:
Let $\str{G}'=(G,d')\in\mathcal A^{\delta-1}_{\infty,0,2\delta+2,2\delta-1}$ be an arbitrary completion of $\str{G}$, then, for all $u,v \in G$ one of the following holds:
\begin{enumerate}
\item $d'(u,v) \geq \bar{d}(u,v) \geq M+1$ or 
\item $d'(u,v) \leq \bar{d}(u,v) \leq M$ or 
\item $M \leq \bar{d}(u,v) \leq M + 1$
\end{enumerate}
Furthermore $\Aut(G,d) = \Aut(G,\bar d)$.
\end{corollary}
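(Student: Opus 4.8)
The plan is to mirror the proof of Corollary~\ref{cor:antievencompletion}, but driving everything from the bipartite antipodal completion Theorem~\ref{thm:antibicompletion} in place of Theorem~\ref{thm:anticompletion}. First I would fix an arbitrary predicate $P$ for a pode (so that $|\{x,x^*\}\cap P|=1$ for every antipodal pair $x,x^*$ with $d(x,x^*)=\delta$) and define $(G,\bar d)$ as the underlying metric space of the bipartite antipodal completion $(G,\bar d,P)$ of $(G,d,P)$ with magic parameter $M=\frac{\delta-1}{2}$. Since $(G,d)$ is antipodally symmetric and connected, Theorem~\ref{thm:antibicompletion} applies and gives $(G,\bar d,P)\in\mathcal B^\delta_{\infty,0,2\delta+2,2\delta+1}$ exactly when $(G,d)$ admits a completion; forgetting $P$ then places $(G,\bar d)$ in $\mathcal A^\delta_{\infty,0,2\delta+2,2\delta+1}$.

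Next I would upgrade the optimality statement of Theorem~\ref{thm:antibicompletion}, which is phrased only for pairs inside the pode $P$, to all pairs $u,v\in G$. Here antipodal symmetry is the key tool: every such pair sits in an antipodal quadruple, so its $\bar d$-distance equals either a pode distance or $\delta$ minus a pode distance. Because $\delta$ is odd we have $\delta-M=M+1$ and $\delta-(M+1)=M$, so the band $\{M,M+1\}$ is invariant under $x\mapsto\delta-x$ while every far distance is sent to a far distance; consequently the three alternatives in the statement transfer verbatim from the pode to all of $G$, and the parity-preservation clause of Theorem~\ref{thm:antibicompletion} holds for every pair as well.

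The heart of the argument, and the step that genuinely differs from the even case, is the independence of $(G,\bar d)$ from the choice of $P$. Let $P'$ be a second pode and $\bar d'$ the completion it produces; both are completions of $(G,d)$ into the class. For a pair $u,v$ whose $\bar d$-value lies outside $\{M,M+1\}$ I would run the extremality argument of Corollary~\ref{cor:antievencompletion}: applying the optimality of $\bar d$ to the completion $\bar d'$ and the optimality of $\bar d'$ to $\bar d$ forces $\bar d(u,v)=\bar d'(u,v)$. For a pair with $\bar d(u,v)\in\{M,M+1\}$ the two optimal values need not be comparable, so instead I would invoke parity preservation: since $\str G$ is connected, both $\bar d(u,v)$ and $\bar d'(u,v)$ share the parity of the path distance of $u,v$ in $\str G$, which is an invariant of $(G,d)$ alone, and as $M$ and $M+1$ have opposite parities this pins the common value. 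Hence $\bar d=\bar d'$, and $(G,\bar d)$ is well defined.

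Finally the automorphism clause follows exactly as in Corollary~\ref{cor:antievencompletion}: any $f\in\Aut(G,d)$ carries $(G,d,P)$ isomorphically onto $(G,d,f(P))$, hence their completions are isomorphic, and by pode-independence both completions coincide with $(G,\bar d)$, so $f\in\Aut(G,\bar d)$. I expect the main obstacle to be precisely the $\{M,M+1\}$ band in the uniqueness step: unlike the even case, where $M=\delta-M$ collapses the band to the single value $M$ and optimality alone suffices, here one must additionally use that connectivity makes the parity of each distance an invariant of the input, and check that the antipodal map does not disturb this parity bookkeeping when passing between a pode and its antipode.
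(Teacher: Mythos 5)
Your proposal is correct and follows essentially the same route as the paper: define $\bar d$ via an arbitrary pode using Theorem~\ref{thm:antibicompletion}, transfer optimality to all of $G$ through antipodal quadruples using $M+(M+1)=\delta$, deduce independence of the pode, and get automorphism preservation as in Corollary~\ref{cor:antievencompletion}. Your explicit use of the parity clause to resolve pairs with $\bar d(u,v)\in\{M,M+1\}$ is in fact more careful than the paper's proof, which attributes pode-independence to the optimality conditions alone even though condition 3 by itself leaves the value within that band undetermined.
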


\begin{proof}
Since $\delta$ is odd, we have that $M + (M+1) = \delta$. Hence the optimality property for pairs $u,v \in P$, described in Theorem \ref{thm:antibicompletion} transfers to all pairs $u,v \in G$ (by considering antipodal quadruples). 

As in the proof of Corollary \ref{cor:antievencompletion}, this optimality properties imply that the completion does not depend of the choice of $P$. Furthermore analogously to Corollary \ref{cor:antievencompletion} one can show that automorphisms of $(G,d)$ are also automorphisms of $(G,\bar d)$.
\end{proof}

We remark that in the last remaining case --- bipartite antipodal spaces of even diameter --- again the standard algorithm for bipartite spaces fails, as shown in the following example:
\begin{example}
\begin{figure}[t]
\centering
\includegraphics{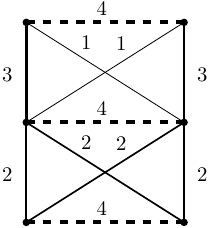}
\caption{An example of graph which fails to be completed to $\mathcal A^4_{\infty,0,10,9}$ using the algorithm given by Definition~\ref{defn:bimagiccompletion}.}
\label{fig:failedexample}
\end{figure}
Consider $\mathcal A^4_{\infty,0,10,9}$ which is an example of a class of bipartite antipodal metric spaces.  Completing the graph 
depicted in Figure~\ref{fig:failedexample} using Definition~\ref{defn:bimagiccompletion} and $M = 2$ will result in non-antipodal metric space where every non-edge will be completed by $3$.
\end{example}

\subsection{Antipodality-preserving EPPA} \label{sec:antieppa}
In the following we add extra layer to the construction of Herwig and Lascar
which makes sure that vertex closures are preserved.  This idea is the same as in~\cite{Evans2,Evans3}.

\begin{thm}
\label{thm:EPPAantipodal}
Let $(\delta,K_1,K_2,\allowbreak C_0,C_1,\mathcal S)$ be an antipodal sequence of admissible parameters.
\begin{enumerate}
\item If $K_1<\infty$ and $\delta$ is even then the class $\mathcal A^\delta_{K_1,K_2,C_0,C_1}\cap \mathcal A^\delta_\mathcal S$ has coherent EPPA.
\item If $K_1=\infty$ and $\delta$ is odd then the class $\mathcal A^\delta_{K_1,K_2,C_0,C_1}\cap \mathcal A^\delta_\mathcal S$ has coherent EPPA.
\item If $K_1<\infty$ and $\delta$ is odd then $S=\emptyset$ and the class $\mathcal B^\delta_{K_1,K_2,C_0,C_1}$ expanding $\mathcal A^\delta_{K_1,K_2,C_0,C_1}$
by an additional unary predicate $P$ determining the podality (so every edge of length $\delta$ has precisely one vertex $v\in P$)
has coherent EPPA.
\item If $K_1=\infty$ and $\delta$ is even then $S=\emptyset$ and the class $\mathcal B^\delta_{K_1,K_2,C_0,C_1}$ expanding $\mathcal A^\delta_{K_1,K_2,C_0,C_1}$
by an additional unary predicate $P$ determining the podality has coherent EPPA.
\end{enumerate}
\end{thm}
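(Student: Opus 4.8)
The plan is to reduce all four cases to the antipodal completion results of the previous subsection (Corollary~\ref{cor:antievencompletion}, Corollary~\ref{cor:antibioddcompletion}, Theorem~\ref{thm:anticompletion}, Theorem~\ref{thm:antibicompletion}) combined with the coherent Herwig--Lascar machinery of Theorem~\ref{thm:herwiglascar}, in exactly the spirit in which coherent EPPA was obtained in the primitive and bipartite settings (Theorems~\ref{thm:regulareppa}, \ref{thm:EPPAHenson} and~\ref{thm:biregulareppa}). The genuinely new feature is that $\delta$-edges form a matching and thus generate a non-trivial algebraic closure: each vertex has a unique antipode, so the extensions of partial automorphisms must respect this pairing. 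This is precisely the ``extra layer'' over Herwig--Lascar mentioned at the start of the subsection, and it is handled by working with antipodally symmetric structures. First I would fix a finite obstacle family $\mathcal O$. In the non-bipartite cases~(1) and~(3) the class is $3$-constrained with $C=2\delta+1$, so Lemma~\ref{lem:obstacles} together with Remark~\ref{rem:Finite_Henson} gives a finite $\mathcal O$; the antipodal matching condition is already encoded, since a $(\delta,\delta)$-fork closes to a triangle of perimeter $\geq C$. In the bipartite cases~(2) and~(4) I would use Lemma~\ref{lem:biobstacles} instead, obtaining a finite $\mathcal O$ capturing everything except odd cycles. In the predicate cases~(3) and~(4) (where the admissibility constraints for antipodal Henson sets force $\mathcal S=\emptyset$) I would pass to the language of $\mathcal B^\delta_{K_1,K_2,C_0,C_1}$ and add to $\mathcal O$ the two-vertex obstacles forcing every $\delta$-edge to have exactly one endpoint in $P$, so that membership in $\Forb(\mathcal O)$ already guarantees a well-defined pode.

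Next, given $\str{A}$ (in $\mathcal A$ for~(1),(2) and in $\mathcal B$ for~(3),(4)) together with its set $P$ of partial automorphisms, I would apply Theorem~\ref{thm:herwiglascar} with $\str{M}$ the \Fraisse{} limit $\Gamma^\delta$. Since $\str{M}$ admits no obstacle homomorphism and extends every partial isometry, this produces a finite $\str{B}\in\Forb(\mathcal O)$ together with a coherent map $\phi\colon P\to\Aut(\str{B})$ extending the given partial automorphisms. The structure $\str{B}$ need not be complete, antipodally symmetric, or odd-cycle-free, so the remaining task is to repair it while preserving both membership in the class and the coherence of $\phi$.

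I would then repair $\str{B}$ in three canonical, automorphism-preserving steps; coherence of $\phi$ survives each step because in every case an automorphism of the repaired structure is uniquely determined by its restriction to $\str{B}$, so that composition of extensions follows from composition in $\str{B}$. Step~(i), for the bipartite cases~(2),(4): if $\str{B}$ still contains odd cycles, remove them via the doubling construction $\str{B}\times\{0,1\}$ from the proof of Theorem~\ref{thm:biregulareppa}, yielding a connected, odd-cycle-free structure that carries the partial automorphisms coherently. Step~(ii): apply Lemma~\ref{lem:antisymmetric} to pass to an antipodally symmetric extension $\str{B}^*$, in which each partial and total automorphism extends uniquely by $x^*\mapsto f(x)^*$. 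Step~(iii): apply the relevant antipodal completion algorithm --- Corollary~\ref{cor:antievencompletion} for~(1), Corollary~\ref{cor:antibioddcompletion} for~(2), Theorem~\ref{thm:anticompletion} for~(3), Theorem~\ref{thm:antibicompletion} for~(4) --- to obtain $\overbar{\str{B}}$ in the class. In cases~(1),(2) the completion is independent of the choice of pode and is therefore automorphism-preserving already on $\mathcal A$; in cases~(3),(4) the predicate $P$ pins down the pode and the completion preserves the $P$-respecting automorphisms, which is exactly why the predicate is indispensable there. The resulting $\overbar{\str{B}}$, with $\phi$ pushed forward canonically, is the desired coherent EPPA-extension.

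The main obstacle I expect is keeping the three repair steps mutually compatible in the bipartite predicate case~(4): the doubling that removes odd cycles, the antipodal symmetrisation, and the $P$-labelling all act on the vertex set by ``doubling'' operations, and one must verify that the antipodal extension of an already doubled, odd-cycle-free graph remains connected and odd-cycle-free and assigns podality consistently, so that Theorem~\ref{thm:antibicompletion} genuinely applies and its automorphism-preservation clause delivers coherence. The non-bipartite cases are comparatively routine once the pode-independence of the completion for even $\delta$ (case~(1)) versus its pode-dependence for odd $\delta$ (case~(3), forcing both the predicate and $\mathcal S=\emptyset$) is used to decide between working in $\mathcal A$ and in $\mathcal B$.
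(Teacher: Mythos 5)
Your high-level architecture (Herwig--Lascar, then a sequence of automorphism-preserving repairs, then the appropriate antipodal completion from Corollary~\ref{cor:antievencompletion}, Corollary~\ref{cor:antibioddcompletion}, Theorem~\ref{thm:anticompletion} or Theorem~\ref{thm:antibicompletion}) matches the paper, as does the use of the doubling construction from Theorem~\ref{thm:biregulareppa} in the bipartite cases. However, there is a genuine gap in your first step. You take the obstacle set from Lemma~\ref{lem:obstacles} applied to the antipodal class itself, arguing that the matching condition ``is already encoded.'' But Lemma~\ref{lem:obstacles} is proved only for \emph{primitive} admissible parameters: its proof runs the completion algorithm of Theorem~\ref{thm:magiccompletion}, which requires $C_0,C_1\geq 2\delta+2$, and its final paragraph explicitly invokes strong amalgamation --- both of which fail for antipodal parameters ($C=2\delta+1$, no strong amalgamation, and for odd $\delta$ no magic parameter at all; the paper even exhibits inputs on which the primitive and bipartite algorithms produce wrong answers in the antipodal classes). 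So no finite-obstacle characterisation of completability into $\mathcal A^\delta_{K_1,K_2,C_0,C_1}$ is available, and consequently your step (ii) cannot be carried out either: Lemma~\ref{lem:antisymmetric} has as a \emph{hypothesis} that the input already has a completion into the antipodal class, which is precisely what has not been established for the Herwig--Lascar output $\str{B}$.

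The paper closes this gap by routing everything through the pode. It takes $\mathcal O$ to be the obstacles of the diameter-$(\delta-1)$ class $\mathcal A^{\delta-1}_{K_1,K_2,C_0,C_1}$ (which \emph{is} primitive or bipartite, so Lemma~\ref{lem:obstacles} or~\ref{lem:biobstacles} applies), applies Theorem~\ref{thm:herwiglascar} relative to that $\mathcal O$, and then builds a new structure $\str{C}$ whose vertices are the ordered pairs $(u,v)$ with $d(u,v)=\delta$ in $\str{B}$, with distances defined only through antipodal quadruples. This pairing construction is what forces the $\delta$-edges to form a perfect matching and lets one choose a pode $\str{C}_{\delta-1}$ that projects isomorphically into $\str{B}\in\Forb(\mathcal O)$; closure of the pode class under antipodal companions (a point your proposal omits) then shows $\str{C}_{\delta-1}$ is completable in $\mathcal A^{\delta-1}$, and only at that point do the antipodal completion results become applicable. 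Your proof would need to supply this reduction (or an independent proof that the antipodal classes themselves have a finite set of obstacles) before the repair-and-complete steps can begin.
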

\begin{proof}
We first give a proof for the even diameter non-bipartite case with $\mathcal S=\emptyset$ and then discuss how to extend the construction
for remaining cases.

\paragraph{1}
Given $\str{A}=(A,d)\in \mathcal \mathcal A^{\delta}_{K_1,K_2,C_0,C_1}$ without loss of generality we can assume that for every vertex $u\in A$ there exists unique vertex $v\in A$, with $d(u,v)=\delta$. Denote by $\mathcal O$ the set of obstacles of $\mathcal A^{\delta-1}_{K_1,K_2,C_0,C_1}$ given by Lemma~\ref{lem:obstacles}.
Apply Theorem~\ref{thm:herwiglascar} to obtain $\str{B}\in \Forb(\mathcal O)$ which is a coherent EPPA-witness of $\str{A}$ and has no homomorphic image of any structure in $\mathcal O$.
Note that $\str{B}=(B,d)$ contains edges of type $\delta$ and thus it can not be completed to a metric space in $\mathcal A^{\delta-1}_{K_1,K_2,C_0,C_1}$.

Now we describe edge-labelled graph $\str{C}=(C,d')$.
$C$ is the set of all ordered pairs $(u,v)$ where $u\neq v\in B$, $d(u,v)=\delta$. We set
$$d'((u,v),(u',v')) =
\begin{cases}
 0 & \hbox{if } (u,v)=(u',v')\\
 \delta & \hbox{if } u=v', v=u'\\
 d(u,u') & \hbox{if $(u,v,u',v')$ is an antipodal quadruple.}\\
\end{cases}
$$

Clearly edges of distance $\delta$ form a (complete) matching in $\str{C}$.

\medskip

Given $u\in A$ put $\varphi(u) = (u,v)$ where $v$ is the unique vertex of $\str{A}$ such that $d(u,v)=\delta$.
It is easy to check that $\varphi$ is an embedding $\str{A}\to \str{C}$.
We show that $\str{C}$ is an coherent EPPA-witness of $\varphi(\str{A})$.

\medskip

Denote by $\pi\colon C\to B$ the {\em projection} assigning $(u,v)\mapsto u$.
Let $\psi$ be a partial isometry
of $\varphi(\str{A})$. Then $\psi\circ \pi$ is a partial isometry of $\str{A}$. Now extend this isometry to $\bar{\psi}\colon \str{B}\to \str{B}$. It follows that the mapping $\bar\psi'\colon C\to C$ defined by $\bar\psi'(u,v)=(\bar\psi(u),\bar\psi(v))$ is an automorphism of $\str{C}$ extending $\psi$.

\medskip

It remains to show that there is a completion of $\str{C}$ to a metric space $\str{C}'\in \mathcal A^{\delta}_{K_1,K_2,C_0,C_1}$.
For every pair of vertices $(u,v)$, such that $d'(u,v) = \delta$ choose arbitrarily one of the vertices and denote by
$\str{C}_{\delta-1}$ the subgraph induced by $C$ on all those vertices. 
As the edges of length $\delta$ form a complete matching, we know that $\str{C}_{\delta-1}$
is precisely half of the graph $\str{C}$ and whenever a distance is defined in
$C\setminus C_{\delta-1}$ it is part of a unique antipodal quadruple.

Note that $\pi(\str{C}_{\delta-1})$ is an isomorphic copy of $\str{C}_{\delta-1}$ to $\str{B}$ and
$\str{C}_{\delta-1}$ does not contain any edges of length $\delta$. Because $\K$ is
closed for antipodal companions, we then know that $\str{C}_{\delta-1}\in \Forb(\mathcal O)$.
Denote by $\str{C}'_{\delta-1}\in \K$ its completion with magic parameter $M = \frac{\delta}{2}$ and
by $\str{C}'\in \K^\ominus$ the completion of $\str{C}$ which extends $\str{C}'_{\delta-1}$
antipodally to the remaining vertices.

We proved that there is a completion of $\str{C}$ into $\mathcal A^\delta_{K_1,K_2,C_0,C_1}$, so now we can look at the completion with magic parameter $M$ of $\str{C}$ given by Corollary~\ref{cor:antievencompletion}.
This completion (which in fact is equivalent to one described above) preserves all automorphisms
of $\str{C}$, which is what we wanted.

Now consider the case when $\mathcal S\neq \emptyset$. Again we use the fact that the completion algorithm will never
introduce new Henson substructures and that the set of obstacles can be extended by $\mathcal S$.

\paragraph{2}
Now we adjust the construction for the bipartite case of odd diameter. Consider again
$\str{A}=(A,d)\in \mathcal \mathcal A^{\delta}_{K_1,K_2,C_0,C_1}$ such that for
every vertex $u\in A$ there exists unique vertex $v\in A$, $d(u,v)=\delta$.
Denote by $\mathcal O$ the set of obstacles given by Lemma~\ref{lem:biobstacles} for
$\mathcal A^{\delta-1}_{K_1,K_2,C_0,C_1}$ (which is a non-antipodal bipartite
amalgamation class). By the same construction as we used to build the EPPA-witness $\str{C}$ in the proof of Theorem~\ref{thm:biregulareppa}
we obtain a coherent EPPA-witness $\str{B}\in \Forb(\mathcal O)$ of $\str{A}$ without odd cycles.
Now we use the same construction as above to obtain a coherent
EPPA-witness $\str{C}\in \Forb(\mathcal O)$ where edges of length $\delta$ form a matching. Note that, since $\str{B}$ can be chosen to be connected, we can also assume that $\str{C}$ is connected. Finally we apply Corollary~\ref{cor:antibioddcompletion} to complete it to the bipartite
metric space in $\mathcal A^{\delta}_{K_1,K_2,C_0,C_1}$.

\paragraph{3}
In the third case of odd diameter non-bipartite spaces we proceed similarly to the first case, however this time with the explicit predicate $P$.  Consider a structure 
$\str{A}=(A,d,P) \in \mathcal B^{\delta}_{K_1,K_2,C_0,C_1}$, such that for every vertex $u\in A$ there exists a unique 
 vertex $v\in A$ with $d(u,v)=\delta$.  Denote by $\mathcal O$ the
set of obstacles given by Lemma~\ref{lem:obstacles} for $\mathcal
A^{\delta-1}_{K_1,K_2,C_0,C_1}$. We apply Theorem~\ref{thm:herwiglascar} to
obtain a structure $\str{B}\in \Forb(\mathcal O)$ which is a coherent EPPA-witness of
$\str{A}$.  Without loss of generality we can assume that the predicate $P$ in
$\str{B}$ picks precisely one vertex of every edge of length $\delta$ (all other
edges of length $\delta$ can be removed from $\str{B}$ because they are never
contained in a copy of $\str{A}$).
Then we construct $\str{C}$ in the same way as before, with $\pi$ being the projection of $\str{C}$ onto the $P$-part of $\str{B}$.

In the last step we complete $\str{C}$ according to the completion algorithm described in Definition \ref{defn:antiftmcompletion}. By Theorem \ref{thm:anticompletion} this completion preserves
automorphisms and gives the desired coherent EPPA-witness in $\mathcal B^{\delta}_{K_1,K_2,C_0,C_1}$.
\paragraph{4} The last case can be dealt with by the combination of case 2 and 3.
\end{proof}

\subsection{Stationary independence relation}

In this section we discuss the existence of a stationary independence relation on antipodal spaces. As one might suspect the answer is related to the question if there is a deterministic completion algorithm with magic parameter. Our results show that if $K_1<\infty$ and $\delta$ is odd, or if $K_1=\infty$ and $\delta$ is even, there is no local SIR. 

\begin{thm} 
\label{thm:antiSIR}
Let ($\delta, K_1, K_2, C_0, C_1, \mathcal S$) be a sequence of admissible antipodal parameters. Then the following holds in $\Gamma^\delta_{K_1,K_2,C_0,C_1,\mathcal S}$:
\begin{enumerate}
\item If $K_1<\infty$ and $\delta$ is even, there is a stationary independence relation.
\item If $K_1=\infty$ and $\delta$ is odd, there is no stationary independence relation, but a local one.
\item If $K_1<\infty$ and $\delta$ is odd, there is no local stationary independence relation.
\item If $K_1=\infty$ and $\delta$ is even, there is no local stationary independence relation.
\end{enumerate}
\end{thm}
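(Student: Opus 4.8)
The plan is to route everything through Theorem~\ref{thm:canonicalamalg}, which reduces the existence of a (local) stationary independence relation on $\Gamma^\delta_{K_1,K_2,C_0,C_1,\mathcal S}$ to the existence of a (local) \emph{canonical amalgamation operator} on its age. The positive statements will be read off from the antipodal completion results, exactly as Corollary~\ref{cor:SIR} was deduced from Theorem~\ref{thm:magiccompletion}, while the negative statements will follow by obstructing canonicity.

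\textbf{Positive directions (cases 1 and 2).} For case 1 ($K_1<\infty$, $\delta$ even) I would set $\str{A}\oplus_{\str{C}}\str{B}$ to be the antipodal completion with parameter $M=\frac{\delta}{2}$ of the free amalgam of $\struc{\str{A}\str{C}}$ and $\struc{\str{B}\str{C}}$ over $\str{C}$ (after antipodally symmetrising via Lemma~\ref{lem:antisymmetric}). By Corollary~\ref{cor:antievencompletion} this completion lands in the class, is independent of the chosen pode, and preserves every automorphism; its optimality clause (for each pair, $d'\geq\bar d\geq M$ or $d'\leq\bar d\leq M$) yields minimality, monotonicity and associativity of $\oplus$ by the same computation as in Corollary~\ref{cor:SIR}. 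Because $\delta$ is even we have $M=\delta-M$, so the completion is symmetric in the two podes and the free amalgam may be taken over $\str{C}=\emptyset$ (its completion is a valid antipodal space, all cross-distances being $M$); hence $\oplus$ is a genuine, non-local canonical operator and Theorem~\ref{thm:canonicalamalg} produces a stationary independence relation. Case 2 ($K_1=\infty$, $\delta$ odd) is identical but uses the bipartite antipodal completion of Corollary~\ref{cor:antibioddcompletion}, which is pode-independent and automorphism-preserving only for \emph{connected} inputs. Over a non-empty base the free amalgam of $\struc{\str{A}\str{C}}$ and $\struc{\str{B}\str{C}}$ is connected through $\str{C}$, so $\oplus$ is defined and canonical for non-empty $\str{C}$, giving a local SIR.

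\textbf{No global SIR in case 2.} This is the bipartite obstruction already met in Corollary~\ref{cor:biSIR}, and the point is precisely that the free amalgam over $\str{C}=\emptyset$ is disconnected, so Corollary~\ref{cor:antibioddcompletion} no longer applies. Assuming a global SIR, hence a canonical operator defined over the empty set, I would amalgamate a single edge $\str{A}=\{u,v\}$ of odd length against a single vertex $\str{B}=\{w\}$: minimality and monotonicity force both $\{u,w\}$ and $\{v,w\}$ to be the canonical amalgam of two points over $\emptyset$, whence $d(u,w)=d(v,w)$ and the triangle $u,v,w$ has odd perimeter, which is impossible in a bipartite class.

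\textbf{No local SIR in cases 3 and 4 (the main obstacle).} Here the completions of Theorem~\ref{thm:anticompletion} and Theorem~\ref{thm:antibicompletion} genuinely depend on the chosen pode, and the task is to upgrade this dependence into the non-existence of \emph{any} local canonical operator on the pode-free class $\mathcal A^\delta_{K_1,K_2,C_0,C_1,\mathcal S}$. The structural fact to exploit is that for odd $\delta$ (case 3) the completion of two disjoint edges of length $\delta$ needs two distinct distances $M$ and $\delta-M$: over a $\delta$-edge base $\str{C}=\{c,c^\ast\}$ the amalgam of two antipodal pairs admits two non-isomorphic ``sheet alignments'' $d(a,b)=M$ and $d(a,b)=\delta-M$, each completing into the class. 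Assuming a local canonical operator $\oplus$, I would derive a contradiction with stationarity~(\ref{stationarity}): using the automorphism of the base swapping $c\leftrightarrow c^\ast$ together with the global antipodal automorphism $x\mapsto x^\ast$ of the \Fraisse{} limit, one manufactures two realisations of the same type over $\str{C}$ that are both independent from a suitable $\str{B}$ yet differ over $\struc{\str{B}\str{C}}$, the discrepancy being exactly the choice between $M$ and $\delta-M$. Case 4 ($K_1=\infty$, $\delta$ even) follows the same scheme, with the antipodal matching now contained in a single bipartition class, so that both the odd-triangle constraint and the matching must be built into the configuration. I expect this step—producing two \emph{simultaneously} independent but inequivalent copies out of the available automorphisms, rather than merely noting that a particular algorithm is pode-dependent—to be the crux of the theorem, and the reason the parity is decisive: when $M=\delta-M$ the two alignments coincide and the obstruction evaporates, which is exactly why cases 1 and 2 go the other way.
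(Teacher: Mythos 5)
Your treatment of cases 1 and 2 matches the paper's: the positive parts are obtained exactly as in Corollary~\ref{cor:SIR}, via Theorem~\ref{thm:canonicalamalg} applied to the pode-independent completions of Corollaries~\ref{cor:antievencompletion} and~\ref{cor:antibioddcompletion} (the latter only for connected inputs, hence only a local operator). For the failure of a global SIR in case 2 you reuse the bipartite obstruction of Corollary~\ref{cor:biSIR} (an odd edge against a point forces an odd-perimeter triangle), whereas the paper amalgamates an antipodal pair against a point and uses $d(b,a_1)=\delta-d(b,a_2)\neq d(b,a_2)$ for odd $\delta$; both arguments are valid.

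The gap is in cases 3 and 4, which you yourself identify as the crux. Your proposed configuration for case 3 — base $\str{C}=\{c,c^*\}$ a $\delta$-edge, with antipodal pairs $\{a,a^*\}\subseteq\str{A}$ and $\{b,b^*\}\subseteq\str{B}$ — cannot yield the contradiction. Since $\delta$ is odd, no vertex is equidistant from $c$ and $c^*$, so the two members of any antipodal pair have \emph{different} types over $\str{C}$; consequently the partial map fixing $c,c^*$ and swapping $b\leftrightarrow b^*$ is not an isometry, and every automorphism of the \Fraisse{} limit fixing $\str{C}$ pointwise and preserving $\str{B}$ setwise in fact fixes $\struc{\str{B}\str{C}}$ pointwise. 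The symmetry you invoke (swap of $c,c^*$ composed with the antipodal map $x\mapsto x^*$) therefore never exchanges the two ``sheet alignments'' while keeping the type over $\str{C}$ and changing it over $\struc{\str{B}\str{C}}$: your two realisations end up with the same type over $\struc{\str{B}\str{C}}$, and stationarity is not violated. Worse, over any base containing a $\delta$-edge one can consistently orient the podes by $P_c=\{x:d(x,c)\leq\lfloor\delta/2\rfloor\}$ and run the pode-based completion of Definition~\ref{defn:antiftmcompletion}, so this family of configurations admits a canonical choice and is not where the obstruction lives.

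The paper's argument places the asymmetry elsewhere: take $\str{C}=\{c_0,c_1\}$ with $d(c_0,c_1)=M$ (so $\str{C}$ contains no antipodal pair), put the antipodal pair $a_0,a_1$ inside $\str{A}$ with $d(a_i,c_i)=M$ and $d(a_i,c_{1-i})=M+1$, and let $\str{B}=\{c_0,c_1,b\}$ with $b$ equidistant from $c_0,c_1$. Then the triangles $a_0c_0c_1$ and $a_1c_1c_0$ are isomorphic as structures over $\str{C}$ (via the swap $c_0\leftrightarrow c_1$, which also fixes $\str{B}$), so monotonicity and well-definedness of $\oplus$ force the two four-point substructures of the amalgam to be isomorphic, while antipodality forces $d(a_0,b)=\delta-d(a_1,b)\neq d(a_1,b)$ — a contradiction. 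Case 4 is not a cosmetic variant of this: there the paper takes $\str{C}$ a single point, $\str{B}$ an antipodal pair at distance $\delta/2$ from $c$, and $\str{A}$ a point at odd distance $1$ from $c$; monotonicity forces $d(a,b_1)=d(a,b_2)=\delta/2$ and the triangle $a,c,b_1$ then has odd perimeter $\delta+1$, violating bipartiteness. Your sketch supplies neither of these configurations, so the negative halves of cases 3 and 4 remain unproved as written.
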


\begin{proof} 
First observe that we may consider stationary independence relation over antipodally closed structures only because the antipodal completion
is unique.
\paragraph{1}
For $K_1<\infty$ and even diameter $\delta$, Corollary~\ref{cor:antievencompletion} gives us a completion algorithm. For structures $\str{A}, \str{B}, \str{C} \in \mathcal A ^\delta_{K_1,K_2,C_0,C_1,\mathcal S}$ we define $\str{A} \oplus_{\str{C}} \str{B}$ to be the space obtained by first forming the free amalgam of $\str{A}$ and $\str{B}$ over $\str{C}$ and then forming the completion with magic parameter $M = \frac{\delta}{2}$. This operator is a canonical symmetric amalgamation operator, hence there is a stationary independence relation on $\Gamma^\delta_{K_1,K_2,C_0,C_1,\mathcal S}$ (cf. Corollary~\ref{cor:SIR}).

\paragraph{2}
For $K_1 = \infty$ and odd diameter $\delta$, use Corollary~\ref{cor:antibioddcompletion} for connected edge-labelled graphs. Then, as in the proof of Corollary \ref{cor:biSIR} one can show, that there is a local stationary independence relation. In order to see that there is no stationary independence relation, we show that there is no canonical symmetric amalgamation in $\mathcal A ^\delta_{K_1,K_2,C_0,C_1,\mathcal S}$ over the empty structure. For that let $\str{A}$ consist of an antipodal pair $a_1,a_2$ and let $\str{B}$ be consist of a single vertex $b$. If there was a canonical symmetric amalgamation operator then in $\str{A} \oplus_{\empty} \str{B}$ we have that $d(b,a_1) = \delta - d(b,a_2)$, and since $\delta$ is odd $d(b,a_1) \neq d(b,a_2)$. Hence $\{a_1\} \oplus_{\empty} \str{B}$ and $\{a_2\} \oplus_{\empty} \str{B}$ are non-isomorphic, which contradicts the assumption that $\oplus$ is a canonical symmetric amalgamation operator.

\paragraph{3}
Next assume that $K_1 < \infty$, $\delta$ is odd, and let $M = \lfloor \frac{\delta}{2} \rfloor$. We are going to show that there is no local canonical symmetric amalgamation operation on $\mathcal A^{\delta}_{K_1,K_2,C_0,C_1, \mathcal S}$. Suppose for a contradiction that there is such an operator $\oplus$. Let $\str{C} = (\{c_0,c_1\},d)$ with $d(c_0,c_1) = M$, let $\str{A} =  (\{c_0,c_1, a_0, a_1\},d)$ with $d(a_0,a_1) = \delta$, $d(a_i,c_i) = M$ and $d(a_i,c_{1-i}) = M+1$, where $i\in\{0,1\}$. Note that the triangles $a_0,c_0,c_1$ and $a_1,c_1,c_0$ are isomorphic. Also let $\str{B} = (\{c_0,c_1, b\},d)$ with $d(b,c_0) = d(b,c_1) = M$. Let us consider the canonical amalgam $\str{A} \oplus_{\str{C}} \str{B}$. Note that in this amalgam $b$ cannot be identified with $a_0$ or $a_1$, so we can write $\str{A} \oplus_{\str{C}} \str{B}$ as $(\{a_0,a_1,b,c_0,c_1\},d)$ for some distance function $d$. By the antipodality we have that $d(a_0,b) = \delta - d(a_1,b)$, hence $d(a_0,b) \neq d(a_1,b)$. By monotonicity of $\oplus$, we have $(\{a_0,b,c_0,c_1\},d) = \{a_0,c_0,c_1\} \oplus_{\str{C}} \str{B}$ and  $(\{a_1,b,c_1,c_0\},d) = \{a_1,c_1,c_0\} \oplus_{\str{C}} \str{B}$. But $(\{a_0,b,c_0,c_1\},d) $ and $(\{a_1,b,c_0,c_1\},d) $ are not isomorphic, which contradicts the assumption that $\oplus$ is an amalgamation operation.

\paragraph{4}
Finally let us assume that $K_1 = \infty$ and $\delta$ is even. Then let us consider structures $\str{A}, \str{B}, \str{C} \in \mathcal A^{\delta}_{K_1,K_2,C_0,C_1, \mathcal S}$, such that $\str{C} = (\{c\},d)$, $\str{B} = (\{b_1,b_2,c\},d)$ with $d(b_1,b_2) = \delta$ and $d(b_1,c) = d(b_2,c) = \frac{\delta}{2}$ and $\str{A} = (\{a,c\},d)$ with $d(a,c) = 1$. Assume that there is a local canonical symmetric amalgamation operation $\oplus$, then let us consider the canonical symmetric amalgam $\str{A} \oplus_{\str{C}} \str{B}$. By monotonicity of $\oplus$ we must have $d(a,b_1) = d(a,b_2)$ in this amalgam. Since $(b_1,b_2)$ is an antipodal pair this implies that $d(a,b_1) = d(a,b_2) = \frac{\delta}{2}$. However then $\str{A} \oplus_{\str{C}} \str{B}$ cannot be a bipartite distance graph, since then $(a, c, b_1)$ is a triangle of odd perimeter $\delta +1$, a contradiction.
\end{proof}

\section{Classes with infinite diameter}
Classes of infinite diameter have two basic types---the 3-constrained cases and tree-like graphs.
\subsection{3-constrained spaces of infinite diameter}
\label{sec:infinite}
The admissible numerical parameters with $\delta=\infty$ are
\begin{enumerate}
\item \ref{I} $K_1,C_0,C_1=\infty$, $K_2=0$ (the generic bipartite metric space)
\item \ref{II} $1\leq K_1<\infty$, $C_0,C_1,K_2=\infty$ (the generic metric spaces without short odd cycles),
\end{enumerate}
where in the first case no Henson constraints are allowed, while in the second case it is possible to have $\mathcal S = \{\str K_n\}$, where $\str K_n$ is the clique on $n$ vertices for some $n\geq 4$ (see Theorem~\ref{thm:admissible_henson}).
\begin{thm}
The class $\overrightarrow{\mathcal A}^\infty_{\infty,0,\infty,\infty}$ of convex orderings of $\mathcal  A^\infty_{\infty,0,\infty,\infty}$ with an additional unary predicate determining the bipartition is Ramsey and has the expansion property.
\end{thm}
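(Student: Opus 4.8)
The plan is to reduce the infinite-diameter statement to the finite-diameter bipartite result, Theorem~\ref{thm:biramseyregular} (taken with $\mathcal S=\emptyset$), using the fact that a finite structure only ever ``sees'' finitely much of the language. First I would observe that $\mathcal A^\infty_{\infty,0,\infty,\infty}$ is exactly the class of all finite bipartite metric spaces with integer distances, and that it is $3$-constrained: membership is decided triangle by triangle. Hence a given $\str{A}\in\mathcal A^\infty_{\infty,0,\infty,\infty}$ of diameter $d$, whose triangle perimeters (all even, by bipartiteness) are bounded by some $P\le 3d$, already lies in $\mathcal A^{\delta'}_{\infty,0,C_0',2\delta'+1}$ as soon as $\delta'\ge d$ and $C_0'>P$ with $C_0'$ even. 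The same remains true after expanding by a convex ordering and the bipartition predicate, since both enrichments are determined at the level of the finite structure itself and do not refer to the ambient diameter: a convex-ordered, bipartition-expanded lift of $\str{A}$ is literally the same object whether regarded in the finite-diameter class or in the infinite class.

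The one point that needs genuine care is the parameter bookkeeping. The acceptability conditions (Definition~\ref{defn:acceptable}) force $C_0'\le 3\delta'+2$, so one \emph{cannot} simply push $C_0'$ upward at fixed diameter; instead I would grow $\delta'$. Concretely, given finitely many structures with triangle perimeters bounded by $P$ and diameters bounded by $d$, I would pick $\delta'\ge\max(d,3)$ large enough that $2\delta'+4>P$ and set $C_0'=2\delta'+4$. Then $C_0'$ is even, $2\delta'<C_0'\le 3\delta'+2$ (using $\delta'\ge 2$), the parameters fall under Case~\ref{I} of Theorem~\ref{thm:admissible}, all the given triangles are allowed, and crucially $C_0'=2\delta'+4>2\delta'+3$, so the hypothesis $C_0>2\delta+3$ of Theorem~\ref{thm:biramseyregular} is satisfied.

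With the right class in hand, the Ramsey transfer is by absoluteness. Given $\overrightarrow{\str{A}},\overrightarrow{\str{B}}\in\overrightarrow{\mathcal A}^\infty_{\infty,0,\infty,\infty}$ and $k$, choose $\delta',C_0'$ as above so that both lie in $\overrightarrow{\mathcal A}^{\delta'}_{\infty,0,C_0',2\delta'+1}$, apply Theorem~\ref{thm:biramseyregular} to obtain $\overrightarrow{\str{C}}$ in that class with $\overrightarrow{\str{C}}\longrightarrow(\overrightarrow{\str{B}})^{\overrightarrow{\str{A}}}_k$, and note that $\overrightarrow{\str{C}}$ then also lies in the infinite class. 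The arrow relation carries over verbatim because $\binom{\overrightarrow{\str{C}}}{\overrightarrow{\str{A}}}$ and $\binom{\overrightarrow{\str{C}}}{\overrightarrow{\str{B}}}$ are sets of induced copies and do not depend on the ambient class, so a colouring and its monochromatic copy are the same objects in both settings. For the expansion property I would argue identically: given $\str{A}$ in the reduct class, bound its diameter and triangle perimeters, choose $\delta',C_0'$ as above, and apply the expansion-property half of Theorem~\ref{thm:biramseyregular} inside $\mathcal A^{\delta'}_{\infty,0,C_0',2\delta'+1}$ to get a witness $\str{B}$ in that class (hence in $\mathcal A^\infty_{\infty,0,\infty,\infty}$); since every convex-ordered, bipartition-expanded lift of $\str{A}$ and of $\str{B}$ in the infinite class is exactly such a lift in the finite-diameter class, the embedding supplied by the finite result is precisely the one required.

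The main obstacle is therefore not combinatorial at all but the verification that a \emph{single} admissible finite-diameter bipartite class can simultaneously contain the prescribed structures and meet the hypothesis $C_0>2\delta+3$ of Theorem~\ref{thm:biramseyregular}; the acceptability bound $C_0\le 3\delta+2$ rules out the naive choice and is what makes it necessary to enlarge the diameter rather than the long-cycle parameter. Once the choice $\delta'$ large, $C_0'=2\delta'+4$ is made, no further work is needed: both the Ramsey and expansion conclusions descend from the finite-diameter case by the absoluteness of copies and the locality of the ordering-and-bipartition expansion.
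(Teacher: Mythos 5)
Your proposal is correct and follows essentially the same route as the paper: both arguments reduce to Theorem~\ref{thm:biramseyregular} by placing the given finite structures into a single admissible finite-diameter bipartite class and transferring the Ramsey and expansion conclusions back, which is immediate because copies, convex orderings and the bipartition predicate are intrinsic to the finite structures and do not depend on the ambient class. The only difference is parameter bookkeeping: the paper keeps $\delta$ equal to the maximal distance occurring in $\str{B}$ and pushes $C_0$ up toward its acceptability ceiling $3\delta+2$ (enough, since all realized perimeters are even and at most $3\delta$), whereas you enlarge the diameter to $\delta'$ and take $C_0'=2\delta'+4$; both choices satisfy the hypothesis $C_0>2\delta+3$, so your variant is equally valid.
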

\begin{proof}
For every choice of finite $\str{A}$, $\str{B}$ denote by $\delta$ the maximal distance in $\str{B}$.
Then apply Theorem~\ref{thm:biramseyregular} for $\overrightarrow{\mathcal A}^\delta_{\infty,0,C_0,2\delta+1}$ where $C_0>3\delta+1$ even
to obtain $\str{C}\longrightarrow(\str{B})^\str{A}_2$. It is easy to see that $\str{C}\in \overrightarrow{\mathcal A}^\infty_{\infty,0,\infty,\infty}$.

The expansion property follows again by the standard argument.
\end{proof}
\begin{thm}
For every $1\leq K_1\leq \infty$ and every $\mathcal S \in \{\emptyset, \{\str K_4\}, \{\str K_5\},\ldots\}$ where $\str K_n$ is the clique on $n$ vertices, the class $\overrightarrow{\mathcal K}$ of free orderings of $\mathcal K =\mathcal  A^\infty_{K_1,\infty,\infty,\infty} \cap \mathcal A^\delta_{\mathcal S}$ is Ramsey and has the expansion property.
\end{thm}
\begin{proof}
Analogously to the previous proof, for every finite $\str{A}$, $\str{B}$ we choose $\delta$ to be the maximal distance in $\str{B}$ and reduce
to Theorem~\ref{thm:ramseyHenson}.
\end{proof}
\begin{thm}
The classes $\mathcal  A^\infty_{K_1,\infty,\infty,\infty}\cap \mathcal A^\delta_{\mathcal S}$, $1\leq K_1\leq \infty$, $\mathcal S \in \{\emptyset, \{\str K_4\}, \{\str K_5\},\ldots\}$ where $\str K_n$ is the clique on $n$ vertices,
and $\mathcal  A^\infty_{\infty,0,\infty,\infty}$ have coherent EPPA.
\end{thm}
\begin{proof}
Again, this follows by a reduction to Theorems~\ref{thm:EPPAHenson} and~\ref{thm:biregulareppa}.
\end{proof}
The stationary independence relation follows from the existence of completion algorithm
which adds the shortest path distances, as discussed in Section~\ref{sec:shortestpathsec}.
From this we immediately get:
\begin{thm}
The classes $\mathcal  A^\infty_{K_1,\infty,\infty,\infty}\cap \mathcal A^\delta_{\mathcal S}$, $1\leq K_1\leq \infty$, $\mathcal S \in \{\emptyset, \{\str K_4\}, \{\str K_5\},\ldots\}$ where $\str K_n$ is the clique on $n$ vertices,
and $\mathcal  A^\infty_{\infty,0,\infty,\infty}$ have no stationary independence relations
but local stationary independence relations that can be defined by means of the shortest path
completion algorithm.
\end{thm}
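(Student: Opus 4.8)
The plan is to obtain both assertions from the equivalence between (local) stationary independence relations and (local) canonical amalgamation operators established in Theorem~\ref{thm:canonicalamalg}, mirroring the proofs of Corollaries~\ref{cor:SIR} and~\ref{cor:biSIR}.

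For the existence of a local SIR, I would define, for $\str{A},\str{B},\str{C}$ in the class with $\str{C}$ a common \emph{nonempty} substructure, the operator $\str{A}\oplus_{\str{C}}\str{B}$ to be the shortest path completion (Section~\ref{sec:shortestpathsec}) of the free amalgam of $\struc{\str{A}\str{C}}$ and $\struc{\str{B}\str{C}}$ over $\str{C}$. The crucial point is that, since $\str{C}\neq\emptyset$, this free amalgam is connected, so every missing edge receives a finite shortest path length; being a metric completion of a completable graph, the result lies in the class (for the non-bipartite class this is the assertion that the shortest path completion introduces no short odd triangle, and for the bipartite class that it introduces no odd cycle on a connected graph). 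This $\oplus$ has minimal domain, and its monotonicity and associativity follow from the optimality of the shortest path distances---each completed edge is as short as possible---exactly as monotonicity and associativity of the magic operator were deduced from Theorem~\ref{thm:magiccompletion} in Corollary~\ref{cor:SIR}. Thus $\oplus$ is a local canonical amalgamation operator and Theorem~\ref{thm:canonicalamalg} yields the claimed local SIR, defined by means of the shortest path completion algorithm.

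For the non-existence of a global SIR it suffices, again by Theorem~\ref{thm:canonicalamalg}, to rule out a global canonical amalgamation operator $\oplus$. Let $k$ be the distance realised in the amalgam $\{a\}\oplus_{\emptyset}\{b\}$ of two one-point spaces over the empty base. Exploiting the infinite diameter, choose a two-point space on vertices $a_1,a_2$ with $d(a_1,a_2)=2k+1$, which is a legitimate member of the class since a single edge satisfies no triangle constraint, and amalgamate it with a one-point space $\{b\}$ over $\emptyset$. Monotonicity forces the induced substructures on $\{a_1,b\}$ and on $\{a_2,b\}$ to coincide with $\{a_i\}\oplus_{\emptyset}\{b\}\cong\{a\}\oplus_{\emptyset}\{b\}$, so $d(a_1,b)=d(a_2,b)=k$; then the triangle $a_1a_2b$ has sides $2k+1,k,k$ and violates the triangle inequality, contradicting that the amalgam is a metric space in the class. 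The same argument applies verbatim to the bipartite class. This is precisely the place where the finite diameter classes behave differently: there no edge can exceed $2M=\delta$, which blocks the construction and is why those classes admit a genuine (global) SIR by Corollary~\ref{cor:SIR}.

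The main point requiring care is the verification that $\oplus$ is canonical, i.e. the monotonicity and associativity of the shortest path operator, together with the claim that the shortest path completion of a connected completable graph stays inside each of these specific classes; both reduce, as for the Ramsey and EPPA results above, to the finite diameter case with $M=\delta$ (Definition~\ref{defn:magiccompletion}), where shortest path completion coincides with magic completion. By comparison the non-existence argument is soft, its only subtlety being the availability of an edge of length $2k+1>2k$, which is guaranteed exactly by the infinite diameter.
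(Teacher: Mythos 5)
Your proposal is correct and follows essentially the same route as the paper, which proves this theorem only by the one-line remark that the (local) SIR ``follows from the existence of [the] completion algorithm which adds the shortest path distances''; your reduction of the shortest-path completion to the magic completion with $M=\delta$ for a sufficiently large finite $\delta$, and your appeal to Theorem~\ref{thm:canonicalamalg}, supply exactly the details the paper leaves implicit. Your non-existence argument (a fixed distance $k$ over the empty base versus an edge of length $2k+1$) is not spelled out in the paper at all, but it is sound and directly parallels the arguments used in Corollary~\ref{cor:biSIR} and Theorem~\ref{thm:antiSIR}.
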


\subsection{Tree-like graphs}
\label{sec:tree-like}
Recall Definition~\ref{defn:tree-like} of tree-like graphs and denote by $\mathcal A_{T_{m,n}}$ the age of metric space associated with the tree-like
graph $T_{m,n}$.
\begin{thm}
For every $2\leq m,n\leq \infty$ the class $\mathcal A_{T_{m,n}}$ has no EPPA.
\end{thm}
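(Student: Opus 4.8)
The plan is to derive a contradiction from EPPA by exhibiting a partial isometry that, once extended to an automorphism of a \emph{finite} metric space, is forced to create a short cycle that cannot exist in $T_{m,n}$. This is the concrete combinatorial shadow of the fact noted in the introduction, that the algebraic closure $\ACl(v)$ of any vertex $v$ of $T_{m,n}$ is infinite.

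First I would locate an isometric copy of a geodesic of length two inside $T_{m,n}$. Since $m \geq 2$, any cut vertex $p_0$ lies in at least two distinct blocks $B_0 \neq B_1$; since $n \geq 2$, each $B_i$ contains a vertex other than $p_0$, say $p_{-1} \in B_0$ and $p_1 \in B_1$. Because the blocks are cliques we have $d(p_{-1},p_0) = d(p_0,p_1) = 1$, and because $T_{m,n}$ is tree-like the unique path from $p_{-1}$ to $p_1$ passes through the cut vertex $p_0$, so $d(p_{-1},p_1) = 2$. Thus $\str{A} = \{p_{-1},p_0,p_1\}$, with distances $1,1,2$, is a member of $\mathcal A_{T_{m,n}}$, and the map $p_{-1} \mapsto p_0$, $p_0 \mapsto p_1$ is a partial automorphism of $\str{A}$ (an isometry between the two substructures $\{p_{-1},p_0\}$ and $\{p_0,p_1\}$, each a single edge of length $1$).

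Next, assume for contradiction that $\mathcal A_{T_{m,n}}$ has EPPA. Applying it to $\str{A}$ yields a \emph{finite} $\str{B} \in \mathcal A_{T_{m,n}}$ containing $\str{A}$ together with an automorphism $\hat p \in \Aut(\str{B})$ extending the shift above. Since $\str{B}$ is finite, $\Aut(\str{B})$ is finite and $\hat p$ has finite order $k$; set $x_i = \hat p^{\,i}(p_{-1})$. As $\hat p$ is an isometry, $d(x_i, x_{i+1}) = d(p_{-1},\hat p(p_{-1})) = d(p_{-1},p_0) = 1$ for every $i$ (indices mod $k$), so the orbit $x_0, x_1, \dots, x_{k-1}, x_0$ is a closed walk with unit steps. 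Its first three vertices are $x_0 = p_{-1}$, $x_1 = p_0$, $x_2 = p_1$, which are pairwise distinct; hence the orbit consists of $k \geq 3$ distinct vertices with consecutive ones adjacent, i.e. it is a genuine cycle in the graph $T_{m,n}$.

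The contradiction then comes from the block structure. Realising $\str{B}$ as a finite subspace of $T_{m,n}$, the orbit becomes a cycle of length $k \geq 3$ in $T_{m,n}$. Every cycle is $2$-connected and is therefore contained in a single block, and in $T_{m,n}$ every block is an $n$-clique, all of whose vertices lie at mutual distance $1$. In particular $d(x_0, x_2) = 1$, contradicting $d(p_{-1},p_1) = 2$. Hence no such $\str{B}$ exists and $\mathcal A_{T_{m,n}}$ fails EPPA. The only genuinely delicate points are the verification that the orbit is an honest cycle rather than a degenerate closed walk, which uses the distinctness of $p_{-1},p_0,p_1$ together with the isometry of $\hat p$, and the passage from ``the extending automorphism has finite order'' to ``a forbidden short cycle appears''; this last step is precisely where the infinite algebraic closure of a vertex obstructs every finite symmetry-preserving extension.
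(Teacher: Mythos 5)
Your argument is correct, but it takes a genuinely different route from the paper's. The paper's proof is global and very short: it observes that every finite member of $\mathcal A_{T_{m,n}}$ contains a \emph{leaf} (a vertex lying in at most one maximal clique), notes that since singletons are pairwise interchangeable by partial isometries an EPPA-witness $\str{B}$ may be assumed vertex-transitive, and then derives a contradiction from the fact that a vertex-transitive $\str{B}$ cannot simultaneously contain the leaf it must have and the non-leaf inherited from $\str{A}$. You instead run the classical argument for $(\mathbb Z,d)$, which the paper alludes to but does not carry out: you extend the unit shift along a geodesic $p_{-1},p_0,p_1$ to an automorphism $\hat p$ of a finite $\str{B}$, observe that the orbit of $p_{-1}$ is a closed walk with unit steps, and kill it using the fact that any cycle of length at least $3$ is $2$-connected, hence lies in a single block of $T_{m,n}$, hence in a clique — contradicting $d(p_{-1},p_1)=2$. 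What your version buys is an explicit single partial isometry witnessing the failure of EPPA and a direct link to the infinite algebraic closure of a vertex; what the paper's version buys is brevity and the avoidance of any orbit analysis. One small slip in your write-up: the orbit of $p_{-1}$ under $\hat p$ has size $j$ dividing the order $k$ of $\hat p$, and need not equal $k$; since $x_0,x_1,x_2$ are pairwise distinct you still get $j\geq 3$, and the cycle argument goes through verbatim with $j$ in place of $k$, so this is cosmetic rather than a gap.
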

\begin{proof}
For every $\str{F} \in \mathcal A_{T_{m,n}}$, let us call a vertex of $\str{F}$ a \emph{non-leaf} if it is contained in at least two blocks (maximal cliques) of $\str{F}$ and \emph{leaf} otherwise. Note that every $\str{F} \in \mathcal A_{T_{m,n}}$ contains a leaf.
Now suppose for a contradiction that $\mathcal A_{T_{m,n}}$ has EPPA, and consider any structure $\str{A}\in \mathcal A_{T_{m,n}}$ which contains a non-leaf. Given a finite EPPA-witness $\str{B}\in \mathcal A_{T_{m,n}}$ of $\str{A}$, we can assume without loss of generality that for every vertex $v$ of $\str{B}$ there exists $\phi\in \Aut(\str{B})$ such that $\phi(v)\in A$.
By the extension property it then follows that the graph $\str{B}$ is vertex transitive and thus either every vertex of $\str{B}$ is a leaf or every vertex of $\str B$ is a non-leaf, which contradicts to the assumption that $\str{A}$ contains both leaves and non-leaves as soon as $\str A$ is not a clique (e.g. when $\str A$ has more than $n$ vertices).
\end{proof}

\begin{thm}
For every $2\leq m,n\leq \infty$ the class $\mathcal A_{T_{m,n}}$ has no precompact Ramsey expansion.
\end{thm}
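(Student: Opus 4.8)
The plan is to show that the single\nobreakdash-vertex structure already has infinite (small) Ramsey degree in $\mathcal A_{T_{m,n}}$, which rules out any precompact Ramsey expansion. First I would recall the standard reduction: if $\mathcal K^+$ were a precompact Ramsey expansion of $\mathcal K=\mathcal A_{T_{m,n}}$, then every $\str A\in\mathcal K$ would have finite Ramsey degree, bounded by the number of its expansions in $\mathcal K^+$. Concretely, for the one\nobreakdash-point space $\str A$, precompactness yields finitely many expansions $\str A^+_1,\dots,\str A^+_s$; fixing an arbitrary expansion $\str B^+$ of a target $\str B$ and iterating the Ramsey property of $\mathcal K^+$ once for each $\str A^+_i$ produces a $\str C^+$ whose reduct $\str C$ has the property that any finite colouring of the vertices of $\str C$ admits a copy of $\str B$ on which vertices of equal $\str A^+_i$\nobreakdash-type receive equal colour. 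Such a copy of $\str B$ therefore uses at most $s$ colours on its vertices, so a precompact Ramsey expansion forces the Ramsey degree of the vertex to be at most $s<\infty$.

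Against this I would produce colourings witnessing that the degree is in fact infinite. Since $m,n\geq 2$, the graph $T_{m,n}$ is an infinite tree of cliques and contains isometric copies of arbitrarily long integer intervals: a geodesic path $v_0,v_1,\dots,v_L$ (consecutive vertices adjacent, $d(v_i,v_j)=|i-j|$) is an isometric copy of the metric space $\str B_L$ on $\{0,1,\dots,L\}$ with $d(i,j)=|i-j|$, so $\str B_L\in\mathcal K$. Now fix $t$, set $L=2t$, and let $\str C\in\mathcal K$ be arbitrary with ${\str{C}\choose \str{B}_{L}}\neq\emptyset$; realise $\str C$ as a subspace of $T_{m,n}$ and choose any vertex $r$ of $\str C$. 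Colour each vertex $v$ by $\chi(v)=d(r,v)$. Any copy of $\str B_L$ in $\str C$ is a geodesic $v_0,\dots,v_L$, and I claim that $\chi$ takes at least $\lceil L/2\rceil+1>t$ distinct values on it.

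The key point, and the main thing to verify, is the betweenness behaviour of the metric of $T_{m,n}$ along such a geodesic. Although the blocks are cliques (so $T_{m,n}$ is not $0$\nobreakdash-hyperbolic), consecutive vertices of a geodesic must lie in distinct blocks, so the geodesic advances monotonically through the block\nobreakdash-decomposition tree; the unique block\nobreakdash-path from $r$ to the geodesic meets it at a single vertex $v_j$, and for every $i\geq j$ the path from $r$ to $v_i$ runs through $v_j$, giving $d(r,v_i)=d(r,v_j)+(i-j)$, and symmetrically $d(r,v_i)=d(r,v_j)+(j-i)$ for $i\leq j$. Hence $\chi$ is strictly monotone on each side of $v_j$ and realises at least $\max(j,L-j)+1\geq\lceil L/2\rceil+1$ values. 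As $\str C$ was arbitrary, the vertex has Ramsey degree greater than $t$ for every $t$, i.e. infinite degree, contradicting the bound $s$ above; therefore $\mathcal A_{T_{m,n}}$ has no precompact Ramsey expansion. (This is the Ramsey analogue of the infinite algebraic closure phenomenon already used for EPPA.)

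The main obstacle is precisely the projection claim for clique\nobreakdash-blocks: one must check that passing from a genuine tree metric to the clique\nobreakdash-block metric creates no shortcuts that destroy the additivity $d(r,v_i)=d(r,v_j)+|i-j|$. This reduces to the standard fact that the block\nobreakdash-cut\nobreakdash-vertex tree of $T_{m,n}$ is a genuine tree and that any vertex has a well\nobreakdash-defined projection onto a geodesic, which I would isolate as a short lemma. Should one prefer to avoid the projection argument altogether, it suffices to observe that $d(r,\cdot)$ changes by exactly $\pm 1$ between adjacent vertices and is unimodal along any geodesic, which already forces $\Omega(L)$ colours. The remaining ingredients---the existence of arbitrarily long geodesics (immediate from $T_{m,n}$ being an infinite tree of cliques) and the reduction bounding Ramsey degree by the number of expansions---are routine.
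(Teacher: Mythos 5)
Your argument is sound in outline and ends up exploiting the same phenomenon as the paper, but it takes a genuinely different route. The paper does not pass through Ramsey degrees at all: it takes the $k$ one-vertex expansions, pigeonholes two vertices of the same expansion type inside a path of distance-$2$ edges on $k+1$ vertices, and then applies the Ramsey property of $\K^+$ to that \emph{two-point} configuration $\str{B}^+_2$ at distance $\ell$, colouring vertices of $\str{C}^+$ by $d(v,\cdot)\bmod(\ell+1)$; uniqueness of shortest paths between vertices at distance $>1$ forces the two points of any copy to get different residues, contradicting monochromaticity. You instead invoke the standard bound ``Ramsey degree of $\str{A}$ $\leq$ number of expansions of $\str{A}$'' and refute it by showing the single vertex has infinite Ramsey degree, using arbitrarily long geodesics and the unreduced colouring $d(r,\cdot)$. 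What the paper's version buys is that it only ever needs a configuration of two points and a colouring with a fixed number $\ell+1$ of colours; what yours buys is a cleaner conceptual statement (the point has infinite Ramsey degree, the exact analogue of the infinite algebraic closure used against EPPA) at the cost of importing the Ramsey-degree reduction and a projection lemma for long geodesics.

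Two small corrections to your supporting claims. First, the fallback observation that $d(r,\cdot)$ changes by exactly $\pm1$ between adjacent vertices is false for $n\geq 3$: if $r$ lies in (or hangs off an interior vertex of) the block containing the edge $v_jv_{j+1}$, then $d(r,v_j)=d(r,v_{j+1})$, so the increment can be $0$; there is no parity obstruction since $T_{m,n}$ contains triangles. Second, and for the same reason, the projection of $r$ onto a geodesic need not be a single vertex: the minimum of $i\mapsto d(r,v_i)$ can be attained at two consecutive indices. The function is still ``slope $\pm1$ away from a flat bottom of width at most $2$,'' which yields at least $\lceil(L-1)/2\rceil+1=t+1$ distinct values for $L=2t$, so your count and hence your conclusion survive; but the lemma should be stated with the two-vertex bottom allowed rather than asserting a unique gate vertex.
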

\begin{proof}
Assume for a contradiction that there is a precompact Ramsey expansion $\K$ of $\mathcal A_{T_{m,n}}$. Denote by $\str{A}^+_1,\str{A}^+_2,\ldots \str{A}^+_k$
all structures in $\K$ with one vertex. Let $\str{B}$ be a path of edges in distance $2$ with $k+1$ vertices completed to a metric space in $\Age(T_{m,n})$.  Denote by $\str{B}^+$ its
expansion in $\K$. There is some $\str{A}_i$, $1\leq i\leq k$, with at least two copies in $\str{B}^+$.  Denote by $\str{B}^+_2$ the substructure induced by $\str{B}^+$ on those two
copies and by $\ell$ their distance. 

By the Ramsey property, there is $\str{C}^+\in \K$ such that $\str{C}^+\longrightarrow (\str{B}^+_2)^{\str{A}^+_i}_{\ell+1}$ in $\K$. Without loss of generality we can assume that $\str{C^+}$ is an expansion of a connected fragment of $T_{m,n}$.
Fix a vertex $v\in \str{C^+}$ and colour the vertices $u\in \str{C}^+$ with $d(u,v) \mod {\ell+1}$.  Denote by $u_1$, $u_2$ the vertices of the monochromatic copy of $\str{A}^+_i$ in $\str{C}^+$, ensuring that $u_1$ is closer to $v$ than $u_2$.  Notice that between any pair of vertices at distance greater than 1 there is precisely one shortest path, so it follows that either $d(v,u_2)=d(v,u_1)+\ell$ or $d(v,u_1) + d(v, u_1) = \ell$.
By the choice of the colouring this pair is not monochromatic, contradicting the choice of $\str{C^+}$.
\end{proof}

\begin{lem} \label{lem:generictreelike}
For all $2 \leq m, n \leq \infty$ with  $m < \infty$ or $n \neq 2,3, \infty$ there is no local stationary independence relation on the metric space associated with the tree-like graph $T_{m,n}$
\end{lem}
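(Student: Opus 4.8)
The plan is to use the bridge provided by Theorem~\ref{thm:canonicalamalg}: the metric space associated with $T_{m,n}$ carries a local stationary independence relation if and only if $\Age(T_{m,n})$ admits a local canonical amalgamation operator $\oplus$. So I would assume such an $\oplus$ exists and derive a contradiction. If $m<\infty$ I use the first construction below (valid for every $n$); otherwise $m=\infty$, in which case the hypothesis forces $n$ to be finite with $n\ge 4$, and I use the second construction (valid for every $m$). Both rest on the same elementary structural facts about $T_{m,n}$, which follow at once from the block--cut-vertex tree description of Definition~\ref{defn:tree-like} and the uniqueness of shortest paths: any two adjacent vertices lie in a unique common block; any vertex adjacent to two adjacent vertices lies in their common block; each vertex meets exactly $m$ blocks; and vertices in distinct blocks through a common vertex are at distance $2$.

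For the case $m<\infty$, I would take $\str{C}=\{c\}$, let $\str{A}$ be the star consisting of $c$ together with one vertex $a_i$ chosen from each of the $m$ blocks at $c$ (so $d(c,a_i)=1$ and $d(a_i,a_j)=2$ for $i\neq j$), and let $\str{B}=\{c,b\}$ with $d(c,b)=1$. In any amalgam $\str{D}=\str{A}\oplus_{\str{C}}\str{B}$ with embeddings $f_1,f_2$, the vertex $f_2(b)$ is a neighbour of $c$; since $c$ already meets all $m$ of its blocks inside $f_1(\str{A})$, the vertex $f_2(b)$ must sit in the block of exactly one $f_1(a_{i_0})$ (or be identified with it when $n=2$). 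Thus $d(f_2(b),f_1(a_{i_0}))\le 1$, while $d(f_2(b),f_1(a_j))=2$ for every $j\neq i_0$.

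For the case $n$ finite with $n\ge 4$, I would take $\str{C}=\{c_1,c_2\}$ to be an edge inside a block $B_0$, let $\str{A}=B_0$ be the full block $K_n$, written $\str{A}=\{c_1,c_2,a_1,\dots,a_{n-2}\}$ with all distances $1$ and $n-2\ge 2$, and let $\str{B}=\{c_1,c_2,b\}$ be a triangle. By the structural facts $f_2(b)$ is a common neighbour of $c_1$ and $c_2$, hence lies in $B_0$; as $B_0$ is already full, $f_2(b)$ must be identified with one of the $f_1(a_{i_0})$, so that $d(f_2(b),f_1(a_{i_0}))=0$ while $d(f_2(b),f_1(a_j))=1$ for $j\neq i_0$.

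The common final step, which I expect to be the only genuine subtlety, is to convert the observation that the amalgam singles out one non-canonical slot $a_{i_0}$ into a contradiction with the invariance built into $\oplus$. In each construction there is an automorphism $\phi$ of $\str{A}$ fixing $\str{C}$ pointwise and transposing $a_{i_0}$ with some $a_{i_1}$, $i_1\neq i_0$ (possible since $m\ge 2$, respectively $n-2\ge 2$), and $\phi=\phi^{-1}$. As $\phi$ fixes the image of $\str{C}$, it is an isomorphism of the amalgamation datum onto itself, so the uniqueness of $\oplus$ yields an isomorphism $\psi\colon\str{D}\to\str{D}$ with $\psi\circ f_2=f_2$ and $\psi\circ f_1=f_1\circ\phi$. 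Applying $\psi$ and using $\psi f_2(b)=f_2(b)$ together with $\psi f_1(a_{i_0})=f_1(a_{i_1})$ gives $d(f_2(b),f_1(a_{i_1}))=d(f_2(b),f_1(a_{i_0}))$, which contradicts the strict inequality $d(f_2(b),f_1(a_{i_0}))\neq d(f_2(b),f_1(a_{i_1}))$ forced above. Hence no local canonical amalgamation operator exists and, by Theorem~\ref{thm:canonicalamalg}, there is no local stationary independence relation on $T_{m,n}$. I would close by remarking that both constructions become impossible exactly when $m=\infty$ and $n\in\{2,3,\infty\}$ (the star cannot occupy infinitely many blocks, and no finite block is ever full), which is consistent with those being the only parameters admitting a local stationary independence relation.
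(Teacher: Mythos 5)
Your proof is correct and uses the same two configurations as the paper: for $m<\infty$ the star consisting of $c$ together with one neighbour in each of its $m$ blocks amalgamated with a single extra neighbour $b$ of $c$, and for finite $n\ge 4$ the full block $K_n$ amalgamated with a triangle over an edge. The reduction to the non-existence of a local canonical amalgamation operator via Theorem~\ref{thm:canonicalamalg} is also exactly the paper's route. The only divergence is the last step: the paper invokes the monotonicity clause of the definition to observe that the restrictions $\{a_i,c\}\oplus_{\str{C}}\str{B}$ and $\{a_j,c\}\oplus_{\str{C}}\str{B}$ (respectively the $3$- versus $4$-clique restrictions in the second case) would have to be isomorphic amalgams of isomorphic data yet are not, whereas you instead apply the automorphism of $\str{A}$ over $\str{C}$ swapping $a_{i_0}$ and $a_{i_1}$ and use the invariance/uniqueness of $\oplus$ to produce an automorphism of the amalgam forcing $d(f_2(b),f_1(a_{i_0}))=d(f_2(b),f_1(a_{i_1}))$. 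Both extractions of the contradiction rest on the same implicit fact that $\oplus$ is well defined up to isomorphism of amalgamation diagrams (which the paper itself uses in the analogous arguments of Theorem~\ref{thm:antiSIR}), so your variant is equally valid; the paper's use of monotonicity buys nothing extra here, and your version has the mild advantage of not needing the monotonicity axiom at all.
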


\begin{proof}
For a contradiction, assume that there is a local stationary independence relation $\ind$. By Theorem \ref{thm:canonicalamalg} this is equivalent to the existence of a local canonical amalgamation $\oplus$ on the $\mathcal A_{T_{m,n}}$.

First let us look at the case where $m < \infty$. Then let $\str{C}$ consist of a single vertex $c$ and let $\str{A} = \{c,a_1,a_2,\ldots, a_{m}\}$ such that $d(c,a_i) = 1$ and $d(a_i,a_j) = 2$ for all $i \neq j$. And let $\str{B} = \{b,c\}$ with $d(b,c) = 1$. Then, by the definition of $(T_{m,n},d)$, in the amalgam $\str{A} \oplus_{\str{C}} \str{B}$ there has to be an index $i$, such that $d(b,a_i)$ is 1 or 0 and $d(b,a_j) = 2$ for all $j \neq i$. Thus $\{a_i,c\} \oplus_{\str{C}} \str{B}$ and $\{a_j,c\} \oplus_{\str{C}} \str{B}$ are non-isomorphic for $j \neq i$. But this contradicts the assumption that $\oplus$ is an amalgamation operator.

Next assume that $n \notin \{ 2,3, \infty \}$. Let $\str{C} = \{c_1,c_2\}$ be such that $d(c_1,c_2) = 1$, let $\str{B} = \{c_1,c_2,b\}$ be a clique of size $3$ and let $\str{A} = \{c_1,c_2, a_1,\ldots, a_{n-2} \}$ be a clique of size $n$. Then, by definition of $\mathcal A_{T_{m,n}}$, in the amalgam $\str{A} \oplus_{\str{C}} \str{B}$ the vertex $b$ has to be identified with one of the vertices $a_i$ in $\str{A}$. By monotonicity of $\oplus$ we have $\{a_i,c_1,c_2\} \oplus_{\str{C}} \str{B}$ is a clique of size 3, but for every $j \neq i$, $\{a_j,c_1,c_2\} \oplus_{\str{C}} \str{B}$ is a 4-clique. This contradicts the assumption that $\oplus$ is an amalgamation operator.
\end{proof}

Surprisingly, for the remaining cases there exists no SIR, but there is a local SIR. In order to show this, we first need a better understanding of how the metric space $(T_{n,m},d)$ relates to the underlying tree-like graph.

\begin{defn}
Let $(G,E)$ be a graph, inducing the graph metric $(G,d)$, and let $\str{A} = (A,d)$ be a subspace of $(G,d)$. Let $a,b \in A$, then we say that a path $a = y_0, y_1, \ldots, y_k = b$ in $(G,E)$ \emph{witnesses the distance $d(a,b)$}, if $d(a,b) = k$.
\end{defn}

\begin{lem} \label{lem:uniquetreelike}
Let $\str{A}$ be a subspace of $(T_{n,m},d)$ and let $(G_A,E)$ be a subgraph of $T_{n,m}$, consisting of paths witnessing the distances in $\str{A}$. Then, $(G_A,E)$ is uniquely determined by $\str{A}$. Also, if $\str{A} \cong \str{B}$, then $(G_A,E) \cong (G_B,E)$.
\end{lem}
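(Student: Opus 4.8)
The whole statement rests on one structural fact, which I would prove first: in $T_{n,m}$ the shortest path between any two vertices is unique. Since every block of $T_{n,m}$ is a clique and every vertex is a cut vertex, $T_{n,m}$ is a block graph and its block–cut tree is a genuine tree. For $u\neq v$ the path joining the node $u$ to the node $v$ in the block–cut tree passes through a determined alternating sequence of blocks and cut vertices; inside each block, which is a clique, a geodesic uses exactly the single edge between the two cut vertices through which it enters and leaves. Concatenating these forced edges yields the unique geodesic, because any path realising $d(u,v)$ must cross exactly the same cut vertices (removing a cut vertex separates the two sides) and cannot revisit a block. This is the same observation already used in the proof that $\Age(T_{n,m})$ has no precompact Ramsey expansion. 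Consequently, once an isometric embedding $\str{A}\hookrightarrow T_{n,m}$ is fixed, the subgraph $(G_A,E)$ obtained as the union of the geodesics realising the distances $d(a,b)$, $a,b\in A$, involves no choice whatsoever, and by construction every vertex and every edge of $G_A$ lies on such a geodesic between two points of $A$ (call this property \emph{reducedness}).

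Next I would show that the metric on $A$ determines $G_A$. First, distance vectors to $A$ separate the vertices of $G_A$: if $u,v\in V(G_A)$ satisfy $d(u,a)=d(v,a)$ for all $a\in A$, pick by reducedness marked points $c,e$ with $v$ on the geodesic $[c,e]$; then $d(c,u)+d(u,e)=d(c,v)+d(v,e)=d(c,e)$, so $u$ lies on $[c,e]$ as well, and since $d(\cdot,c)$ is strictly monotone along the unique geodesic $[c,e]$ and agrees on $u$ and $v$, we get $u=v$. Second, I would record the ``gate'' formula: for $a\in A$ and the vertex $x_t$ at distance $t$ from $c$ on $[c,e]$ (with $k=d(c,e)$), the closest point of $[c,e]$ to $a$ is the unique gate $z_j$ with $j=\tfrac12(k+d(a,c)-d(a,e))$ and $d(a,z_j)=\tfrac12(d(a,c)+d(a,e)-k)$, whence $d(a,x_t)=d(a,z_j)+|t-j|$. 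This expresses $d(a,x_t)$ as an explicit function of the purely metric data $d(a,c),d(a,e),d(c,e),t$; its validity is exactly the statement that geodesics are gated in block graphs, which follows from the uniqueness of geodesics together with the block–cut tree structure.

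With these two ingredients the uniqueness becomes a bookkeeping argument. Let $\str{A}\cong\str{B}$ via an isometry $\phi\colon A\to B$ and let $G_A,G_B$ be reduced realisations. I would encode a vertex $v\in V(G_A)$ by any triple $(c,e,t)$ with $c,e\in A$, $v$ on $[c,e]$ and $t=d(c,v)$, and define $\Phi(v)$ to be the vertex of $G_B$ at distance $t$ from $\phi(c)$ on the geodesic $[\phi(c),\phi(e)]$ (which lies in $G_B$ by reducedness). The gate formula shows that the distance vector of $\Phi(v)$ to $B$ is the $\phi$-image of the distance vector of $v$ to $A$; since that vector does not depend on the chosen encoding $(c,e,t)$, and distance vectors separate the vertices of $G_B$, the map $\Phi$ is well defined, and the symmetric construction gives its inverse. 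Applying the gate formula again, now with $v$ regarded as an external point of the geodesic carrying $u=x_t$, expresses $d(u,v)$ through the distance vectors of $u$ and $v$; hence $\Phi$ preserves all distances and in particular adjacency, so it is a graph isomorphism, and it fixes the marked set because $a\in A$ is encoded by $(a,a,0)$. Taking $\str{B}=\str{A}$, $\phi=\mathrm{id}$ but two different embeddings yields the first assertion, and the general $\phi$ yields the second. The main obstacle is precisely the well-definedness of $\Phi$ — that the vertex it produces is independent of the representing geodesic — and this is exactly what the gate formula and the separation property are there to guarantee; an alternative way to organise the same content is an induction on $|V(G_A)|$ that peels off a leaf block of $G_A$, reconstructed from the metric in the same manner.
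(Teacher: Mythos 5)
Your proof is correct, but it takes a genuinely different route from the paper's, mainly in the second assertion. The paper's entire proof is the observation that geodesics in $T_{m,n}$ are unique (shown there by a short direct argument: two distinct witnessing paths between $u$ and $v$ would diverge and reconverge, the union of the two diverging segments would lie in a single block, i.e.\ span a clique, and this would shortcut the path); uniqueness of $(G_A,E)$ is then immediate, and the isomorphism $(G_A,E)\cong(G_B,E)$ is dispatched in one line by invoking the metric homogeneity of $(T_{m,n},d)$ --- an isometry $\str{A}\to\str{B}$ extends to an automorphism of the whole space, which is also a graph automorphism and carries the unique geodesics over $A$ to those over $B$. You establish the same uniqueness of geodesics via the block--cut tree (equally valid), but then, instead of appealing to homogeneity, you reconstruct $G_A$ intrinsically from the metric on $A$ (separation by distance vectors plus your distance formula along geodesics) and build the isomorphism $\Phi$ by hand. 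This is considerably longer, but it buys something: it does not use homogeneity of $T_{m,n}$ at all, it works in any block graph, and it produces a canonical isomorphism extending the given isometry $\phi$. One caveat on the write-up: the claim that ``geodesics are gated in block graphs'' is false as literally stated --- for an edge $ce$ inside a triangle block, the third vertex $a$ of that block has no vertex gate on $\{c,e\}$, and your $j=\tfrac12(k+d(a,c)-d(a,e))$ is then a half-integer, so there are two closest points rather than a unique gate. What is true, and all your argument actually needs, is the identity $d(a,x_t)=\tfrac12(d(a,c)+d(a,e)-k)+\lvert t-j\rvert$ with $j$ possibly half-integral (a ``formal midpoint gate''); this does follow from the block--cut tree case analysis you sketch, and with that reading the well-definedness of $\Phi$ and the preservation of distances go through.
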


\begin{proof}
It is enough to show that for every pair $u,v$ in $\str{A}$, there is exactly one path in $T_{n,m}$ witnessing its distance $d(u,v) =c$. Then clearly $(G_A,E)$ is uniquely determined by $\str{A}$; the second statement follows from the homogeneity of $(T_{n,m},d)$.

For a contradiction, assume that $c \geq 2$ and that there are two paths $u = x_0, x_1, x_2, \ldots, x_c = v$ and $u = y_0, y_1, y_2, \ldots, y_c = v$. Since the two paths are not equal, there have to be indices $i < k < c$ such that $x_i = y_i$, $x_{i+1} \neq y_{i+1}$, $\ldots$, $x_k \neq y_k$ and $x_{k+1} = y_{k+1}$. Then, by definition of $T_{n,m}$, the set $\{x_i, x_{i+1}, \ldots x_k\} \cup \{y_{i+1}, \ldots y_k\} $ has to be a clique. But this implies that $d(u,v) < c$, which contradicts our assumptions.
\end{proof}

\begin{thm}
For every $m,n$, the metric space associated with tree-like graph $T_{m,n}$ has no stationary independence relation. It has a local stationary independence relation if and only if $m = \infty$ and $n \in \{2,3,\infty\}$.
\end{thm}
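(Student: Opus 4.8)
The plan is to split the statement into its two assertions and reduce both, via Theorem~\ref{thm:canonicalamalg}, to questions about (local) canonical amalgamation operators on $\Age(T_{m,n})$. For the \emph{nonexistence of a stationary independence relation} I would argue uniformly for all $m,n$, using only amalgamation over the empty structure. Suppose for contradiction that $T_{m,n}$ carries a SIR; by Theorem~\ref{thm:canonicalamalg} this yields a canonical amalgamation operator $\oplus$ defined in particular over the empty set. Let $k$ be the distance between the two points of the amalgam $\str{P}\oplus_\emptyset \str{P}$ of two one-point structures, with the convention $k=0$ if they are identified; by Invariance $k$ is well defined. Since every $T_{m,n}$ has infinite diameter, I can choose $b_1,b_2$ with $d(b_1,b_2)=2k+1$ and put $\str{B}=\{b_1,b_2\}$, $\str{A}=\{a\}$. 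Since any one-point structure is isomorphic to $\str{P}$, the Monotonicity clause applied to the restrictions of $\str{A}\oplus_\emptyset\str{B}$ to $\{a,b_1\}$ and to $\{a,b_2\}$ forces $d(a,b_1)=d(a,b_2)=k$. If $k=0$ this gives $b_1=a=b_2$, contradicting $d(b_1,b_2)=1$; if $k\ge 1$ the triangle inequality gives $2k+1=d(b_1,b_2)\le d(a,b_1)+d(a,b_2)=2k$, again a contradiction. Hence no SIR exists for any $m,n$.

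For the \emph{characterisation of the local SIR}, the forward implication is already available: if $m<\infty$ or $n\notin\{2,3,\infty\}$, Lemma~\ref{lem:generictreelike} shows there is no local SIR. So it remains to construct a local SIR when $m=\infty$ and $n\in\{2,3,\infty\}$, and by Theorem~\ref{thm:canonicalamalg} it suffices to exhibit a local canonical amalgamation operator $\oplus$ on $\Age(T_{m,n})$. I would build it on the level of witnessing subgraphs. Using Lemma~\ref{lem:uniquetreelike}, identify $\str{A}$, $\str{B}$ and the common nonempty $\str{C}$ with their uniquely determined witnessing subtrees-of-cliques $G_{\struc{AC}}$, $G_{\struc{BC}}$, $G_{C}$ inside $T_{m,n}$. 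Then I would define $\struc{AC}\oplus_{\struc{C}}\struc{BC}$ to be the subspace whose witnessing graph is obtained by gluing $G_{\struc{AC}}$ and $G_{\struc{BC}}$ along $G_{C}$ so that the two copies overlap only in $G_{C}$ together with the blocks that are \emph{forced} to coincide, namely those blocks of $T_{m,n}$ already present in $G_{C}$ and extended by new vertices on both sides. All remaining branches are reattached through fresh blocks, which is possible precisely because $m=\infty$ supplies infinitely many blocks at each vertex.

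The role of the restriction $n\in\{2,3,\infty\}$ is that in each of these cases the forced overlaps are canonical, leaving no choice in the gluing. For $n=\infty$ a shared block is an infinite clique, so the $A$-extension and the $B$-extension can coexist in it while meeting only in $G_{C}$; for $n=2$ a block is a full edge admitting no extension beyond $G_{C}$, so no forcing occurs and every branch is genuinely fresh; and for $n=3$ a block is a triangle, so an edge of $G_{C}$ has a unique possible third vertex, whence simultaneous extension on both sides is forced to identify those third vertices, again with no choice. This is exactly where the argument breaks down for $n\ge 4$: a partially filled block can there be completed in several inequivalent ways, the asymmetry exploited in Lemma~\ref{lem:generictreelike}.

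Having fixed the gluing I would check that the result lies in $\Age(T_{m,n})$ and is the unique minimal amalgam, and then verify the Monotonicity and Associativity axioms of a canonical operator; both should reduce to the uniqueness of geodesics in $T_{m,n}$ (Lemma~\ref{lem:uniquetreelike}), so that restricting or re-bracketing the gluing commutes with passing to witnessing subgraphs, and the induced relation $\str{A}\ind_{\str{C}}\str{B}$ holds essentially when the witnessing trees of $\struc{AC}$ and $\struc{BC}$ meet minimally over $\str{C}$. The main obstacle I anticipate is precisely this well-definedness: proving that the canonical gluing is unique up to isomorphism fixing $G_{C}$, and that the forced identifications in the $n=2,3$ cases remain compatible with Monotonicity and Associativity. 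This is the step where both hypotheses $m=\infty$ and $n\in\{2,3,\infty\}$ are genuinely used, and where one must rule out the pathologies isolated in Lemma~\ref{lem:generictreelike}.
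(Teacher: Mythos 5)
Your proposal is correct, and its two halves relate to the paper's proof differently. For the nonexistence of a global SIR your argument takes a genuinely different and more elementary route: the paper amalgamates a single point $b$ over the empty set with the path $a_1,a_2,a_3$ ($d(a_1,a_2)=d(a_2,a_3)=1$, $d(a_1,a_3)=2$) and invokes the uniqueness of witnessing paths (Lemma~\ref{lem:uniquetreelike}) to show that $b$ cannot be equidistant from all three vertices, so monotonicity fails; you instead extract the fixed distance $k$ realised by $\str{P}\oplus_\emptyset\str{P}$ and defeat it with a pair at distance $2k+1$, using only the triangle inequality and the unboundedness of the diameter. This is cleaner and proves strictly more --- it rules out a global SIR for every homogeneous metric space of infinite diameter, consistent with what the paper states separately for $\mathcal A^\infty_{K_1,\infty,\infty,\infty}$ --- whereas the paper's version is tailored to the tree-like structure. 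For the local SIR characterisation you follow the same route as the paper: Lemma~\ref{lem:generictreelike} for necessity, and, for $m=\infty$ and $n\in\{2,3,\infty\}$, a local canonical amalgamation operator built from the witnessing subgraphs of Lemma~\ref{lem:uniquetreelike}. Your three gluing rules (free amalgam of trees for $n=2$, identification of the forced third vertices of shared edges for $n=3$, completion of edge-sharing cliques to a single block for $n=\infty$) coincide with the paper's constructions. Like you, the paper asserts rather than verifies monotonicity and associativity of these operators, so your level of detail is comparable; the point you flag as the remaining obstacle --- that Lemma~\ref{lem:uniquetreelike} makes the forced identifications independent of the order of gluing --- is indeed exactly what a full verification would rest on.
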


\begin{proof}
By Lemma \ref{lem:generictreelike} the statement about local SIR holds for all cases where $m \neq \infty$ or $n \notin \{2,3, \infty \}$.

In order to prove that $(T_{m,n},d)$ has no SIR we show that there is no canonical symmetric amalgamation operator on $\mathcal A_{T_{m,n}}$ (cf. Theorem \ref{thm:canonicalamalg}). Assume there is such an operator $\oplus$. Let $\str{A} = \{a_1,a_2,a_3\}$ with $d(a_1,a_2)=d(a_2,a_3)=1$ and $d(a_1,a_3) = 2$ and let $\str{B}$ consist of a single point $b$. We claim that then, no matter how the amalgam $\str{A} \oplus \str{B}$ is formed, we have $d(b,a_1) \neq d(b,a_2)$ or $d(b,a_1) \neq d(b,a_3)$. In order to prove this claim, let us assume that $d(b,a_1) = d(b,a_3)$. By Lemma \ref{lem:uniquetreelike} the graph witnessing the distances in $\str{A} \oplus_{\empty} \str{B}$ has to consist of a path from $b$ to $a_2$ and the two edges $d(a_2,a_3) = d(a_2,a_1) = 1$. Hence $d(b,a_2) = d(b,a_1)-1$, which proves our claim. By the claim and monotonicity of $\oplus$, $\{a_1\} \oplus_{\empty} \str{B}$ is non-isomorphic to $\{a_i\} \oplus_{\empty} \str{B}$ for some $i \in \{2,3\}$. But this contradicts the assumption that $\oplus$ is an amalgamation operator.

It is only left to show that for $m = \infty$ and $n \in \{2,3,\infty\}$, there is a local stationary independence relation on  $(T_{m,n},d)$. We do so again by finding a local canonical symmetric amalgamation operator. So let $\str{A}, \str{B}, \str{C}$ be non-empty substructures of $(T_{m,n},d)$ such that $e_1\colon  \str{C} \to \str{A}$ and $e_2\colon  \str{C} \to \str{B}$ are embeddings. By Lemma \ref{lem:uniquetreelike} we can assume that every distance in the structures $\str{A}, \str{B}, \str{C}$ is witnessed by a path (so informally we can think about $\str{A}, \str{B}, \str{C}$ as the underlying tree-like graphs instead of the metric spaces).

If $m = \infty$ and $n = 2$ we define $\str{A} \oplus_{\str{C}} \str{B}$ to be the free amalgam of the trees $\str{A}$ and $\str{B}$ over $\str{C}$. This graph is again a tree and thus in $\mathcal A_{T_{\infty,2}}$. It is easy to see that this symmetric amalgamation operator is associative and monotone.

If $m = \infty$ and $n = 3$ we construct $\str{A} \oplus_{\str{C}} \str{B}$ again by first forming the free amalgam of the graphs $\str{A}$ and $\str{B}$ over $\str{C}$. This free amalgam might contain $c_1,c_2,a,b$, such that $a,c_1,c_2$ is a 3-clique and $b,c_1,c_2$ is a 3-clique, but there is no edge between $a$ and $b$. In this case we identify $a$ and $b$. Again this symmetric amalgamation operator is associative and monotone.

If $m = \infty$ and $n = \infty$ we construct $\str{A} \oplus_{\str{C}} \str{B}$ by first forming the free amalgam of the graphs $\str{{A}}$ and $ \str{{B}}$ over $\str{C}$. Then we add edges to complete all the subgraphs $A' \cup B'$ with $A' \subseteq A$ and $B' \subseteq B$, such that $A'$ and $B'$ are cliques of size $\geq 3$ and share at least one edge. Again this symmetric amalgamation operator is associative and monotone.
\end{proof}
\section{Corollaries}
While the main focus of our work is on the combinatorial properties of
metrically homogeneous graphs, let us briefly discuss the corollaries of our
results for the automorphism group of the \Fraisse{} limits.  This section is not meant to be
exhaustive and we refer the reader to a recent survey~\cite{NVT14} and thesis~\cite{Siniora2}
which discuss many of the topics in a greater detail.
\subsection{Ample generics}
\label{sec:ample}
The automorphism group of the \Fraisse{} limits of amalgamation classes in the
catalogue can be seen as Polish groups when endowed with the pointwise convergence
topology. A Polish group has {\em generic automorphisms} if it contains a comeagre conjugacy class.
A Polish group has {\em ample generics} if it has a comeagre diagonal conjugacy class in every dimension.

Known examples of structures with ample generics include $\omega$-stable $\omega$-categorical structures \cite{hodges1993b}, the random graph~\cite{hodges1993b,hrushovski1992}, the homogeneous $K_n$-free graph~\cite{herwig1998}, the rational Urysohn space~\cite{solecki2005,vershik2008}, free homogeneous structures over finite relational languages~\cite{Siniora} and Philip Hall's locally finite universal group~\cite{Siniora2}. 

Our study of coherent EPPA is directly motivated by the following result:
\begin{thm}[Hodges--Hodkinson--Lascar--Shelah~\cite{hodges1993b}, see also Siniora--Solecki~\cite{Siniora2}]
Suppose that $\str{M}$ is a homogeneous structure such that $\Age(\str{M})$ has both EPPA and APA. Then $\str{M}$ has ample generics.
\end{thm}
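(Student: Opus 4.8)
The plan is to derive the result from the Kechris--Rosendal characterisation of generic tuples of automorphisms~\cite{Kechris2007}, applied one dimension at a time, since possessing a comeagre diagonal conjugacy class in every dimension $n$ is exactly what ample generics means. For a fixed $n$ I would work in the category $\mathcal{K}_p^{(n)}$ whose objects are pairs $(\str{A},\bar p)$ with $\str{A}\in\Age(\str{M})$ and $\bar p=(p_1,\dots,p_n)$ a tuple of partial automorphisms of $\str{A}$, and whose morphisms $(\str{A},\bar p)\to(\str{B},\bar q)$ are embeddings $\iota\colon\str{A}\to\str{B}$ satisfying $\iota\circ p_i\subseteq q_i\circ\iota$ for each $i$. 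By~\cite{Kechris2007}, $\Aut(\str{M})$ has a comeagre orbit on $\Aut(\str{M})^n$ precisely when $\mathcal{K}_p^{(n)}$ has the joint embedding property (JEP) and the weak amalgamation property (WAP), so the whole theorem reduces to verifying these two properties for every $n$.

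The JEP is the easy half. Given two systems $(\str{A},\bar p)$ and $(\str{B},\bar q)$, I would use the JEP of $\Age(\str{M})$ to place disjoint copies of $\str{A}$ and $\str{B}$ inside a single $\str{C}_0\in\Age(\str{M})$; then $r_i:=p_i\cup q_i$ is a partial automorphism of $\str{C}_0$ (the two pieces have disjoint domains and disjoint ranges), and $(\str{C}_0,\bar r)$ jointly embeds both systems.

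The WAP is where EPPA and APA enter, and it is the main obstacle. Starting from a system $S_0=(\str{A}_0,\bar p_0)$, I would invoke EPPA to find $\str{A}\in\Age(\str{M})$ containing $\str{A}_0$ in which every $p_{0,i}$ extends to a genuine automorphism $\hat p_i\in\Aut(\str{A})$, and set $S=(\str{A},(\hat p_1,\dots,\hat p_n))$. The point of passing to such an EPPA-witness is that in any extension $(\str{B},\bar q)$ of $S$ the partial automorphism $q_i$ restricts on the overlap $\str{A}$ to the \emph{total} map $\hat p_i$; hence for two extensions $(\str{B}_1,\bar q_1)$ and $(\str{B}_2,\bar q_2)$ of $S$ the maps $q_{1,i}$ and $q_{2,i}$ agree on $\str{A}$. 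I would then amalgamate $\str{B}_1$ and $\str{B}_2$ over $\str{A}$, and here APA is exactly the hypothesis that lets the combined map $r_i:=q_{1,i}\cup q_{2,i}$ be realised on the amalgam: it guarantees an amalgam whose automorphism group is that of the free amalgam, so that automorphisms of the two factors agreeing on the base extend simultaneously. The resulting $(\str{C},\bar r)$ amalgamates the two extensions over $S$, which yields WAP.

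The delicate step I expect to cost the most work is checking that $r_i=q_{1,i}\cup q_{2,i}$ is a partial \emph{isomorphism} of the chosen amalgam and not merely a bijection of vertex sets: one must ensure that no relation of $\str{C}$ links $\dom(q_{1,i})$ to $\dom(q_{2,i})$ in a way that $r_i$ fails to preserve. This is precisely why the free amalgam is needed, and why APA is phrased in terms of the automorphism group of the free amalgam; with it in hand each $r_i$ (or, if needed, a further EPPA extension of it) is a partial automorphism, and the argument closes. Combining JEP and WAP for all $n$ through~\cite{Kechris2007} produces a comeagre diagonal conjugacy class in every dimension, i.e.\ ample generics; this is the route of~\cite{hodges1993b}, refined by Siniora and Solecki~\cite{Siniora2}.
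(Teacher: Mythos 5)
The paper gives no proof of this statement; it is quoted from~\cite{hodges1993b} and~\cite{Siniora2}, so I am comparing your proposal against the standard argument in those references. Your route is the right one and is essentially theirs: reduce ample generics via the Kechris--Rosendal criterion to JEP and WAP of the classes of $n$-systems of partial automorphisms, use EPPA to show that the systems whose automorphisms are total are cofinal, and use APA to amalgamate two such total systems over a common one, which yields the cofinal amalgamation property and hence WAP.

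There are, however, two genuine soft spots. First, your JEP argument fails as stated for the classes this paper actually concerns. Placing ``disjoint copies'' of $\str{A}$ and $\str{B}$ inside $\str{C}_0$ is not available for metric spaces: any joint embedding must assign distances between the two copies, and $r_i:=p_i\cup q_i$ must then preserve those cross-distances, which disjointness of domains and ranges alone does not guarantee (take $p\colon a_1\mapsto a_2$ and $q\colon b_1\mapsto b_2$; you need $d(a_1,b_1)=d(a_2,b_2)$). The repair is to choose a canonical joint embedding in which all cross-distances coincide (for instance the magic distance $M$), or to first apply EPPA to each system and then invoke APA over the empty (or a one-point) base. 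Second, APA as defined in Section~\ref{sec:ample} extends pairs of \emph{total} automorphisms $f\in\Aut(\str{B}_1)$, $g\in\Aut(\str{B}_2)$ agreeing on the base, whereas your WAP step feeds it the partial automorphisms $q_{1,i}$, $q_{2,i}$ of the two extensions. You need one more round of EPPA to replace each $(\str{B}_j,\bar q_j)$ by a system with total automorphisms before APA applies; your closing parenthetical gestures at this, but it is not optional --- without it APA says nothing. With these two repairs the argument closes and coincides with the proof in the cited sources.
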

Where we say a class $\K$ of finite structures has the {\em amalgamation property
with automorphisms (APA)} if whenever $\str{A},\str{B}_1,\str B_2\in \K$ with embeddings $\alpha_1\colon \str{A}\to\str{B}_1$ and $\alpha_2\colon \str{A}\to\str{B}_2$ then there is a structure $\str{C}\in\K$ with embeddings $\beta_1\colon \str{B}_1\to\str{C}$ and $\beta_2\colon \str{B}_2\to\str{C}$ such that $\beta_1\circ\alpha_1=\beta_2\circ\alpha_2$ and whenever $f\in \Aut(\str{B}_1)$ and $g\in \Aut(\str{B}_2)$ such that $f\circ\alpha_1(\str{A})=\alpha_1(\str{A})$, $g\circ \alpha_2(\str{A})=\alpha_2(\str{A})$, and for every $a\in \str{A}$ we have $\alpha_1^{-1}\circ f\circ \alpha_1(a)=\alpha_2^{-1}\circ g\circ\alpha_2(a)$, then there exists $h\in \Aut(\str{C})$ which extends $\beta_1\circ f\circ\beta^{-1}_1 \cup \beta_2\circ g\circ \beta^{-1}_2$.
Observe that APA follows from our canonical amalgamation and the automorphism preservation lemma for
all classes where we shown the coherent EPPA. As a corollary of Theorem~\ref{thm:EPPAmain} we thus
extended the list of known structures with ample generics by many new examples.

Ample generics are a powerful tool to prove additional properties.
Among multiple consequences of the ample generic we list the following:
\begin{thm}[Kechris--Rosendal~\cite{Kechris2007}]
Suppose that $G$ is a Polish group with ample generics.  Then $G$ has the small index property.
\end{thm}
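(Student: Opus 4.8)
The plan is to prove the contrapositive: if a subgroup $H\le G$ is \emph{not} open, then its index $[G:H]$ is already $2^{\aleph_0}$, so that every subgroup of index $<2^{\aleph_0}$ must be open. I work in the relevant non-archimedean case $G=\Aut(\str{M})$ for a countable homogeneous $\str{M}$ (this is what the corollaries need, and the abstract Polish case is the same once one fixes a countable neighbourhood basis of the identity consisting of open subgroups). Then the pointwise stabilisers $G_{\bar a}$ of finite tuples $\bar a$ form a neighbourhood basis at $1$, and a subgroup is open exactly when it contains some $G_{\bar a}$. Hence $H$ not open means $G_{\bar a}\not\subseteq H$ for \emph{every} finite tuple $\bar a$.

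The core of the argument is a perfect-tree (fusion) construction producing continuum-many pairwise distinct left cosets of $H$. Fix an enumeration $a_0,a_1,\dots$ of the domain of $\str{M}$, write $\bar a_n=(a_0,\dots,a_{n-1})$, so that the open subgroups $G_{\bar a_n}$ decrease to $\{1\}$. I would build a family $(g_s)_{s\in 2^{<\omega}}$ of elements of $G$ such that (i) $g_{s'}\in g_s\,G_{\bar a_{|s|}}$ whenever $s$ is an initial segment of $s'$, guaranteeing that along each branch $x\in 2^{\omega}$ the finite approximations $g_s$ ($s\subset x$) form a Cauchy net converging to some $g_x\in G$; and (ii) for incomparable $s,t$ the cosets $g_sH$ and $g_tH$ are forced to remain distinct all the way to the limit, so that $x\ne y$ implies $g_xH\ne g_yH$. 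Since $2^{\omega}$ has $2^{\aleph_0}$ branches, this yields $[G:H]\ge 2^{\aleph_0}$ and closes the contrapositive.

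At a branching node $s$ of length $n$ I would use $G_{\bar a_n}\not\subseteq H$ to pick a witness $w\in G_{\bar a_n}\setminus H$ and split $g_{s0}:=g_s$, $g_{s1}:=g_sw$, so that $g_{s0}^{-1}g_{s1}=w\notin H$. The entire difficulty, and the place where the hypothesis of \emph{ample} generics is indispensable, is clause (ii): when the two sub-branches are later refined by elements of the smaller stabilisers $G_{\bar a_m}$ ($m>n$), the difference $g_x^{-1}g_y$ becomes a product of conjugates of the separating witnesses by these refinements, and a careless choice could conjugate a witness back into $H$. Controlling this requires choosing the refinements at all nodes \emph{coherently}, so that the growing finite conjugacy data can be amalgamated while the separating witnesses survive to the limit. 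This simultaneous amalgamation of finite approximations at every level is exactly what the comeagre diagonal conjugacy classes in all powers $G^n$ (ample generics) supply, via a Baire-category argument carried out inside each open subgroup $G_{\bar a_n}$, using the standard fact that open subgroups of a group with ample generics again have ample generics.

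I expect step (ii)---the coherent, category-driven choice of refinements keeping the tree's cosets separated through the limit---to be the main obstacle; clause (i) and the initial splitting are routine. An alternative packaging would reduce the statement, via Pettis' theorem, to showing that a small-index subgroup is non-meagre with the Baire property; but since the index may exceed $\mathrm{cov}(\mathcal{M})$, the naive ``a union of fewer than $2^{\aleph_0}$ meagre cosets cannot cover the Baire space $G$'' argument fails outright, and one is driven back to the same genericity input. This is precisely why ample generics, rather than bare Baire category, is the right hypothesis, and why the threshold in the small index property is $2^{\aleph_0}$ rather than $\aleph_1$.
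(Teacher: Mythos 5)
The paper never proves this statement: it is imported verbatim from Kechris--Rosendal \cite{Kechris2007} (the argument goes back to Hodges--Hodkinson--Lascar--Shelah), so there is no in-paper proof to compare against. Judged on its own terms, your outline does reproduce the architecture of the known proof --- work in the contrapositive, build a Cantor scheme $(g_s)_{s\in 2^{<\omega}}$ whose branches converge, insert separating witnesses $w\in G_{\bar a_n}\setminus H$ at branching nodes, and you correctly diagnose why the naive ``fewer than $2^{\aleph_0}$ meagre cosets cannot cover $G$'' argument fails (the index may exceed $\mathrm{cov}(\mathcal M)$).

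The difficulty is that the proposal stops exactly where the content of the theorem begins. Your clause (ii) --- guaranteeing that $g_x^{-1}g_y=u_x^{-1}w\,u_y$ stays outside $H$ as the refinements $u_x,u_y$ accumulate along the two branches --- \emph{is} the theorem, and you defer it with ``this is exactly what ample generics supplies, via a Baire-category argument carried out inside each open subgroup.'' No such argument is given: you do not specify which comeagre set in which power $G^n$ the witnesses and refinements are to be drawn from, nor how the fewer than $2^{\aleph_0}$ cosets of $H$ are played against that comeagre set. In the Kechris--Rosendal/HHLS proof this is carried by a concrete key lemma (a subgroup of index $<2^{\aleph_0}$ must be comeagre in some open set, the Cantor scheme being run against the negation of this; Pettis then yields openness), and that lemma is where all the work lies. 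As written, your sketch is a plan with the engine missing. A secondary defect: the reduction of the general Polish case to the non-archimedean one by ``fixing a countable neighbourhood basis of the identity consisting of open subgroups'' is vacuous, since such a basis exists precisely when $G$ is non-archimedean; this is harmless for the paper's applications, which all concern $\Aut(\str{M})$ for countable $\str{M}$, but it does not prove the theorem as stated for arbitrary Polish groups and should be flagged as a restriction rather than asserted as a routine equivalence.
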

\begin{thm}[Kechris--Rosendal~\cite{Kechris2007}]
Suppose that $\str{M}$ is an $\omega$-categorical structure with ample generics.  Then $\Aut(\str{M})$ has uncountable cofinality and 21-Bergman property.
\end{thm}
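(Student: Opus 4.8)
The plan is to derive both conclusions from the single structural input of ample generics, combined with Baire-category (Pettis-type) arguments, following the route of Kechris and Rosendal. The conjunction of uncountable cofinality and the Bergman property is what is often called \emph{strong uncountable cofinality}, and the cleanest strategy is to isolate one master lemma from which both halves follow. For the setup, note that since $\str{M}$ is $\omega$-categorical, Ryll-Nardzewski's theorem makes $G = \Aut(\str{M})$ oligomorphic, and its open subgroups are precisely the finite-index overgroups of pointwise stabilizers of finite tuples; with the pointwise convergence topology $G$ is Polish, and ample generics provides, for each $n$, a comeagre orbit for the diagonal conjugation action of $G$ on $G^n$.

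First I would establish the master lemma: there is a fixed integer $N$ (this is precisely where the explicit constant $21$ enters) such that every symmetric subset $W \ni 1$ of $G$ that is non-meager and has the Baire property satisfies $W^N = G$. The two ingredients are, first, Pettis' theorem, which guarantees that such a $W$ has $W^2 \supseteq WW^{-1}$ containing an open neighbourhood $U$ of the identity; and second, the comeagre diagonal conjugacy class, which lets me cover $G$ by a \emph{bounded} number of conjugates of $U$. Concretely, I would fix a generic tuple $\bar h$ whose diagonal conjugacy class is comeagre, and then show that an arbitrary $g \in G$ decomposes as a product of absolutely boundedly many conjugates of elements of $U$, using that the comeagre class meets every nonempty open set and that conjugation is available inside $W$. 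Careful bookkeeping of the number of factors is what produces the explicit bound $N = 21$.

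The Bergman property is then immediate: given an exhaustive increasing chain $W_0 \subseteq W_1 \subseteq \cdots$ with $\bigcup_n W_n = G$ and each $W_n$ symmetric containing $1$, Baire category (applied to the Polish, hence non-meagre, group $G$) forces some $W_n$ to be non-meagre, and after securing the Baire property the master lemma yields $W_n^{21} = G$. For uncountable cofinality, I would argue by contradiction: if $G = \bigcup_n V_n$ with $(V_n)$ a strictly increasing chain of proper subgroups, then again some $V_n$ is non-meagre, a non-meagre subgroup with the Baire property is open by Pettis, and the master lemma gives $V_n^{21} = V_n = G$ (a subgroup being closed under products), contradicting properness.

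I expect the main obstacle to be twofold. The sharper difficulty is extracting the \emph{uniform} bound $N = 21$: the covering of $G$ by conjugates of $U$ must use a number of factors independent both of $W$ and of the target $g$, and this is exactly the combinatorial heart where ample generics in higher dimension (not merely the existence of one comeagre conjugacy class) is essential. The second, more technical, obstacle is the Baire-property issue in the cofinality argument: an arbitrary subgroup $V_n$ need not have the Baire property, so one must either show that a non-meagre subgroup containing a comeagre-in-some-open-set piece of the generic class is automatically open, or reformulate the statement as strong uncountable cofinality for arbitrary symmetric exhaustions and discharge the measurability there. Resolving how Pettis' theorem is legitimately applied to the $V_n$ is where the real care lies.
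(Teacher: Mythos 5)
This statement is not proved in the paper at all: it is quoted verbatim as an external result of Kechris and Rosendal~\cite{Kechris2007} and used as a black box in the section on ample generics, so there is no in-paper proof to compare your attempt against. Judged on its own terms, your outline does follow the general shape of the Kechris--Rosendal argument (a master lemma giving a uniform exponent for ``large'' symmetric sets, then Baire category to locate a large term of the chain), and you have correctly identified both where the constant $21$ must come from and where $\omega$-categoricity (oligomorphicity) enters.

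However, there is a genuine gap at exactly the point you flag and then set aside. Your master lemma is stated for symmetric sets that are \emph{non-meagre with the Baire property}, and both applications (to $W_n$ in the Bergman chain and to the subgroups $V_n$ in the cofinality argument) invoke Pettis' theorem. Baire category does guarantee that some term of an exhaustive countable chain is non-meagre, but it gives you no Baire property, and for an arbitrary symmetric exhaustion there is no way to ``secure'' it: a non-meagre subgroup without the Baire property need not be open, and Pettis simply does not apply. This is not a removable technicality but the crux of the theorem. The actual Kechris--Rosendal mechanism avoids it by working with \emph{countable syndeticity} instead: one shows that if no $W_n^{21}$ equals $G$ then one can build, by a tree/diagonalisation argument carried out against the comeagre diagonal conjugacy classes in the powers $G^n$, a witness contradicting ample generics; the Baire-category reasoning is applied to the (genuinely comeagre, hence Baire-measurable) generic classes, not to the sets $W_n$ themselves. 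Likewise the uniform covering of $G$ by boundedly many conjugates is where oligomorphicity is consumed, and your sketch mentions it without using it. As written, the proposal reduces the theorem to a lemma whose hypotheses cannot be verified for the objects it is applied to, so the argument does not close.
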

\begin{thm}[Kechris--Rosendal~\cite{Kechris2007}]
Suppose that $\str{M}$ is an $\omega$-categorical structure with ample generics.  Then $\Aut(\str{M})$ has Serre's property (FA).
\end{thm}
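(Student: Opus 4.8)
The plan is to deduce property (FA) from the 21-Bergman property recorded in the immediately preceding theorem, by first upgrading it to the statement that \emph{every length function on $G:=\Aut(\str{M})$ is bounded}, and then observing that a group with this ``strong boundedness'' property fixes a point of every tree on which it acts. Recall that a length function is a map $\ell\colon G\to[0,\infty)$ with $\ell(1)=0$, $\ell(g^{-1})=\ell(g)$ and $\ell(gh)\le \ell(g)+\ell(h)$; these are relevant because property (FA) is a statement about actions on trees, and a tree is a metric space, so it suffices to control all isometric actions of $G$ on metric spaces.

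First I would show that the 21-Bergman property forces every length function to be bounded. Given $\ell$, set $V_n=\{g\in G:\ell(g)\le 2^n\}$. Each $V_n$ is symmetric (since $\ell(g^{-1})=\ell(g)$), contains the identity for $n$ large, the chain is increasing and exhaustive (as $\ell$ is finite-valued), and $V_n\cdot V_n\subseteq V_{n+1}$ by subadditivity of $\ell$. The 21-Bergman property then produces an $n$ with $G=V_n^{21}$, whence $\ell(g)\le 21\cdot 2^n$ for all $g$; thus $\ell$ is bounded. Consequently, for any isometric action of $G$ on a metric space $(X,d)$ and any basepoint $x_0$, the function $g\mapsto d(x_0,gx_0)$ is a bounded length function, so the orbit $G\cdot x_0$ is bounded.

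Applying this to a tree $T$ (with its graph, resp.\ geodesic, metric) on which $G$ acts without inversions, I obtain a bounded orbit, hence a nonempty bounded $G$-invariant set. Its circumcentre --- the centre of the smallest enclosing ball, which exists and is unique in a tree --- is canonical and therefore fixed by $G$. If this centre is a vertex we are done; if it is the midpoint of an edge $\{u,v\}$ then $G$ preserves that edge setwise, and since the action is without inversions no element exchanges $u$ and $v$, so $G$ fixes both $u$ and $v$. Either way $G$ fixes a vertex, which is precisely property (FA). (Equivalently, one could invoke Serre's three-part criterion and derive the absence of a $\mathbb{Z}$-quotient and of a nontrivial amalgam decomposition from strong boundedness together with the uncountable cofinality supplied by the preceding theorem; the direct metric argument above is shorter.)

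I expect the only genuinely delicate point to be the last one: passing from a bounded orbit to an actual fixed \emph{vertex}, i.e.\ the circumcentre argument together with the correct bookkeeping of the no-inversion hypothesis in the edge-midpoint case. The implication ``21-Bergman $\Rightarrow$ all length functions bounded'' is routine once the filtration is arranged so that $V_n^2\subseteq V_{n+1}$, and the reduction of (FA) to controlling isometric actions on the tree metric is immediate. Thus the substance of the argument is carried entirely by the previously established uncountable-cofinality-and-Bergman theorem, and what remains here is the general-nonsense passage from strong boundedness to fixed points on trees.
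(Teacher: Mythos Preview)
The paper does not prove this theorem; it merely quotes it from Kechris--Rosendal~\cite{Kechris2007} as a black box, alongside the companion statements about the small index property and the 21-Bergman property. There is therefore no ``paper's own proof'' to compare against.

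Your sketch is a correct reconstruction of the standard route: the 21-Bergman property (together with uncountable cofinality, which you implicitly use to guarantee the chain $V_n$ terminates at some finite stage) yields boundedness of every length function, i.e.\ strong boundedness in the sense of Bergman; and strong boundedness forces every isometric action on a tree to have bounded orbits, whence a fixed point by the circumcentre (equivalently, Bruhat--Tits) argument. One small point worth tightening: your invocation of the 21-Bergman property for the chain $(V_n)$ presumes the chain formulation (for every exhaustive symmetric chain there is $n$ with $V_n^{21}=G$), which is indeed what Kechris--Rosendal establish, but you should be explicit that uncountable cofinality is part of the package---without it a countable exhaustive chain of proper subgroups would already witness failure. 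Otherwise the argument is sound and is essentially the one in the cited reference.
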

Consequently we also gave a number of new examples of structures whose automorphism group has small index property, uncountable cofinality, 21-Bergman property and Serre's property (FA).

\subsection{Amenability and unique ergodicity}

In this section, let $G = \Aut(\text{Flim}(\mathcal{K}))$, be the automorphism group of the \Fraisse{} limit of the class $\mathcal{K}$ endowed with the topology of pointwise convergence. In the following we set up the machinery needed to discuss when $G$ is amenable (its universal minimal flow has a $G$-invariant measure) or uniquely ergodic (a unique such measure). See \cite{NVT14} and \cite{AKL14} for a more in depth discussion of the definitions and their history.

We now state precise definitions and lemmas which capture the idea that ``amenability and unique ergodicity can sometimes be shown by counting finite quantities related to the number of Ramsey expansions of some finite structures." We have in mind the example that arbitrary linearly ordered metric graphs (with a fixed set of parameters) is a precompact Ramsey expansion of the corresponding metric graphs (see Theorem~\ref{thm:ramseyall}).

Let $L \sse L^+$ be languages. Let $\K$ and $\K^+$ be classes of structures in $L$ and $L^+$ respectively such that $\K^+\vert L = \K$ (i.e. $\K^+$ is an expansion of $\K$). If $\str{B} \in \K^+$ then we write $\str{B}^+ = \expand{\str{A}}$ where $\str{A} = \str{B}^+ \vert L$ and $\str{A}^+ = \str{B}^+ \vert (L^+ \setminus L)$. Colloquially, ``$\str{A}$ is the old stuff, and $\str{A}^+$ is the new stuff'' when we use the representation $\expand{\str{A}}$. 

Let $\str{A} \leq \str{B}$ (where by $\leq$ we mean that $A\subseteq B$ and the relations induced by $\str{B}$ on $A$ give precisely $\str{A}$) be structures in $\K$ and let $\expand{\str{A}} \in \K^+$. Then we write:
\[
	\Nexp_{\K^+} (\str{A}^+, \str{B}) := \left\vert \{ \str{B}^+ : \expand{\str{A}} \leq \expand{\str{B}} \in \K^+ \}\right\vert,
\]
which is the number of expansions of $\str{B}$ in $\K^+$ that extend $\expand{\str{A}}$. If there is no confusion then we write $\Nexp(\str{A}^+,\str{B})$.

The following black-box lemma reduces checking amenability to a counting argument.

\begin{lem}[Angel--Kechris--Lyons~\cite{AKL14}, Pawliuk--Soki{\'c}~\cite{PawliukSokic16}]\label{lem:amenable_same_no_of_exp}
Let $\K^+$ be a precompact Ramsey expansion of $\mathcal{K}$. If for every $\str{A} \leq \str{B}$ in $\K$ and every $\expand[\prime]{\str{A}}, \expand[\prime\prime]{\str{A}} \in \K^+$ we have
\begin{align}
	\Nexp(\str{A}^\prime,\str{B}) = \Nexp(\str{A}^{\prime\prime},\str{B}) \label{ass:samenumberofexp}
\end{align}
then $\Aut(\Flim(\K))$ is amenable. 

In particular, this will happen if the values in equation (\ref{ass:samenumberofexp}) only depend on the isomorphism type of $\str{A}$ and $\str{B}$.
\end{lem}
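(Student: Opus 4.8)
The plan is to exhibit a single $G$-invariant Borel probability measure on the universal minimal flow of $G=\Aut(\Flim(\K))$ and then invoke the standard fact that amenability of a topological group is equivalent to the existence of an invariant measure on its universal minimal flow. Concretely, I would work with the space $X_{\K^+}$ of all $L^+$-structures $\str{K}^*$ on the (countable) domain of $\Flim(\K)$ whose reduct to $L$ equals $\Flim(\K)$ and whose age is contained in $\K^+$. This $X_{\K^+}$ is a nonempty compact metrizable $G$-space under the logic action, where $g\cdot\str{K}^*$ is obtained by transporting the new relations along $g$. By the Kechris--Pestov--Todorcevic correspondence, since $\K^+$ is a precompact Ramsey expansion of $\K$ with the expansion property (available for all classes treated here), $X_{\K^+}$ is exactly the universal minimal flow $M(G)$, so it suffices to build a $G$-invariant probability measure on $X_{\K^+}$.

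First I would set up the generating algebra of clopen sets. For a finite substructure $\str{A}\leq\Flim(\K)$ and an expansion $\str{A}^+\in\K^+$ of $\str{A}$, let $[\str{A}^+]=\{\str{K}^*\in X_{\K^+} : \str{K}^*\vert A=\str{A}^+\}$. These sets generate the topology, and for $\str{A}\leq\str{B}$ finite one has the disjoint decomposition $[\str{A}^+]=\bigsqcup [\str{B}^+]$ over all expansions $\str{B}^+$ of $\str{B}$ with $\str{B}^+\vert A=\str{A}^+$ (every such $\str{B}^+$ occurs, by homogeneity of the limit together with the amalgamation and joint embedding properties of $\K^+$). I would then take the candidate premeasure to be \emph{uniform}: $\mu([\str{A}^+])=1/|\K^+(\str{A})|$, where $\K^+(\str{A})$ is the (finite, by precompactness) set of expansions of $\str{A}$ in $\K^+$.

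The crux, and the only place where the hypothesis enters, is verifying that $\mu$ is consistent, i.e. $\mu([\str{A}^+])=\sum_{\str{B}^+\vert A=\str{A}^+}\mu([\str{B}^+])$. The right-hand side equals $\Nexp(\str{A}^+,\str{B})/|\K^+(\str{B})|$, while the left-hand side is $1/|\K^+(\str{A})|$. Since each expansion of $\str{B}$ restricts to exactly one expansion of $\str{A}$, we have $\sum_{\str{A}^+\in\K^+(\str{A})}\Nexp(\str{A}^+,\str{B})=|\K^+(\str{B})|$; hence the assumption that $\Nexp(\str{A}^+,\str{B})$ is independent of the choice of $\str{A}^+$ forces $\Nexp(\str{A}^+,\str{B})=|\K^+(\str{B})|/|\K^+(\str{A})|$, which is precisely the required identity. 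This is the heart of the argument; everything else is routine. By compactness of $X_{\K^+}$ finite additivity upgrades to countable additivity on the clopen algebra, and the Carathéodory extension theorem yields a Borel probability measure $\mu$.

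Finally I would check $G$-invariance and conclude. For $g\in G$ we have $g\cdot[\str{A}^+]=[g\cdot\str{A}^+]$, where $g\cdot\str{A}^+$ is an expansion of $g(\str{A})\cong\str{A}$ isomorphic to $\str{A}^+$; as $\mu$ depends only on the isomorphism type of the reduct, $\mu([g\cdot\str{A}^+])=1/|\K^+(g(\str{A}))|=1/|\K^+(\str{A})|=\mu([\str{A}^+])$, so $g_*\mu=\mu$ by uniqueness of the extension. Thus $\mu$ is a $G$-invariant probability measure on $M(G)=X_{\K^+}$, whence $G$ is amenable. The final clause is then immediate: if $\Nexp(\str{A}^+,\str{B})$ depends only on the isomorphism types of the reduct $\str{A}$ and of $\str{B}$, it in particular does not depend on which expansion $\str{A}^+$ of $\str{A}$ is chosen, so the main hypothesis holds. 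The main obstacle is not in any single calculation but in correctly installing the framework---identifying $X_{\K^+}$ with $M(G)$ and recognising that consistency of the uniform premeasure is literally the stated counting equality---after which the verification is short.
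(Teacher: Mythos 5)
The paper does not prove this lemma at all: it is quoted as a black box from Angel--Kechris--Lyons and Pawliuk--Soki\'c, so there is no in-paper argument to compare against. Your reconstruction is the standard proof from those references and is essentially correct: identify the universal minimal flow with the compact space $X_{\K^+}$ of global expansions via the KPT correspondence for precompact expansions, observe that a $G$-invariant measure on it is the same thing as a consistent, isomorphism-invariant family of probability measures on the finite expansion sets, and check that the uniform family is consistent exactly when $\Nexp(\str{A}^+,\str{B})$ is independent of $\str{A}^+$ --- your identity $\sum_{\str{A}^+}\Nexp(\str{A}^+,\str{B})=|\K^+(\str{B})|$ forcing $\Nexp(\str{A}^+,\str{B})=|\K^+(\str{B})|/|\K^+(\str{A})|$ is precisely the heart of the matter. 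Two hypotheses that the lemma as stated in the paper suppresses, but that your argument genuinely uses, deserve to be made explicit. First, the identification $X_{\K^+}=M(G)$ requires the expansion property (without it $X_{\K^+}$ is only a universal ambit containing $M(G)$ as a minimal subflow, and an invariant measure on a non-minimal flow does not by itself yield amenability); you flag this parenthetically, and it is harmless here since the paper establishes the expansion property for every expansion it uses, but it is a real hypothesis in the cited sources. Second, the clean decomposition $[\str{A}^+]=\bigsqcup[\str{B}^+]$ with \emph{all} cells nonempty needs the expansion class to be ``reasonable'' (every $\str{B}^+\in\K^+$ over $\str{A}^+$ is realised inside some global expansion); otherwise the uniform premeasure would assign positive mass to empty cylinders. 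Again this holds for all the expansion classes in the paper, so your proof is a faithful and correct account of the cited result.
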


For fixed structures $\str{A}$ and $\str{B}$ with expansions $\expand{\str{A}}$ and $\expand{\str{B}}$, define
\[
	\Nemb(\str{A},\str{B}) := \left\vert\{\str{A} : \str{A} \leq \str{B}\}\right\vert.
\]
Let $\str{H}\in\K$ with $\vert \str{H} \vert = k$ and $\mathbf G$ is a random element of $\K$ with $\vert \mathbf{G} \vert = n$ (typically $k<<n$). Let
\[
	f(\mathbf G) := \frac{\Nemb(\str{H}, \mathbf G)}{\mathbb {E}[\Nemb(\str{H}, \mathbf G)]}, \hspace{1cm}f^+(\mathbf G) := \frac{\Nexp(\str{H}^+,\mathbf{G})}{\mathbb {E}[\Nemb(\str{H}, \mathbf G)]},
\]
where $\mathbb{E}$ is the expected value. See section 7.2 of \cite{PawliukSokic16} for a more in depth discussion of these notions.

The following black-box lemma reduces checking unique ergodicity to counting three quantities.

\begin{lem}[Angel--Kechris--Lyons~\cite{AKL14}, Pawliuk--Soki{\'c}~\cite{PawliukSokic16}]\label{lem:QOP_strategy} Using the notation defined above, suppose that $\Aut(\Flim(\K))$ is amenable, that changing $\mathbf G$ by a single edge changes $f$ and $f^+$ by no more than $O(\frac{1}{n^2})$, and that the number of expansions of $\mathbf G \leq O((n!)^k)$. Then $\Aut(\Flim(\K))$ is uniquely ergodic.
\end{lem}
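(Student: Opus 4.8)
The plan is to reconstruct the concentration-of-measure argument of Angel--Kechris--Lyons~\cite{AKL14} and Pawliuk--Soki{\'c}~\cite{PawliukSokic16}, whose two moving parts are a bounded-differences inequality and a union bound over expansions. Recall that since $\K^+$ is a precompact Ramsey expansion of $\K$, the universal minimal flow of $G=\Aut(\Flim(\K))$ is the space of expansions, and by the amenability that we are already assuming it carries at least one invariant measure. Unique ergodicity then amounts to showing that this measure is forced: concretely, it suffices to prove that for the random $\mathbf G\in\K$ of size $n$ and \emph{every} expansion $\mathbf G^+$ of $\mathbf G$, the empirical frequency of a fixed small expanded pattern $\str{H}^+$ inside $\mathbf G^+$ converges, as $n\to\infty$, to a constant depending only on the isomorphism types of $\str{H}$ and $\str{H}^+$. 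The normalised counts $f(\mathbf G)$ and $f^+(\mathbf G)$ are precisely designed to package these frequencies.

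First I would fix $\str{H}$ of size $k$ and view $\mathbf G$ as determined by its $O(n^2)$ edge-labels, so that $f$ and $f^+$ become functions of $O(n^2)$ coordinates. The hypothesis that a single-edge change perturbs $f$ and $f^+$ by at most $O(1/n^2)$ is exactly the bounded-differences condition, so McDiarmid's inequality (equivalently, an edge-exposure martingale together with Azuma's inequality, which also handles the mild dependence among edges) gives, for each fixed $t>0$,
\[
	\Pr\bigl[\,\lvert f(\mathbf G)-\mathbb E f(\mathbf G)\rvert\ge t\,\bigr]\ \le\ 2\exp\!\bigl(-\Omega(t^2 n^2)\bigr),
\]
and the same bound for each $f^+$. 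The exponent arises since $\sum_i c_i^2 = O(n^2)\cdot O(1/n^4)=O(1/n^2)$.

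Next I would upgrade this single-expansion concentration to a statement uniform over all expansions of $\mathbf G$. By the third hypothesis the number of such expansions is at most $O((n!)^k)$, and $\log\bigl((n!)^k\bigr)=O(kn\log n)$, which is dwarfed by the $\Omega(n^2)$ in the exponent above. Hence a union bound yields that with probability tending to $1$ every expansion-frequency simultaneously lies within $t$ of its mean; letting $t\to 0$ sufficiently slowly shows the frequencies converge uniformly to the target ratio $\mathbb E f^+/\mathbb E f$. Feeding this uniform convergence into the Angel--Kechris--Lyons criterion (the \emph{quantitative ordering property}) pins down the unique invariant measure and thus gives unique ergodicity.

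The main obstacle is not the probabilistic estimate, which is a routine application of bounded differences, but the translation step: one must verify that the dynamical notion of a unique invariant measure on the universal minimal flow is equivalent to the combinatorial statement about uniform convergence of expansion frequencies, and that the particular normalisations in $f$ and $f^+$ make $\mathbb E f^+/\mathbb E f$ the correct candidate density. This equivalence is exactly the content abstracted in~\cite{AKL14,PawliukSokic16}, so in the present paper I would invoke it rather than re-derive it; the only thing that then remains, for each of our classes, is to check that the three numerical hypotheses (amenability, the $O(1/n^2)$ bounded-difference bound, and the $O((n!)^k)$ bound on the number of expansions) genuinely hold.
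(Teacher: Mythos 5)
The paper does not prove this lemma at all: it is imported verbatim as a ``black-box'' result of Angel--Kechris--Lyons~\cite{AKL14} and Pawliuk--Soki{\'c}~\cite{PawliukSokic16}, and the surrounding text only explains how to feed the three numerical hypotheses into it. So there is no in-paper argument to compare against; what you have written is a reconstruction of the proof in the cited sources, and as such it is essentially faithful. The two moving parts you identify --- a bounded-differences concentration bound of the form $\exp(-\Omega(t^2 n^2))$ obtained from the $O(1/n^2)$ edge-perturbation hypothesis (via an edge-exposure martingale, since the edge labels of a random $\mathbf G\in\K$ need not be independent), and a union bound over the at most $O((n!)^k)$ expansions, whose logarithm $O(kn\log n)$ is beaten by the $n^2$ in the exponent --- are exactly the mechanism that makes the quantitative expansion property hold. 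The one place where your sketch is not self-contained is the translation step: the equivalence between unique ergodicity of $\Aut(\Flim(\K))$ and the uniform convergence of normalised expansion frequencies to $\mathbb{E}f^+/\mathbb{E}f$ (together with the identification of the universal minimal flow with the space of expansions, which needs the expansion property and not just precompactness plus Ramsey) is itself the main theorem of~\cite{AKL14}, and you invoke it rather than prove it. You acknowledge this explicitly, and it is consistent with how the present paper uses the lemma, so I would not count it as a gap --- but be aware that the concentration estimate is the easy half, and the dynamical equivalence is where the real content of the cited result lives.
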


Note that this counting trick goes back to Nešetřil and Rödl~\cite{Nesetvril1978} where the \emph{strong ordering property} is established for classes of graphs without short cycles, a key lemma from that paper is then used in~\cite{AKL14}. See also~\cite{nevsetvril2017statistics}.

In general these two lemmas are proved by explicit computations. While the computations asked for by Lemma \ref{lem:QOP_strategy} are straightforward, it is often tedious to set up and the quantities are somewhat opaque. So instead of writing down these computations for metric graphs, we will leverage computations that have already been made for unlabelled directed graphs \cite{PawliukSokic16}. Despite the differences in the relations of the underlying structures (there there is a single asymmetric relation, here there are many symmetric relations) the precompact expansions are nearly identical. These expansions only care about the definable equivalence classes of the structures. For example, in \cite{PawliukSokic16} the class $\mathcal{D}_2$ of all complete bipartite directed graphs have convex (with respect to the bipartition) linear orders as expansions, and here we investigate various classes of metric graphs that form a bipartition and who also have convex (with respect to the bipartition) linear orders as expansions. In both cases, when showing amenability by counting, once the underlying structures $\str{A} \leq \str{B}$ are chosen, all counting about the expansions becomes identical. In this sense, it is immaterial for our purposes that the computations in \cite{PawliukSokic16} are about directed graphs.

The next lemma makes this precise.

\begin{lem}\label{lem:UE_counting} Let $\K$ and $\mathcal{F}$ be \Fraisse{} classes in signatures $L \subseteq F$ respectively. Let $\pi$ be the map that forgets the extra structure in $F \setminus L$. Suppose that $\K^+$ and $\mathcal{F}^+$ are precompact Ramsey expansions of $\K$ and $\mathcal{F}$ respectively, such that $\pi[\mathcal{F}^+] = \K^+$.
\begin{enumerate}
	\item If amenability of $\Aut(\Flim(\K))$ was shown by counting (i.e. Lemma~\ref{lem:amenable_same_no_of_exp}), then $\Aut(\Flim(\F))$ is amenable by counting.
	\item If unique ergodicity of $\Aut(\Flim(\K))$ was shown by counting (i.e. Lemma~\ref{lem:QOP_strategy}), then $\Aut(\Flim\allowbreak(\F))$ is uniquely ergodic by counting.
\end{enumerate}
\end{lem}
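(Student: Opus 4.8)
The plan is to reduce both parts to a single structural observation about how $\pi$ acts on expansions. Write $E = L^+\setminus L = F^+\setminus F$ for the common expansion language (the hypothesis $\pi[\F^+]=\K^+$ forces the expanding relations to coincide). First I would establish the \emph{expansion correspondence}: for every $\str{B}\in\F$, the map sending an $\F^+$-expansion $\str{B}^+$ of $\str{B}$ to its reduct $\pi(\str{B}^+)\in\Rel(L^+)$ is a bijection onto the set of $\K^+$-expansions of $\pi(\str{B})$, and it commutes with inclusions (since $\pi$ is faithful on $E$ and on $L$, we have $\str{A}^+\le\str{B}^+$ in $\F^+$ if and only if $\pi(\str{A}^+)\le\pi(\str{B}^+)$ in $\K^+$, whenever $\str{A}\le\str{B}$ in $\F$). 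Injectivity is immediate, as $\pi$ leaves the $E$-relations untouched while the $F$-part remains the fixed structure $\str{B}$, so an $\F^+$-expansion of $\str{B}$ is determined by its $E$-relations; surjectivity is exactly the content of $\pi[\F^+]=\K^+$ applied fibrewise over $\str{B}$. The numerical consequences I need are
\[
\Nexp_{\F^+}(\str{A}^+,\str{B}) = \Nexp_{\K^+}(\pi(\str{A}^+),\pi(\str{B}))
\]
for all $\str{A}\le\str{B}$ in $\F$ and all expansions $\str{A}^+$, together with the equality of the total number of $\F^+$-expansions of any $\str{G}\in\F$ with the number of $\K^+$-expansions of $\pi(\str{G})$.

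For part (1) I would fix $\str{A}\le\str{B}$ in $\F$ and two expansions $\str{A}'^+,\str{A}''^+\in\F^+$ of $\str{A}$. Their reducts $\pi(\str{A}'^+),\pi(\str{A}''^+)$ are two $\K^+$-expansions of the single $\K$-structure $\pi(\str{A})\le\pi(\str{B})$, so the counting hypothesis for $\K$ gives $\Nexp_{\K^+}(\pi(\str{A}'^+),\pi(\str{B}))=\Nexp_{\K^+}(\pi(\str{A}''^+),\pi(\str{B}))$. By the expansion correspondence these equal $\Nexp_{\F^+}(\str{A}'^+,\str{B})$ and $\Nexp_{\F^+}(\str{A}''^+,\str{B})$, so condition~(\ref{ass:samenumberofexp}) holds for $\F$ and $\F^+$, and Lemma~\ref{lem:amenable_same_no_of_exp} yields amenability of $\Aut(\Flim(\F))$.

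For part (2) I would check the three hypotheses of Lemma~\ref{lem:QOP_strategy} for $\F$. Amenability is supplied by part (1). The bound on the number of expansions of a random $\mathbf G\in\F$ with $|\mathbf G|=n$ is immediate from the expansion correspondence, since that number equals the number of $\K^+$-expansions of $\pi(\mathbf G)$ with $|\pi(\mathbf G)|=n$, which is $\le O((n!)^k)$ by the $\K$-computation. The remaining point is the bounded-differences estimate, that altering $\mathbf G$ by one edge perturbs $f$ and $f^+$ by $O(1/n^2)$. Here I would use that $f^+$ is assembled from expansion counts $\Nexp_{\F^+}(\str{H}^+,\mathbf G)$, which factor through $\pi$ and hence depend only on the $\pi$-preserved, $L$-definable equivalence structure; the effect of a single edge modification on $f$ and $f^+$ is therefore controlled by precisely the combinatorial quantities already estimated in the Pawliuk--Soki\'c computation for $\K$. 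Granting all three hypotheses, Lemma~\ref{lem:QOP_strategy} gives unique ergodicity of $\Aut(\Flim(\F))$.

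I expect the main obstacle to be this last bounded-differences step, as it is the only place where the base languages $L$ and $F$ differ in a way $\pi$ does not see directly. One must confirm that the single-edge sensitivity of the copy-counting function $f$, which lives in the richer language $F$, is still $O(1/n^2)$, and that the expansion-counting function $f^+$ inherits its sensitivity from the verified $\K$ estimate through the expansion correspondence rather than from the finer $F$-structure; everything else is a formal consequence of the bijection established in the first paragraph.
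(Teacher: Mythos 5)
First, for calibration: the paper offers no proof of this lemma at all --- the statement is followed only by the remark that ``the relevant computations from Pawliuk--Soki\'c will all be applicable in our case, possibly with minor modifications,'' with the actual verifications deferred to the individual theorems that follow. So your attempt to give a genuine argument goes beyond what the paper records, and your overall strategy (the counting quantities factor through $\pi$) is certainly the intended one. There are, however, two real gaps. The first is the surjectivity half of your ``expansion correspondence.'' The hypothesis $\pi[\F^+]=\K^+$ is a statement about classes, not about fibres: it guarantees that every structure in $\K^+$ is the reduct of \emph{some} structure in $\F^+$, but that witness may sit over a different $F$-structure having the same $L$-reduct as $\str{B}$. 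To get the fibrewise identity $\Nexp_{\F^+}(\str{A}^+,\str{B})=\Nexp_{\K^+}(\pi(\str{A}^+),\pi(\str{B}))$ you need the stronger (and surely intended, cf.\ the paper's remark that the expansions ``only care about the definable equivalence classes'') hypothesis that membership of an expansion in $\F^+$ is decided by its $L^+$-reduct. As literally stated, one can arrange $\pi[\F^+]=\K^+$ while some particular $\str{B}\in\F$ admits strictly fewer $\F^+$-expansions than $\pi(\str{B})$ admits $\K^+$-expansions, and then condition~(\ref{ass:samenumberofexp}) for $\F$ does not follow from the one for $\K$. So this step is not ``exactly the content'' of the hypothesis; it is an additional assumption you are silently importing.

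Second, in part (2) the reduction does not go through for the quantities $f$ and $f^+$, and you correctly flag but do not close this gap. Both are normalised by $\mathbb{E}[\Nemb(\str{H},\mathbf{G})]$ where $\mathbf{G}$ is a random element of $\F$ of size $n$; this is a different probability space from a random element of $\K$ of size $n$, and $\Nemb(\str{H},\mathbf{G})$ counts copies in the richer language $F$, so neither the expectation in the denominator nor the single-edge sensitivity of $f$ is controlled by the $\K$-computation via $\pi$. The expansion-counting numerator of $f^+$ does factor through $\pi$ (granting the corrected correspondence above), but the Lipschitz estimate for $f$ and the expectation must be re-derived for $\F$ directly. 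This is presumably what the paper's ``minor modifications'' alludes to, but it is precisely the part of Lemma~\ref{lem:QOP_strategy} that does not transfer formally, so a complete proof would have to spell it out for the metric classes at hand rather than cite the directed-graph computation.
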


The relevant computations from \cite{PawliukSokic16} will all be applicable in our case with at most minor modifications.

\medskip

The case of tree-like graphs is exceptional because these classes do not have precompact expansions. Clearly, $\Aut(T_{2,2})$ is amenable because $T_{2,2}$ is just the infinite path with no endpoints. We conjecture that in all the other cases, $\Aut(T_{m,n})$ is not amenable, see Conjecture~\ref{conj:treelike} for more discussion.

The non-tree-like classes will be dealt with combinatorially:
\begin{thm} If $\mathcal{K}$ is a primitive 3-constrained class of metric graphs, then $G$ is amenable and uniquely ergodic.
\end{thm}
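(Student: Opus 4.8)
The plan is to reduce both properties to the counting machinery of Lemmas~\ref{lem:amenable_same_no_of_exp}, \ref{lem:QOP_strategy} and~\ref{lem:UE_counting}, using the fact that by Theorem~\ref{thm:regularramsey} a primitive $3$-constrained class $\mathcal K=\mathcal A^\delta_{K_1,K_2,C_0,C_1}$ has the \emph{free} linear ordering $\overrightarrow{\mathcal K}$ as its precompact Ramsey expansion with the expansion property. The freeness of the ordering is exactly what makes the relevant counts depend only on cardinalities.

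For amenability I would apply Lemma~\ref{lem:amenable_same_no_of_exp} directly. Fix $\str A\le \str B$ in $\mathcal K$ and an expansion $\str A^+$ of $\str A$, i.e. a linear order on $A$. Since $\overrightarrow{\mathcal K}$ consists of \emph{all} linearly ordered members of $\mathcal K$, an expansion $\str B^+$ extending $\str A^+$ is precisely a linear order on $B$ whose restriction to $A$ is the given order, and there are $\lvert B\rvert!/\lvert A\rvert!$ of these. Thus $\Nexp(\str A^+,\str B)=\lvert B\rvert!/\lvert A\rvert!$ depends only on $\lvert A\rvert$ and $\lvert B\rvert$, hence only on the isomorphism types of $\str A$ and $\str B$, and in particular is independent of the chosen expansion $\str A^+$. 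Condition~(\ref{ass:samenumberofexp}) of Lemma~\ref{lem:amenable_same_no_of_exp} therefore holds and $G=\Aut(\Flim(\mathcal K))$ is amenable by counting.

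For unique ergodicity I would invoke the transfer Lemma~\ref{lem:UE_counting} rather than verify the three estimates of Lemma~\ref{lem:QOP_strategy} by hand. In the notation of that lemma, take the smaller class to be the class of finite pure sets (empty language), whose \Fraisse{} limit is a countable set with automorphism group $S_\infty$, and take its expansion to be the class of all finite linear orders; Angel--Kechris--Lyons~\cite{AKL14} proved that $S_\infty$ is uniquely ergodic by a counting argument of the form demanded by Lemma~\ref{lem:QOP_strategy}. Take the larger class to be $\mathcal F=\mathcal K$ in the metric-graph language, with expansion $\mathcal F^+=\overrightarrow{\mathcal K}$, and let $\pi$ be the forgetful map erasing all distance relations but keeping the linear order. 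Then $\pi[\mathcal F^+]$ is exactly the class of finite linear orders, and both expansions are precompact Ramsey expansions (the base case classically, and $\overrightarrow{\mathcal K}$ by Theorem~\ref{thm:regularramsey}). Hence Lemma~\ref{lem:UE_counting}(2) applies and unique ergodicity of $S_\infty$ transfers to $G$.

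The genuine mathematical content sits entirely in the already-established machinery: the existence of the free-ordering Ramsey expansion (Theorem~\ref{thm:regularramsey}), the black-box reductions, and the base computation for $S_\infty$ in~\cite{AKL14}. The only points requiring care are bookkeeping ones --- checking that the forgetful projection really sends $\overrightarrow{\mathcal K}$ onto the \emph{full} class of finite linear orders (which uses that every finite linear order is realised on some member of $\mathcal K$ of the appropriate size, clear since $\mathcal K$ contains arbitrarily large spaces) and confirming that the $S_\infty$ result is of the ``by counting'' form required by Lemma~\ref{lem:UE_counting}. I expect no serious obstacle; the main risk is a mismatch in the precise hypotheses of the transfer lemma, which is why I would keep the amenability half self-contained through Lemma~\ref{lem:amenable_same_no_of_exp} and use the transfer lemma only for the harder unique-ergodicity half.
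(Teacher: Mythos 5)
Your amenability argument is correct and is essentially the computation the paper has in mind: with the free-order expansion $\overrightarrow{\mathcal K}$ of Theorem~\ref{thm:regularramsey}, one gets $\Nexp(\str A^+,\str B)=\lvert B\rvert!/\lvert A\rvert!$, which depends only on the isomorphism types of $\str A$ and $\str B$, so Lemma~\ref{lem:amenable_same_no_of_exp} applies. The paper phrases this as citing the direct counting proof of \cite{AKL14}, and also records the alternative route to amenability through EPPA and \cite{Kechris2007}; either way this half is fine.

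The unique-ergodicity half has a genuine gap. You apply the transfer Lemma~\ref{lem:UE_counting} with the degenerate base $\K=$ pure sets, $\K^+=$ finite linear orders. Read literally, this would make unique ergodicity an automatic consequence of having a free-order precompact Ramsey expansion, with no further input from the structure of the class --- which is more than the lemma can deliver. The point is that the hypotheses of Lemma~\ref{lem:QOP_strategy} which are supposed to ``transfer'' are not statements about the expansions alone: they require that for a random $\mathbf G$ in the \emph{underlying} class of size $n$, changing $\mathbf G$ by a single edge perturbs $f(\mathbf G)=\Nemb(\str H,\mathbf G)/\mathbb{E}[\Nemb(\str H,\mathbf G)]$ and $f^+(\mathbf G)$ by at most $O(1/n^2)$. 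For pure sets these conditions are vacuous ($f\equiv 1$), whereas for a random space in $\mathcal A^\delta_{K_1,K_2,C_0,C_1}$ they are substantive concentration estimates on embedding counts; this is precisely where the real work, and the use of the Hrushovski property, sits in \cite{AKL14}. Lemma~\ref{lem:UE_counting} is meant to transfer between classes whose underlying combinatorics are genuinely parallel (the directed-graph computations of \cite{PawliukSokic16} versus their metric analogues with the same definable equivalence structure), not to manufacture these estimates from a base case in which they hold trivially. Note also that your argument never uses EPPA, while the applicable theorem of \cite{AKL14} does --- a further sign that something essential has been bypassed. To close the gap, either cite the direct counting proof of \cite{AKL14} for classes with free-order expansions and the Hrushovski property (which applies here by Theorems~\ref{thm:regularramsey} and~\ref{thm:regulareppa}; this is what the paper does), or verify the three estimates of Lemma~\ref{lem:QOP_strategy} for $\overrightarrow{\mathcal K}$ by hand.
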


\begin{proof} Since we have shown that $\overrightarrow{\mathcal{K}}$ is a precompact Ramsey expansion of $\mathcal{K}$ (see Theorem~\ref{thm:regularramsey}), a direct, counting proof is given by Angel, Kechris and Lyons~\cite{AKL14}. Furthermore, this expansion guarantees that $G$ is uniquely ergodic.

Alternatively, amenability of $G$ will follow from the fact that $\mathcal{K}$ has EPPA, although this is not enough for unique ergodicity. See Kechris and Rosendal~\cite{Kechris2007}.
\end{proof}


\begin{thm} If $\mathcal{K}$ is a bipartite 3-constrained class of metric graphs, then $G$ is amenable and uniquely ergodic.
\end{thm}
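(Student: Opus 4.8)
The plan is to reuse the counting machinery assembled for the primitive case together with the transfer Lemma~\ref{lem:UE_counting}, so that essentially no new explicit computation is needed. First I would fix the precompact Ramsey expansion $\mathcal{K}^+$ supplied by Theorem~\ref{thm:biramseyregular}: it consists of the convex orderings of $\mathcal{K}$ together with the unary predicate naming the bipartition, where ``convex'' means the first bipartition class forms an initial segment. The essential structural observation is that for a bipartite $3$-constrained class the bipartition is the only non-trivial definable equivalence relation, it always has exactly two classes, and it is determined by the $L$-reduct alone, not by the expansion. Consequently an expansion $\expand{\str{A}}$ of a structure $\str{A} \in \mathcal{K}$ records nothing beyond a choice of convex linear order refining this fixed two-class partition.

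Second, I would verify the hypothesis of Lemma~\ref{lem:amenable_same_no_of_exp}. Let $\str{A} \leq \str{B}$ in $\mathcal{K}$ and let $\expand[\prime]{\str{A}}, \expand[\prime\prime]{\str{A}}$ be two expansions. Because the bipartition of $\str{B}$ is already fixed by its $L$-reduct, extending either convex order of $\str{A}$ to a convex order of $\str{B}$ amounts to interleaving the new vertices of each bipartition class independently into the corresponding segment of the given order; the number of such extensions equals $\frac{|B_1|!}{|A_1|!}\cdot\frac{|B_2|!}{|A_2|!}$, where $B_1,B_2$ and $A_1,A_2$ are the bipartition classes, and hence depends only on the class sizes, i.e.\ only on the isomorphism types of $\str{A}$ and $\str{B}$. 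Thus $\Nexp(\str{A}^\prime,\str{B}) = \Nexp(\str{A}^{\prime\prime},\str{B})$, and Lemma~\ref{lem:amenable_same_no_of_exp} yields amenability of $G = \Aut(\Flim(\mathcal{K}))$.

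Third, for unique ergodicity I would invoke Lemma~\ref{lem:UE_counting}, taking the bipartite directed graph class $\mathcal{D}_2$ of Pawliuk--Soki{\'c}~\cite{PawliukSokic16} in the role of the already-computed class: there the automorphism group of the generic complete bipartite directed graph is shown to be amenable and uniquely ergodic by counting, using exactly the expansion by a convex (with respect to the bipartition) linear order together with the bipartition predicate. Since the map forgetting the distance relations carries each convex-ordered bipartite metric graph to a convex-ordered two-class partition structure, the two expansions carry identical combinatorial data, so the functions $f,f^+$ and the quantities $\Nemb$, $\Nexp$ coincide with those of $\mathcal{D}_2$ term by term. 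The three numerical hypotheses of Lemma~\ref{lem:QOP_strategy}---amenability, the $O(1/n^2)$ stability of $f$ and $f^+$ under a single-edge change, and the bound $O((n!)^k)$ on the number of expansions of a random $n$-point structure---therefore hold verbatim, the last because a bipartite structure with classes of sizes $p,q$ with $p+q=n$ has exactly $p!\,q! \leq n!$ convex orders. Unique ergodicity of $G$ follows.

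The only place needing genuine care, and thus the main obstacle, is making the transfer rigorous: I must argue that the counting quantities for the metric class agree with those of $\mathcal{D}_2$ rather than merely being of the same shape. The key justification is that in both settings the only invariant relevant to counting expansions is the partition into two definable classes---the actual distances never constrain which convex orders are available, precisely because Theorem~\ref{thm:biramseyregular} requires the orders to be convex only with respect to the bipartition. Once this independence of the counting from the distances is recorded, Lemma~\ref{lem:UE_counting} applies as stated and both conclusions are immediate, exactly as in the primitive case but with the convex, bipartition-respecting orders in place of the free orders.
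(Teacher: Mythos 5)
Your proof is correct and follows essentially the same route as the paper: it likewise takes the expansion of Theorem~\ref{thm:biramseyregular}, verifies Lemma~\ref{lem:amenable_same_no_of_exp} by the count $\frac{|B_1|!\,|B_2|!}{|A_1|!\,|A_2|!}$ (the paper writes $2!\cdot\frac{|B_1|!\,|B_2|!}{|A_1|!\,|A_2|!}$; the constant discrepancy is harmless since only independence of the count from the chosen expansion of $\str{A}$ matters), and transfers unique ergodicity from the bipartite digraph computation of \cite{PawliukSokic16} via Lemma~\ref{lem:UE_counting}. The only extra remark in the paper is that amenability alone also follows directly from EPPA by Kechris--Rosendal.
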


\begin{proof} Amenability of $G$ follows from the fact that $\mathcal{K}$ has EPPA, although this is not enough for unique ergodicity. See Kechris and Rosendal~\cite{Kechris2007}.

Alternatively, we have shown that adding linear orders that are convex with respect to the equivalence classes of even distances and unary marks to determine the bipartition is a precompact Ramsey expansion of $\mathcal{K}$ (see Theorem~\ref{thm:biramseyregular}). A direct counting argument for Lemma~\ref{lem:amenable_same_no_of_exp}, is
\[
	\Nexp(\str{A}^\prime,\str{B}) = 2!\cdot \frac{\vert B_1 \vert! \vert B_2 \vert!}{\vert A_1 \vert! \vert A_2 \vert!}
\]
where $B_1$, $B_2$ are the equivalence classes of even distances of $\str{B}$, and $A_1$, $A_2$ are the equivalence classes of even distances of $\str{A}$, and $\str{A}$ contains vertices from both bipartitions (otherwise the coefficient $2!$ changes). This computation appears as Theorem~4.2 in~\cite{PawliukSokic16} (there it is discussed in the context of generic multipartite digraphs).

The computation for unique ergodicity and Lemma~\ref{lem:UE_counting} appears as Theorem~8.2 in~\cite{PawliukSokic16}.
\end{proof}


\begin{thm} If $\mathcal{K}$ is an antipodal, non-bipartite class of metric graphs, then $G$ is amenable and uniquely ergodic.
\end{thm}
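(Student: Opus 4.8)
The plan is to deduce both conclusions from the counting lemmas, Lemma~\ref{lem:amenable_same_no_of_exp} and Lemma~\ref{lem:QOP_strategy}, using the Ramsey expansion already produced for this case. Recall that for an antipodal non-bipartite class $\mathcal{A}$ the precompact Ramsey expansion with the expansion property is $\overrightarrow{\K}^\ominus$, the class of ordered antipodal extensions (Definition~\ref{defn:antipodalext}) of the free ordering expansion $\overrightarrow{\K}$ of the underlying primitive pode-class (Theorem~\ref{thm:antiporamsey}). The key structural fact, established in Definition~\ref{defn:antipodalext} and the surrounding discussion, is that an ordered expansion of an antipodally closed space $\str{B}$ amounts to a choice of one pode together with a free linear order on it; the order and the metric on the second pode are then forced by the antipodal isomorphism. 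I would record this as the combinatorial dictionary underlying every count below.

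First I would establish amenability. Let $\str{A}\le\str{B}$ in $\mathcal{A}$ be antipodally closed, with $a$ and $b$ antipodal pairs respectively (so $|A|=2a$, $|B|=2b$), and fix an ordered expansion $\str{A}^+$, which singles out a pode $A_0$ of $\str{A}$ carrying a fixed linear order. An ordered expansion $\str{B}^+$ extending $\str{A}^+$ is determined by extending $A_0$ to a pode $B_0$ of $\str{B}$ and a free order on $B_0$ refining the one on $A_0$. For each of the $b-a$ antipodal pairs of $\str{B}$ outside $\str{A}$ there are two legal choices of representative for $B_0$ (all admissible, since the pode-class is closed under antipodal companions), and the resulting $b$ representatives can be linearly ordered extending $A_0$ in $\tfrac{b!}{a!}$ ways. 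Hence
\[
\Nexp_{\overrightarrow{\K}^\ominus}(\str{A}^+,\str{B}) = 2^{\,b-a}\cdot\frac{b!}{a!},
\]
which depends only on the isomorphism types of $\str{A}$ and $\str{B}$. Lemma~\ref{lem:amenable_same_no_of_exp} then yields that $G$ is amenable.

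For unique ergodicity I would verify the hypotheses of Lemma~\ref{lem:QOP_strategy}. The bound on the number of expansions is immediate: an antipodally closed $\mathbf{G}$ on $n=2m$ vertices has exactly $2^{m}m!$ ordered expansions, which is $O((n!)^{k})$. The remaining point—that a single-edge modification of $\mathbf{G}$ perturbs $f$ and $f^{+}$ by $O(1/n^{2})$—I would obtain by the same averaging estimate as in the primitive case, observing that by the dictionary above all quantities $\Nemb(\str{H},\mathbf{G})$ and $\Nexp(\str{H}^{+},\mathbf{G})$ are controlled by embeddings into a single pode, so for counting purposes the antipodal class behaves like its primitive pode-class together with the benign factor $2^{\,b-a}$. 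Equivalently, one may route the computation through Lemma~\ref{lem:UE_counting}, exhibiting $\overrightarrow{\K}^\ominus$ as the image under the forgetful map of an already-computed expanded directed-graph class. The main obstacle is precisely this coupling: the antipodal matching links the two podes, so one must check that modifying an edge together with its forced antipodal partner does not concentrate the perturbation, i.e.\ that the rigidity introduced by the pairing still leaves the $O(1/n^{2})$ stability intact. Once that is confirmed, Lemma~\ref{lem:QOP_strategy} delivers unique ergodicity of $G$.
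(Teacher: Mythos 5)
Your proposal is correct and follows essentially the same route as the paper: the amenability count $\Nexp(\str{A}^+,\str{B})=2^{\,b-a}\cdot b!/a!$ over antipodally closed structures is exactly the formula the paper uses (there attributed to Theorem~4.3 of the Pawliuk--Soki\'c paper on double covers of generic tournaments), and your fallback of routing unique ergodicity through Lemma~\ref{lem:UE_counting} to the already-computed directed-graph case is precisely what the paper does (citing Theorem~8.3 there) rather than re-verifying the $O(1/n^2)$ stability directly. The only cosmetic difference is that the paper additionally remarks that for even $\delta$ amenability also follows from coherent EPPA.
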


\begin{proof} When $\delta$ is even, amenability of $G$ follows from the fact that $\mathcal{K}$ has EPPA (see Theorem~\ref{thm:EPPAantipodal}).

We have shown that adding linear orders that are convex with respect to the podes and a unary marks denoting the podes is a precompact Ramsey expansion of $\mathcal{K}$ (see Theorem~\ref{thm:ramseyall} and Remark~\ref{rem:antiporamsey}). Observe that this expansion is bi-definable with the expansion by an order and unary marks for the podes, where in the order each edge of length $\delta$ forms an interval and the smaller elements from each $\delta$ pair are in the same pode.

This alternative expansion corresponds to what is discussed in~\cite{PawliukSokic16} for double-covers of the generic tournaments. Suppose, for simplicity, that both structures $\str{A}$ and $\str{B}$ are antipodally closed. Then for amenability, the direct counting argument for Lemma~\ref{lem:amenable_same_no_of_exp}, is
\[
	\Nexp(\str{A}^\prime,\str{B}) = \frac{b!}{a!} \cdot 2^{b-a},	
\]
where $b$ is the number of $\delta$-pairs in $\str{B}$ and $a$ is the number of $\delta$-pairs $\str{A}$. This computation appears as Theorem~4.3 in~\cite{PawliukSokic16}.

The computation for unique ergodicity and Lemma~\ref{lem:UE_counting} appears as Theorem~8.3 in~\cite{PawliukSokic16}.
\end{proof}


\begin{thm} If $\mathcal{K}$ is an antipodal, bipartite class of metric graphs, then $G$ is amenable and uniquely ergodic.
\end{thm}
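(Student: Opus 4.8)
The plan is to follow exactly the template of the two preceding theorems, treating the antipodal bipartite classes as a combination of the bipartite $3$-constrained case and the antipodal non-bipartite case. First I would fix the precompact Ramsey expansion supplied by Theorem~\ref{thm:ramseyall}: when $\delta$ is even this is the expansion by a unary predicate for the bipartition together with a linear order that is convex with respect to both the bipartition and the podes, and when $\delta$ is odd it is the expansion by a linear order convex with respect to the podes (the bipartition being determined by the podes in that case, cf.\ the discussion surrounding Remark~\ref{rem:antiporamsey}). As in the earlier proofs, I would pass to the bi-definable form of this expansion in which each edge of length $\delta$ forms an interval of the order and the two parts of the bipartition form an initial and a terminal segment; this is the form in which the counting is transparent.

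For amenability I would use two routes, mirroring the antipodal non-bipartite theorem. When $\delta$ is odd the bare class has coherent EPPA by case~2 of Theorem~\ref{thm:EPPAantipodal}, so amenability of $G$ follows immediately from Kechris--Rosendal~\cite{Kechris2007}. For the remaining even case (and, in fact, uniformly) I would invoke Lemma~\ref{lem:amenable_same_no_of_exp}: it suffices to check that for $\str{A}\leq\str{B}$ in $\K$ the quantity $\Nexp(\str{A}^+,\str{B})$ depends only on the isomorphism types of $\str{A}$ and $\str{B}$. Writing $b_1,b_2$ (respectively $a_1,a_2$) for the numbers of $\delta$-pairs lying in the two bipartition classes of $\str{B}$ (respectively $\str{A}$), the convexity constraints should force the count to factor as the bipartite count of Theorem~4.2 of~\cite{PawliukSokic16} multiplied by a double-cover factor $2^{\,b_i-a_i}$ in each part, giving
\[
  \Nexp(\str{A}^+,\str{B}) \;=\; 2!\cdot\frac{b_1!}{a_1!}\,2^{\,b_1-a_1}\cdot\frac{b_2!}{a_2!}\,2^{\,b_2-a_2},
\]
with the factor $2!$ present precisely when $\str{A}$ meets both parts. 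In the odd case the $\delta$-pairs cross the bipartition, so the analogous computation collapses to the pure double-cover count $\tfrac{b!}{a!}\,2^{\,b-a}$ of Theorem~4.3 of~\cite{PawliukSokic16}. In every case the value depends only on the isomorphism types, so amenability follows.

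For unique ergodicity I would appeal to Lemma~\ref{lem:UE_counting}, exhibiting the antipodal bipartite expansion as the image, under the forgetful map $\pi$, of the corresponding expansion of the relevant directed-graph class from~\cite{PawliukSokic16}, namely the double cover of the generic (multi)partite digraph. Since the precompact Ramsey expansion of that class was shown to be uniquely ergodic by counting (Theorems~8.2 and~8.3 of~\cite{PawliukSokic16}), the second part of Lemma~\ref{lem:UE_counting} transports unique ergodicity to $G$; the only remaining work is to verify the hypotheses of Lemma~\ref{lem:QOP_strategy} — the $O(1/n^2)$ stability of $f$ and $f^+$ under a single-edge change and the $O((n!)^k)$ bound on the number of expansions of $\mathbf G$ — which are formally identical to the digraph computations.

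I expect the main obstacle to be bookkeeping rather than conceptual difficulty: one must verify that the two layers of the expansion, the bipartition/order layer and the antipodal doubling layer, interact in the product fashion claimed above, and in particular that in the even case no correlation is introduced between the choice of representative within a $\delta$-pair and its bipartition class, whereas in the odd case exactly such a correlation is what makes the bipartition predicate redundant. Confirming that the resulting formula matches, up to the ``minor modifications'' anticipated in Lemma~\ref{lem:UE_counting}, the double-cover-of-(multi)partite computation recorded in~\cite{PawliukSokic16} is the delicate step.
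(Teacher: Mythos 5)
Your proposal follows essentially the same route as the paper: amenability via EPPA where available and otherwise via the expansion-counting lemma, and unique ergodicity by transferring the counting computations for the corresponding double-cover classes in~\cite{PawliukSokic16}. One small correction: in the odd-diameter case your count $\frac{b!}{a!}\cdot 2^{b-a}$ overcounts by the factor $2^{b-a}$, since for odd $\delta$ the podes coincide with the (definable) bipartition classes, so there is no independent choice of pode representative for each new $\delta$-pair and the correct count is just $\frac{(\lvert B\rvert/2)!}{(\lvert A\rvert/2)!}$ --- but as this still depends only on the isomorphism types of $\str{A}$ and $\str{B}$, the amenability conclusion is unaffected.
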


\begin{proof} For some values of $\delta$ amenability follows from the fact that $\mathcal{K}$ has EPPA (see Theorem~\ref{thm:EPPAantipodal}).

There are two cases to consider: $\delta$ even and $\delta$ odd. When $\delta$ is even, then each $\delta$-pair lives inside a bipartite equivalence class.

In both cases, for simplicity and to keep this section concise, we assume that $\str{A}$ and $\str{B}$ are antipodally closed and that $\str{A}$ contains elements from each part of the bipartition. If this is not true, the computations go in a similar manner with some further technicalities.

For $\delta$ even: Similarly as for antipodal non-bipartite, one can show that adding unary marks for the bipartitions and the podes and linear orders that are convex with respect to the bipartitions and where each edge of length $\delta$ forms an interval is a precompact Ramsey expansion of $\mathcal{K}$ bi-definable with the one presented in Theorem~\ref{thm:ramseyall} and Remark~\ref{rem:antiporamsey}. A direct counting argument for Lemma \ref{lem:amenable_same_no_of_exp}, is
\[
	\Nexp(\str{A}^\prime,\str{B}) = \frac{b_1!}{a_1!} \cdot \frac{b_2!}{a_2!} \cdot 2^{b-a},
\]
where $b$ is the number of $\delta$-pairs in $\str{B}$, $a$ is the number of $\delta$-pairs in $\str{A}$, $b_i$ (for $i=1,2$) is the number of $\delta$-edges in the $i$-th part of the bipartition (and $a_i$ is defined similarly).

For $\delta$ odd: We have shown that adding unary marks for the bipartitions and linear orders that are convex with respect to the bipartition, and when restricted to the bipartitions (or equivalently in this case the podes) are isomorphic, is a precompact Ramsey expansion of $\mathcal{K}$ (See Theorem~\ref{thm:ramseyall}). A direct counting argument for Lemma \ref{lem:amenable_same_no_of_exp}, is
\[
	\Nexp(\str{A}^\prime,\str{B}) = \frac{\frac{\vert B \vert }{2}!}{\frac{\vert A \vert }{2}!}.
\]

For unique ergodicity, it suffices to notice that defining the order on one node in a $\delta$-pair uniquely determines the order for the second node. In this way, unique ergodicity of these classes follows from unique ergodicity of the corresponding bipartite and non-bipartite 3-constrained classes.
\end{proof}

\section{Conclusion and open problems}
\label{sec:conclussion}
Between the submission of this paper (2017) and its publication, there have been many developments, and in particular some of the open problems have been solved (and others have arisen). We decided to update this section and mention whenever we are aware of some progress, while keeping the original numbering.

\paragraph{1 and 2} The EPPA results of this paper need an extra expansion for the odd-diameter antipodal classes and the even-diameter bipartite antipodal classes. In the original version we asked whether this is necessary and argued that this is closely connected to EPPA for two-graphs:

The class $\mathcal A^3_{0,10,7,2}$ of all antipodal metric spaces of diameter $3$, the metric spaces can be equivalently
interpreted as double-covers of complete graphs. By {\em double-cover} of
$\str{K}_n$ we mean a graph $\str{G}$ with a 2--to--1 covering map from the
vertices of $\str{G}$ to those of $\str{K}_n$, so that each edge of $\str{K}_n$
is covered by two edges of $\str{G}$.
There are two double-covers of $\str{K}_3$, a pair of triangles and a hexagon,
see Figure~\ref{fig:doublecover}.
\begin{figure}[t]
\centering
\includegraphics{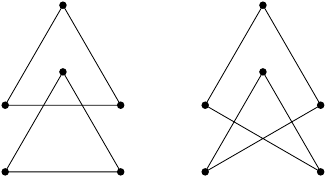}
\caption{Two double-covers of complete graphs with 3 vertices.}
\label{fig:doublecover}
\end{figure}

It is a well known fact that double-covers of complete graphs correspond to {\em two-graphs} (3-regular hypergraphs where
every subhypergraph with 4 vertices contains an even number of hyperedges):
given a double-cover $\str{G}$ of $\str{K}_n$ put vertices $a,b,c\in K_n$ to
a hyper-edge if and only if the corresponding double-cover is a hexagon.

Two-graphs have been extensively studied since 1960s~\cite{Seidel1973,Godsil2001} and many of those results are
relevant to our problem.
Because the correspondence between antipodal metric graphs and the underlying two-graphs is automorphism-preserving, by constructing
an EPPA-witness in $\str{A}\in \mathcal A^3_{0,10,7,2}$ we would also construct
an EPPA-witness of its associated two-graph. Extensions can be assumed to be
vertex transitive (because every vertex may be assumed to be contained in the copy)
and construction of vertex-transitive (or regular) two-graphs is related to
construction of strongly regular graphs and Taylor graphs. This is a well established
topic of algebraic graph theory and those graphs are rare.  Having a positive
answer to the problem of EPPA of the odd non-bipartite antipodal graphs would
thus require a construction of strongly regular graphs with even stronger symmetry
assumptions and thus seems difficult.

Removing the need for the unary expansion for the class $\str{A}\in \mathcal A^3_{0,10,7,2}$ would closely correspond to proving EPPA for two-graphs, a problem posed by Macpherson which also appears in~\cite{Siniora2}.

\medskip

There has been progress and the expansion was shown not to be necessary by Evans, Hubička, Konečný, and Nešetřil~\cite{eppatwographs} for the antipodal metric space of diameter $3$ (and for two-graphs) and by Konečný~\cite{Konecny2019a}. Curiously, while all the antipodal metric spaces do have coherent EPPA, coherence gets lost when applying the arguments to two-graphs. Hence the question whether the class of all finite two-graphs has EPPA still remains open:

\begin{question}[\cite{eppatwographs}]
Does the class of all finite two-graphs have coherent EPPA?
\end{question}

\paragraph{3} The classification of metrically homogeneous graphs is not a classification
of metric spaces with integer distances, because it includes the additional requirement of containing all \emph{geodesics}, i.e all triangles with edge lengths $a$, $b$ and $|a-b|$.
Our completion algorithm does not really rely on this requirement (though it is implicitly
used in the definition of the time function). It would be interesting to give a more general
characterisation of classes where such an approach works, possibly including non-binary
relations in the sense of homogenizations defined in~\cite{Hubicka2016}.

\paragraph{4} As already mentioned in the introduction, Sauer~\cite{Sauer2013} identified for which subsets $S\subseteq \mathbb R^+_0$ the class of all $S$-valued metric spaces is an amalgamation class. Hubi\v cka and Ne\v set\v ril~\cite{Hubicka2016} later proved the Ramsey property for all these classes. Conant~\cite{Conant2015} studied EPPA generalised metric spaces (that is metric spaces with values from some ordered \textit{distance} monoid). Finally, Braunfeld~\cite{Sam} proved that $\Lambda$-ultrametric spaces, where $\Lambda$ is a finite distributive lattice, also have the Ramsey property.

In~\cite{Hubicka2017sauer}, Hubi\v cka, Kone\v cn\'y and Ne\v set\v ril generalise all these results and discuss the Ramsey property and EPPA of a broad family of classes for which there is a commutative ordered semigroup $\mathfrak M$ satisfying some further axioms, such that the incomplete structures with completion to these classes can be completed by the shortest path algorithm, which uses the semigroup operation instead of $+$ and the semigroup order instead of the standard order of the reals. See also~\cite{Konecny2018b}.

It turns out that for a primitive 3-constrained class and a magic parameter $M$, one can define a commutative and associative operation $\oplus^M\colon  [\delta]^2 \rightarrow [\delta]$ as
$$x \oplus^M y =
  \begin{cases}
    |x-y| & \text{if } |x-y| > M \\
    \min\left(x+y, C-1-x-y\right) & \text{if } \min\left(\ldots\right) < M \\
    M & \text{otherwise}\end{cases}$$
and the partial order $\preceq^M$ as $x\preceq^M y$ if and only if $x=y$ or there is $z$ such that $x\oplus z = y$ (this is called the \textit{natural order of $\oplus^M$}).

One can observe that the order induced by $t_M(x)$ is an extension of $\preceq$. And in this setting, we basically proved in this paper the following proposition (cf. Theorem~\ref{thm:magiccompletion}):
\begin{prop}
Let $\str G = (G,d) \in \mathcal G^d$ such that it has a completion in $\mathcal A^\delta_{K_1,K_2,C_0,C_1}$. Let $M$ be a magic distance. Define $\bar d \colon  V^2 \rightarrow [\delta]$ as
$$\bar d(x,y) = 
  \begin{cases}
    d(x,y) & \text{if } d(x,y)\text{ defined} \\
    \min\limits_{P \text{ path from $x$ to $y$}} \bigoplus^M_i d(P_i, P_{i+1}) & \text{otherwise}
  \end{cases}$$
(minimum is taken in the $\preceq^M$ order).

Then $\overbar{\str G} = (V, \bar d)$ is in $\mathcal A^\delta_{K_1,K_2,C_0,C_1}$.
\end{prop}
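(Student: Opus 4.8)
The plan is to deduce the Proposition directly from Theorem~\ref{thm:magiccompletion} by showing that the $\oplus^M$-shortest-path distance $\bar d$ coincides with the completion $\overbar d$ of $\str G$ with magic parameter $M$ from Definition~\ref{defn:magiccompletion}. Both functions agree with $d$ on the edges of $\str G$, so once the identity $\bar d=\overbar d$ is established, the membership $\overbar{\str G}=(V,\bar d)\in\mathcal A^\delta_{K_1,K_2,C_0,C_1}$ is exactly the conclusion of Theorem~\ref{thm:magiccompletion}. Before comparing the two, I would record the structural facts needed for $\bar d$ to make sense: that $\oplus^M$ is commutative and associative (so that $\bigoplus^M_i d(P_i,P_{i+1})$ does not depend on bracketing), that $M$ is an absorbing idempotent and hence the $\preceq^M$-greatest element, and that $\preceq^M$ is a genuine (antisymmetric) partial order refined by the time function $t_M$, as claimed in the paragraph preceding the statement. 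Under the standing hypothesis that $\str G$ has a completion, I would further note that the set of path-values $\{\bigoplus^M_i d(P_i,P_{i+1}) : P \text{ a path } u\to v\}$ possesses a $\preceq^M$-least element, so that the minimum defining $\bar d(u,v)$ is well defined; incomparable minimal values can only occur for non-completable inputs, where two forks force mutually inconsistent distances on the same pair.

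The first key step is a purely local comparison: for any two legs $a,b\in\{1,\dots,\delta\}$, the value $a\oplus^M b$ is exactly the length assigned by Definition~\ref{defn:magiccompletion} to the missing edge of a triangle with legs $a,b$. This is a case check against the three fork families. The branch $|a-b|>M$ matches $\mathcal F^-$; the branch $\min(a+b,C-1-a-b)<M$ matches $\mathcal F^+\cup\mathcal F^C$, where the algorithm completes by whichever of $a+b$ or $C-1-a-b$ fires first and the Time Consistency Lemma~\ref{lem:expandtime} guarantees that this is $\min(a+b,C-1-a-b)$; and the residual case is the final step assigning $M$. Here Observation~\ref{obs:FCforks} reconciles the formula with the exceptional Case~\ref{IIb}: the only $\mathcal F^C$-fork with $x<M$ is $(\delta,\delta)$, completed by $M-1$, and one checks $\delta\oplus^M\delta=\min(2\delta,C-1-2\delta)=M-1$ using $C=2\delta+M$ in Case~\ref{IIb}. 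Observation~\ref{obs:magicismagic} ensures that whenever neither of the first two branches fires, the triangle $aMb$ is allowed, so assigning $M$ is legitimate.

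Next I would pass from single forks to shortest paths. The Time Consistency Lemma~\ref{lem:expandtime} says that whenever $c=a\oplus^M b$ depends on $a$ then $t_M(a)<t_M(c)$, that is $a\prec^M c$; thus the magic completion inserts edges along an order extending $\preceq^M$, and every edge the algorithm uses to complete a pair already carries its final value. Consequently the single-pass algorithm is exactly monoid relaxation carried out in a $\preceq^M$-consistent order, which is the standard way to compute a monoid shortest path. To turn this into the equality $\overbar d(u,v)=\bar d(u,v)$ I would argue two inequalities. For $\bar d(u,v)\preceq^M\overbar d(u,v)$, unwinding the algorithm backwards along the chain of witnesses, exactly the backward induction used in the proof of Lemma~\ref{lem:obstacles}, produces a concrete path $P$ with $\bigoplus^M_i d(P_i,P_{i+1})=\overbar d(u,v)$, so $\overbar d(u,v)$ is one of the path-values and the least is below it. For the reverse $\overbar d(u,v)\preceq^M\bar d(u,v)$, I would show by induction on $t_M$ that every path-value lies $\preceq^M$-above $\overbar d(u,v)$: since each relaxation step only moves a completed distance $\preceq^M$-upward toward $M$, this is precisely the optimality statement of Theorem~\ref{thm:magiccompletion}(1),(2) together with the Optimality Lemma~\ref{lem:bestcompletion}, read in the order $\preceq^M$. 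Combining the two inequalities with antisymmetry yields $\bar d=\overbar d$ and hence the Proposition.

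The main obstacle is not the reduction itself but the algebraic bookkeeping underpinning it: checking that the three-case operation $\oplus^M$ is genuinely associative, that its natural order $\preceq^M$ is antisymmetric and refined by $t_M$, and that these facts survive the extremal parameter configurations. The delicate points are Case~\ref{IIb}, where the value $M-1$ must be produced by $\oplus^M$ and placed correctly in $\preceq^M$ (handled via Observation~\ref{obs:FCforks}), and the interaction of the branches of $\oplus^M$ with the magicality constraints of Observation~\ref{obs:magicismagic}. Once associativity and the compatibility of $t_M$ with $\preceq^M$ are secured, the rest of the argument is a transcription of the already-proven Lemmas~\ref{lem:expandtime} and~\ref{lem:bestcompletion} and Theorem~\ref{thm:magiccompletion}.
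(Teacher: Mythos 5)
Your overall strategy---identifying the $\oplus^M$-shortest-path value with the output of the $(\mathbb F_M,t_M,M)$-completion and then invoking Theorem~\ref{thm:magiccompletion}---is exactly the reduction the paper intends (the paper offers no separate proof of this proposition, only the pointer ``cf.\ Theorem~\ref{thm:magiccompletion}''), and your fork-by-fork matching of $\oplus^M$ against $\mathcal F^+$, $\mathcal F^-$, $\mathcal F^C$, including the Case~\ref{IIb} value $\delta\oplus^M\delta=M-1$ via Observation~\ref{obs:FCforks}, is the right local computation.

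The gap is in the global step. Your preliminary claim that the set of path-values has a $\preceq^M$-least element for completable inputs, and your inequality that every path-value lies $\preceq^M$-above the algorithm's output, both fail in Case~\ref{IIb}. Take the paper's running example $\mathcal A^5_{3,3,16,13}$ with $M=3$, and the graph on $\{u,w_1,w_2,v\}$ with $d(u,w_1)=1$ and $d(w_1,v)=d(u,w_2)=d(w_2,v)=5$. This graph has a (unique) completion, with $d(u,v)=d(w_1,w_2)=4$, since a triangle $1,5,c$ forces $c=4$ and $5,5,4$ is allowed; the algorithm indeed assigns $4$ to the non-edge $u,v$ at time $t_M(4)=2$. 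But the two $u$--$v$ paths have values $1\oplus^M 5=4$ and $5\oplus^M 5=2$, and these are $\preceq^M$-incomparable: one checks that $4\oplus^M z=3$ and $2\oplus^M z=3$ for every $z$, so there is no $z$ with $4\oplus^M z=2$ or $2\oplus^M z=4$. Hence the minimum in the statement is undefined here, and $4\not\preceq^M 2$ refutes the lower bound you want. This cannot be rescued by the Optimality Lemma~\ref{lem:bestcompletion}, which compares the output against completions, not against path-values (the value $2$ of the second path is not attained by any completion --- the fork $(5,5)$ merely admits $2$, it does not force it, which is exactly the Case~\ref{IIb} pathology the paper flags after Definition~\ref{defn:magiccompletion}). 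The repair is to take the minimum in the linear order induced by the time function $t_M$, which strictly refines $\preceq^M$ and under which $4$ precedes $2$; with that reading the minimum always exists, your two inequalities go through (the second by induction along the algorithm using Lemma~\ref{lem:expandtime}), and the Proposition follows from Theorem~\ref{thm:magiccompletion} as you intend.
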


In~\cite{Hubicka2017sauer} the interpretation of some metrically homogeneous graphs as generalised semigroup-valued metric spaces is discussed and their Ramsey property and EPPA then follow from a more general result (which, though, still depends on the fact that the magic completion works for these classes). See also~\cite{Konecny2018b}.

\paragraph{5} One of our original motivations for investigating the problem
was the problem stated by Lionel Nguyen Van Th{\'e} about the Ramsey expansion of the class of
affinely independent Euclidean metric spaces~\cite{The2010}. Our techniques
do not seem to generalise to this setting, however it seems more clear that
the lack of (local) canonical amalgamation is one of the main obstacles.  It would be interesting to identify
classes of a more combinatorial nature which also expose such problems as an
additional step in this direction.

\paragraph{6} Stationary independence relations have been an ingredient for showing simplicity
of automorphism groups of some homogeneous structures~\cite{Tent2013,Evans2016}. After the submission of this paper, Evans, Hubička, Konečný, Li, and Ziegler~\cite{Evanssimplicity} showed that the existence of a stationary independence relation satisfying some extra (which are indeed satisfied by the stationary independence relations defined in this paper for the finite-diameter primitive 3-constrained case with Henson constraints) implies simplicity of the automorphism group. However, for the other cases the following question remains open:

\begin{question}
What are the normal subgroups of the automorphism groups of the non-tree-like countably infinite metrically homogeneous graphs from Cherlin's catalogue?
\end{question}

In fact, we believe that a solution to this question is within reach by refining the analysis from~\cite{Evanssimplicity}.

\paragraph{7} Since the tree-like graphs do not have a precompact Ramsey expansion, the KPT correspondence cannot be used as is to argue about amenability of the automorphism groups of the tree-like graphs. $T_{2,2}$ is the infinite path with no endpoints and $\Aut(T_{2,2})$ is isomorphic to $(\mathbb Z,+)$ which is amenable. We conjecture that this is the only amenable case:
\begin{conjecture}\label{conj:treelike}
If $(m,n)\neq (2,2)$ then $\Aut(T_{m,n})$ is not amenable.
\end{conjecture}
Our intuition for conjecturing this is the following: If $m=2$ and $n<\infty$, one can consider a graph $T'$ whose vertices are the blocks of $T_{2,n}$ and two vertices are connected by an edge if and only if the blobs have a non-empty intersection. Then $T'$ is the regular infinite $n$-ary tree and the action of $\Aut(T_{2,n})$ on the blocks of $T_{2,n}$ should give a continuous surjection onto $\Aut(T')$. In general, if $m,n\leq \infty$, one can do a similar construction with the only complication being that $T'$ is additionally equipped with an equivalence relation on the neighbourhood of every vertex corresponding to several blocks intersecting in the same vertex. Regarding the infinite parameter cases, $T_{\infty,2}$ is just the infinitely branching tree and one should be able to use arguments from~\cite{Evans2} to prove non-amenability of its automorphism group. (Or maybe it is a known result?)

\paragraph{8} We have proved various results for the class of finite metric subspaces of the associated metric spaces of metrically homogeneous graphs. However, one might want to ask these questions in terms of finite graphs and their isometric embeddings. For example, for every finite metric space $\str A$ with integer distances there exists a finite graph $\str G$ whose associated metric space contains $\str A$ as a substructure. Moreover, this finite graph can be constructed canonically so that every automorphism of $\str A$ extends to an automorphism of the associated metric space of $\str G$ (one simply adds a disjoint path of length $k$ for every edge of distance $k$).

This becomes more interesting once one considers $\str A$ from some $\mathcal A^\delta_{K_1,K_2,C_0,C_1}$ and also requires that the associated metric space of $\str G$ belongs to $\mathcal A^\delta_{K_1,K_2,C_0,C_1}$:

\begin{question}\label{q:dist_finite}
Given a choice of admissible primitive parameters $\delta$, $K_1$, $K_2$, $C_0$ and $C_1$, and $\str A\in \mathcal A^\delta_{K_1,K_2,C_0,C_1}$, does there exist a finite graph $\str G$ with associated metric space $\str B\in \mathcal A^\delta_{K_1,K_2,C_0,C_1}$ such that $\str B$ contains $\str A$ as a substructure?
\end{question}
\begin{question}\label{q:dist_finte_eppa}
In the setting of Question~\ref{q:dist_finite}, can one require that every automorphism of $\str A$ extends to an automorphism of $\str B$?
\end{question}

A positive answer to Question~\ref{q:dist_finite} was sketched by Cherlin for the unconstrained cases for every $\delta>1$ (i.e., only non-metric triangles are forbidden) and also for all cases with $\delta=3$.\footnote{Gregory Cherlin, personal communication.}

This question can be traced back to Moss~\cite{Moss1992} who studied \emph{distanced graphs} (that is, graphs with isometric embeddings) and in particular asked which countable distanced graphs $\str G$ are \emph{distance finite} (that is, for every finite metric subspace $(X,d)$ of the associated metric space of $\str G$ there is a finite subgraph $\str Y$ of $\str G$ which embeds isometrically into $\str G$ such that $X\subseteq Y$). He proved that the metrically homogeneous graph corresponding to the class $\mathcal A^\infty_{1,\infty,\infty,\infty}$ (the class of all finite metric spaces with integer distances) is distance finite (the argument was sketched above) and that there exists a non-distance-finite countable distanced graph which, however, is not (metrically) homogeneous. On page 299 he remarks that distance finiteness is open even for \emph{distance homogeneous graphs} (metrically homogeneous graphs in our language) -- Question~\ref{q:dist_finite} is a special case of this remark. Moss' question can be phrased in today's terms as the first part of the following question:

\begin{question}[Moss~\cite{Moss1992}]\label{q:moss}
Given a countable metrically homogeneous graph $\Gamma$ and a finite subset $X\subseteq \Gamma$, does there exist a finite $Y\subset \Gamma$ such that $X\subseteq \Gamma$ and the graph induced by $\Gamma$ on $Y$ embeds isometrically to $\Gamma$? If this is the case, can one construct $Y$ canonically so that every isometry $X\to X$ extends to an isometry $Y\to Y$?
\end{question}

Let us remark that Question~\ref{q:dist_finite} is related to the \emph{finite model property}. More precisely, this is a special case of it for formulas describing that a given finite graph is isometrically realised and constraints of the ambient class are satisfied in the associated metric space. Even for triangle-free graphs (which would correspond to the class $\mathcal A^2_{2,4,8,7}$ if $\delta=2$ wasn't explicitly excluded in Definition~\ref{defn:acceptable} for practical reasons) the finite model property is wide open. See e.g.~\cite{Cherlin2011b} for an overview and~\cite{EvenZohar2015} for the currently strongest result in this direction.

\paragraph{9} Besides Ramsey classes, there is also a (currently very active) area studying \emph{big Ramsey degrees}, where one studies the following question: Given an (infinite) structure $\str B$ and its finite substructure $\str A$, is there a finite number $t$ such that for every finite colouring of embeddings from $\str A$ into $\str B$, one can find an embedding of $\str B$ into itself which only attains at most $t$ colours? The least such $t$ is called the \emph{big Ramsey degree of $\str A$ in $\str B$}, and we say that $\str B$ \emph{has finite big Ramsey degrees} if all its finite substructures have a finite big Ramsey degree in $\str B$. For an overview of the area, see e.g.~\cite{dobrinen2021ramsey,hubicka2024survey}.

In~\cite{balko2021big}, Balko, Chodounsk{\'y}, Hubi{\v{c}}ka, Kone{\v{c}}n{\'y}, Ne{\v{s}}et{\v{r}}il, and Vena give a condition which, together with another paper from this volume by Hubička, Kompatscher, and Ko\-neč\-ný~\cite{Hubickacycles2018}, implies that the \Fraisse{} limits of the finite-diameter non-antipodal 3-constrained classes from Cherlin's catalogue have finite big Ramsey degrees. It seems that a complete characterization of finiteness of big Ramsey degrees for \Fraisse{} limits of classes from Cherlin's catalogue is within reach of the current methods. Given the role that the results and methods of this paper played in the more recent developments of structural Ramsey theory, we believe that a big Ramsey counterpart of this paper is desirable.

\section{Acknowledgements}
All authors would like to thank Gregory Cherlin for the catalogue, for engaging discussions concerning
finite set of obstacles of metrically homogeneous graphs, and for many helpful comments regarding the paper, history of the area as well as future of the area.

A significant part of the research was done when the authors were participating in the Ramsey DocCourse programme, in Prague 2016--2017.
We are grateful to Lionel Nguyen Van Th{\'e} for the encouragement to include this problem 
for the afternoon problem-solving sessions of the DocCourse. We would also like to thank David M. Evans and Andy Zucker
for discussions regarding non-amenability of the tree-like case. The seventh author would like to thank Jacob Rus for his Python style guidance. We would also like to thank Colin Jahel for pointing out an omission in a statement of our results.

Shortly before this paper was submitted we were informed
that similar results on finite set of obstacles, Ramsey property and ample
generics were also obtained by Rebecca Coulson and Gregory Cherlin which later appeared in~\cite{Coulson}.
The Ramsey expansion of space $\mathcal A^3_{1,3,8,9}$
was also independently obtained by Miodrag Soki\'c~\cite{Sokic2017}.
\bibliography{ramsey.bib}

\end{document}